\numberwithin{equation}{section}
\numberwithin{figure}{section}
\patchcmd{\thebibliography}{\chapter*}{\section*}{}{}
\newcommand{\commentout}[1]{{}} % for large block comments
\newcommand{\abs}[1]{\left|#1\right|}
\newcommand{\bfa}{{\bf a}}
\newcommand{\bfH}{{\bf H}}
\newcommand{\bfn}{{\bf n}}
\newcommand{\bfP}{{\bf P}}
\newcommand{\bfp}{{\bf p}}
\newcommand{\bft}{{\bf t}}
\newcommand{\bfu}{{\bf u}}
\newcommand{\bfv}{{\bf v}}
\newcommand{\bfx}{{\bf x}}
\newcommand{\bfz}{{\bf z}}
\newcommand{\conv}{\operatorname{conv}}
\newcommand{\ddiv}{\operatorname{div}}
\newcommand{\rot}{\operatorname{rot}}
\newcommand{\jump}[1]{\left[\!\left[#1\right]\!\right]}
\newcommand{\dd}{\,{\rm d}}
\newcommand{\vertiii}[1]{{\left\vert\kern-0.25ex\left\vert\kern-0.25ex\left\vert #1
    \right\vert\kern-0.25ex\right\vert\kern-0.25ex\right\vert}}
    \newcommand{\vertii}[1]{{\left\vert\kern-0.25ex\left\vert #1
    \right\vert\kern-0.25ex\right\vert}}
\begin{document}
\title{
Virtual element methods based on boundary triangulation:\\
fitted and unfitted meshes
%for anisotropic polyhedral meshes 
%How flexible element shape can be for the lowest-order VEM:\\
%beyond small faces and edges?
%Flexiable shape Virtual element methods:
%theories and applications
%\blfootnote{Keywords: Interface problems, discontinuous coefficients, finite element spaces, curved interfaces, higher order.}
%\blfootnote{}
}
\author{Ruchi Guo \thanks{The School of Mathematics, Sichuan University, Chengdu, Sichuan, China (ruchiguo@scu.edu.cn).} 
\funding{This work was funded in part by NSF grant DMS-2012465.}
}
\date{}
\maketitle
\begin{abstract}
One remarkable feature of virtual element methods (VEMs) is their great flexibility and robustness
when used on almost arbitrary polytopal meshes. 
This very feature makes it widely used in both fitted and unfitted mesh methods.
Despite extensive numerical studies, 
a rigorous analysis of robust optimal convergence has remained open for highly anisotropic 3D polyhedral meshes. 
In this work, we consider the VEMs in \cite{2022CaoChenGuo,2017ChenWeiWen}
that introduce a boundary triangulation satisfying the maximum angle condition.
We close this theoretical gap regarding  optimal convergence on polyhedral meshes in the lowest-order case 
for the following three types of meshes:
(1) elements only contain non-shrinking inscribed balls but \textit{are not necessarily star convex} to those balls; 
(2) elements are cut arbitrarily from a background Cartesian mesh, which can extremely shrink; 
(3) elements contain different materials on which the virtual spaces involve discontinuous coefficients. 
The first two widely appear in generating fitted meshes for interface and fracture problems,
while the third one is used for unfitted mesh on interface problems.
%The error estimates are guaranteed to be independent of polyhedral element shapes. 
%Such VEMs and meshes have wide applications in interface, fracture and crack problems.
In addition, the present work also generalizes the maximum angle condition from simplicial meshes to polyhedral meshes.
\end{abstract}

%Finite element methods are well-known to admit robust optimal convergence on simplicial meshes satisfying the maximum angle conditions.
%But how to generalize this condition to polyhedra is unknown in the literature. 
%In this work, we argue that this generation is possible for virtual element methods (VEMs).

\begin{keywords}
Virtual element methods, 
anisotropic analysis, 
maximum angle conditions, 
polyhedral meshes, 
fitted meshes, 
unfitted meshes, 
immersed finite element methods,
interface problems.
\end{keywords}

% !TEX root =  VEM_MaxAngle.tex

\section{Introduction}

Polyhedral meshes admit many attractive features, especially the flexibility to adapt to complex geometry. 
For existing works on polyhedral meshes, say discontinuous Galerkin methods \cite{2012BassiBottiColombo,2013AntoniettiGianiHouston,2022CangianiDongGeorgoulis,2017CangianiDongGeorgoulisHouston,2014CangianiGeorgoulisHouston,2020PietroDroniou}, 
mimetic finite difference methods \cite{2005BrezziLipnikovShashkov,2005BREZZILIPNIKOVSIMONCINI,2010VeigaManzini}, 
weak Galerkin methods \cite{2012MuWwangYe,2014WangYe} 
and virtual element methods (VEMs) \cite{BEIRAODAVEIGA20171110,Brenner;Sung:2018Virtual,2017VeigaCarloAlessandro,2013BeiraodeVeigaBrezziCangiani,2017AndreaEmmanuilSutton} to be discussed, 
their assumptions for element shape eventually turn into the conventional shape regularity if the polyhedral meshes reduce to simplicial meshes. 
%(ii) roughly speaking, of the three edges not nearly coplanar, see Remark \ref{rem_tetangle} for the explanation.
%Inspired by \cite{2022CangianiDongGeorgoulis,2020PietroDroniou} which basically assume a shape-regular triangulation covering the computational mesh
%with some ``non-degenerating" conditions for the elements, see Remark \ref{rem_tetangle} for the explanation.
%In fact, to our best knowledge, no geometrical conditions for polyhedra in the literature can handle the thin elements considered in this work, as they basically reduce to the shape regular conditions for simplicial elements, see \cite{2010VeigaManzini,2018MarkusMarkus,2012StellaGianmarco} for example.
The VEMs were first introduced in  \cite{2013BeiraodeVeigaBrezziCangiani}, 
and the key idea is to develop local problems to construct virtual spaces for approximation. 
VEMs possess several attractive features, especially the conformity to the underlying Hilbert spaces 
and the flexibility to handle almost arbitrary polygonal or polyhedra element shapes. 
%One attractive feature of VEMs is its flexibility to be used on almost arbitrary polygonal or polyhedra meshes, and the conformity to the underlying Hilbert spaces can be still enforced. 
Those features bring numerous applications of VEMs in many fields. 
For instances, in \cite{2014BenedettoPieraccini,2022BenvenutiChiozziManziniSukumar,2017ChenWeiWen,2018ThanhZhuangXuan,2018HusseinBlaFadi}, 
VEMs are used on meshes cut by interfaces, fractures and cracks, 
and the convergence is robust to these highly anisotropic meshes, 
which benefits the mesh generation procedure in these problems. 
In fact, it was observed in \cite{BEIRAODAVEIGA20171110} that the optimal convergence of VEMs can be achieved on Voronoi meshes 
of which the control vertices are randomly generated. 
Such flexibility also benefits solving multiscale problems \cite{2021XieWangFeng,2019RivarolaBenedettoLabanda,2020SreekumarTriantafyllouBcot}. 

\begin{table}[h!]
  \centering
  \hspace*{-1cm}
  \begin{tabular}{c | c | c | c| c } 
   \hline
    Poly type       & edge(E)   & face(F)     & body(K) & ref \\
   \hline
   \makecell{\includegraphics[width = 0.5in]{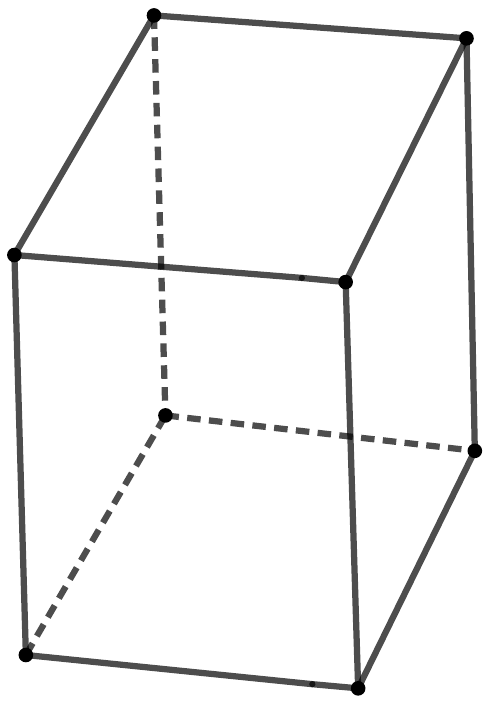} }
  & $\rho_E\simeq h_K$ & $\rho_F\simeq h_K$    & $\rho_K\simeq h_K$  
  & \makecell{ VEM: \cite{2017VeigaCarloAlessandro,2013BeiraodeVeigaBrezziCangiani,2017AndreaEmmanuilSutton} ...\\
  (classical shape regularity) } \\ 
   \hline
   \makecell{\includegraphics[width = 0.5in]{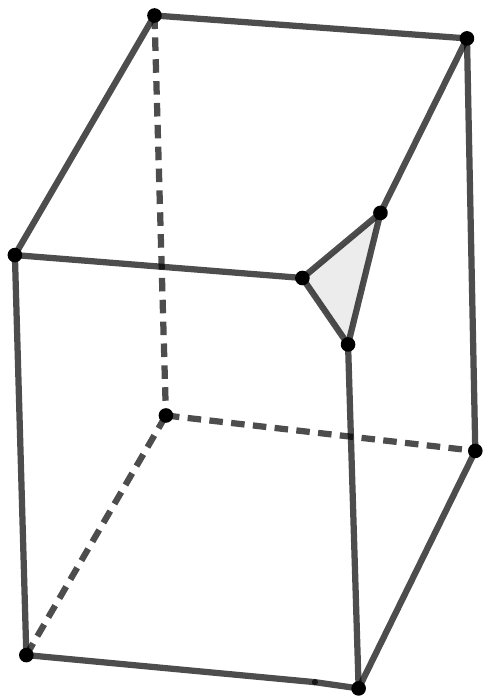}}
  & \makecell{$\rho_E \simeq h_E$\\ ($\rho_E\rightarrow 0$)} & $\rho_F \simeq h_F$ & $\rho_K\simeq h_K$ 
  & \makecell{ \cite{Brenner;Sung:2018Virtual} (small edges and small faces)} \\
   \hline
   \makecell{\includegraphics[width=0.5in]{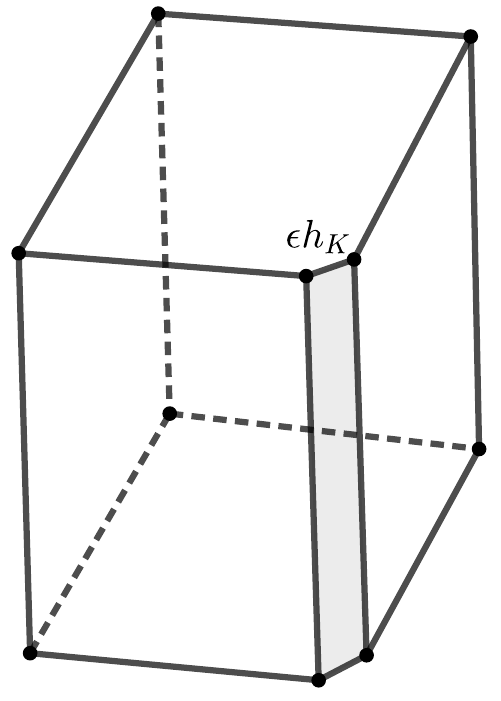}} 
   & $\rho_E\rightarrow 0 $ & $\rho_F\rightarrow 0 $ & $\rho_K\simeq h_K$
   & \makecell{ \cite{Cao;Chen:2018AnisotropicNC} (a height condition)} \\ 
   \hline
  \end{tabular}
  \caption{The existing work for polygonal shapes in VEMs.}
  \label{table_existPolyg}
  \end{table}

However, the error analysis for VEMs on anisotropic meshes (referred to as anisotropic analysis in the following discussion) seems quite challenging. 
Most of the earlier works \cite{2017VeigaCarloAlessandro,2013BeiraodeVeigaBrezziCangiani,2017AndreaEmmanuilSutton} assume such a shape regularity: 
an element $K$ together with each of its faces and edges are all star-convex to non-shrinking balls in 3D, 2D and 1D, respectively; namely
\begin{equation}
  \label{eq_shaperegu_cond}
  \rho_D = \mathcal{O}(h_K),  ~~~~ \text{with} ~ D ~ \text{being an edge $e$, face $F$ or $K$ itself},
\end{equation}
where $\rho_D$ is the radius of the largest ball in $D$. 
In other words, this assumption requires that the elements, faces and edges must have the same side,
and thus rules out short edges, small faces, and shrinking elements.

%(ii) there is no short edge. Specifically, the star-shape assumption is mainly for the approximation property and no short edge is for the stability. 

Some efforts have been made to relax the shape conditions.
As a fundamental advance, the ``no small face" and ``no short edge" assumptions were relaxed by Brenner et al. in \cite{Brenner;Sung:2018Virtual}.
%namely, \eqref{eq_shaperegu_cond} for edges and faces can be relaxed to 
Compared with \eqref{eq_shaperegu_cond}, the condition in \cite{Brenner;Sung:2018Virtual} can be summarized as
\begin{equation}
  \label{eq_shaperegu_cond_2}
\rho_D = \mathcal{O}(h_D) ~~ \text{with} ~ D=e ~ \text{and} ~ F.
\end{equation}
%\begin{equation}
%  \label{eq_shaperegu_cond2}
%  \rho_D = \mathcal{O}(h_D),  ~~~~ \text{with} ~ D ~ \text{being an edge or face}.
%\end{equation}
%in which the balls for star convexity on faces and edges, respectively, are allowed to have the radius $\mathcal{O}(h_F)$ and $\mathcal{O}(h_e)$ instead of $\mathcal{O}(h_K)$. 
In other words, the edges can be arbitrarily small, while a face $F$ can be small but cannot be thin, say thin rectangles. 
Notice that $\rho_K = \mathcal{O}(h_K)$ in \eqref{eq_shaperegu_cond} is still needed,
meaning that the element body cannot be shrinking in any sense.
What's more, their estimates involve an unfavorable factor in the error bound: $\ln(1 + \max_F \tau_F )$,
%\begin{equation}
%\label{lnterm}
%\ln(1 + \max_F \tau_F ), %~~~~~ \text{with} ~~ \tau_F = \frac{\max_{e\in\mathcal{E}_F} h_e}{\min_{e\in\mathcal{E}_F} h_e}
%\end{equation} 
with $\tau_F$ being the ratio of the longest edge and shortest edge. 
A remarkable achievement is made in \cite{2018CaoChen}
where the conditions in \eqref{eq_shaperegu_cond} are completely circumvented for the 2D case.
For the 3D case, the authors in \cite{Cao;Chen:2018AnisotropicNC} considered a nonconforming VEM that avoids the star-convexity assumptions of faces and edges, 
and they proposed a ``height condition" to replace the star convexity of $K$, 
i.e., the height of a face $F$ towards its neighbor elements must be $\mathcal{O}(h_F)$. 
A similar condition is also used in \cite{2014WangYe}.  
However, the method in \cite{Cao;Chen:2018AnisotropicNC} relies on a projection $\Pi_{\omega_K}$ defined onto the whole patch $\omega_K$ of an element $K$ 
whose computation is relatively complex and expensive. 
We refer readers to Table \ref{table_existPolyg} for illustration of these conditions,
where the polyhedra are constructed from cutting cubes.
However, these conditions are far from practical, 
as many useful and commonly occurring anisotropic shapes are still not covered—for example, 
those shown in Figures \ref{fig:anisotropic_element} and \ref{fig:cub_example}.
It is also worthwhile to mention that, many of these works can only handle the energy norm estimates, 
as the ``$\ln$"  factor seems very difficult to be removed when estimating the $L^2$ errors.

\begin{figure}[h]
  \centering
    \begin{minipage}{0.45\textwidth}
  \centering
   \includegraphics[width=1.5in]{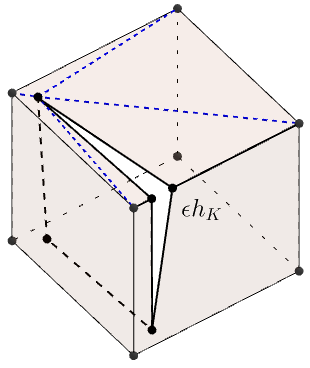}
  \caption{Example of an anisotropic element $K$ which contains a ball of the radius $\mathcal{O}(h_K)$ but is not star convex to it. 
  The face on the left is not supported by an $\mathcal{O}(h_K)$ height towards $K$.
  But its boundary admits a triangulation satisfying Assumptions \hyperref[asp:A1]{A1} and \hyperref[asp:A2]{A2}. 
  This element corresponds to Case (1) studied in this paper.}
  \label{fig:anisotropic_element}
  \end{minipage}
 ~~~ 
  \begin{minipage}{0.45\textwidth}
  \centering
   \includegraphics[width=1.5in]{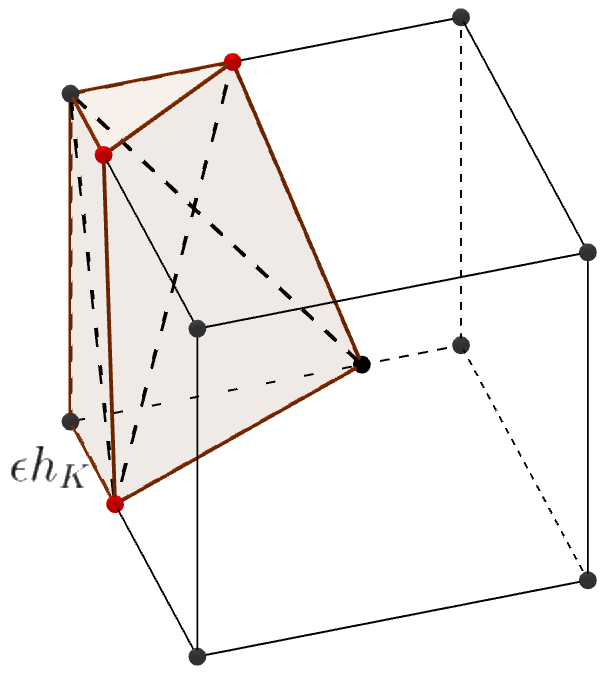}
  \caption{Example of a possibly shrinking element $K$ cut from a cub ($\epsilon \rightarrow 0$). 
  It does not even contain a ball of the radius $\mathcal{O}(h_K)$. 
  In addition, the interior dihedral angle $A_5$-$D_1D_4$-$D_2$ may approach $\pi$ such that the 3D triangulation does not satisfy the MAC, but the boundary triangulation still has the bounded maximum angle.
  This element corresponds to Case (2) studied in this paper. 
  The two subelements of the cube can be also treated together as an entire polyhedron even if they contain discontinuous PDE coefficients,
  i.e., it becomes an interface element of unfitted meshes,
  corresponding to Case (3) studied in this paper.}
  \label{fig:cub_example}
  \end{minipage}
\end{figure}

In summary, the 3D anisotropic analysis of VEMs still remains quite open.
Even how to impose a sufficiently general condition is unknown,
as existing experiments have indicated the robust optimal convergence on meshes beyond the aforementioned shapes.
Meanwhile, for standard finite element methods (FEMs) on simplicial meshes, 
it is well-known that the robust optimal convergence can be achieved under the \textit{maximum angle conditions (MAC)} \cite{1976BabuskaAziz,1999Duran,2020KobayashiTsuchiya,1992Michal,1994Shenk} 
which allows extremely narrow and thin elements.
This observation indicates that there is considerable potential for broadening the class of polyhedral shapes used in VEMs, 
as none of the existing shape conditions can accommodate very thin polyhedra for VEMs or any other methods.

In this work, we argue that the extension of MAC from simplicial meshes to polyhedral meshes is possible for VEMs, 
thereby allowing the shape conditions to be largely relaxed. 
The key idea is to utilize a boundary triangulation of elements satisfying MAC 
for constructing virtual functions—a concept originally introduced in \cite{2022CaoChenGuo,2017ChenWeiWen}.
This is motivated by the practice that constructing such a 2D triangulation on polygonal faces
is much easier than constructing a 3D one.
Indeed, while any 3D triangulation satisfying MAC must result in a 2D boundary triangulation with a bounded maximum angle,
the reverse is not generally true—even for simple prisms, 
see Figure \ref{fig:cub_example} for a counter-example.
In addition, we refer readers to \cite{2021CaoChenGuo} for using a virtual mesh satisfying MAC to show anisotropic analysis of VEMs in the 2D case.

We shall concentrate on the model problem: 
find $u\in H^1_0(\Omega)$ such that
\begin{subequations}
\label{model}
\begin{align}
\label{inter_PDE}
% -\nabla\cdot(\beta\nabla u)=f, &~~~~  \text{in} ~ \Omega^-  \cup \Omega^+, \\
% \jump{u}_{\Gamma} = 0, &~~~~  \text{on} ~ \Gamma \label{jump_cond_1}, \\
% \jump{\beta \nabla u\cdot \mathbf{n}}_{\Gamma}  = 0, & ~~~~\text{on}~\Gamma \label{jump_cond_2}, \\
%  u=g, &~~~~\text{on} ~ \partial\Omega,
a(u,v) := (\beta \nabla u, \nabla v)_{\Omega} = (f,v)_{\Omega}, ~~~ \forall v\in H^1_0(\Omega),
\end{align}
\end{subequations}
where $\Omega$ is a 3D domain, $f\in L^2(\Omega)$, and $\beta$ can be potentially piecewise constant.
In this work we focus on the lowest-order VEMs,
showing their robust optimal convergence for the three types of meshes:
\begin{itemize} 
 \item[(1)] elements contain \textit{but not necessarily star-convex} to non-shrinking balls; 
 \item[(2)]  elements are cut arbitrarily from a background Cartesian mesh which may not even contain non-shrinking balls; 
 \item[(3)]  elements contain different materials on which the virtual spaces involve discontinuous coefficients.  
\end{itemize}
Figures \ref{fig:anisotropic_element} and \ref{fig:cub_example} illustrate these three cases. 
Note that none of the aforementioned studies in the literature can address any of these cases, 
as they no longer rely on star convexity or height conditions. 
In all these scenarios, short edges and small faces are permitted, and in cases (2) and (3), even very thin elements(subelements) are allowed.
 %{\color{red}It is also worthwhile to mention that these scenarios all concern complex geometry
 %where low-order methods with flexible element shape may be more favorable.}
In particular, Case (3) refers to immersed virtual element methods (IVEMs) \cite{2021CaoChenGuoIVEM,2022CaoChenGuo} that involve discontinuous coefficients on unfitted meshes.
%In \cite{2021CaoChenGuoIVEM,2023CaoChenGuo}, the authors developed virtual spaces involving discontinuous coefficients for the purpose of unfitted meshes, i.e., the immersed virtual element methods (IVEMs). 
The key idea is to project the solutions of local interface problems to the immersed finite element (IFE) spaces which are not standard polynomials \cite{2023ChenGuoZou,2020GuoZhang,2005KafafyLinLinWang} 
such that the jump behavior can be captured.
Related works can be found in \cite{2019BenvenutiChiozziManziniSukumar,2024ZhengChenWang,2022BenvenutiChiozziManziniSukumar,2024DroniouManziniYemm},
but restricted to 2D analysis.
For the 3D case, our key technique is the boundary triangulation with which each interface element is still treated as a polytopal element.
Compared with the standard IFE methods, the advantages of IVEM are unique: the existence of shape functions can be always guaranteed.
In this work, we also compare IVEMs and VEMs for interface problems, examining both their error robustness and the performance of their solvers.
%Our numerical results also suggest that, compared to VEMs, IVEMs exhibit better performance when combined with standard multigrid (MG) methods.

%Note that, for the approaches in \cite{2021CaoChenGuoIVEM,2022CaoChenGuo}, 
%interface is allowed to cut a shape regular mesh arbitrarily, 
%and thus the resulting elements can be also considered as highly anisotropic polyhedra.
 
 %Therefore, the geometry conditions considered in this work are way more relaxed compared to those in the aforementioned work.

%The presented analysis can benefit many applications of VEMs for interface, fracture and crack problems \cite{2014BenedettoPieraccini,2022BenvenutiChiozziManziniSukumar,2017ChenWeiWen,2018HusseinBlaFadi,2018ThanhZhuangXuan}.
The analysis of the three anisotropic cases described above can facilitate many applications of VEMs. 
For instance, case (1) may appear for simulating crack propagation with a background shape regular meshes \cite{2014BenedettoPieraccini,2022BenvenutiChiozziManziniSukumar,2018HusseinBlaFadi}, see Figure \ref{fig:anisotropic_element} for an example.
Case (2) may appear when solving interface problems on a background Cartesian mesh 
with a fitted mesh formulation \cite{2017ChenWeiWen,2009ChenXiaoZhang,1998ChenZou}, 
while Case (3) is for an unfitted mesh formulation \cite{2020AdjeridBabukaGuoLin,2015BurmanClaus,2021CaoChenGuoIVEM,2022CaoChenGuo,2020GuoLin}. 
See Figure \ref{fig:cub_example} for illustration.
%It is also worthwhile to mention that, in most of these applications, 
%the truly anisotropic elements, i.e., those extremely thin elements, 
%are generally merely concentrated around a 2D manifold in the 3D domain, 
%and thus a block-diagonal smoother can alleviate the ill-conditioning issue caused by the irregular element shapes, see \cite{2023CaoChenGuo,2021HuWang}. 
The application of VEMs to these problems is particularly advantageous for moving interfaces \cite{2020Guo,2018GuoLinLinElasto,2022MaZhangZheng} and growing cracks \cite{2001DolbowMoesBelytschko,1999MoesDolbowBelytschko}. 
%Although, these applications have been widely studied in the literature, a complete theoretical analysis is still missing. 
%The proposed framework and analysis aim to close the theoretical gap to a certain extent.

%\RG{
%Roughly speaking, the existing analysis appraoches more or less relies on the norm equivalence:
%\begin{equation}
%\label{norm_equiv}
%\| \nabla v_h \|^2_{L^2(K)} \lesssim a_K(v_h,v_h) \lesssim \| \nabla v_h \|^2_{L^2(K)} ~~~ \forall v_h\in V^n_h(K)
%\end{equation}
%where $a_K$ is a typical VEM local discretization to the bilinear form in \eqref{inter_PDE}.}

\textbf{The challenge and our contributions.} 
The basic concept of VEMs is to use a discrete local bilinear form 
$a_K(\cdot,\cdot): H^1(K)\times H^1(K) \rightarrow \mathbb{R}$ to approximate \eqref{inter_PDE},
where
\begin{equation}
\label{vem_scheme}
    a_K(u_h, v_h):=(\beta_h \nabla \Pi_K u_h, \nabla \Pi_K v_h)_{K}
    +S_K(u_h-\Pi_K u_h, v_h-\Pi_K v_h).
\end{equation}
Here $\Pi_K$ is a projection of virtual functions onto polynomial spaces,
and $S_K(\cdot,\cdot)$ is a semi-positive symmetric bilinear form, 
called \textit{stabilization}, 
to make $a_K$ stable. 
While the virtual functions are not computable,
the projection and stabilization are computable through DoFs.

Let us recall the meta-framework of analysis in VEMs.
First, to make $a_K$ stable, the stabilization should be strong enough such that the following boundedness holds
for the lowest-order virtual functions $v_h$:
%Most analysis in the VEM literature would require that the stabilization is strong enough to ensure the norm equivalence
\begin{equation}
\label{stab_bound_old}
S_K(v_h,v_h)  \succsim   | v_h |^2_{H^1(K)}. %~~~~~ \forall v_h\in \mathcal{V}_h(K),
\end{equation}
The precise definition of the stabilization will be given in the next section.
This is typically obtained by the inverse trace inequality and the Poincar\'e inequality in the continuous level.
These two useful tools together with the trace inequality serve as the theoretical foundation for the optimal error estimates of VEMs including the approximation capabilities and stability.
Roughly speaking, 
all the existing work establish these inequalities by considering a mapping from polyhedra to certain reference elements or balls.
If the polyhedra is regular in the sense of \eqref{eq_shaperegu_cond},
the mapping becomes a Lipschitz isomorphism whose derivative is upper bounded.
It makes the constants of those inequalities also upper bounded,
only dependent on the ratio of the radius of the circumscribed and inscribed balls.
%This is the typical meta-framework used in almost all the VEM literature.

%We may ask the question: 
%is the condition \eqref{stab_bound_old} really needed through the whole error estimation?
%Can we show the approximation capabilities independent of \eqref{stab_bound_old},
%such that less shape regularity is required?

However, it is worth noting that establishing condition \eqref{stab_bound_old} and other fundamental inequalities at the continuous level is quite demanding. Doing so typically requires highly regular element shapes, such as those in \eqref{eq_shaperegu_cond} or their equivalent variants. As a result, this approach is not suitable for the anisotropic element shapes encountered in practice, including those considered in the present work—creating a gap between theory and computation.
To fill such a gap, we resort to the VEMs based on boundary triangulation \cite{2022CaoChenGuo,2017ChenWeiWen},
the details of which are provided in the next section.
With the maximal angle $\theta_M$ and the number of triangles $|\mathcal{N}_T|$, 
we argue that \eqref{stab_bound_old} can be replaced by a discrete Poincar\'e-type inequality on element boundary:
\begin{equation}
  \label{SK_equiv_0}
  %h^{1/2}_K | \cdot |_{1,\partial K} \simeq \| \cdot \|_{S_K} ~~~ \text{in} ~ \mathcal{B}_h(\partial K) .
  \| v_h \|^2_{0,\partial K} \le 10|\mathcal{N}_T| h_K \sin\left( \frac{\pi-\theta_M}{2+\epsilon } \right)^{-1}  \| v_h \|_{S_K} %~~~ \text{in} ~ \widetilde{\mathcal{B}}^0_h(\partial K),
  \end{equation}
  for a small constant $\epsilon$,
see \eqref{SK_equiv_1} and Lemma \ref{lem_disct_poinc} for details and related discussions.
Clearly, the constant in the bound of \eqref{SK_equiv_0} is very explicit.
We show that this inequality solely leads to the stabilization of the considered VEMs in Lemma \ref{lem_norm}.
As for approximation capabilities through interpolations, 
we give their self-contained proof by taking advantages of the delicate geometric properties,
where the conventional trace inequalities or Poincar\'e inequalities are avoided.
Another highlight is that the classical error estimates based on MAC requiring relatively higher regularity \cite{1999Duran,1994Shenk} are also circumvented. 

Unlike the conventional sophisticated analysis framework,
the ``hard analysis'' in the present work is quite tedious,
due to the involved geometric estimates.
To have a more clear streamline of the analysis, 
we develop an abstract framework
by decomposing the whole analysis into several components:
the \textbf{stability}, the \textbf{approximation capabilities of the boundary space},
and the \textbf{approximation capabilities of the virtual space}.
These three components collectively lead to the desired optimal estimate,
while their proof are self-contained and independent with each other to a certain extend.
Under this framework,
the virtual spaces, local problems and projection spaces remain abstract and can be adapted to the underlying problems' nature. 
For instance, the local problems can involve singular coefficients, 
and the projection spaces are free of polynomials and can be chosen as any computable spaces as long as they can provide sufficient approximation capability,
making the analysis of IVEMs possible..

This article consists of 5 additional sections. 
In the next section, we establish the boundary triangulation and boundary space for constructing the virtual spaces
In Section \ref{sec:assump}, we present an abstract framework for VEMs by introducing several ``hypotheses" and showing these hypotheses can lead to optimal errors. 
In the next 3 sections, we analyze the aforementioned 3 types of VEMs by theoretically verifying the hypotheses. 
In Section \ref{sec:num}, we provide numerical results.
Some technical results will be given in the Appendix.

% !TEX root =  VEM_MaxAngle.tex

%\section{Meshes and notations}

%%%%%%%%%%%%%%%%%%%%%%%%%%%%%%%%%%%%%%%%%%%%%%

\section{The VEM based on a boundary triangulation}
\label{sec:unifyframe}

Throughout this article, 
we let $\mathcal{T}_h$ be a polyhedral mesh of $\Omega$,
let $h_K$ be the diameter of an element $K$, and define $h=\max_{K}h_K$. 
We denote the collection of faces and edges of an element $K$ as $\mathcal{F}_K$ and $\mathcal{E}_K$. 
The geometrical conditions of $\mathcal{T}_h$  are left for later discussions in details. 
%For each element $K\in \mathcal{T}_h$, we define its patch $\omega_K$ as
%\begin{equation}
%\label{patch}
%\omega_K = \{ K'\in \mathcal{T}_h~:~ \overline{K'}\cap \overline{K} \neq \emptyset \}.
%\end{equation}
Let $H^m(D)$ be the standard Sobolev space on a region $D$ and let $H^1_0(D)$ be the space with the zero trace on $\partial D$. 
We further let 
$$
\bfH(\ddiv;D)=\{\bfu\in \bfH(D), \ddiv(u)\in H(D)\} ~~~ \text{and} ~~~ \bfH(\ddiv,c;D)=\{\bfu\in \bfH(D), \ddiv(c u)\in H(D)\},
$$ 
for a piecewise-constant function $c$.
% \begin{equation}
%\label{jump_cond}
%u \in H^1(\Omega) ~~~~~~  \text{and} ~~~~~~ \beta \nabla u \in \bfH(\text{div};\Omega),
%\end{equation}
In addition, $\|\cdot\|_{m,D}$ and $|\cdot|_{m,D}$ denote the norms and semi norms. 
%In addition, we let $\bfH(\ddiv0;D)$ and $\bfH(\ddiv0,c;D)$ be $\ddiv$-free and $c$-weighted $\ddiv$-free spaces, respectively.
The $L^2$ inner product is then denoted as $(\cdot,\cdot)_D$.
For simplicity's sake, we shall employ the notation $\lesssim$ and $\gtrsim$ representing $\le C$ and $\ge C$ where $C$ is a generic constant independent of element shape and size. 
Furthermore, the notation $\simeq$ denotes equivalence where the hidden constant $C$ has the same property. 

%One of main features for VEM is its flexibility to used on rather general polyhedral shaped elements, which greatly facilitates the mesh generation procedure, especially for complicated geometry.

%In this section, we develop a unified analysis framework for VEMs. 
%We will first establish VEM quintuplets, then present some general hypotheses they should satisfy and show these hypotheses can yield optimal convergence. 

%Let us begin with describing the meshes. 

%Since the main motivation of anisotropic analysis in this work comes from a background highly-structured mesh cut by a curved surface, we include the geometrical approximation in our unified analysis framework. Without loss of generality, we shall focus on the geometrical error caused by the interface $\Gamma$, and the results are also applicable to the geometrical error from $\partial \Omega$. 
%%%%%%%%%%%%%%%%%%%%%%%%%%%%%%%%%%%%%%%%%%%%%%%%%%%%%%%%%%%%%%%%%%%%%%

\subsection{The boundary triangulation and geometry assumptions}
%In this section, we introduce some basic geometric assumptions of the polyhedral elements.
One distinguished feature of the VEM in \cite{2022CaoChenGuo,2017ChenWeiWen} is a boundary triangulation that enables us to overcome the difficulty arising from anisotropic element shapes. 
Given each triangle $T$, denote $\theta_m(T)$ and $\theta_M(T)$ as the minimum and maximum angles of $T$. 
Recall that a mesh satisfies the maximal and minimal angle condition (MAC and mAC) if, for every $T$ in this mesh, $\theta_M(T)$ and $\theta_m(T)$ are bounded above and below from $\pi$ and $0$, respectively.
With this set-up, we first make the following assumption:
\begin{itemize}
  \item[(\textbf{A1})] \label{asp:A1} For each element $K$, the number of edges and faces is uniformly bounded. 
  Each of its face admits a triangulation in which the edges are connected by vertices only in $\mathcal{N}_K$.
  The collection of these triangles is referred to as the boundary triangulation
  which is assumed to satisfy MAC with the upper bound $\theta_M$.
%  In addition, every $T$ shares at least one edge with a triangle (including $T$ itself) which satisfies the minimum angle condition with the lower bound $\theta_m$ and whose size bounded below by $\mathcal{O}(h_T)$.
%%%%%%%%%%%%%
%   and \textit{one of the following two conditions} must hold:
%  \begin{itemize}
%  \item[(\textbf{a1})] for every three adjacent triangles, i.e., one triangle shares at least one edge with the other two, for example those in Figure \ref{}, any other triangles must share at least one vertex with them;
%  \item[(\textbf{A3})] every triangle $T$ shares at least one edge with another triangle (including $T$ itself) whose minimum angle is bounded below and the size is $\mathcal{O}(h_T)$.
%  \end{itemize}
\end{itemize}  
We shall denote the boundary triangulation by $\mathcal{T}_h(\partial K)$,
and let the collection of all the vertices be $\mathcal{N}_h(\partial K)$.
%and Assumption \hyperref[asp:A1]{A1} cannot imply that the elements themselves admit a triangulation satisfying the maximum angle condition, see Figure \ref{}. 

With Assumption \hyperref[asp:A1]{(A1)}, we define the trace space
\begin{equation}
\label{BhK_fitted}
\mathcal{B}_h(\partial K) = \{ v_h \in L^2(\partial K)~:~ v_h|_T \in \mathcal{P}_1(T), ~ \forall T\in \mathcal{T}_h(\partial K) \},
\end{equation}
in which the functions serve as the boundary data/conditions for the defining the virtual element spaces.
Note that the functions in $\mathcal{B}_h(\partial K)$ are standard finite element functions instead of those defined through local PDEs. %\cite{Beirao-da-Veiga;Brezzi;Marini;Russo:2014hitchhikers}.
This is particularly advantageous for establishing the shape-independent stability of the VEM.
In particular, it is the key to achieve the Poincar\'e inequality in \eqref{SK_equiv_0} with a shape-independent constant.
%We find that although the Poincar\'e inequality for general Sobolev spaces inevitably leads to bound constants heavily depending on the element shape,
%a discrete Poincar\'e inequality presented in Section \ref{sec:boundary} may achieve a constant independent of the shrinking shapes.
%This is our of our  major theoretical contributions based on the boundary space.

Moreover, the MAC alone may lead to extremely irregular mesh.
We further need the following \textit{path condition}. 
Here, a \textit{path} between two vertices is defined as a collection of edges connecting these two vertices.  
\begin{itemize}
  \item[(\textbf{A2})] \label{asp:A2} Let $T_M\in\mathcal{T}_h(\partial K)$ have the maximum area. 
  For each vertex $\bfz\in \mathcal{N}_K$, there exists a vertex $\bfz'$ of $T_M$ and a path from $\bfz$ to $\bfz'$ such that, for each edge $e$ in this path,  one of its opposite angles $\theta_e$ satisfies $\theta_e \le (1+\epsilon) \theta_m(T)$ where $T$ is the element containing the angle $\theta_e$.
\end{itemize} 

Note that Assumption \hyperref[asp:A2]{(A2)} does not pose any restrictions on the minimum angle which could be still arbitrarily small. 
Roughly speaking, under the assumption, 
two neighborhood elements of an edge in a path cannot both shrink to this edge.
We use Figure \ref{fig:path} for illustration.
Verifying Assumption \hyperref[asp:A2]{(A2)} can sometimes be challenging, 
as it requires a global check over all possible paths—a process that can be computationally expensive when dealing with many triangles.
To address this issue, we introduce an alternative assumption, Assumption \hyperref[asp:A2plus]{(A2')}, provided in Appendix \ref{append:lem_A2plus}. This assumption is much easier to verify since it only requires local information. In most situations, Assumption \hyperref[asp:A2plus]{(A2')} is sufficient, 
for example, those elements that may shrink to a face, as shown in the middle and right plots of Figure \ref{fig:cub}.
But for some extreme case that elements may shrink to an edge, such as the left plot in Figure \ref{fig:cub} as well as Figure \ref{fig:cutmeshA2},
we still need to use Assumption \hyperref[asp:A2]{(A2)}.
The interested readers can easily check that Assumption \hyperref[asp:A2plus]{(A2')} does not hold there. 
%In fact, in this case, every triangle may only have the bounded maximum angle.

\begin{figure}[h]
  \centering
  \begin{minipage}{.4\textwidth}
  \centering
  \includegraphics[width=2.3in]{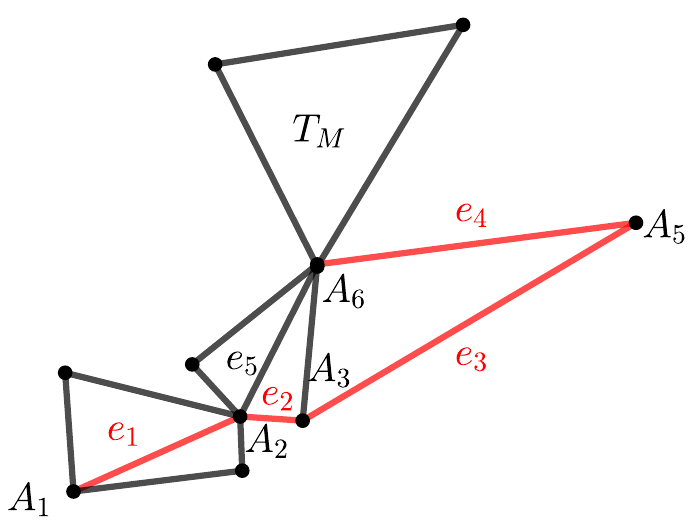}
  \caption{Illustration of Assumption \hyperref[asp:A2]{(A2)}: $e_5 = A_2A_6$ is not allowed in a path as the two neighborhood elements may shrink to this edge. Then, $e_2$, $e_3$ and $e_4$ are needed to connect $A_2$ and $A_6$.}
  \label{fig:path}
  \end{minipage}
  ~~~~
  \begin{minipage}{0.4\textwidth}
   \includegraphics[width=1in]{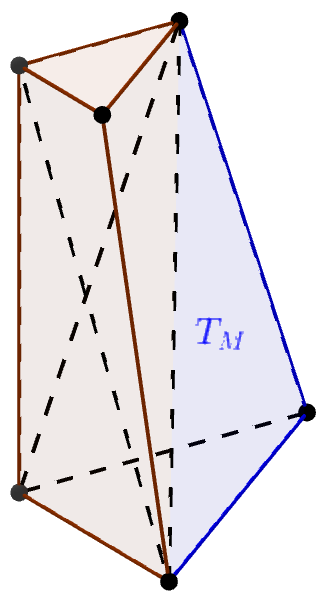}
  \caption{Configuration of an element which may shrink to a vertical edge.
  This element satisfies Assumption \hyperref[asp:A2]{(A2)} but not Assumption \hyperref[asp:A2plus]{(A2')}.}
  \label{fig:cutmeshA2}
  \end{minipage}
\end{figure}

%%%%%%%%%%%%%%%%%%%%%%%%%%%%%%%%%%%%%%%%%%%%%%%%%%%%%%%%%%%%%%%%%%%%%%

\subsection{The VEM}

In this work, we consider the scenario of $\beta$ being a piecewise-constant function on two subdomains $\Omega^{\pm}$:
\begin{equation}
\label{eq_beta}
\beta = 
\begin{cases}
\beta^- & ~~~ \text{in}~ \Omega^-,\\
\beta^+ & ~~~ \text{in}~ \Omega^+,
\end{cases}
\end{equation}
where the generalization is to more subdomains is straightforward.
We introduce $\beta_h$ as an approximation to $\beta$, 
with errors arising from the geometric approximation of the interface. 
For instance, $\partial\Omega^{\pm}$ maybe a curvilinear interface in inhomogeneous media, 
then $\beta_h$ is a piecewise-constant function but partitioned by a polygonal approximation of the interface surface.
See detailed discussion of this case in Section \ref{sec:IVEM}.

Mimicking the Ciarlet's finite element triplets \cite{Ciarlet1975LecturesOT}, 
for a given an element $K$,
we introduce quintuplets for the description of basic ingredients of a VEM: 
$(K,\mathcal{B}_h(\partial K), \mathcal{V}_h(K), \mathcal{W}_h(K),\mathcal{D}_K)$,
where $\mathcal{B}_h(\partial K)$ is given in \eqref{BhK_fitted}.
Now, let us describe $\mathcal{V}_h(K)$, $\mathcal{W}_h(K)$ and $\mathcal{D}_K$.
With these preparations, we define
\begin{itemize}
%\item $\mathcal{B}_h(\partial K)\subseteq H^1(\partial K)$ is a finite-dimensional space defined on $\partial K$ called the \textit{trace space};
\item $\mathcal{V}_h(K)$ is a finite-dimensional virtual element space defined as
\begin{equation}
\label{lifting}
\mathcal{V}_h(K) = \{  \nabla\cdot(\beta  \nabla v_h) =0: v_h|_{\partial K}\in \mathcal{B}_h(\partial K) \} \subset H^1(K);
\end{equation}
%where $\mathscr{L}_K:\mathcal{B}_h(\partial K)\rightarrow H^1(K)$ is a lifting operator extending boundary functions to the element interior;
\item $\mathcal{W}_h(K) \subset H^1(K)$ is a computable finite-dimensional space (allowed to be a non-polynomial space) containing $\mathcal{P}_0(K)$ for projecting $\mathcal{V}_h(K)$,
satisfying 
\begin{equation}
  \label{lifting_h}
\nabla\cdot(\beta_h \nabla w_h) = 0, ~~~ \forall w_h \in \mathcal{W}_h(K);
\end{equation}
 %specified later for various considered cases,  
 %onto which $\mathcal{V}_h(K)$ is projected for computation;
\item $\mathcal{D}_K=\{ v_h(\bfx) \}_{\bfx \in \mathcal{N}_K}$ corresponds to the nodal DoFs. 
%$\mathcal{D}_K=\{L_1,{L}_2,...,{L}_{N_K}\}$ is a set of linear forms on $\mathcal{B}_h(\partial K)$ such that
%\begin{equation}
%\label{DoF_def}
%v_h \rightarrow (L_1(v_h), L_2(v_h), ..., L_{N_K}(v_h))
%\end{equation}
%is bijective, which describes the DoFs with $N_K$ being its number.
\end{itemize}
As $\mathcal{B}_h(\partial K)$ consists of merely standard 2D FE spaces, and the local problem in \eqref{lifting} gives the unique harmonic extension for each boundary function, 
we know that $\mathcal{V}_h(K)$ is unisolvent with respect to the DoFs in $\mathcal{D}_K$.
Then, the global space is defined as 
\begin{equation}
\label{global_space}
\mathcal{V}_h = \{ v_h \in H^1_0(K) :~ v_h|_K \in \mathcal{V}_h(K) , ~ \forall K\in \mathcal{T}_h \} .
\end{equation}

As functions in $\mathcal{V}_h(K)$ are not computable, 
we need a projection $\Pi_K: H^1(K) \rightarrow \mathcal{W}_h(K)$ defined as
\begin{equation}
\label{Pi_proj}
( \beta_h \nabla \Pi_K v_h, \nabla w_h)_{K} = ( \beta_h \nabla v_h, \nabla w_h)_{K} , ~~~~ \forall w_h \in \mathcal{W}_h(K),
\end{equation}
where $\int_{\partial K} \Pi_K v_h \dd s = \int_{\partial K}  v_h \dd s$ is imposed for uniqueness. 
Since $ w_h\in \mathcal{W}_h(K)$ is explicitly known, the projection in \eqref{Pi_proj} is computable:
\begin{equation}
\begin{split}
\label{Pi_proj_compute}
( \beta_h \nabla \Pi_K v_h, \nabla w_h)_{K} & = (  \nabla v_h, \beta_h \nabla w_h)_{K} \\
& = -  (   v_h, \text{div}( \beta_h \nabla w_h) )_{K} + (\beta_h \nabla w_h\cdot\bfn, v_h)_{0,\partial K} =  (\beta_h \nabla w_h\cdot\bfn, v_h)_{0,\partial K},
\end{split}
\end{equation}
provided known $v_h|_{\partial K}$. 
%In the present work, $\text{div}( \beta_h \nabla w_h)$ is actually $0$, i.e., $\beta_h \nabla \mathcal{W}_h$ is a divergence-free space making the first term vanished.
%\RG{For simplicity, $\Pi$ denotes the global projection, i.e., $\Pi = \Pi_K$, on each $K$.}
%and $\beta_h$ is defined with $\Omega_h$ partitioned by $\Gamma_h$.
%Then, the local discrete bilinear form is defined as $a_K(\cdot,\cdot): H^1(K)\times H^1(K) \rightarrow \mathbb{R}$ where
%\begin{equation}
%\label{vem_scheme}
%    a_K(u_h, v_h):=(\beta_h \nabla \Pi_K u_h, \nabla \Pi_K v_h)_{K}
%    +S_K(u_h-\Pi_K u_h, v_h-\Pi_K v_h),
%\end{equation}
%where $S_K(\cdot,\cdot)$ is another semi-positive symmetric bilinear form, called the \text-it{stabilization}, to make $a_K$ stable, as $( \nabla \Pi_K u_h, \nabla \Pi_K v_h)_{0,K}$ itself may not be stable. 
%We will see that the stability is essentially a Poincar\'e-type inequality, given by Lemma \ref{lem_stab_f_verify} below.
%Its precise definition and the related detailed discussion  will be given in the next subsection.
Then, the global bilinear form is defined as
\begin{equation}
\label{vem_scheme_glob}
a_h(u_h, v_h):=\sum_{K\in \mathcal{T}_h} a_K(u_h, v_h),
\end{equation} 
which leads to a reasonably good approximation to $a(u_h, v_h)$, $\forall u_h, v_h \in H^1_0(\Omega)$.
%\begin{equation}
%\label{a_bilinear_form}
%a(u_h, v_h): = (\beta \nabla u_h, \nabla v_h)_{\Omega}, ~~~ \forall u_h, v_h \in H^1_0(\Omega).
%\end{equation}
Now, the virtual scheme is to find $u_h\in \mathcal{V}_h$ such that 
%Moreover, we will see using higher order stabilization yields less dependence on the contrast. Due to the easier computation of $S^{0}_K$ than $S^{1/2}_K$ or $S^{1}_K$ especially in the 3D case, we shall focus on the former one in this work, and for simplicity, we just denote $S_K = S^1_K$, $a_h=a^1_h$ and $\vertiii{\cdot}=\vertiii{\cdot}_1$. Then, the proposed virtual immersed scheme is to find $u_h\in v_h$ such that for all $v_h \in v_h$,
\begin{equation}
\label{eq_VEM}
    a_h(u_h, v_h) =\sum_{K\in \mathcal{T}_h}(f, \Pi_K v_h)_{L^2(K)}, ~~~~~ \forall v_h \in \mathcal{V}_h.
\end{equation}

%Delicate choices must be made for  $S_K$ to ensure the stability and approximation, see \cite{2013BeiraodeVeigaBrezziCangiani,Brenner;Sung:2018Virtual,Cao;Chen:2018AnisotropicNC} for example. Here, we postpone the precise definition for the later discussion; instead, we shall make some general assumptions in the next subsection for error estimates.

%Leaving the precise definition of $S_K$ for later discussion, here we only provide 
%There are many different choices of the stabilization \cite{2013BeiraodeVeigaBrezziCangiani,Brenner;Sung:2018Virtual,Cao;Chen:2018AnisotropicNC}. Here, we present a general framework to show what conditions a stabilization should satisfy such that the optimal convergence can be achieved. In particular, we make the following two assumption for the stabilization

%By projection, we know that $(\beta_h \nabla \Pi_K u_h, \nabla \Pi_K v_h)_{L^2(K)}$ 

%However, $(\beta_h \nabla \Pi_K u_h, \nabla \Pi_K v_h)_{L^2(K)}$ itself cannot result in a stable method, and thus a stabilization term $S_K(\cdot,\cdot)$ is needed. We leave the precise definition of $S_K$ for later discussion. 

\begin{remark}
\label{rem_discont}
For standard VEMs, the typical choice of $\mathcal{W}_h(K)$ is $\mathcal{P}_1(K)$. 
For IVEMs, $\mathcal{W}_h(K)$ becomes the IFE space.
\end{remark}

%%%%%%%%%%%%%%%%%%%%%%%%%%%%%%%%%%%%%%%%%%

%\section{The boundary space}
%\label{sec:boundary}

\subsection{The stabilization and a discrete Poincar\'e type inequality}
\label{subsec:stab}
We recall the following projection estimate which will be frequently used in this work.
Given a domain $D$ and a non-negative integers $m$, let be $\mathrm{P}^m_D$ the $L^2$ projection form $L^2(D)$ to $\mathcal{P}_m(D)$. 
\begin{lemma}[\cite{1999Rudiger}]
\label{lem_proj}
Let $D$ be a domain and let $k = 0,1,..., m$. Then, for every $u\in H^{m+1}(D)$, there is
\begin{subequations}
\label{lem_proj_eq0}
\begin{align}
    &   \| u -  \mathrm{P}^m_D u \|_{k,D} \leqslant c_{m,k} h^{m+1-k}_D |u|_{m+1,D},  ~~~~~~~~ \text{for convex}~D, \label{lem_proj_eq01}  \\
    &  \| u -  \mathrm{P}^m_D u \|_{0,D} \leqslant c_{m,0} h^{m+1}_D |u|_{m+1,\conv(D)},  ~~~~~ \text{for arbitrary}~D, \label{lem_proj_eq02}  
\end{align}
\end{subequations}
where the constant $c_{m,k}$ is independent of the geometry of $D$.
\end{lemma}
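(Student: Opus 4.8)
The plan is to reduce the arbitrary-domain estimate \eqref{lem_proj_eq02} to the convex estimate \eqref{lem_proj_eq01}, and to prove the latter with an averaged Taylor polynomial whose remainder can be bounded by a constant that does not see the shape of $D$. For \eqref{lem_proj_eq02} I would exploit that $\mathrm{P}^m_D u$ is the $L^2(D)$-best approximation of $u$ from $\mathcal{P}_m$, so that for any competitor $q\in\mathcal{P}_m$ one has $\|u-\mathrm{P}^m_D u\|_{0,D}\le\|u-q\|_{0,D}\le\|u-q\|_{0,\conv(D)}$. Choosing $q=\mathrm{P}^m_{\conv(D)}u$ and invoking \eqref{lem_proj_eq01} on the convex set $\conv(D)$ gives the claim, since $h_{\conv(D)}=h_D$ and the seminorm over $\conv(D)$ is exactly the one on the right of \eqref{lem_proj_eq02}. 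This step is routine once \eqref{lem_proj_eq01} is available.

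For the convex case I would build the classical averaged Taylor polynomial $Q^m_D u\in\mathcal{P}_m$, obtained by averaging the degree-$m$ Taylor polynomial of $u$ over all base points in $D$, and use the integral representation of the remainder $u-Q^m_D u$ as an average of one-dimensional Taylor remainders along the segments joining pairs of points of $D$. The decisive use of convexity is that every such segment lies inside $D$, so the remainder only involves the $(m{+}1)$-st derivatives of $u$ on $D$ itself; averaging over the \emph{whole} domain, rather than over an inscribed ball as in the usual Bramble--Hilbert/Dupont--Scott argument, is exactly what removes the chunkiness parameter and yields $\|u-Q^m_D u\|_{0,D}\lesssim h_D^{m+1}|u|_{m+1,D}$ with a constant depending only on $m$ and the dimension. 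The $L^2$-best-approximation property then upgrades this to $\|u-\mathrm{P}^m_D u\|_{0,D}\le\|u-Q^m_D u\|_{0,D}\lesssim h_D^{m+1}|u|_{m+1,D}$, which is \eqref{lem_proj_eq01} for $k=0$.

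For the seminorm estimates $1\le k\le m$ I would differentiate the same remainder representation $k$ times; since differentiation lowers the order of the surviving Taylor remainder, the convex averaging argument again gives $|u-Q^m_D u|_{k,D}\lesssim h_D^{m+1-k}|u|_{m+1,D}$ with a geometry-independent constant. The genuinely delicate point, which I expect to be the main obstacle, is transferring this bound from $Q^m_D u$ to the $L^2$ projection $\mathrm{P}^m_D u$ in the $H^k$ seminorm. Writing $u-\mathrm{P}^m_D u=(I-\mathrm{P}^m_D)(u-Q^m_D u)$, the error term is $\mathrm{P}^m_D(u-Q^m_D u)\in\mathcal{P}_m$, and bounding its $k$-th seminorm by its $L^2$ norm would require a polynomial inverse inequality $|p|_{k,D}\lesssim h_D^{-k}\|p\|_{0,D}$; this inequality is \emph{not} geometry-independent on thin convex domains, so the naive route fails. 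The real content of the cited estimate is precisely to control this transfer through convexity directly rather than through a scale-broken inverse inequality, and this is the step I would spend the most effort on; I would treat $k=0$, on which the anisotropic interpolation estimates ultimately rely, as the robust core and handle $1\le k\le m$ by this more careful convex argument.
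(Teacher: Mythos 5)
Your proposal matches the paper's proof exactly on the part the paper actually proves, and goes beyond it on the part the paper does not. The paper's entire argument is the reduction for arbitrary $D$: since $\mathrm{P}^m_D$ is the $L^2(D)$-best approximation,
$$
\| u - \mathrm{P}^m_D u \|_{0,D} \le \| u - \mathrm{P}^m_{\conv(D)} u \|_{0,D} \le \| u - \mathrm{P}^m_{\conv(D)} u \|_{0,\conv(D)},
$$
followed by the convex estimate on $\conv(D)$ --- precisely your first paragraph, including the observation $h_{\conv(D)}=h_D$. For the convex case \eqref{lem_proj_eq01}, however, the paper gives no proof at all: it is the cited result of \cite{1999Rudiger}, with the constant $c_{m,k}$ taken from equation (1.1) of that reference. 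Your attempted reconstruction of that case is therefore extra work relative to the paper. Your $k=0$ argument (averaged Taylor polynomial with base points averaged over the whole convex domain, so that convexity keeps all segments inside $D$ and eliminates the chunkiness parameter, then the best-approximation property to pass to $\mathrm{P}^m_D$) is sound and is essentially how the cited result is obtained. For $1\le k\le m$ you correctly diagnose the obstruction: the passage from the averaged Taylor polynomial to the $L^2$ projection in the $H^k$ seminorm cannot go through a polynomial inverse inequality, since that inequality degenerates on thin convex domains; you leave this step open. This is a genuine incompleteness if the lemma is to be proved from scratch --- note the paper does use the convex case with $k=1$ (e.g.\ in the proof of Lemma \ref{lem_uI_face}) --- but it does not conflict with the paper, because the paper outsources exactly this point to \cite{1999Rudiger}. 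In short: your reduction is the paper's proof verbatim; the unresolved $k\ge 1$ transfer is precisely the content of the citation, so a clean way to finish would be to cite it rather than reprove it.
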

\begin{proof}
The convex case is given in \cite{1999Rudiger} where the constant $c_{m,k}$ is specified by (1.1) in the reference.
For the non-convex case, the projection property 
$$
\| u -  \mathrm{P}^m_D u \|_{0,D} \le \| u -  \mathrm{P}^m_{\conv(D)} u \|_{0,D} \le \| u -  \mathrm{P}^m_{\conv(D)} u \|_{0,\conv(D)},
$$
finishes the proof.
\end{proof}

With the boundary space in \eqref{BhK_fitted}, in this work we consider the stabilization
\begin{equation}
\label{stab_f}
S_K(v_h,w_h) = h_K \sum_{F\in\mathcal{F}_K} (\nabla_F v_h,\nabla_F w_h)_{0,F}
\end{equation}
which is computable since functions in $\mathcal{B}_h(\partial K)$ are known. %given the boundary triangulation.

%\RG{Most analysis in the VEM literature would require that the stabilization is strong enough to ensure
%\begin{equation}
%\label{stab_bound_old}
%S_K(v_h,v_h)  \gtrsim | v_h |^2_{H^1(K)}, ~~~~~ \forall v_h\in \mathcal{V}_h(K),
%\end{equation}
%with a hidden constant independent of the element shape. 
%Then, \eqref{stab_bound_old} together with \eqref{vem_scheme} imply the stability $a_h(v_h,v_h)  \gtrsim | v_h |^2_{H^1(K)}$.
%However, on one hand, we find that this condition may be a little too demanding in the analysis in the sense that it requires very regular element shape, 
%i.e., the star convexity.
%On the other hand, we may ask the question: is the condition \eqref{stab_bound_old} really needed in the error estimation?
%In fact, we will see that the error estimation under the energy norm does not need any lower bound for the stabilization,
%while certain (weaker) lower bound is still needed for the $L^2$ error estimation.}

%To distinguish the proposed analysis from those in the literature,
%we consider a polyhedron $K$ that is star convex to a ball of the radius $\mathcal{O}(h_K)$, 
%which yields \cite[Section 2]{Brenner;Sung:2018Virtual}
%\begin{equation}
%\label{standard_arg}
%\| v \|_{0,\partial K} \lesssim h^{-1/2}_K \| v \|_{0,K} + h^{1/2}_K | v |_{1, K} \lesssim h^{1/2}_K | v |_{1,K}  ~~~~~ \forall v \in H^1(K)~ \text{with} ~ \int_{\partial K} v \dd s = 0.
%\end{equation}
The trace inequality with \eqref{stab_bound_old} links $\| v_h \|_{0,\partial K} $ and $\| v_h \|_{S_K}$, leading to the stability of many VEMs.
In the following lemma, we proceed to show the stability by directly connecting $S_K$ and $\| \cdot \|_{0,\partial K}$, i.e., \eqref{SK_equiv_0}, 
without relying on the trace inequality and any star-convexity conditions.
%We will see that this is sufficient for the anisotropic error estimation.
The key to show this is to utilize the discrete properties of the space $ \mathcal{B}_h(\partial K)$
which the continuous Sobolev spaces do not have.

\begin{lemma}
\label{lem_oppo_angle}
Suppose a triangle $T$ has the maximum angle $\theta_M(T)$. 
Let $e$ be one of its edges with the opposite angle $\theta_e$ and the ending points $\bfx_1$ and $\bfx_2$. 
Suppose $\theta_e\le (1+\epsilon) \theta_m(T)$. Then, there holds
\begin{equation}
\label{lem_oppo_angle_eq0}
| w_h(\bfx_2) - w_h(\bfx_1) | \le \kappa \| \nabla w_h \|_{0,T} ~~~~ \forall w_h \in \mathcal{P}_1(T),
\end{equation}
\end{lemma}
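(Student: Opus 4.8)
The plan is to reduce \eqref{lem_oppo_angle_eq0} to a scale-invariant geometric inequality and then to control that inequality solely through $\theta_M(T)$ and the hypothesis on $\theta_e$. Since $w_h\in\mathcal{P}_1(T)$, the gradient $\nabla w_h$ is constant on $T$; writing $\ell_e=|\bfx_2-\bfx_1|$ for the length of $e$ and $|T|$ for the area of $T$, the identity $w_h(\bfx_2)-w_h(\bfx_1)=\nabla w_h\cdot(\bfx_2-\bfx_1)$ together with Cauchy--Schwarz and $\|\nabla w_h\|_{0,T}=|\nabla w_h|\,|T|^{1/2}$ gives
\[
|w_h(\bfx_2)-w_h(\bfx_1)|\le \ell_e\,|\nabla w_h|=\frac{\ell_e}{|T|^{1/2}}\,\|\nabla w_h\|_{0,T}.
\]
Thus it suffices to bound the shape factor $\ell_e/|T|^{1/2}$, and $\kappa$ will be exactly an upper bound for this ratio.

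Next I would express the ratio through the interior angles. Relabelling the two endpoints if necessary (the left-hand side is symmetric in $\bfx_1,\bfx_2$), let $\beta_1\ge\beta_2$ denote the angles of $T$ at $\bfx_1$ and $\bfx_2$, so that $\beta_1+\beta_2+\theta_e=\pi$. The law of sines (equivalently $|T|=\tfrac12\ell_e h_e$ with $h_e$ the altitude onto $e$) yields
\[
\frac{\ell_e^2}{|T|}=\frac{2\sin\theta_e}{\sin\beta_1\,\sin\beta_2}.
\]
For the numerator, since $e$ is opposite the near-minimal angle $\theta_e$ one has $\theta_e\le\beta_1$ and $\theta_e<\pi-\beta_1=\theta_e+\beta_2$, so $\theta_e\le\min(\beta_1,\pi-\beta_1)$ and hence $\sin\theta_e\le\sin\beta_1$, giving the factor $\sin\theta_e/\sin\beta_1\le 1$; the only exception, $\theta_e>\beta_1$, forces all three angles to lie within a factor $1+\epsilon$ of one another, a quasi-uniform triangle for which the ratio is trivially bounded.

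The heart of the argument, and where the hypothesis $\theta_e\le(1+\epsilon)\theta_m(T)$ enters, is a lower bound on the small angle $\beta_2$. Using $\theta_m(T)\le\beta_2$ gives $\theta_e\le(1+\epsilon)\beta_2$; inserting this into $\beta_2=\pi-\theta_e-\beta_1$ and using $\beta_1\le\theta_M(T)$ produces
\[
(2+\epsilon)\,\beta_2\ \ge\ \pi-\beta_1\ \ge\ \pi-\theta_M(T),\qquad\text{that is,}\qquad \beta_2\ \ge\ \frac{\pi-\theta_M(T)}{2+\epsilon}.
\]
Since $\beta_2\le\pi/2$ and $(\pi-\theta_M(T))/(2+\epsilon)\le\pi/2$, monotonicity of $\sin$ on $[0,\pi/2]$ gives $\sin\beta_2\ge\sin\big((\pi-\theta_M(T))/(2+\epsilon)\big)$. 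Combining the two estimates yields $\ell_e^2/|T|\le 2/\sin\big((\pi-\theta_M(T))/(2+\epsilon)\big)$, so \eqref{lem_oppo_angle_eq0} holds with the explicit, shape-independent constant $\kappa=\big(2/\sin((\pi-\theta_M)/(2+\epsilon))\big)^{1/2}$, which is precisely the quantity appearing in \eqref{SK_equiv_0}.

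The main obstacle is exactly the control of $\beta_2$: the ratio $\sin\theta_e/(\sin\beta_1\sin\beta_2)$ can blow up through an arbitrarily small $\beta_2$ even when $\theta_M(T)$ is bounded away from $\pi$ --- for example a triangle with angles $\pi/2,\delta,\pi/2-\delta$ and $e$ taken opposite $\pi/2-\delta$ has $\ell_e^2/|T|=2\cot\delta\to\infty$. The hypothesis $\theta_e\le(1+\epsilon)\theta_m(T)$ is what excludes such edges (here $\theta_e=\pi/2-\delta$ is far from $(1+\epsilon)\delta$) and turns the minimum-angle degeneracy into the benign bound on $\beta_2$ above. The remaining points --- the book-keeping for an obtuse $\beta_1$ (covered by $\theta_e<\pi-\beta_1$) and the quasi-uniform corner case --- are routine.
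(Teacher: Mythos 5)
Your reduction to bounding $\ell_e/|T|^{1/2}$, the law-of-sines identity $\ell_e^2/|T|=2\sin\theta_e/(\sin\beta_1\sin\beta_2)$, and the extraction of $\beta_2\ge(\pi-\theta_M)/(2+\epsilon)$ from the hypothesis are exactly the paper's strategy, and your main case ($\theta_e\le\beta_1$) is correct. In fact your numerator cancellation $\sin\theta_e\le\sin\beta_1$ is cleaner and more robust than the corresponding step in the paper: in the paper's second case the manipulation $\sin(\theta_e+\theta_m)/\sin\theta_e\le 2$ yields the \emph{upper} bound $|T|\le|e|^2\sin\theta_m$, whereas the conclusion drawn from it requires the reverse \emph{lower} bound on $|T|$ (which is false, e.g.\ for angles $50^\circ$-$60^\circ$-$70^\circ$ with $e$ opposite $60^\circ$); your argument repairs precisely that step whenever $\theta_e$ is not the strict maximum angle.

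The genuine gap is your exceptional case $\theta_e>\beta_1$, i.e.\ $\theta_e$ strictly the maximum angle, which you dismiss as ``a quasi-uniform triangle for which the ratio is trivially bounded.'' Bounded by what? With the stated constant $\kappa$ the claim is false: take angles $55^\circ$, $60^\circ$, $65^\circ$ with $e$ opposite $65^\circ$ and the minimal admissible $\epsilon=2/11$; then $\ell_e^2/|T|=2\sin 65^\circ/(\sin 55^\circ\sin 60^\circ)\approx 2.56$ while $\kappa^2=2/\sin\bigl(115^\circ/(2+\epsilon)\bigr)\approx 2.51$, so taking $\nabla w_h$ parallel to $\bft_e$ violates \eqref{lem_oppo_angle_eq0}. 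Thus this corner case is exactly where the stated constant breaks down (the paper's own proof is defective here too), and no ``trivial'' observation can recover $\kappa$ itself. The fix is short and you should write it out: in this case $\beta_1\le\pi/2$ (two angles cannot both exceed $\pi/2$), and your derivation of the lower bound $\beta_2\ge(\pi-\theta_M)/(2+\epsilon)$ applies verbatim to $\beta_1\ge\beta_2$, so $\sin\beta_1\sin\beta_2\ge\sin^2\bigl((\pi-\theta_M)/(2+\epsilon)\bigr)$ and, using only $\sin\theta_e\le 1$,
\begin{equation*}
\frac{\ell_e^2}{|T|}\ \le\ \frac{2}{\sin^2\bigl((\pi-\theta_M)/(2+\epsilon)\bigr)}\ =\ \frac{\kappa^4}{2}.
\end{equation*}
Hence the lemma holds in all cases with $\kappa$ replaced by $\kappa^2/\sqrt{2}$ (which dominates $\kappa$ since $\kappa^2\ge 2$); the constant remains explicit, shape independent, and dependent only on $\theta_M$ and $\epsilon$, so the downstream uses in Lemma \ref{lem_disct_poinc} and \eqref{SK_equiv_0} survive with adjusted numerical factors.
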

where $\kappa = \sqrt{2/\sin\left( \frac{\pi-\theta_M(T)}{2+\epsilon } \right)}$.
\begin{proof}
If $\theta_e$ itself is the minimum angle, then, both the edges opposite to $\bfx_1$ and $\bfx_2$ should have the length greater than $|e|$.
Thus, we have $|T|\ge \sin(\theta_M(T))|e|^2/2$, and then obtain
\begin{equation}
\label{lem_oppo_angle_eq1}
| w_h(\bfx_2) - w_h(\bfx_1) | = \abs{ \int_{\bfx_1}^{\bfx_2} \partial_{\bft_e} w_h \dd s } \le |e| \| \nabla w_h \| \le \sqrt{2/\sin(\theta_M)} \| \nabla w_h \|_{0,T},
\end{equation}
which yields \eqref{lem_oppo_angle_eq0}, as $ \frac{\pi-\theta_M}{2+\epsilon } \le \min\{ \theta_M, \pi-\theta_M \}$.

If $\theta_e$ is not the minimum angle, by the assumption we have $(2+\epsilon)\theta_m(T) + \theta_M(T) \ge \pi$ implying $\theta_m(T) \ge (\pi-\theta_M(T))/(2+\epsilon)$. 
%In fact, we can let $\theta_e$ be the maximum angle to maximize the bound below. 
Then, by the sine law, there holds 
$
|T| = |e|^2 \sin(\theta_m)\sin( \theta_e + \theta_m )/(2\sin(\theta_e)),
$ 
and notice
$$
\frac{\sin( \theta_e + \theta_m )}{\sin(\theta_e)} = \sin(\theta_m)\cot(\theta_e) + \cos(\theta_m) \le 2 \cos(\theta_m) \le 2,
$$
where we have used $\theta_e\ge \theta_m$. It yields $|T| \le |e|^2 \sin(\theta_m)$.
Hence, we obtain
\begin{equation}
\label{lem_oppo_angle_eq2}
| w_h(\bfx_2) - w_h(\bfx_1) | \le |e| \| \nabla w_h \| \le \sqrt{ 1/ \sin(\theta_m) } \| \nabla w_h \|_{0,T}.
%~~ \forall ~\text{vertices}~ \bfx_1,\bfx_2.
\end{equation}
Then, the desired estimate follows from \eqref{lem_oppo_angle_eq2} with the bound of $\theta_m$.
\end{proof}

\begin{lemma}
\label{lem_disct_poinc}
Given a polyhedral element $K$, 
%let $h_P$ and $\rho_P$ be its diameter and the radius of the largest inscribed ball to which $P$ is star convex. 
suppose $\partial K$ has a boundary triangulation $\mathcal{T}_h(\partial K)$ satisfying the Assumptions \hyperref[asp:A1]{(A1)} and \hyperref[asp:A2]{(A2)}. Then, there holds
\begin{equation}
\label{lem_disct_poinc_eq0}
\| v_h - \mathrm{P}^0_{\partial K} v_h \|_{0,\partial P} \le \sqrt{5} \kappa h_K |N_{\mathcal{T}}|^{1/2}  |v_h|_{1,\partial K}, ~~~~ \forall v_h\in \mathcal{B}_h(\partial K),
\end{equation}
where $\kappa$ inherits from Lemma \ref{lem_oppo_angle} with $\theta_M(T)$ replaced by the maximum angle $\theta_M$ in Assumption \hyperref[asp:A1]{(A1)}, and $N_{\mathcal{T}}$ is the number of elements in the boundary triangulation.
\end{lemma}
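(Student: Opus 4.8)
The plan is to exploit that $\mathrm{P}^0_{\partial K}v_h$ is the best $L^2(\partial K)$ approximation of $v_h$ by constants, so that it suffices to bound $\|v_h-c\|_{0,\partial K}$ for one conveniently chosen constant $c$; I would take $c=v_h(\bfz_0)$, the nodal value of $v_h$ at a fixed vertex $\bfz_0$ of the maximal-area triangle $T_M$ furnished by Assumption \hyperref[asp:A2]{(A2)}. The whole inequality then splits into two essentially independent estimates: a \emph{local} one, turning $\|v_h-c\|_{0,T}$ on each boundary triangle into its nodal values, and a \emph{global} one, transporting every nodal value back to the gauge $\bfz_0$ along an (A2)-path while paying only one power of $|N_{\mathcal{T}}|$.

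\textbf{Localization.} First I would localize: since $v_h-c$ is affine on each $T\in\mathcal{T}_h(\partial K)$ and an affine function attains its extrema at the vertices, $\|v_h-c\|_{0,T}^2\le|T|\max_{\bfz\in T}|v_h(\bfz)-c|^2$, and summing gives $\|v_h-c\|_{0,\partial K}^2\le|\partial K|\,\max_{\bfz\in\mathcal{N}_h(\partial K)}|v_h(\bfz)-c|^2$. The total boundary area is controlled independently of the triangulation, $|\partial K|\lesssim h_K^2$, using that each planar face has diameter at most $h_K$ (an isodiametric bound) and that the number of faces is uniformly bounded by Assumption \hyperref[asp:A1]{(A1)}; this is the step that keeps a single, triangulation-independent factor in front and produces the explicit constant.

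\textbf{Nodal transport.} Next comes the heart of the argument. For an arbitrary vertex $\bfz$, Assumption \hyperref[asp:A2]{(A2)} supplies a path to a vertex $\bfz'$ of $T_M$ each of whose edges $e$ has an opposite angle obeying $\theta_e\le(1+\epsilon)\theta_m(T_e)$ in its containing triangle $T_e$; this is precisely the hypothesis under which Lemma \ref{lem_oppo_angle} applies, giving $|v_h(\bfx_2)-v_h(\bfx_1)|\le\kappa\|\nabla v_h\|_{0,T_e}$ across each edge $e$ with endpoints $\bfx_1,\bfx_2$. Telescoping along the path and applying Cauchy--Schwarz over its at most $|N_{\mathcal{T}}|$ edges yields $|v_h(\bfz)-v_h(\bfz')|^2\le|N_{\mathcal{T}}|\,\kappa^2\sum_{e}\|\nabla v_h\|_{0,T_e}^2\le|N_{\mathcal{T}}|\,\kappa^2|v_h|_{1,\partial K}^2$, where I use that the $T_e$ are triangles of the boundary triangulation, so their gradient energies are dominated by the full seminorm. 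Finally I transfer $v_h(\bfz')$ to the gauge $v_h(\bfz_0)$ within the single triangle $T_M$ (again via Lemma \ref{lem_oppo_angle}), and combine this uniform nodal bound with $|\partial K|\lesssim h_K^2$ to reach \eqref{lem_disct_poinc_eq0}.

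\textbf{Main obstacle.} The delicate part is the bookkeeping that forces exactly \emph{one} factor of $|N_{\mathcal{T}}|$ together with the clean constant $\sqrt5$. The key is to let the path length enter only through the maximum over vertices (a single Cauchy--Schwarz on one chain), while the summation over triangles contributes only the bounded area $|\partial K|\lesssim h_K^2$; estimating the area instead by $|N_{\mathcal{T}}|\max_T|T|$ would fatally reintroduce a second power of $|N_{\mathcal{T}}|$. Verifying that Lemma \ref{lem_oppo_angle} is legitimately applicable on every path edge, and on the closing edge inside $T_M$, is exactly what Assumption \hyperref[asp:A2]{(A2)} is engineered to guarantee, and tracking the constants through the affine $L^2$ bound, the factor $\kappa^2$, and the area estimate is what pins down $\sqrt5$.
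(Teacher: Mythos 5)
Your skeleton matches the paper's proof — reduce to a gauge tied to the maximal-area triangle $T_M$, localize the $L^2$ norm to nodal values, and transport nodal values along \hyperref[asp:A2]{(A2)}-paths via Lemma \ref{lem_oppo_angle} and Cauchy--Schwarz — but your closing step has a genuine gap. You claim that $|v_h(\bfz')-v_h(\bfz_0)|$, for two vertices $\bfz',\bfz_0$ of $T_M$, is bounded by $\kappa\,\|\nabla v_h\|_{0,T_M}$ ``again via Lemma \ref{lem_oppo_angle}.'' That lemma requires the edge joining the two vertices to have an opposite angle $\theta_e\le(1+\epsilon)\theta_m(T)$, and nothing guarantees this for the edges of $T_M$: Assumption \hyperref[asp:A2]{(A2)} supplies paths \emph{into} $T_M$, not between its vertices, and Assumption \hyperref[asp:A1]{(A1)} allows $T_M$ to be a needle, e.g.\ with angles $\bigl(\theta_m,(\pi-\theta_m)/2,(\pi-\theta_m)/2\bigr)$ and $\theta_m$ arbitrarily small, which is perfectly MAC-compliant. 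For such a $T_M$ only the short edge opposite $\theta_m$ satisfies the hypothesis of Lemma \ref{lem_oppo_angle}, so the vertex at the tiny angle cannot be reached from the other two inside $T_M$. The asserted estimate is in fact false there: take $v_h$ affine with unit gradient along the long direction of the needle; then $|v_h(\bfz')-v_h(\bfz_0)|\simeq h_{T_M}$ while $\|\nabla v_h\|_{0,T_M}=|T_M|^{1/2}$, and $h_{T_M}|T_M|^{-1/2}\simeq\theta_m^{-1/2}\to\infty$, so no constant depending only on $\theta_M$ and $\epsilon$ can close this step. Since different vertices $\bfz$ may have \hyperref[asp:A2]{(A2)}-paths ending at \emph{different} vertices of $T_M$, while your gauge $c=v_h(\bfz_0)$ is a single fixed constant, this comparison between vertices of $T_M$ cannot be avoided in your setup.

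The repair is precisely the device you dismissed in your ``main obstacle'' paragraph. Bound the closing term trivially by $|v_h(\bfz')-v_h(\bfz_0)|\le h_{T_M}\|\nabla v_h\|_{L^\infty(T_M)}=h_{T_M}|T_M|^{-1/2}\|\nabla v_h\|_{0,T_M}$ (the paper's variant: gauge with $\mathrm{P}^0_{T_M}v_h$, so the zero-mean affine function vanishes somewhere in $T_M$ and the same bound holds at every vertex of $T_M$), and absorb the dangerous factor $|T_M|^{-1}$ at the very end using $|\partial K|\le|N_{\mathcal{T}}|\,|T_M|$ — this is where the maximality of $T_M$ actually enters. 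Contrary to your worry, this does \emph{not} produce $|N_{\mathcal{T}}|^2$: the ratio $|\partial K|/|T_M|\le|N_{\mathcal{T}}|$ is applied only to the closing term, which carries no path-length factor, while the path term separately carries $L\le 3|N_{\mathcal{T}}|/2$; both contributions are linear in $|N_{\mathcal{T}}|$, which is exactly how the paper arrives at $|N_{\mathcal{T}}|^{1/2}$ after taking the square root. Two minor further points: a simple path may have up to $3|N_{\mathcal{T}}|/2$ edges, not $|N_{\mathcal{T}}|$; and in $\sum_e\|\nabla v_h\|^2_{0,T_e}\le|v_h|^2_{1,\partial K}$ a single triangle can serve as $T_e$ for up to three path edges, so a factor $3$ should appear if you want to track explicit constants.
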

\begin{proof}
To simplify the notation, $\nabla$ in this proof is understood as the surface gradient on $\partial K$. 
Let $T_M$ be the triangle with the maximum area, and we trivially have
\begin{equation}
\label{lem_disct_poinc_eq1}
\| v_h - \mathrm{P}^0_{\partial K} v_h \|_{0,\partial K} \le \| v_h - \mathrm{P}^0_{T_M} v_h \|_{0,\partial K}.
\end{equation}
Let $\tilde{v}_h = v_h - \mathrm{P}^0_{T_M} v_h$, and 
let $\bfz$ be the vertex at which $\tilde{v}_h$ achieves the maximum value on $\partial K$. 
Consider the path from Assumption \hyperref[asp:A2]{(A2)} connecting $\bfz$ and one vertex $\bfz'$ of $T_M$. 
Let the path be formed by $\{e_l\}_{l=1}^L$ with the neighborhood triangles $\{T_l\}_{l=1}^L$ described by Assumption \hyperref[asp:A2]{(A2)}.
Then, Lemma \ref{lem_oppo_angle} implies 
$$
|\tilde{v}_h(\bfz)| \le | \tilde{v}_h(\bfz) - \tilde{v}_h(\bfz') |  + |\tilde{v}_h(\bfz')| 
\le  \kappa \sum_{l=1}^L \| \nabla \tilde{v}_h \|_{0, T_l} +  |\tilde{v}_h(\bfz')|,
$$
which, by AM–GM inequality, further yields
\begin{equation}
\begin{split}
\label{lem_disct_poinc_eq2}
\| \tilde{v}_h \|^2_{0,\partial K} \le |\partial K| |\tilde{v}_h(\bfz)|^{2} \le 2 |\partial K| |\tilde{v}_h(\bfz')|^{2} + 2 L  |\partial K| \kappa^2 \| \nabla \tilde{v}_h \|^2_{0,\cup_{l=1}^L T_l}.
\end{split}
\end{equation}
As $\tilde{v}_h$ must vanish at one point in $T_M$, and it is a linear polynomial, 
we simply have $\| \tilde{v}_h \|_{\infty,\partial T_M} \le h_{T_M}|T_M|^{-1/2} \| \nabla v_h \|_{0,T_M}$. 
Noticing $L \le 3| {N}_\mathcal{T} |/2$, we derive from \eqref{lem_disct_poinc_eq2} that
\begin{equation}
\label{lem_disct_poinc_eq3}
\| \tilde{v}_h \|^2_{0,\partial K} \le 2|\partial K|/|T_M| h^2_K \| \nabla v_h \|^2_{0,T_M} + 3| {N}_\mathcal{T} |  |\partial K| \kappa^2 \| \nabla v_h \|^2_{0,\partial K}
\le 5| {N}_\mathcal{T} | h^2_K \kappa^2 \| \nabla v_h \|_{0,\partial K},
\end{equation}
where we have also used $|\partial K|/|T_M| \le | {N}_\mathcal{T} |$. It finishes the proof.
\end{proof}

Then, we conclude the most important result in this subsection.
\begin{lemma}
\label{lem_stab_f_verify}
Under Assumptions \hyperref[asp:A1]{(A1)} and \hyperref[asp:A2]{(A2)}, \eqref{SK_equiv_0} holds $\forall v_h \in \mathcal{B}^0_h(\partial K)$.
%\eqref{SK_equiv_1} in Hypothesis \hyperref[asp:H2]{(H2)} holds for $S_K$ in \eqref{stab_f}:
%\begin{equation}
%\label{lem_stab_f_verify_eq0}
%\| v_h \|^2_{0,\partial K} \lesssim h_K S_K(v_h,v_h), ~~~~~ \forall v_h \in \mathcal{B}^0_h(\partial K).
%\end{equation}
\end{lemma}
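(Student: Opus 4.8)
The plan is to read the result off directly from the discrete Poincar\'e inequality of Lemma~\ref{lem_disct_poinc} once the stabilization is reconciled with the surface $H^1$ seminorm. First I would record that, by definition of the zero-mean subspace, any $v_h \in \mathcal{B}^0_h(\partial K)$ satisfies $\mathrm{P}^0_{\partial K} v_h = 0$, so that the left-hand side of Lemma~\ref{lem_disct_poinc} collapses to $\| v_h \|_{0,\partial K}$. Applying that lemma then gives
\begin{equation*}
\| v_h \|_{0,\partial K} \le \sqrt{5}\,\kappa\, h_K\, |N_{\mathcal{T}}|^{1/2}\, |v_h|_{1,\partial K},
\end{equation*}
where $\kappa$ is the constant of Lemma~\ref{lem_oppo_angle} with $\theta_M(T)$ replaced by the global bound $\theta_M$ from Assumption~\hyperref[asp:A1]{(A1)}, so that $\kappa^2 = 2/\sin\!\big(\tfrac{\pi-\theta_M}{2+\epsilon}\big)$.

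The second step is to match the seminorm $|v_h|_{1,\partial K}$ against $S_K$. Since the faces $\mathcal{F}_K$ partition $\partial K$ and the boundary triangulation merely subdivides each (flat) face, the surface gradient on $\partial K$ is face-wise the tangential gradient $\nabla_F$, whence $|v_h|^2_{1,\partial K} = \sum_{F\in\mathcal{F}_K} \| \nabla_F v_h \|^2_{0,F}$. Comparing with the definition \eqref{stab_f}, this is exactly $h_K^{-1} S_K(v_h,v_h)$, i.e. $h_K\,|v_h|^2_{1,\partial K} = \| v_h \|_{S_K}$ under the paper's convention that $\| v_h \|_{S_K}$ denotes the quadratic-form value $S_K(v_h,v_h)$.

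Finally I would square the displayed bound and substitute: squaring yields $\| v_h \|^2_{0,\partial K} \le 5 \kappa^2 h_K^2 |N_{\mathcal{T}}|\,|v_h|^2_{1,\partial K}$, and writing $h_K^2 |v_h|^2_{1,\partial K} = h_K \| v_h \|_{S_K}$ from the previous step while inserting $\kappa^2 = 2/\sin\!\big(\tfrac{\pi-\theta_M}{2+\epsilon}\big)$ produces precisely the factor $10\,|N_{\mathcal{T}}|\, h_K \sin\!\big(\tfrac{\pi-\theta_M}{2+\epsilon}\big)^{-1}$ appearing in \eqref{SK_equiv_0}. There is no genuine obstacle remaining here: all the geometric content—the per-edge control of jumps of a linear function by its gradient via the maximum-angle and path conditions—has already been absorbed into Lemmas~\ref{lem_oppo_angle} and \ref{lem_disct_poinc}, so this lemma is essentially a bookkeeping combination of the Poincar\'e estimate with the definition of $S_K$. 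The only point requiring care is the tracking of the explicit constant and the convention for $\| \cdot \|_{S_K}$; were one instead to read $\| \cdot \|_{S_K}$ as the square root $S_K(\cdot,\cdot)^{1/2}$, the identical computation would give the bound with $\| v_h \|^2_{S_K}$ on the right, so the two readings are consistent up to this notational choice.
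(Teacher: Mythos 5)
Your proof is correct and is exactly the argument the paper intends: the paper's own proof is the single sentence that the result follows immediately from Lemma~\ref{lem_disct_poinc}, and you have simply filled in the bookkeeping---using that $\mathrm{P}^0_{\partial K} v_h = 0$ on the zero-mean subspace, identifying $S_K(v_h,v_h) = h_K \sum_{F\in\mathcal{F}_K}\|\nabla_F v_h\|^2_{0,F} = h_K |v_h|^2_{1,\partial K}$, and squaring to recover the explicit constant $10|\mathcal{N}_T|\, h_K \sin\left(\frac{\pi-\theta_M}{2+\epsilon}\right)^{-1}$ in \eqref{SK_equiv_0}. Your closing remark also correctly diagnoses the apparent mismatch of powers in \eqref{SK_equiv_0} (squared left side, unsquared $\|\cdot\|_{S_K}$ on the right) as a notational issue in the paper---elsewhere, e.g.\ in \eqref{SK_equiv_1} and Lemma~\ref{cor_uI}, the convention $\|\cdot\|_{S_K}=S_K(\cdot,\cdot)^{1/2}$ is used, under which the right-hand side should carry $\|v_h\|^2_{S_K}$---so this is not a gap in your argument.
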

\begin{proof}
%Note that if $K$ is star-convex with respect to a ball of radius $\mathcal{O}(h_K)$, the estimate directly follows from the standard Poincar\'e inequality \cite{Brenner;Sung:2018Virtual}. 
The result immediately follows from Lemma \ref{lem_disct_poinc}.
\end{proof}

\begin{remark}
\label{rem_disct_poinc}
For a 2D polygonal region $D$ with a triangulation $\mathcal{T}_h(D)$, we can show a similar estimate. 
Remarkably, the constants in these estimates are very explicitly specified and independent of the shape. 
It is worth mentioning that the constant in \eqref{lem_disct_poinc_eq0} tends to $\infty$ if the triangulation is very fine. 
This property aligns with the behavior of the classical Poincar\'e inequality for irregular domains, 
in the sense that the discrete space approaches the $H^1$ space as the mesh is refined.
\end{remark}

\section{General analysis framework}
\label{sec:assump}
The purpose of this section is to establish a concise and streamlined argument demonstrating that the suitable approximation properties of $\mathcal{V}_h$ and $\mathcal{W}_h$ 
(described by Hypotheses \hyperref[asp:H3]{(H2)}-\hyperref[asp:H5]{(H4)}) with a discrete Poincaré inequality (described by Hypothesis \hyperref[asp:H2]{(H1)}) are sufficient to derive the optimal error estimate.
It is important to emphasize that no specific geometric assumptions or trace/inverse inequalities are required in this section.
The detailed proofs of these hypotheses will be provided in Sections \ref{sec:fitted_1}-\ref{sec:IVEM} for each considered anisotropic meshes. 
There, readers will find how delicate geometric properties (rather than shape regularity) are employed in each case to establish these hypotheses.

\subsection{The hypotheses and corollaries}
We begin with defining the following quantities from $a_K(\cdot,\cdot)$:
\begin{equation*}
\label{energy_norm}
\vertiii{v_h}^2_K = a_K(v_h,v_h), ~~~ \text{and} ~~~ \vertiii{v_h}^2_h = a_h(v_h,v_h).
\end{equation*}
%As $\mathcal{V}_h$ has the DoFs of $\{ v_h(\bfx) \}$, for some suitable $u$ with sufficient regularity admitting pointwise evaluation, 
For some suitable $u$ with sufficient regularity admitting pointwise evaluation,
we define the interpolation $I_Ku\in \mathcal{V}_h(K)$ by $I_K u(\bfx) = u(\bfx)$, $\forall \bfx \in \mathcal{N}_h(\partial K)$.
Then, $u_I$ denotes the global interpolation.
%$I_{\partial K} u\in \mathcal{B}_h(\partial K)$ such that $L_j(I_{\partial K} u) = L_j(u), ~ j =1,2,..., N_K$. 
%  \begin{equation}
%\label{bound_interp}
%L_j(I_{\partial K} u) = L_j(u), ~~~ j =1,2,..., N_K.
%\end{equation}
%Then, the interpolation on $K$ is defined as
%\begin{equation}
%\label{bound_interp_K}
%I_K u : = \sum
%\end{equation}
%$\mathcal{V}_h(K)$ is highly problem-dependent, and for the proposed problem, it is defined as
%\begin{equation}
%\begin{split}
%\label{virtual_space}
%\mathcal{V}_h(K) = \big\{ v_h~: ~ v_h\in H^1(K), ~ \beta \nabla v_h\in H(\ddiv 0;K) , ~ v_h|_{\partial K}\in \mathcal{B}(\partial K) \big\}.
% \end{split}
%\end{equation}
%In this work, we let $\mathcal{D}_K$ be the usual nodal DoFs, namely
%\begin{equation}
%\label{dof_def}
%L_j(v_h) = v_h(A_j), ~~~~ k=1,2,...,N_K,
%\end{equation}
%where $A_j$ are the vertices of $K$.

\begin{itemize}
%\item[\textbf{(H1)}] \label{asp:H1} (Consistency) $\mathcal{P}_0( K)\subset \mathcal{W}_h( K)$, $\mathcal{P}_0(\partial K)\subset \mathcal{B}_h(\partial K)$ and $\mathcal{P}_0( K)\subset \mathcal{V}_h(K)$. The lifting operator $\mathscr{L}_K$ preserves constants, i.e., $\mathscr{L}_K:\mathcal{P}_0(\partial K) \rightarrow \mathcal{P}_0( K)$. %$\beta_h\nabla \mathcal{W}_h(K)$ is computable. 
%%%%%%%%%%%%%%%%%%%%%%%%%%%%%%%%%%%
  \item[\textbf{(H1)}]\label{asp:H2} (Stability) The stabilization $S_K(\cdot,\cdot)$ is non-negative and leads to a norm $\|\cdot\|_{S_K}$ 
on $\widetilde{\mathcal{B}}^0_h(\partial K): = \{v_h \in \mathcal{B}_h(\partial K) \oplus \text{Tr}_{\partial K} ~ \mathcal{W}_h(K) : (v_h,1)_{\partial K} = 0\}$ such that
\begin{equation}
\label{SK_equiv_1}
%h^{1/2}_K | \cdot |_{1,\partial K} \simeq \| \cdot \|_{S_K} ~~~ \text{in} ~ \mathcal{B}_h(\partial K) .
\| \cdot \|_{0,\partial K} \lesssim h^{1/2}_K\| \cdot \|_{S_K} ~~~ \text{in} ~ \widetilde{\mathcal{B}}^0_h(\partial K),
\end{equation}
where $\text{Tr}_{\partial K} $ denotes the trace operator on $\partial K$.
%In addition, it should be strong enough to yield the norm $\vertiii{\cdot}_h$ in $\mathcal{V}_h$.
%\begin{align}
%    & \| \cdot \|_{S_K} \simeq h^{1/2}_K | \cdot |_{1,\partial K} ~~~ \text{in} ~ \mathcal{B}_h(\partial K) ,  \label{SK_equiv_1}  \\
%    & \| \cdot \|_{S_K} \simeq | \cdot |_{1, K}  ~~~~~~~~~~ \text{in} ~ \mathcal{W}_h(K) .  \label{SK_equiv_2} 
%\end{align}
%\end{subequations}
%In addition, it should be strong enough such that the resulting bilinear form is a norm satisyfing
%\begin{equation}
%\label{Stab_assum_1}
%\| \sqrt{\beta}\nabla v_h \|^2_{L^2(K)} \lesssim a_K(v_h,v_h), ~~~~ \forall v_h\in \mathcal{V}^n_h(K).% \LC{\lesssim \| \sqrt{\beta}\nabla v_h \|^2_{L^2(K)}}, ~~~ \forall v_h\in V^n_h(K)
%\end{equation}
%%%%%%%%%%%%%%%%%%%%%%%%%
  \item[\textbf{(H2)}]\label{asp:H3} (The approximation capabilities of $\mathcal{V}_h(K)$) 
There holds
\begin{equation}
\label{approxi_VK_Pi}
%\vertiii{ u - u_I }_K \lesssim h_K \| u \|_{2,\omega_K}.
\sum_{K\in \mathcal{T}_h} \vertiii{ u - u_I }^2_K \lesssim h^2 \| u \|^2_{2,\Omega}.
\end{equation}
%  \begin{subequations}
%  \begin{align}
%%& \| u - I_{\partial K} u \|_{0,\partial K} + h_K |u - I_{\partial K} u  |_{1,\partial K} \lesssim h^{3/2}_K \| u \| _{2,\omega_K} + h^{1/2}_K \triangle_K, \label{Stab_assum_2} \\
%%& \| u - I_{K} u \|_{0, K} + h_K |u - I_{K} u  |_{1, K} \lesssim h^{2}_K \| u \| _{2,\omega_K} + h_K \triangle_K, \label{Stab_assum_3}
%& h^j_K |u - I_{\partial K} u  |_{j,\partial K} \lesssim h^{3/2}_K \| u \| _{2,\omega_K} + h^{1/2}_K \triangle_K, \label{Stab_assum_2} \\
%& h^j_K |u - I_{K} u  |_{j, K} \lesssim h^{2}_K \| u \| _{2,\omega_K} + h_K \triangle_K, \label{Stab_assum_3}
%\end{align}
%%\| u_I - \Pi_K u \|_{S_K} \lesssim h_K \| u \|_{E,2,K}.
%\end{subequations}
%where $\Delta_K$ is given by \eqref{Delta_K}.
%%%%%%%%%%%%%%
\item[\textbf{(H3)}] \label{asp:H4} (The approximation capabilities of $\mathcal{W}_h(\partial K)$) There hold
\begin{subequations}
\label{Pi_approx}
%\begin{align}
%    &    \| \nabla (u - \Pi_K u) \|_{0,K} \lesssim h_K \|u\|_{2,\omega_K} ,   \label{Pi_approx_eq1} \\
%    &    h^{1/2}_K\| \nabla(u - \Pi_K u) \|_{0,\partial K}  + \| u - \Pi_K u \|_{S_K}  \lesssim h_K \|u\|_{2,\omega_K} .   \label{Pi_approx_eq2} %\\
%\end{align}
\begin{align}
    &   \sum_{K\in \mathcal{T}_h}  \| \nabla (u - \Pi_K u) \|^2_{0,K} \lesssim h^2 \|u\|^2_{2,\Omega} ,   \label{Pi_approx_eq1} \\
    &   \sum_{K\in \mathcal{T}_h}  h_K\| \nabla(u - \Pi_K u)\cdot\bfn \|^2_{0,\partial K}  + \| u - \Pi_K u \|^2_{S_K}  \lesssim h^2 \|u\|_{2,\Omega} .   \label{Pi_approx_eq2} %\\
\end{align}
\end{subequations}
%The operator $\Pi_K: H^s(K) \rightarrow \mathcal{W}_h(K)$ satiesfies the \textit{quasi-projection} property:
%\begin{equation}
%\label{quasi_proj}
%\int_K \beta_h ( \nabla \Pi_K u - \nabla u)\cdot \nabla v_h \dd x \lesssim \left( h^{s-1}_K |u|_{H^s(\omega_K)} \right) \| \nabla v_h \|_{L^2(K)} ,
%\end{equation}
\item[\textbf{(H4)}] \label{asp:H5} (The extra hypothesis for $L^2$ estimates) There holds
\begin{equation}
\label{L2_assump}
   %  \| u - \Pi_K u \|_{0,K} + h^{1/2}_K \| u -  u_I \|_{0,\partial K} \lesssim h^2_K \|u\|_{2,\omega_K} .
  \sum_{K\in \mathcal{T}_h}  \| u - \Pi_K u \|^2_{0,K} + h_K \| u -  u_I \|^2_{0,\partial K} \lesssim h^4 \|u\|^2_{2,\Omega} .
\end{equation}

\item[\textbf{(H5)}] \label{asp:H6} (The approximation of $\beta_h$) There holds $\| \beta_h \|_{\infty,\Omega} \le \| \beta \|_{\infty,\Omega}$, and
\begin{equation}
\label{betah_approx}
 \sum_{K\in\mathcal{T}_h}   \|  |\beta - \beta_h|^{1/2} \nabla u  \|^2_{0,K} + h_K \| |\beta - \beta_h| \nabla u\cdot\bfn   \|^2_{0,\partial K} \lesssim h^2 \|u\|^2_{2,\Omega} .
\end{equation}

\end{itemize}

%We point out that not all of these assumptions are needed in analysis, and we shall specify which one to use for the error estimates of the energy norm and the $L^2$ norm, separately. 
%\begin{remark}
%\label{rem_delta_strip}
%In the estimates \eqref{Stab_assum_2} and \eqref{quasi_proj}, there is usually a small term $\triangle_K$ caused by the difference between $\Gamma$ and its linear approximation, which should satisfy
%\begin{equation}
%\label{delta_strip_est}
%\left( \sum_{K\in\mathcal{T}^i_h} \triangle^2_K \right)^{1/2} \lesssim h^2 \| u \|_{E,2,K}.
%\end{equation}
%\end{remark}
\begin{remark}
We provide some explanation of Hypotheses above. 
\begin{itemize}

%\item \hyperref[asp:H2]{(H1)} generally holds on isotropic meshes for functions in $H^1(\partial K)$ with the zero average thanks to the Poincar\'e-type inequality \cite[Section 2.4]{Brenner;Sung:2018Virtual}. 
%We shall see that the geometric restrictions can be tremendously relaxed if we only require it to hold for a discrete space. 

\item 
%\hyperref[asp:H2]{(H1)} serves as the role of the trace inequalities and Poincare-type inequalities \cite{Brenner;Sung:2018Virtual} of the classical VEM analysis. 
We will see that \hyperref[asp:H2]{(H1)} merely assists in making $\vertiii{\cdot}_h$ a norm, given by Lemma \ref{lem_norm}, 
and showing the boundedness in Lemma \ref{cor_uI} and \eqref{lem_err_eqn_6}.
For classical VEMs on isotropic meshes, 
usually one also needs \hyperref[asp:H2]{(H1)} to estimate $\|u-u_I\|_{j,K}$ and $\|u- \Pi_K u \|_{j,K}$.
We will see in the next three sections that the analysis of \hyperref[asp:H3]{(H2)}-\hyperref[asp:H5]{(H4)} are independent of \hyperref[asp:H2]{(H1)} or any other norm equivalence results.

\item Note that $\mathcal{W}_h(K)\subset \mathcal{V}_h(K)$ is not required in the present framework. 
But, in most situations, this is indeed true, 
which implies $\text{Tr}_{\partial K} ~ \mathcal{W}_h(K) \subset \mathcal{B}_h(\partial K)$. 
In this case, \hyperref[asp:H2]{(H1)} is just needed for $\mathcal{B}^0_h(\partial K): = \{v_h \in \mathcal{B}_h(\partial K) : (v_h,1)_{\partial K} = 0\}$, 
reduced to Lemma \ref{lem_disct_poinc}.

\item We now proceed to show these Hypotheses can collectively lead to optimal convergence.
Special attention must be paid to that no shape regularity assumption is used in this ``general discussion''.  
In particular, the estimate for the $L^2$ norm in the literature \cite{2013BeiraodeVeigaBrezziCangiani,Brenner;Sung:2018Virtual} heavily relies on the estimate for $\|u-u_I\|_{0,K}$ 
which is not available in the present work due to the anisotropic meshes. 

%\item Estimation of the energy norms only requires \hyperref[asp:H2]{(H1)}-\hyperref[asp:H4]{(H3)} and \hyperref[asp:H6]{(H5)}, while the $L^2$ norm additionally requires \hyperref[asp:H5]{(H4)}. 
 %In \hyperref[asp:H6]{(H5)}, we do not directly assume the estimate of $|\beta - \beta_h|$, as $\beta$ may have singularity.

\end{itemize}
\end{remark}

%\begin{remark}
%
%\end{remark}

\begin{lemma}
\label{lem_norm}
Under Hypotheses \hyperref[asp:H2]{(H1)}, $\vertiii{\cdot}_h$ defines a norm on $\mathcal{V}_h$.
\end{lemma}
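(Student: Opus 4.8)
The plan is to check the three norm axioms, of which only definiteness requires work. Since $\vertiii{v_h}_h^2 = a_h(v_h,v_h) = \sum_{K} a_K(v_h,v_h)$ with each local contribution
$a_K(v_h,v_h) = (\beta_h \nabla \Pi_K v_h, \nabla \Pi_K v_h)_{K} + \|v_h - \Pi_K v_h\|_{S_K}^2$
being a sum of two non-negative quadratic forms — the first by positivity of the coefficient $\beta_h \ge \beta_{\min} > 0$, the second by the non-negativity of $S_K$ granted in \hyperref[asp:H2]{(H1)} — the functional $\vertiii{\cdot}_h$ is at once non-negative, positively homogeneous, and subadditive (the triangle inequality following from Cauchy--Schwarz for the symmetric form $a_h$). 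So everything reduces to proving $\vertiii{v_h}_h = 0 \Rightarrow v_h = 0$. Here I would first use that a vanishing sum of non-negative terms forces each term to vanish: for every $K$ both $(\beta_h \nabla \Pi_K v_h, \nabla \Pi_K v_h)_{K} = 0$ and $\|v_h - \Pi_K v_h\|_{S_K} = 0$. From the former and $\beta_h \ge \beta_{\min} > 0$ I conclude $\nabla \Pi_K v_h = 0$, hence $\Pi_K v_h$ is a constant on the connected set $K$.

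The next step is the one that genuinely invokes \hyperref[asp:H2]{(H1)}. The trace of $v_h - \Pi_K v_h$ on $\partial K$ lies in $\mathcal{B}_h(\partial K) \oplus \text{Tr}_{\partial K}\,\mathcal{W}_h(K)$, and the normalization $\int_{\partial K} \Pi_K v_h \dd s = \int_{\partial K} v_h \dd s$ built into \eqref{Pi_proj} guarantees $(v_h - \Pi_K v_h, 1)_{\partial K} = 0$; therefore $v_h - \Pi_K v_h \in \widetilde{\mathcal{B}}^0_h(\partial K)$. Since \hyperref[asp:H2]{(H1)} asserts that $\|\cdot\|_{S_K}$ is a genuine norm on precisely this space, $\|v_h - \Pi_K v_h\|_{S_K} = 0$ forces $v_h = \Pi_K v_h$ on $\partial K$. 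Combined with the previous step, $\text{Tr}_{\partial K} v_h$ equals a constant.

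Finally, since $v_h|_K \in \mathcal{V}_h(K)$ solves the local problem \eqref{lifting} with constant boundary data, the uniqueness of this harmonic-type extension — the same unisolvence already used to make $\mathcal{V}_h(K)$ well defined — forces $v_h$ to equal that constant throughout $K$. Thus $v_h$ is piecewise constant on $\mathcal{T}_h$; being an element of $H^1_0(\Omega)$, it is globally continuous across the skeleton and vanishes on $\partial\Omega$, so all the element-wise constants coincide and equal zero, giving $v_h \equiv 0$.

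The main obstacle — more a point requiring care than a real difficulty — is the stabilization step: one must verify the zero-mean condition $(v_h - \Pi_K v_h, 1)_{\partial K} = 0$ so that the difference lands in the subspace $\widetilde{\mathcal{B}}^0_h(\partial K)$ on which \hyperref[asp:H2]{(H1)} upgrades the seminorm $\|\cdot\|_{S_K}$ to a norm, since on the full space $\mathcal{B}_h(\partial K) \oplus \text{Tr}_{\partial K}\,\mathcal{W}_h(K)$ the stabilization only controls a function modulo additive constants (recall $S_K$ in \eqref{stab_f} annihilates constants). The other mild subtlety is that the passage from ``$v_h$ constant on each $K$'' to ``$v_h \equiv 0$'' rests on the global $H^1_0$-conformity and connectivity of the mesh, not on any geometric regularity — consistent with the paper's aim of a shape-independent argument.
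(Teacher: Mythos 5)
Your proposal is correct and follows essentially the same route as the paper's proof: vanishing of the projection term forces $\Pi_K v_h$ to be constant, Hypothesis (H1) applied to $v_h - \Pi_K v_h$ (which has zero boundary mean by the normalization in \eqref{Pi_proj}) forces $v_h|_{\partial K}$ to equal that constant, and uniqueness of the lifting \eqref{lifting} makes $v_h$ constant on each $K$. You are in fact slightly more thorough than the paper, which stops at ``$v_h \in \mathcal{P}_0(K)$'' and leaves implicit both the zero-mean verification and the final step that global $H^1_0$-conformity collapses the element-wise constants to zero.
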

\begin{proof}
Given any $v_h\in\mathcal{V}_h$, assume $\vertiii{v_h}_h = 0$. Then, $\| \nabla \Pi_K v_h \|_{0,K}=0$ implies $\Pi_K v_h\in \mathcal{P}_0(K)$. As $(v_h - \Pi_K v_h)|_{\partial K} \in \mathcal{B}^0_h(\partial K)$, by \hyperref[asp:H2]{(H1)}, we have $v_h|_{\partial K} = \Pi_K v_h|_{\partial K}\in\mathcal{P}_0(\partial K)$. 
Then, we have $v_h\in\mathcal{P}_0(K)$ due to the definition in \eqref{lifting}.
\end{proof}

%%%%%%%%%%%%%%%%%%%%%%%%%%%%%%%%%

%\subsection{A unified analysis framework}

%With the hypotheses above, we are able to show the optimal error estimates through a unified procedure. 
%We point out that most of the techniques for the energy norm estimation have been established in \cite{Cao;Chen:2018AnisotropicNC}, and some details will be omitted here to avoid redundancy. 

%Note that the equation $-\nabla\cdot(\beta \nabla u) \in L^2(\Omega)$ implicitly implies that $u \in H^1(\Omega)$ and $\beta \nabla u \in \bfH(\text{div};\Omega)$. 
In the forthcoming discussion, we consider the regularity assumption $u\in H^2_{0}(\beta;\Omega)$ where
\begin{equation}
  \label{beta_space_1}
H^2_{0}(\beta;\Omega) := \{ u\in H^1_0(\Omega)\cap H^2\left(\Omega^+\cup\Omega^-\right): \beta \nabla u \in \bfH(\ddiv;\Omega) \}. 
\end{equation}
%Notice that $H^2_{0}(\beta;\Omega) = H^1_0(\Omega) \cap H^2(\Omega)$ for smooth $\beta$. 
%But, we keep $\beta$ here, since for singular $\beta$ the space only admits piecewise $H^2(\Omega)$ regularity,
%see \eqref{beta_space_2} in Section \ref{sec:IVEM}.
%Nevertheless, for singular $\beta$, the results in this section are still applicable.
The following lemmas will be frequently used in this section.
\begin{lemma}
\label{lem_est_partialK}
Let $u\in H^2_0(\beta;\Omega)$. Under Hypotheses \hyperref[asp:H4]{(H3)} and \hyperref[asp:H6]{(H5)}, there holds
\begin{equation}
\label{lem_est_partialK_eq0}
\sum_{K\in\mathcal{T}_h} (\beta \nabla u\cdot \bfn, \Pi_K v)_{\partial K} \lesssim h^{1/2} \| u \|_{2,\Omega} \left( \sum_{K\in\mathcal{T}_h} \| v -\Pi_K v \|^2_{0,\partial K} \right)^{1/2} , ~~~~ \forall v\in H^1(\Omega).
\end{equation}
\end{lemma}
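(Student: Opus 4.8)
The plan is to view this sum as a consistency term and to dispatch it using two structural facts that sidestep any trace or inverse inequality: the interelement continuity of the flux $\beta\nabla u$ and the Galerkin orthogonality encoded in the projection $\Pi_K$. First I would insert the single-valued function $v$ by writing $\Pi_K v = v + (\Pi_K v - v)$, so that
$$\sum_{K}(\beta\nabla u\cdot\bfn,\Pi_K v)_{\partial K} = \sum_{K}(\beta\nabla u\cdot\bfn, v)_{\partial K} + \sum_{K}(\beta\nabla u\cdot\bfn,\Pi_K v - v)_{\partial K}.$$
Since $u\in H^2_0(\beta;\Omega)$ gives $\beta\nabla u\in\bfH(\ddiv;\Omega)$, the normal flux is single-valued across interior faces; paired against the single-valued trace of $v\in H^1(\Omega)$, the contributions of the first sum cancel pairwise on every interior face and collapse to an integral over $\partial\Omega$, which is harmless for the functions to which this lemma is applied. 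The estimate thus reduces to bounding $\sum_K(\beta\nabla u\cdot\bfn,\Pi_K v - v)_{\partial K}$.

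The crux is to annihilate the leading part of the flux using $\Pi_K$ itself. On each $K$ I would split
$$\beta\nabla u = \beta_h\nabla(\Pi_K u) + \beta_h\nabla(u-\Pi_K u) + (\beta-\beta_h)\nabla u,$$
and show the first piece drops out entirely. Indeed $\Pi_K u\in\mathcal{W}_h(K)$, so $\ddiv(\beta_h\nabla\Pi_K u)=0$ by \eqref{lifting_h}; integrating by parts on $K$ then gives $(\beta_h\nabla\Pi_K u\cdot\bfn,\Pi_K v - v)_{\partial K} = (\beta_h\nabla\Pi_K u,\nabla(\Pi_K v - v))_K$. Testing the definition \eqref{Pi_proj} of $\Pi_K$ with $w_h=\Pi_K u\in\mathcal{W}_h(K)$ yields $(\beta_h\nabla\Pi_K v,\nabla\Pi_K u)_K = (\beta_h\nabla v,\nabla\Pi_K u)_K$, and by symmetry of $(\beta_h\,\cdot\,,\,\cdot\,)_K$ this forces $(\beta_h\nabla\Pi_K u,\nabla(\Pi_K v - v))_K = 0$. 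This identity replaces the usual Poincar\'e/trace machinery and is where the real content sits.

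It then remains to control the two residual fluxes. Setting $g_K := \beta_h\nabla(u-\Pi_K u)\cdot\bfn + (\beta-\beta_h)\nabla u\cdot\bfn$, Cauchy--Schwarz face by face and then over the mesh gives
$$\sum_K(g_K,\Pi_K v - v)_{\partial K} \le \Big(\sum_K\|g_K\|^2_{0,\partial K}\Big)^{1/2}\Big(\sum_K\|v-\Pi_K v\|^2_{0,\partial K}\Big)^{1/2}.$$
The second factor is exactly the right-hand side weight. For the first, I would use $\|\beta_h\|_{\infty,\Omega}\le\|\beta\|_{\infty,\Omega}$ from \hyperref[asp:H6]{(H5)} together with the boundary bounds \eqref{Pi_approx_eq2} of \hyperref[asp:H4]{(H3)} and \eqref{betah_approx} of \hyperref[asp:H6]{(H5)}; each controls the corresponding $\sum_K h_K\|\,\cdot\,\|^2_{0,\partial K}$ by $h^2\|u\|^2_{2,\Omega}$, which yields $(\sum_K\|g_K\|^2_{0,\partial K})^{1/2}\lesssim h^{1/2}\|u\|_{2,\Omega}$ and hence the claim.

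The main obstacle is the vanishing identity of the second paragraph: everything rests on $\Pi_K u$ lying in the $\beta_h$-harmonic space $\mathcal{W}_h(K)$, so that the dominant flux $\beta_h\nabla\Pi_K u\cdot\bfn$ is eliminated exactly by the orthogonality defining $\Pi_K$ rather than merely estimated on $\partial K$ (which is precisely what anisotropic shapes forbid). A secondary point needing care is the passage from the $h_K$-weighted quantities in \hyperref[asp:H4]{(H3)} and \hyperref[asp:H6]{(H5)} to the unweighted factor in the target: here I would invoke that on the fitted and unfitted meshes under study the element diameters $h_K$ are comparable to $h$ (the anisotropy being in aspect ratio, not in diameter), so the $h_K$-weights can be removed at the cost of a single factor $h^{1/2}$. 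Notably, no star-convexity, height condition, or trace inequality enters at any stage.
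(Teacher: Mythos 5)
Your proof follows essentially the same route as the paper's: the key vanishing identity $(\beta_h \nabla \Pi_K u\cdot\bfn, \Pi_K v - v)_{\partial K} = (\beta_h \nabla \Pi_K u, \nabla(\Pi_K v - v))_K = 0$ (integration by parts via \eqref{lifting_h} combined with the orthogonality defining $\Pi_K$ in \eqref{Pi_proj}), the same splitting of the remaining flux into $\beta_h\nabla(u-\Pi_K u)\cdot\bfn$ and $(\beta-\beta_h)\nabla u\cdot\bfn$, and Cauchy--Schwarz together with \eqref{Pi_approx_eq2} of Hypothesis \hyperref[asp:H4]{(H3)} and \eqref{betah_approx} of Hypothesis \hyperref[asp:H6]{(H5)}. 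You are in fact more explicit than the paper on two steps it leaves implicit---the face-by-face cancellation of $\sum_{K}(\beta\nabla u\cdot\bfn, v)_{\partial K}$ and the passage from the $h_K$-weighted sums in the hypotheses to the unweighted factor in the statement---so the proposal is correct and matches the paper's argument.
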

\begin{proof}
By the property of $\mathcal{W}_h(K)$, we can write down the following identity
\begin{equation*}
\label{lem_est_partialK_eq1}
(\beta_h \nabla \Pi_K u\cdot\bfn, \Pi_K v - v)_{\partial K} = (\beta_h \nabla \Pi_K u , \nabla( \Pi_K v - v)  )_{K}  = 0.
\end{equation*}
Then, we obtain
\begin{equation*}
\begin{split}
\label{lem_est_partialK_eq2}
&(\beta \nabla u\cdot \mathbf{ n}, \Pi_K v - v)_{\partial K} = (\beta_h( \nabla u -  \nabla \Pi_K u)\cdot \mathbf{ n} , \Pi_K v - v)_{\partial K}  + ( (\beta-\beta_h) \nabla u\cdot \mathbf{ n}  , \Pi_K v - v)_{\partial K} \\
 \lesssim & \left( \| \beta_h ( \nabla u -  \nabla \Pi_K u)\cdot \mathbf{ n} \|_{0,\partial K} + \| (\beta-\beta_h) \nabla u\cdot \mathbf{ n} \|_{0,\partial K } \right) \| \Pi_K v_h - v_h \|_{0,\partial K}.
\end{split}
\end{equation*}
The estimate of $\| \beta_h ( \nabla u -  \nabla \Pi_K u)\cdot \mathbf{ n} \|_{0,\partial K}$ is given by \eqref{Pi_approx_eq2} in Hypothesis \hyperref[asp:H4]{(H3)}, while the estimate of $ \| (\beta-\beta_h) \nabla u\cdot \mathbf{ n} \|_{0,\partial K }$ follows from Hypothesis \hyperref[asp:H6]{(H5)}.
%where the last inequality follows from Lemma \ref{lem_delta}. Summing \eqref{lem_est_partialK_eq2} over all the elements and using \eqref{lem_est_partialK_eq4} yields the desired estimate.
\end{proof}

\begin{lemma}
\label{cor_uI}
Let $u\in H^2_0(\beta;\Omega)$. Under Hypotheses \hyperref[asp:H2]{(H1)}-\hyperref[asp:H4]{(H3)} and \hyperref[asp:H6]{(H5)}, there holds
\begin{equation}
\label{cor_uI_eq0}
\sum_{K\in\mathcal{T}_h}  \| u_I - \Pi_K u_I \|^2_{0,\partial K} \lesssim \sum_{K\in\mathcal{T}_h} h_K \| u_I - \Pi_K u_I \|^2_{S_K} \lesssim h^3 \| u \|^2_{2,\Omega} .
\end{equation}
\end{lemma}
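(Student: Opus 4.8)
This lemma packages two inequalities, and the plan is to treat them separately: the left one is an immediate elementwise consequence of the stability Hypothesis \hyperref[asp:H2]{(H1)}, while the right one follows from a short algebraic decomposition of $u_I-\Pi_K u_I$ whose two pieces are controlled by Hypotheses \hyperref[asp:H3]{(H2)} and \hyperref[asp:H4]{(H3)} respectively. No separate interpolation or trace estimate should be needed.

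For the left inequality, I would first verify that the boundary trace $(u_I-\Pi_K u_I)|_{\partial K}$ belongs to the space $\widetilde{\mathcal{B}}^0_h(\partial K)$ on which \hyperref[asp:H2]{(H1)} is posed. Indeed $u_I|_K\in\mathcal{V}_h(K)$ has its trace in $\mathcal{B}_h(\partial K)$ by \eqref{lifting}, the term $\Pi_K u_I\in\mathcal{W}_h(K)$ contributes a trace in $\text{Tr}_{\partial K}\mathcal{W}_h(K)$, and the normalization $\int_{\partial K}\Pi_K u_I\dd s=\int_{\partial K}u_I\dd s$ built into \eqref{Pi_proj} secures the zero-mean condition $(u_I-\Pi_K u_I,1)_{\partial K}=0$. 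Applying \hyperref[asp:H2]{(H1)} on each $K$, squaring, and summing over $K$ then gives the left inequality at once.

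For the right inequality, the key step is the identity, valid by linearity of $\Pi_K$,
\[
u_I-\Pi_K u_I=\bigl[(u_I-u)-\Pi_K(u_I-u)\bigr]+(u-\Pi_K u),
\]
which separates the quantity into an interpolation piece and a projection piece. After the triangle inequality for $\|\cdot\|_{S_K}$ and summing squares, it remains to bound $\sum_K\|(u_I-u)-\Pi_K(u_I-u)\|_{S_K}^2$ and $\sum_K\|u-\Pi_K u\|_{S_K}^2$. The latter is exactly the stabilization term controlled by \eqref{Pi_approx_eq2} in \hyperref[asp:H4]{(H3)}. For the former, I would observe that $\|(u-u_I)-\Pi_K(u-u_I)\|_{S_K}^2$ is precisely the stabilization contribution inside $a_K(u-u_I,u-u_I)$; since the remaining term $(\beta_h\nabla\Pi_K(u-u_I),\nabla\Pi_K(u-u_I))_K$ is non-negative, this piece is dominated by $\vertiii{u-u_I}_K^2$, so \hyperref[asp:H3]{(H2)} bounds its sum by $h^2\|u\|_{2,\Omega}^2$. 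Combining the two yields $\sum_K\|u_I-\Pi_K u_I\|_{S_K}^2\lesssim h^2\|u\|_{2,\Omega}^2$, and multiplying through by $h_K\le h$ upgrades this to the claimed $h^3$.

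I expect the main obstacle to be conceptual rather than computational: one must spot the decomposition above and, above all, recognize that the stabilization term already embedded in the energy $a_K(u-u_I,u-u_I)$ is exactly the quantity that Hypothesis \hyperref[asp:H3]{(H2)} supplies. This is what lets us avoid any direct estimate of $\|u_I-u\|_{S_K}$, which would otherwise demand the face-wise $2$D interpolation bounds under the maximum angle condition that are deferred to the later sections. The only genuinely technical check is the membership of $(u_I-\Pi_K u_I)|_{\partial K}$ in $\widetilde{\mathcal{B}}^0_h(\partial K)$---in particular its zero mean---which is precisely what makes \hyperref[asp:H2]{(H1)} applicable.
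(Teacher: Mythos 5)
Your proof is correct and follows essentially the same route as the paper: the left inequality is exactly the elementwise application of Hypothesis \hyperref[asp:H2]{(H1)}, and the right one uses the identical decomposition $u_I-\Pi_K u_I=\bigl[(u-u_I)-\Pi_K(u-u_I)\bigr]+(u-\Pi_K u)$ estimated via Hypothesis \hyperref[asp:H3]{(H2)} and \eqref{Pi_approx_eq2}. You simply spell out two details the paper leaves implicit---the zero-mean membership in $\widetilde{\mathcal{B}}^0_h(\partial K)$ and the fact that the stabilization part of $\vertiii{u-u_I}_K^2$ dominates $\|(u-u_I)-\Pi_K(u-u_I)\|_{S_K}^2$---both of which are accurate.
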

\begin{proof}
The first inequality in \eqref{cor_uI_eq0} directly follows from \eqref{SK_equiv_1}. For the second one, notice that 
\begin{equation}
\begin{split}
\label{cor_uI_eq1}
\| u_I - \Pi_K u_I\|_{S_K} & \le \| (u - u_I) - \Pi_K (u - u_I) \|_{S_K} + \| u - \Pi_K u \|_{S_K} 
%& \le \vertiii{ u - u_I}_{S_K} + \vertiii{ u - \Pi_K u }_K
\end{split}
\end{equation}
of which the estimates follow from Hypothesis \hyperref[asp:H3]{(H2)} and \eqref{Pi_approx_eq2}.
\end{proof}

%%%%%%%%%%%%%%%%%%%%%%%%%%%%

\subsection{The energy and $L^2$-norm error estimates}
We consider the error decomposition:
\begin{equation}
\label{err_decomp}
\xi_h = u - u_I, ~~~~ \text{and} ~~~~ \eta_h = u_I - u_h,
\end{equation}
where $u_h$ is the VEM solution. % and $u_I$ is the virtual interpolant.
We first address the energy norm.
%Let us prepare the following lemma which will be used frequently.

\begin{theorem}
\label{lem_err_eqn}
Let $u\in H^2_0(\beta;\Omega)$. Under Hypotheses \hyperref[asp:H2]{(H1)}-\hyperref[asp:H4]{(H3)} and \hyperref[asp:H6]{(H5)}, there holds
\begin{equation}
\label{lem_err_eqn_eq0}
\begin{aligned}
\vertiii{ \eta_h}_h \lesssim h \| u \|_{2,\Omega}.
\end{aligned}
\end{equation}
\end{theorem}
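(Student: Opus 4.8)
The plan is to exploit that $\vertiii{\cdot}_h$ is a genuine norm on $\mathcal{V}_h$ (Lemma \ref{lem_norm}) and that $\eta_h = u_I - u_h \in \mathcal{V}_h$, so that testing against $\eta_h$ itself gives
\[
\vertiii{\eta_h}_h^2 = a_h(\eta_h,\eta_h) = a_h(u_I,\eta_h) - a_h(u_h,\eta_h).
\]
For the second term I would invoke the discrete scheme \eqref{eq_VEM} to replace $a_h(u_h,\eta_h)$ by $\sum_K (f,\Pi_K\eta_h)_K$, and then use the strong form $\nabla\cdot(\beta\nabla u)=-f$ (legitimate since $u\in H^2_0(\beta;\Omega)$ forces $\beta\nabla u\in\bfH(\ddiv;\Omega)$) together with elementwise integration by parts to write $(f,\Pi_K\eta_h)_K = (\beta\nabla u,\nabla\Pi_K\eta_h)_K - (\beta\nabla u\cdot\bfn,\Pi_K\eta_h)_{\partial K}$. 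Unfolding $a_K(u_I,\eta_h)$ from \eqref{vem_scheme} then decomposes $\vertiii{\eta_h}_h^2$ into a bulk consistency term $\sum_K(\beta_h\nabla\Pi_K u_I - \beta\nabla u,\nabla\Pi_K\eta_h)_K$, a stabilization term $\sum_K S_K(u_I-\Pi_K u_I,\eta_h-\Pi_K\eta_h)$, and a boundary flux term $\sum_K(\beta\nabla u\cdot\bfn,\Pi_K\eta_h)_{\partial K}$. The objective is to bound each by $h\|u\|_{2,\Omega}\vertiii{\eta_h}_h$ and divide out one factor of $\vertiii{\eta_h}_h$.

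For the bulk term I would split $\beta_h\nabla\Pi_K u_I - \beta\nabla u = \beta_h\nabla\Pi_K(u_I-u) + \beta_h\nabla(\Pi_K u - u) + (\beta_h-\beta)\nabla u$ and treat the three summands by Cauchy--Schwarz. Using $\|\beta_h\|_{\infty}\le\|\beta\|_{\infty}$ and the uniform bounds on $\beta,\beta_h$, each factor contracted against $\nabla\Pi_K\eta_h$ is controlled by $\vertiii{\eta_h}_h$, while the first summand is handled by the $\Pi_K$-part of Hypothesis \hyperref[asp:H3]{(H2)}, the second by \eqref{Pi_approx_eq1} in \hyperref[asp:H4]{(H3)}, and the third by \eqref{betah_approx} in \hyperref[asp:H6]{(H5)}; each contributes the desired factor $h\|u\|_{2,\Omega}\vertiii{\eta_h}_h$. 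For the stabilization term, a Cauchy--Schwarz in the $S_K$-seminorm together with $\|\eta_h-\Pi_K\eta_h\|_{S_K}\le\vertiii{\eta_h}_K$ reduces matters to $\sum_K\|u_I-\Pi_K u_I\|_{S_K}^2\lesssim h^2\|u\|_{2,\Omega}^2$, which follows from the triangle inequality $\|u_I-\Pi_K u_I\|_{S_K}\le\|(u-u_I)-\Pi_K(u-u_I)\|_{S_K}+\|u-\Pi_K u\|_{S_K}$ combined with \hyperref[asp:H3]{(H2)} and \eqref{Pi_approx_eq2}, exactly as in the proof of Lemma \ref{cor_uI}.

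The boundary flux term is the main obstacle, since it involves the non-computable normal flux $\beta\nabla u\cdot\bfn$ and cannot be made to vanish (the fluxes of $\Pi_K\eta_h$ do not match across shared faces). Here I would apply Lemma \ref{lem_est_partialK} with $v=\eta_h$ to bound it by $h^{1/2}\|u\|_{2,\Omega}\big(\sum_K\|\eta_h-\Pi_K\eta_h\|_{0,\partial K}^2\big)^{1/2}$, and then convert the boundary $L^2$-norm into the energy norm via the discrete Poincar\'e inequality. Concretely, $(\eta_h-\Pi_K\eta_h)|_{\partial K}$ has zero mean by the normalization in \eqref{Pi_proj} and lies in $\widetilde{\mathcal{B}}^0_h(\partial K)$, so Hypothesis \hyperref[asp:H2]{(H1)} gives $\|\eta_h-\Pi_K\eta_h\|_{0,\partial K}\lesssim h_K^{1/2}\|\eta_h-\Pi_K\eta_h\|_{S_K}\le h_K^{1/2}\vertiii{\eta_h}_K$; summing yields $\sum_K\|\eta_h-\Pi_K\eta_h\|_{0,\partial K}^2\lesssim h\,\vertiii{\eta_h}_h^2$, so the flux term is also $\lesssim h\|u\|_{2,\Omega}\vertiii{\eta_h}_h$. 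Adding the three estimates produces $\vertiii{\eta_h}_h^2\lesssim h\|u\|_{2,\Omega}\vertiii{\eta_h}_h$, and \eqref{lem_err_eqn_eq0} follows upon dividing by $\vertiii{\eta_h}_h$. The genuinely delicate point is that this final conversion rests entirely on \hyperref[asp:H2]{(H1)}, i.e. on the shape-independent discrete Poincar\'e inequality of Lemma \ref{lem_disct_poinc}, which is exactly what substitutes for the trace and Poincar\'e inequalities that degenerate on anisotropic elements.
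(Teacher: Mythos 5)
Your proposal is correct and follows essentially the same route as the paper's proof: the same error equation obtained from the scheme \eqref{eq_VEM} plus elementwise integration by parts of the strong form, the same three-way decomposition into bulk consistency, stabilization, and boundary flux terms, with each piece handled by Hypotheses \hyperref[asp:H3]{(H2)}, \hyperref[asp:H4]{(H3)}, \hyperref[asp:H6]{(H5)}, Lemma \ref{cor_uI}, Lemma \ref{lem_est_partialK}, and the conversion $\|\eta_h-\Pi_K\eta_h\|_{0,\partial K}\lesssim h_K^{1/2}\|\eta_h-\Pi_K\eta_h\|_{S_K}$ from Hypothesis \hyperref[asp:H2]{(H1)}. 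The only cosmetic difference is that you test directly with $\eta_h$ whereas the paper works with a generic $v_h\in\mathcal{V}_h$ and sets $v_h=\eta_h$ at the end.
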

\begin{proof}
Applying integration by parts to the equation $-\nabla \cdot (\beta \nabla u) = f$ tested with $\Pi_K v_h$, $\forall v_h \in \mathcal{V}_h(\Omega)$, we obtain
\begin{equation}
\label{lem_err_eqn_1}
\begin{aligned}
 & a_h(\eta_h,v_h) = a_h(u_h-u_I, v_h) = a_h(u_h, v_h)-a_h(u_I,v_h) \\
=& \sum_{K\in \mathcal{T}_h}[ \underbrace{(\beta \nabla u, \nabla \Pi_K v_h)_K - (\beta_h \nabla \Pi_K u_I, \Pi_K u_h) }_{(I)} - \underbrace{ (\beta \nabla u\cdot \mathbf{ n}, \Pi_K v_h)_{\partial K} }_{(II)}] \\
& -\underbrace{ S_K(u_I-\Pi_K u_I, v_h-\Pi_K v_h) }_{(III)}.
\end{aligned}
\end{equation}
For $(I)$ in \eqref{lem_err_eqn_1}, we have
\begin{equation*}
\begin{split}
\label{lem_err_eqn_2}
(I) % = (\beta_h \nabla u, \nabla \Pi_K v_h)_K + ((\beta-\beta_h) \nabla u, \nabla \Pi_K v_h)_K  - (\beta_h \nabla \Pi_K u_I, \Pi_K u_h)   \\
=  (\beta_h (\nabla u - \nabla  \Pi_K u), \nabla \Pi_K v_h)_K  + ((\beta-\beta_h) \nabla u, \nabla \Pi_K v_h)_K + (\beta_h \nabla \Pi_K (u-u_I), \nabla \Pi_K v_h)_K,
\end{split}
\end{equation*}
of which the estimates follow from \eqref{Pi_approx_eq1} in Hypothesis \hyperref[asp:H4]{(H3)}, Hypotheses \hyperref[asp:H6]{(H5)} and \hyperref[asp:H3]{(H2)}, respectively.
%For the first term in \eqref{lem_err_eqn_2}, by \eqref{Pi_approx_eq1} we have
%\begin{equation}
%\label{lem_err_eqn_2_1}
%(\beta_h (\nabla u - \nabla  \Pi_K u), \nabla \Pi_K v_h)_K \lesssim h_K \| u \|_{E,2,K} \|  \nabla \Pi_K v_h \|_{0,K}.
%\end{equation}
%For the second term in \eqref{lem_err_eqn_2}, using the $\delta$-strip argument with Lemma \ref{lem_delta} and \eqref{delta_strip_arg}, we obtain
%\begin{equation}
%\label{lem_err_eqn_3}
%\sum_{K\in\mathcal{T}^i_h}  ((\beta-\beta_h) \nabla u, \nabla \Pi_K v_h)_K \lesssim \sum_{K\in\mathcal{T}^i_h} 
% \| \nabla u \|_{0,\widetilde{K}} \| \nabla \Pi_K v_h\|_{0,{K}}  \lesssim h \| u \|_{E,2,\Omega} \| \nabla \Pi_K v_h \|_{0,\Omega}.
%\end{equation}
%For the third term in \eqref{lem_err_eqn_2}, by \eqref{approxi_VK_Pi}, we have
%\begin{equation}
%\begin{split}
%\label{lem_err_eqn_4}
%(\beta_h \nabla \Pi_K (u-u_I), \nabla \Pi_K v_h)_K 
%& \lesssim \| \nabla \Pi_K (u-u_I) \|_{0,K} \| \nabla \Pi_K v_h \|_{0,K} \\
%& \lesssim (h_K \| u \| _{E,2,\omega_K} +  \triangle_K) \| \nabla \Pi_K v_h \|_{0,K}.
%\end{split}
%\end{equation}
%Substituting \eqref{lem_err_eqn_2_1}-\eqref{lem_err_eqn_4} into \eqref{lem_err_eqn_2}, we have the estimate for $(I)$.
For $(II)$, by Lemma \ref{lem_est_partialK}, we only need to estimate $\| v_h-\Pi_K v_h \|_{0, \partial K}$ which follows from Hypotheses \hyperref[asp:H2]{(H1)}:
%$\| v_h-\Pi_K v_h \|_{0,\partial K} \lesssim h^{1/2}_K  \vertiii{v_h}_h$.
\begin{equation}
\begin{split}
\label{lem_err_eqn_6}
\| v_h-\Pi_K v_h \|_{0,\partial K} \lesssim h^{1/2}_K \| v_h-\Pi_K v_h \|_{S_K} \lesssim h^{1/2}_K  \vertiii{v_h}_h.
\end{split}
\end{equation}
As for $(III)$, we note that $(III) \le \| u_I - \Pi_K u_I \|_{S_K} \| v_h - \Pi_K v_h \|_{S_K}$,
and the estimate of $\| u_I - \Pi_K u_I \|_{S_K}$ follows from Lemma \ref{cor_uI}.
%\begin{equation}
%\label{lem_err_eqn_8}
%(III) \le \| u_I - \Pi_K u_I \|_{S_K} \| v_h - \Pi_K v_h \|_{S_K}.
%\end{equation}
%and thus it remains to estimate $\| u_I - \Pi_K u_I \|_{S_K}$. With the triangular inequality, we have
%\begin{equation*}
%\label{lem_err_eqn_9}
%\| u_I - \Pi_K u_I \|_{S_K} \le \| u - \Pi_K u \|_{S_K} + \| u - u_I - \Pi_K (u-u_I) \|_{S_K} 
%\end{equation*}
%of which the estimates follow from \eqref{Pi_approx_eq2} in Hypothesis \hyperref[asp:H4]{(H3)} and Hypothesis \hyperref[asp:H3]{(H2)}.
%\eqref{Pi_approx_eq2} and \eqref{approxi_VK_Pi} , respectively. 
Putting all these estimates together into \eqref{lem_err_eqn_1}, and taking $v_h = \eta_h$, we obtain the desired estimate.
\end{proof}

Now, we present the following main theorem.
\begin{theorem}[The energy norm]
\label{thm_energy_est}
Let $u\in H^2_0(\beta;\Omega)$. Under Hypotheses \hyperref[asp:H2]{(H1)}-\hyperref[asp:H4]{(H3)} and \hyperref[asp:H6]{(H5)}, there holds
\begin{equation}
\label{thm_energy_est_eq0}
\vertiii{u-u_h}_h \lesssim h \| u \|_{2,\Omega}.
\end{equation}
\end{theorem}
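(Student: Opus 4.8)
The plan is to obtain this global energy estimate by combining the two pieces already assembled—the interpolation bound encoded in Hypothesis \hyperref[asp:H3]{(H2)} and the discrete-error bound of Theorem \ref{lem_err_eqn}—through the triangle inequality for the energy seminorm $\vertiii{\cdot}_h$. First I would invoke the error splitting from \eqref{err_decomp}, writing $u - u_h = \xi_h + \eta_h$ with $\xi_h = u - u_I$ and $\eta_h = u_I - u_h$, so that $\vertiii{u-u_h}_h \le \vertiii{\xi_h}_h + \vertiii{\eta_h}_h$. To justify this step I note that $a_h(\cdot,\cdot) = \sum_{K} a_K(\cdot,\cdot)$ is a symmetric non-negative bilinear form: each $a_K(v,v) = (\beta_h\nabla\Pi_K v,\nabla\Pi_K v)_K + S_K(v-\Pi_K v, v-\Pi_K v)$ is non-negative because $\beta_h \ge 0$ and $S_K$ is non-negative by Hypothesis \hyperref[asp:H2]{(H1)}. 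Hence $\vertiii{\cdot}_h$ is a genuine seminorm and the triangle inequality is available.

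Next I would dispatch the interpolation term. By the very definition of the norms, $\vertiii{\xi_h}^2_h = a_h(u-u_I,u-u_I) = \sum_{K\in\mathcal{T}_h} \vertiii{u-u_I}^2_K$, so this quantity is precisely the left-hand side of \eqref{approxi_VK_Pi}. Invoking Hypothesis \hyperref[asp:H3]{(H2)} therefore gives $\vertiii{\xi_h}_h \lesssim h\,\|u\|_{2,\Omega}$ immediately. At this point I would pause to confirm that $\vertiii{u-u_I}_h$ is finite to begin with: since $u \in H^2_0(\beta;\Omega)$, its trace on each face lies in $H^1$, so the surface gradients entering $S_K$ are well-defined, and $\Pi_K u$ exists as the projection onto $\mathcal{W}_h(K)$ from \eqref{Pi_proj}; thus the energy seminorm is well-defined on $u$, hence on $u-u_I$.

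The discrete term is then handled directly by Theorem \ref{lem_err_eqn}, which under exactly the same hypotheses \hyperref[asp:H2]{(H1)}--\hyperref[asp:H4]{(H3)} and \hyperref[asp:H6]{(H5)} already yields $\vertiii{\eta_h}_h \lesssim h\,\|u\|_{2,\Omega}$. Adding the two contributions gives $\vertiii{u-u_h}_h \lesssim h\,\|u\|_{2,\Omega}$, which is the claimed bound \eqref{thm_energy_est_eq0}.

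I do not expect a substantive obstacle here: all the delicate work—the Galerkin-type manipulation of \eqref{lem_err_eqn_1}, the consistency estimate routed through Lemma \ref{lem_est_partialK}, and the stability input \eqref{lem_err_eqn_6} supplied by \hyperref[asp:H2]{(H1)}—has already been absorbed into Theorem \ref{lem_err_eqn} and the hypotheses. The only points demanding a moment's care are the well-definedness of $\vertiii{\cdot}_h$ on the continuous solution $u$ and the recognition that the interpolation contribution $\vertiii{\xi_h}_h$ is literally the statement of \hyperref[asp:H3]{(H2)}; neither is a real difficulty, so this theorem is essentially a one-line corollary of the preceding development.
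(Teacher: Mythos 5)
Your proposal is correct and follows essentially the same route as the paper: decompose $u-u_h = \xi_h + \eta_h$, bound $\vertiii{\xi_h}_h$ by the interpolation hypothesis and $\vertiii{\eta_h}_h$ by Theorem \ref{lem_err_eqn}, and add. If anything, your citation of Hypothesis \hyperref[asp:H3]{(H2)} for the $\xi_h$ term is the more accurate one, since \eqref{approxi_VK_Pi} is literally the statement $\sum_K \vertiii{u-u_I}^2_K \lesssim h^2\|u\|^2_{2,\Omega}$ (the paper's proof points to \hyperref[asp:H4]{(H3)} instead, which appears to be a slip), and your extra remarks on the seminorm property and well-definedness are harmless additions to the same one-line argument.
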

\begin{proof}
The estimates of $\vertiii{\xi_h}_h$ and $\vertiii{\eta_h}_h$ follow from Hypothesis \hyperref[asp:H4]{(H3)} and Theorem \ref{lem_err_eqn}, respectively.
\end{proof}

%%%%%%%%%%%%%%%%%%%%%%%%%%%%%%%%%%

%\subsection{The $L^2$ norm estimate}
The $L^2$ norm estimation under anisotropic elements is more difficult. %and to our best knowledge there is no such result in the literature. 
We begin with following two corollaries from the energy norm estimate.

\begin{corollary}
\label{cor_energy}
%Let $u\in H^2(\beta;\Omega)$ and $u_h$ be the solution to the scheme \eqref{eq_VEM}. Then,
Let $u\in H^2_0(\beta;\Omega)$. Under Hypotheses \hyperref[asp:H2]{(H1)}-\hyperref[asp:H4]{(H3)} and \hyperref[asp:H6]{(H5)}, there holds
\begin{subequations}
%\| \nabla(u - \Pi_K u_h) \|_{0,K} + 
\begin{align}
& \sum_{K\in\mathcal{T}_h} \| \nabla( u - \Pi_K u_h ) \|^2_{0,K} \lesssim h^2 \| u \|^2_{2,\Omega},  \label{cor_energy_eq1}\\
 &  \sum_{K\in\mathcal{T}_h}\| u_h - \Pi_K u_h \|^2_{S_K} \lesssim h^2 \| u \|^2_{2,\Omega}.  \label{cor_energy_eq0}
\end{align}
\end{subequations}
\end{corollary}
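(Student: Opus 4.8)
The plan is to obtain both estimates as direct consequences of the energy bound in Theorem~\ref{thm_energy_est} combined with the projection estimates of Hypothesis \hyperref[asp:H4]{(H3)}, with no further geometric input. The crucial observation is that, by the definition of $a_K$ in \eqref{vem_scheme} and the linearity of $\Pi_K$, the local energy of $u-u_h$ splits into two nonnegative contributions,
\begin{equation*}
\vertiii{u-u_h}^2_K = \| \beta_h^{1/2}\nabla \Pi_K(u-u_h) \|^2_{0,K} + \| (u-u_h)-\Pi_K(u-u_h) \|^2_{S_K},
\end{equation*}
so that each piece is separately bounded by $\vertiii{u-u_h}^2_K$. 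Summing over $K$ and invoking Theorem~\ref{thm_energy_est} then controls both the weighted gradient of $\Pi_K(u-u_h)$ and the stabilization seminorm of $(u-u_h)-\Pi_K(u-u_h)$ by $h^2\|u\|^2_{2,\Omega}$.

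For \eqref{cor_energy_eq1} I would decompose, using $\Pi_K u - \Pi_K u_h = \Pi_K(u-u_h)$,
\begin{equation*}
u - \Pi_K u_h = (u-\Pi_K u) + \Pi_K(u-u_h),
\end{equation*}
and apply the triangle inequality to $\|\nabla(\,\cdot\,)\|_{0,K}$. The first summand is handled by \eqref{Pi_approx_eq1}; for the second I would use the uniform positive lower bound of $\beta_h$ to pass from $\|\nabla \Pi_K(u-u_h)\|_{0,K}$ to $\|\beta_h^{1/2}\nabla \Pi_K(u-u_h)\|_{0,K}$, which is then controlled by the first piece of the energy splitting above.

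For \eqref{cor_energy_eq0} I would rely on the algebraic identity
\begin{equation*}
u_h - \Pi_K u_h = (u-\Pi_K u) - \big[(u-u_h)-\Pi_K(u-u_h)\big],
\end{equation*}
again a consequence of the linearity of $\Pi_K$. Taking the $\|\cdot\|_{S_K}$ seminorm and applying the triangle inequality, the first term is bounded by the $\|u-\Pi_K u\|_{S_K}$ contribution in \eqref{Pi_approx_eq2}, while the second term coincides with the stabilization piece already estimated via the energy splitting. Summing over $K$ then gives the claim.

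I do not expect a genuine obstacle: both bounds are essentially a bookkeeping exercise built on top of Theorem~\ref{thm_energy_est}. The only points meriting attention are that $\Pi_K u$ and the $\|\cdot\|_{S_K}$ seminorm of $u$ are well defined for $u\in H^2_0(\beta;\Omega)$ (since its face traces and surface gradients are square integrable), and that the hidden constant must be allowed to absorb the uniform positive lower bound of $\beta_h$; neither shape regularity nor any trace or inverse inequality is needed.
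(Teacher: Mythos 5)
Your proposal is correct and follows essentially the same route as the paper: the paper's proof of \eqref{cor_energy_eq1} is precisely your decomposition $u-\Pi_K u_h=(u-\Pi_K u)+\Pi_K(u-u_h)$ handled by \eqref{Pi_approx_eq1} and Theorem~\ref{thm_energy_est}, and its proof of \eqref{cor_energy_eq0} is the same decomposition as in \eqref{cor_uI_eq1}, i.e.\ your identity $u_h-\Pi_K u_h=(u-\Pi_K u)-\bigl[(u-u_h)-\Pi_K(u-u_h)\bigr]$ combined with \eqref{Pi_approx_eq2} and the stabilization part of the energy norm. Your write-up merely makes explicit the splitting of $\vertiii{\cdot}_K$ into its projection and stabilization pieces and the role of the lower bound of $\beta_h$, which the paper leaves implicit.
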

\begin{proof}
\eqref{cor_energy_eq1} follows from inserting $u$ into $\| \nabla\Pi_K(u-u_h)\|_{0,K}$. The estimate for \eqref{cor_energy_eq0} follows from a similar decomposition in \eqref{cor_uI_eq1}. 
%Besides, notice that $\| u-u_h - \Pi_K(u-u_h)\|_{S_K} \le \| u_h - \Pi_K u_h\|_{S_K} + \| u - \Pi_K u \|_{S_K}$. Then, Theorem \ref{thm_energy_est} and \eqref{Pi_approx_eq3} yield \eqref{cor_energy_eq0}.
%\begin{equation}
%\label{cor_energy_eq1}
%\sum_{K\in\mathcal{T}_h}  \| u-u_h - \Pi_K(u-u_h)\|_{S_K} \lesssim h \| u \|_{2,\Omega} .
%\end{equation}
\end{proof}

We are ready to estimate the solution errors under the $L^2$ norm.
\begin{theorem}
\label{thm_u}
Let $u\in H^2_0(\beta;\Omega)$. Under Hypotheses \hyperref[asp:H2]{(H1)}-\hyperref[asp:H6]{(H5)}, there holds
\begin{equation}
\label{thm_u_eq0}
\| u - \Pi u_h \|_{0,\Omega} \lesssim h^2 \| u \|_{2,\Omega}.
\end{equation}
\end{theorem}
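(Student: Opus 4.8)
The plan is to run an Aubin--Nitsche duality argument, carefully arranged so that every interpolation error that appears is confined to element \emph{boundaries}, where Hypothesis \hyperref[asp:H5]{(H4)} supplies $\sum_{K}h_K\|u-u_I\|_{0,\partial K}^2\lesssim h^4\|u\|_{2,\Omega}^2$; this sidesteps the interior estimate $\|u-u_I\|_{0,K}$ that, as emphasized in the preceding remark, is unavailable on anisotropic meshes. Put $g:=u-\Pi u_h$ and let $\phi\in H^2_0(\beta;\Omega)$ solve the dual problem $-\ddiv(\beta\nabla\phi)=g$ with $\phi=0$ on $\partial\Omega$; by $H^2$-regularity of the interface problem $\|\phi\|_{2,\Omega}\lesssim\|g\|_{0,\Omega}$, and since $\phi\in H^2_0(\beta;\Omega)$ every one of Hypotheses \hyperref[asp:H3]{(H2)}--\hyperref[asp:H6]{(H5)} applies with $\phi$ replacing $u$. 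Because $g|_K=u-\Pi_K u_h\in H^1(K)$ and $\beta\nabla\phi\in\bfH(\ddiv;\Omega)$, the elementwise Green's formula is valid (robustly across any interface interior to $K$, as $\beta\nabla\phi$ has continuous normal trace there), giving
\[
\|g\|_{0,\Omega}^2=\sum_{K\in\mathcal{T}_h}(\beta\nabla(u-\Pi_K u_h),\nabla\phi)_K-\sum_{K\in\mathcal{T}_h}(u-\Pi_K u_h,\beta\nabla\phi\cdot\bfn)_{\partial K}.
\]

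I would dispose of the boundary sum first. Splitting $u-\Pi_K u_h=(u-u_h)+(u_h-\Pi_K u_h)$ and using that $u-u_h\in H^1_0(\Omega)$ is single-valued while $\beta\nabla\phi\cdot\bfn$ matches across interior faces and $u-u_h$ vanishes on $\partial\Omega$, the $u-u_h$ part telescopes to zero, leaving $-\sum_K(u_h-\Pi_K u_h,\beta\nabla\phi\cdot\bfn)_{\partial K}$. Since $\int_{\partial K}(u_h-\Pi_K u_h)=0$ and $u_h\in H^1_0(\Omega)$, this is precisely the structure of Lemma \ref{lem_est_partialK} (with $\phi$ in the role of $u$ and $u_h$ in the role of $v$), which bounds it by $h^{1/2}\|\phi\|_{2,\Omega}\big(\sum_K\|u_h-\Pi_K u_h\|_{0,\partial K}^2\big)^{1/2}$. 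The discrete Poincaré inequality \hyperref[asp:H2]{(H1)} and the energy bound \eqref{cor_energy_eq0} then give $\sum_K\|u_h-\Pi_K u_h\|_{0,\partial K}^2\lesssim\sum_K h_K\|u_h-\Pi_K u_h\|_{S_K}^2\lesssim h^3\|u\|_{2,\Omega}^2$, so the whole boundary sum is $\lesssim h^2\|u\|_{2,\Omega}\|g\|_{0,\Omega}$.

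For the volume sum I would write $\nabla\phi=\nabla(\phi-\Pi_K\phi)+\nabla\Pi_K\phi$. The projection-error part pairs $\nabla(u-\Pi_K u_h)$ with $\nabla(\phi-\Pi_K\phi)$ and is immediately $O(h^2)$ by Cauchy--Schwarz, via \eqref{cor_energy_eq1} for the first factor and \eqref{Pi_approx_eq1} (for $\phi$) for the second. The full-projection part $\sum_K(\beta\nabla(u-\Pi_K u_h),\nabla\Pi_K\phi)_K$ is only $O(h)$ under a naive bound, so here I must extract Galerkin orthogonality. First I replace $\beta$ by $\beta_h$, the discrepancy being higher order by \hyperref[asp:H6]{(H5)} together with \eqref{cor_energy_eq1}; then the defining identity \eqref{Pi_proj} of $\Pi_K$ (with $\Pi_K\phi\in\mathcal{W}_h(K)$ an admissible test function) converts $\Pi_K u_h$ into $u_h$, reducing the term to $\sum_K(\beta_h\nabla(u-u_h),\nabla\Pi_K\phi)_K$. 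Integrating by parts and using $\ddiv(\beta_h\nabla\Pi_K\phi)=0$ moves this onto $\partial K$; subtracting the elementwise boundary mean of $u-u_h$ is free because the divergence-free discrete flux has zero mean over $\partial K$, and comparing against the continuous flux $\beta\nabla\phi\cdot\bfn$ leaves a pairing of the boundary fluctuation of $u-u_h$ with the flux defect $(\beta_h\nabla\Pi_K\phi-\beta\nabla\phi)\cdot\bfn$ (plus an auxiliary volume remainder coming from $\ddiv(\beta\nabla\phi)=-g$). The $h_K^{1/2}$-weighted $L^2(\partial K)$ norm of the flux defect is $O(h\|\phi\|_{2,\Omega})$ by \eqref{Pi_approx_eq2} and \hyperref[asp:H6]{(H5)}, and the auxiliary remainder is absorbed by $\|g\|_{0,\Omega}$ at the optimal order.

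The crux, and the step I expect to be the main obstacle, is controlling the boundary fluctuation of $u-u_h$ with the unfavorable weight $h_K^{-1}$, namely $\sum_K h_K^{-1}\|(u-u_h)-\overline{(u-u_h)}_{\partial K}\|_{0,\partial K}^2\lesssim h^2\|u\|_{2,\Omega}^2$, using boundary data alone. I would split $u-u_h=(u-u_I)+(u_I-u_h)$: the interpolation part is handled by the boundary estimate in \hyperref[asp:H5]{(H4)}, while for $u_I-u_h\in\mathcal{V}_h$ the discrete Poincaré inequality of Lemma \ref{lem_disct_poinc}/\hyperref[asp:H2]{(H1)} converts its boundary fluctuation into stabilization energy, which is then bounded through $\vertiii{u_I-u_h}_h\lesssim h\|u\|_{2,\Omega}$ from Theorem \ref{lem_err_eqn}. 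The mean subtraction is essential here precisely because the per-element constant parts of $u_I-u_h$ are not controlled by $S_K$ alone, yet they annihilate the divergence-free discrete flux. Collecting the boundary and volume estimates and dividing by $\|g\|_{0,\Omega}=\|u-\Pi u_h\|_{0,\Omega}$ yields \eqref{thm_u_eq0}; keeping the $\beta$--$\beta_h$ mismatch at optimal order via \hyperref[asp:H6]{(H5)} at each occurrence is the secondary technical point to verify.
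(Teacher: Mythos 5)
Your overall duality strategy is the same as the paper's: the paper also solves a dual problem, splits the resulting identity into a volume and a boundary sum, handles the boundary sum through Lemma \ref{lem_est_partialK} together with Hypothesis \hyperref[asp:H2]{(H1)} and \eqref{cor_energy_eq0}, and disposes of the projection-error part of the volume sum by Cauchy--Schwarz with \eqref{cor_energy_eq1} and \eqref{Pi_approx_eq1}; those portions of your proposal are sound and match the paper's terms $(II)$, $(Ia)$, $(Ib)$. The gap lies in the critical remaining term (the paper's $(Ic)$). After you reduce $\sum_K(\beta_h\nabla(u-\Pi_K u_h),\nabla\Pi_K\phi)_K$ to $\sum_K(u-u_h,\beta_h\nabla\Pi_K\phi\cdot\bfn)_{\partial K}$ and compare the discrete flux with $\beta\nabla\phi\cdot\bfn$, the ``auxiliary volume remainder'' you set aside is exactly
\begin{equation*}
R=\sum_{K\in\mathcal{T}_h}\overline{(u-u_h)}_{\partial K}\int_K g\dd\bfx,\qquad \overline{w}_{\partial K}:=|\partial K|^{-1}\int_{\partial K}w\dd s,
\end{equation*}
because $\int_{\partial K}\beta_h\nabla\Pi_K\phi\cdot\bfn\dd s=0$ while $\int_{\partial K}\beta\nabla\phi\cdot\bfn\dd s=-\int_K g\dd\bfx$; this remainder appears no matter how you arrange the mean subtraction. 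To bound $R\lesssim h^2\|u\|_{2,\Omega}\|g\|_{0,\Omega}$ you would need $\bigl(\sum_K|K|\,\overline{(u-u_h)}_{\partial K}^{\,2}\bigr)^{1/2}\lesssim h^2\|u\|_{2,\Omega}$, i.e.\ $L^2$ control of the elementwise \emph{constant modes} of the error. None of Hypotheses \hyperref[asp:H2]{(H1)}--\hyperref[asp:H6]{(H5)} provides this: the stabilization and \hyperref[asp:H2]{(H1)} only see mean-free fluctuations, \hyperref[asp:H5]{(H4)} controls $u-u_I$ but says nothing about the means of $u_I-u_h$, and $\vertiii{\cdot}_h$ annihilates constants. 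A bound on these means is essentially the $L^2$ estimate being proven, so at this point the argument is circular rather than merely incomplete.

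The paper escapes precisely this trap by invoking the discrete scheme \eqref{eq_VEM} once more, tested with the interpolant $z_I$ of the dual solution---an ingredient your proposal never uses. With $a_h(u_h,z_I)=\sum_K(f,\Pi_K z_I)_K$ and integration by parts applied to $(\beta\nabla z,\nabla u)_K$, all source terms merge into $(f,z-\Pi_K z)_K$, controlled by \hyperref[asp:H5]{(H4)} applied to $z$, and every surviving boundary pairing has the form $(\beta\nabla u\cdot\bfn,\Pi_K v-v)_{\partial K}$ with $v\in H^1(\Omega)$, where $\Pi_K v-v$ has zero mean on $\partial K$ by the normalization built into \eqref{Pi_proj} (this is exactly what Lemma \ref{lem_est_partialK} exploits); hence no uncontrolled constant modes ever arise. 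A secondary issue: your ``crux'' estimate $\sum_K h_K^{-1}\|\eta_h-\overline{\eta_h}_{\partial K}\|^2_{0,\partial K}\lesssim h^2\|u\|^2_{2,\Omega}$ via \hyperref[asp:H2]{(H1)} requires $\|\eta_h\|_{S_K}$, whereas Theorem \ref{lem_err_eqn} only controls $\|\eta_h-\Pi_K\eta_h\|_{S_K}$; passing from one to the other needs a trace inequality for $\nabla\Pi_K\eta_h$ on $\partial K$, which holds in each of the three applications (Lemmas \ref{lem_traceinequa} and \ref{lem_trace_inequa}) but is not among the abstract hypotheses of Section \ref{sec:assump}.
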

\begin{proof}
Let $z\in H^2_0(\beta;\Omega)$ be the solution to $-\nabla\cdot(\beta\nabla z) = \Pi(u-u_h)$. Testing this equation by $\Pi(u-u_h)$ and applying integration by parts, we have
\begin{equation}
\label{thm_u_eq1}
\| \Pi(u-u_h) \|^2_{0,\Omega} = \sum_{K\in\mathcal{T}_h} \underbrace{ ( \beta \nabla z, \nabla\Pi_K(u-u_h) )_K }_{(I)} - \underbrace{ ( \beta \nabla z\cdot\bfn,  \Pi_K(u-u_h)  )_{\partial K} }_{(II)}.
\end{equation}
For $(I)$, we notice that $( \beta_h \nabla \Pi_K z , \nabla (u-\Pi_K u_h) )_{ K} = ( \beta_h \nabla z , \nabla \Pi_K(u- u_h) )_{ K}$ by the projection property, which yields
\begin{equation}
\begin{split}
\label{thm_u_eq2}
(I) 
&= \underbrace{ ( (\beta- \beta_h) \nabla z, \nabla\Pi_K(u-u_h) )_K }_{(Ia)} + \underbrace{ ( \beta_h \nabla ( \Pi_K z - z ), \nabla (u-\Pi_K u_h) )_{ K} }_{(Ib)} \\
& + \underbrace{ ( \beta_h \nabla   z, \nabla (u-\Pi_K  u_h) )_{ K} }_{(Ic)}.
\end{split}
\end{equation}
Here, $(Ia)$ follows from Hypothesis \hyperref[asp:H6]{(H5)} and Theorem \ref{thm_energy_est}, 
while $(Ib)$ follows from \eqref{cor_energy_eq1} in Corollary \eqref{cor_energy} and \eqref{Pi_approx_eq1} in Hypothesis \hyperref[asp:H4]{(H3)} with $\|\beta_h\|_{\infty}\le \|\beta \|_{\infty}$ in Hypothesis \hyperref[asp:H6]{(H6)}.
%\begin{equation}
%\label{thm_u_eq4}
%\sum_{K\in\mathcal{T}_h} (Ib) \lesssim h^2 \| z \|_{2,\cup\Omega^{\pm}} \| u \|_{2,\cup\Omega^{\pm}}. 
%\end{equation}

The most difficult one is $(Ic)$, for which we use the projection property to write down
\begin{equation}
\label{thm_u_eq5}
(Ic) = \underbrace{ ( (\beta_h - \beta) \nabla z, \nabla u )_{ K} }_{\dagger_0} + \underbrace{ ( \beta \nabla z, \nabla u )_{ K} }_{\dagger_1} - \underbrace{ ( \beta_h \nabla \Pi_K z, \nabla \Pi_K u_h )_{ K}  }_{\dagger_2}.
\end{equation}
For $\dagger_0$, we need to apply Hypothesis \hyperref[asp:H6]{(H5)} to both $z$ and $u$:
$$
\sum_K \dagger_0 \le \left( \sum_K \| |\beta_h - \beta|^{1/2} \nabla z \|^2_K \right)^{1/2} \left( \sum_K \| |\beta_h - \beta|^{1/2} \nabla u \|^2_K \right)^{1/2} \le h^4 \| u \|^2_{2,\Omega} \| z \|^2_{2,\Omega}.
$$ 
The estimation of the remaining two terms are much more involved.
%Notice the following estimate
%\begin{equation}
%\begin{split}
%\label{thm_u_eq6}
%& \sum_{K\in\mathcal{T}_h} \int_{K} (\beta_h-\beta) \nabla \Pi_K u_h\cdot \nabla (\Pi_K z_h - \Pi_K z) \dd \bfx  \\\lesssim &\sum_{K\in\mathcal{T}_h}  \| \nabla \Pi_K u_h \|_{0,\widetilde{K}} \| \nabla (\Pi_K z_h - \Pi_K z) \|_{0,\widetilde{K}} \\
% \lesssim &  \sum_{K\in\mathcal{T}_h}  h_K \| \nabla \Pi_K u_h \|_{0,{K}} \left( \| \nabla (z - \Pi_K z_h) \|_{0,{K}} + \|  \nabla(z -\Pi_K z )\|_{0,{K}} \right) \\
%\lesssim & h^2 \| z \|_{2,\cup\Omega^{\pm}} \| u \|_{2,\cup\Omega^{\pm}}. 
%\end{split}
%\end{equation}
%where we have used $|\widetilde{K}|\lesssim h^4_K$, $|K|\simeq h^3_K$ with $\nabla \Pi_K\cdot$ being constants in the second inequality, and used \eqref{cor_energy_eq1} in Corollary \ref{cor_energy} for $z_h$ and \eqref{Pi_approx_eq1} in Assumption in \hyperref[asp:S4]{A4} in the third inequality. 
We first notice the following identity by inserting $ \nabla \Pi_Kz_I$ and $ \nabla \Pi_K u_h$:
\begin{equation*}
\begin{split}
\label{thm_u_eq6_1}
\dagger_2 &= ( \beta_h \nabla \Pi_K u_h, \nabla \Pi_K z_I )_{ K} +  ( \beta_h (\nabla \Pi_K u_h - \nabla u)), \nabla \Pi_K(z- z_I) )_{ K} \\
% + ( \beta_h \nabla  u, \nabla \Pi_K(z- z_I) )_{ K} .
& + ( \beta \nabla  u, \nabla \Pi_K(z- z_I) )_{ K}  + ( (\beta_h - \beta) \nabla  u, \nabla \Pi_K(z- z_I) )_{ K}.
%%%%%%%%
%& = (f,\Pi_Kz_I) - S_K(u_h - \Pi_Ku_h, z_I - \Pi_Kz_I) + ( \beta \nabla u, \nabla \Pi_K(z- z_I) )_{ K}  \\
%& + ( (\beta_h-\beta) \nabla u, \nabla \Pi_K(z- z_I) )_{ K}  +  ( \beta_h \nabla \Pi_K (u_h-u), \nabla \Pi_K(z- z_I) )_{ K} \\
%& \int_{ K} \beta_h  \nabla \Pi_K u_h\cdot \nabla (\Pi_K z_h - \Pi_K z) \dd \bfx  =  \int_{ K} \beta_h  \nabla u_h\cdot \nabla (\Pi_K z_h - \Pi_K z) \dd \bfx \\
%  = & \int_{\partial K} (\beta_h \nabla u_h\cdot\bfn) (\Pi_K z_h - z_h + z - \Pi_K z) \dd \bfx \\
% = & \int_{ K} \beta_h  \nabla u_h\cdot \nabla (\Pi_K z_h - z_h + z - \Pi_K z) \dd \bfx \\
% = & \int_{ K} \beta_h \nabla ( u_h - \Pi_K u_h) \cdot \nabla (\Pi_K z_h - z_h + z - \Pi_K z) \dd \bfx \
 \end{split}
\end{equation*}
By the projection, we have $( \beta_h (\nabla \Pi_K u_h - \nabla u)), \nabla \Pi_K(z- z_I) )_{ K}  = ( \beta_h \nabla \Pi_K ( u_h -  u)), \nabla \Pi_K(z- z_I) )_{ K} $.
Applying integration by parts to $\dagger_1$ and $(\beta \nabla u, \nabla \Pi_K( z-z_I) )$, and using the scheme \eqref{eq_VEM} for $( \beta_h \nabla \Pi_K u_h, \nabla \Pi_K z_I )_{ K}$, we arrive at
\begin{equation*}
\begin{split}
\label{thm_u_eq6_2}
\dagger_1 - \dagger_2& = (f, z)_K - (f,\Pi_Kz_I) + S_K(u_h - \Pi_Ku_h, z_I - \Pi_Kz_I) -  ( \beta_h \nabla \Pi_K (u_h-u), \nabla \Pi_K(z- z_I) )_{ K}  \\ 
&- ( f,  \Pi_K(z- z_I) )_{ K} - (\beta \nabla u\cdot\bfn, \Pi_K(z-z_I))_{\partial K}  - ( (\beta_h-\beta) \nabla u, \nabla \Pi_K(z- z_I) )_{ K}  \\
& = (f, z - \Pi_K z)_K + S_K(u_h - \Pi_Ku_h, z_I - \Pi_Kz_I)  -  ( \beta_h \nabla \Pi_K (u_h-u), \nabla \Pi_K(z- z_I) )_{ K} \\
& -  (\beta \nabla u\cdot\bfn, \Pi_K(z-z_I))_{\partial K}  - ( (\beta_h-\beta) \nabla u, \nabla \Pi_K(z- z_I) )_{ K}  
\end{split}
\end{equation*}
where we have merged all the terms involving $f$ in the second equality.
Then, the terms in the right-hand side of equality above after being summed over all the elements are denoted as $\ddag_1$,..., $\ddag_5$, respectively. 
The following estimates are immediately given by the previously-established results:
%where we can insert $z_h - z$ as they are continuous arcoss the boundary $\partial K$.
\begin{subequations}
\begin{align*}
\ddag_1 &\lesssim h^2 \| f \|_{0,\Omega} \| z \|_{2,\Omega}, &\text{(by \eqref{L2_assump})} \\
 \ddag_2 &\le \sum_{K\in\mathcal{T}_h} \| u_h - \Pi_K u_h \|_{S_K}  \| z_I - \Pi_K z_I \|_{S_K}   \lesssim h^2 \| u \|_{2,\Omega} \| z \|_{2,\Omega}, &\text{(by \eqref{cor_energy_eq0} and \eqref{cor_uI_eq0})} \nonumber \\
 \ddag_3 & \lesssim  \sum_{K\in\mathcal{T}_h}  \| \nabla \Pi_K(u-u_h) \|_{0,K}  \| \nabla \Pi_K(z- z_I) \|_{0,K}  \lesssim h^{2} \| u \|_{2,\Omega} \| z \|_{2,\Omega}, & \text{(by \eqref{thm_energy_est_eq0} and \eqref{approxi_VK_Pi})}  \nonumber \\
\ddag_4 & \lesssim h^{1/2} \| u \|_{2,\Omega} \left( \sum_{K\in\mathcal{T}_h} \| (z - z_I) - \Pi_K (z-z_I) \|^2_{0, \partial K}  \right)^{1/2} \lesssim h^{2} \| u \|_{2,\Omega} \| z \|_{2,\Omega}, & \text{(by \eqref{L2_assump}, \eqref{cor_uI_eq0}) and \eqref{lem_est_partialK_eq0}} \nonumber \\
 \ddag_5 & \lesssim h \|  u \|_{2, \Omega} \left( \sum_{K\in\mathcal{T}_h} \| \nabla \Pi_K(z- z_I) \|^2_{0,K} \right)^{1/2} \lesssim h^{2} \| u \|_{2,\Omega} \| z \|_{2,\Omega} . & \text{(by \eqref{betah_approx} and \eqref{approxi_VK_Pi})} 
\end{align*}
\end{subequations}
%In addition, the estimation of $\ddag_4$ is slightly more challenging. We insert $z-z_I$ into $\ddag_4$ thanks to its conformity and denote $\zeta = z-z_I - \Pi_K(z-z_I)$ for simplicity.
%Then, applying integration by parts, we obtain
%\begin{equation*}
%\begin{split}
%%\label{}
% \ddag_4 & = \sum_{K\in\mathcal{T}_h} (\beta \nabla u\cdot\bfn,  \zeta )_{\partial K}  
% = \sum_{K\in\mathcal{T}_h} ( \ddiv(\beta \nabla u), \zeta )_K + (\beta \nabla u, \nabla \zeta )_{\partial K} \\
% & = \sum_{K\in\mathcal{T}_h} ( \ddiv(\beta \nabla u - \beta_h \nabla \Pi_K u), \zeta )_K +  (\beta_h( \nabla u -  \nabla \Pi_K u) , \nabla \zeta )_{ K} + ((\beta-\beta_h)\nabla u, \nabla \zeta )_K
% \end{split}
%\end{equation*}
%where we have used $\ddiv(\beta_h \nabla \Pi_K u) =0$ and the projection property in the second equality.
Putting these estimates into \eqref{thm_u_eq5} leads to the estimate for $(Ic)$, which is combined with $(Ia)$ and $(Ib)$ to conclude the estimate for $(I)$. 
In addition, the estimation of $(II)$ is similar to $\ddag_4$ above. 
$(I)$ and $(II)$ together lead to the estimate of $\Pi(u-u_h)$ by the elliptic regularity $\|z \|_{2,\Omega}\lesssim \| \Pi(u-u_h) \|_{0,\Omega}$. 
Then, the proof is finished by applying triangular inequality to $u - \Pi_K u_h = (u - \Pi_Ku) + (\Pi_Ku_h - \Pi_K u)$ with Hypothesis \hyperref[asp:H5]{(H4)}.
%Putting these estimates into \eqref{thm_u_eq5} leads to the estimate for $(Ib)$, which is combined with $(Ia)$ to conclude the estimate for $(I)$. Next, the estimate of $(II)$ is similar to $\ddag_4$ above, which finishes the proof by applying triangular inequality to $u - \Pi_K u_h = (u - \Pi_Ku) + (\Pi_Ku_h - \Pi_K u)$.
\end{proof}

\begin{remark}
\label{rem_highorder}
It is possible to generalize the general discussion in this section to the high-order case,
where one of the major modification is the boundary space that should include high-order 2D FE spaces.
However, the major challenge is to theoretically verify these hypotheses.
We also point out that verifying these hypotheses is also the main difficulty in this work.
\end{remark}

%%%%%%%%%%%%%%%%%%%%%%%%%%%%%%%%%%%%%%%%%%%%%%%%%%%%%%%%%%%%%%%%%%%%%%%%%%%%%%%%%%%%x%%%%%%%%%%%%%%%%%%%%%%%%%%%%%%

 \section{Application I: elements with non-shrinking inscribed balls}
 \label{sec:fitted_1}

In this section, we consider the anisotropic meshes in Case (1): elements are allowed to merely contain \textit{but not necessarily star convex to} non-shrinking balls.
It typically arises from fitted meshes, and thus we shall let $\beta=\beta_h=1$ to facilitate a clear presentation. 
In this case, we let $\mathcal{W}_h(K) = \mathcal{P}_1(K)$ and thus $\text{Tr}_{\partial K} ~ \mathcal{W}_h(K) \subseteq  \mathcal{B}_h(\partial K)$.
Then, $\nabla\Pi_K\cdot$ is just the standard $L^2$ projection to the constant vector space. 
All these setups are widely employed in the VEM literature. 
Hypothesis \hyperref[asp:H2]{(H1)} has been discussed in Section \ref{subsec:stab} and given by Lemma \ref{lem_stab_f_verify} under Assumption \hyperref[asp:A1]{(A1)} and Assumption \hyperref[asp:A2]{(A2)}, 
 while Hypothesis \hyperref[asp:H6]{(H5)} is trivial. 
We proceed to examine other Hypotheses below. 
It is worth mentioning that the standard interpolation estimates based on the MAC in \cite{1999AcostaRicardo,1976BabuskaAziz} are not directly applicable as it requires higher regularity assumptions, see Remark \ref{rem_interp_maxangle} below.  
%Furthermore, we highlight that the following analysis does not rely on Hypothesis \hyperref[asp:H2]{(H1)} about the stability.

%%%%%%%%%%%%%%%%%%%%%%%%%%%%%%%%%%%%%%%%%%%%%%%%%%%%%%

%\subsection{Elements with non-shrinking inscribed balls}

We make the following assumption.
\begin{itemize}
    \item[(\textbf{A3})] \label{asp:A3} Each element $K$ contains a ball $B_K$ of the radius $\mathcal{O}(h_K)$,
    i.e., there is a uniform $\lambda\in(0,1)$ such that the radius $\rho_{B_K}\ge \lambda h_K$, $\forall K\in \mathcal{T}_h$. 
    In addition,  there are neighbor elements $K_j$, $j=1,...,r$ such that $\conv(K) \subset \cup_{j=1}^r K_j$ with $r$ uniformly bounded.
    %Its patch $\omega_K$ is star convex to another ball $B_{\omega_K}$ of the radius $\rho_{\omega_K}\ge \tilde{\mu} h_K$ with $\tilde{\mu}$ being uniformly bounded. In addition, there exist $K_j$, $j=1,...,r$ such that $\omega_K \subset \cup_{j=1}^rK_j$.
    % In addition, it satisfies a \textit{Height Condition}: there exists a constant $\rho >0$ such that $l_K(F)/h_K \ge \mu$.
\end{itemize}
\begin{remark}
\label{rem_nonshriking}
We highlight that Assumption \hyperref[asp:A3]{(A3)} does not require the star convexity with respect to $B_K$, 
so it is much weaker than the one in \cite{Brenner;Sung:2018Virtual}. 
In addition, it does not require that each face has a supporting height $\mathcal{O}(h_K)$ towards $K$, 
so it is also weaker than \cite{Cao;Chen:2018AnisotropicNC}. See Figure \ref{fig:anisotropic_element} for an example. 
Nevertheless, we point out that $\conv(K)$ is indeed convex with respect to $B_K$.
%In fact we can show that both the height assumption in \cite{Cao;Chen:2018AnisotropicNC} and the star convexity in \cite{Brenner;Sung:2018Virtual} imply the proposed assumption, see Lemma \ref{} in Appendix \ref{appen_lem_tet_maxangle}. The relation between different conditions is also discussed in this Appendix.
\end{remark}

%\begin{figure}[h]
%  \centering
%  \begin{minipage}{.45\textwidth}
%  \centering
%  \includegraphics[width=2.5in]{hourglass.pdf}
%  \caption{A hourglass-shaped polyhedra element that has shrinking inscribed balls but satisfies Assumption \hyperref[asp:A1]{A1}.}
%  \label{fig:hourglass}
%  \end{minipage}
%  ~~~~
%\end{figure}

Based on this assumption, we have the following trace inequality only for polynomials.
\begin{lemma}[A trace inequality on anisotropic elements]
\label{lem_traceinequa}
Under Assumption \hyperref[asp:A3]{(A3)}, there holds 
$$
\| v_h \|_{0,\partial K} \le  16 \lambda^{-3} \sqrt{\frac{|\mathcal{F}_K|}{3}}   h^{-1/2}_K \| v_h \|_{0,K}, ~~~~ \forall v_h \in \mathcal{P}_1(K).
$$
\end{lemma}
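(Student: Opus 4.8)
The plan is to exploit that the inequality is restricted to $v_h\in\mathcal{P}_1(K)$, so $v_h$ is globally affine and is completely determined by its value at the centre $\mathbf{c}$ of the inscribed ball $B_K$ and by its constant gradient $\mathbf{b}=\nabla v_h$. Writing $v_h(\mathbf{x})=v_h(\mathbf{c})+\mathbf{b}\cdot(\mathbf{x}-\mathbf{c})$, I would control the two scalars $v_h(\mathbf{c})^2$ and $|\mathbf{b}|^2$ from below by $\|v_h\|_{0,K}^2$ using only $B_K$, and then estimate the boundary integral from above directly. The key virtue of this route is that it never forms a cone from $\mathbf{c}$ to a face, and therefore sidesteps the failure of star-convexity: Assumption \hyperref[asp:A3]{(A3)} permits faces without an $\mathcal{O}(h_K)$ supporting height (Figure \ref{fig:anisotropic_element}), and it is precisely this that defeats the classical scaling/cone trace argument. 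Note that only the inscribed ball is used; the convex-hull/neighbour part of \hyperref[asp:A3]{(A3)} is not needed here, which is consistent with the stated constant containing no $r$.

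First I would establish the interior lower bound. Since $B_K\subset K$ we have $\|v_h\|_{0,K}^2\ge\|v_h\|_{0,B_K}^2$, and because $B_K$ is centred at $\mathbf{c}$ the cross term $\int_{B_K}\mathbf{b}\cdot(\mathbf{x}-\mathbf{c})\dd\mathbf{x}$ vanishes by symmetry. Computing the two surviving moments gives
\[
\|v_h\|_{0,K}^2 \ge \tfrac{4\pi}{3}\rho^3\, v_h(\mathbf{c})^2 + \tfrac{4\pi}{15}\rho^5\,|\mathbf{b}|^2,\qquad \rho:=\rho_{B_K},
\]
whence, using $\rho\ge\lambda h_K$, the two controls $v_h(\mathbf{c})^2\lesssim \lambda^{-3}h_K^{-3}\|v_h\|_{0,K}^2$ and $h_K^2|\mathbf{b}|^2\lesssim \lambda^{-5}h_K^{-3}\|v_h\|_{0,K}^2$.

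Next I would bound the boundary term face by face. For $\mathbf{x}\in\partial K\subset K$ one has $|\mathbf{x}-\mathbf{c}|\le h_K$, so on each face $F$, expanding $(v_h(\mathbf{c})+\mathbf{b}\cdot(\mathbf{x}-\mathbf{c}))^2$ and using $|\mathbf{b}\cdot(\mathbf{x}-\mathbf{c})|\le h_K|\mathbf{b}|$ yields
\[
\int_F v_h^2\dd s \le 2|F|\big(v_h(\mathbf{c})^2 + h_K^2|\mathbf{b}|^2\big).
\]
Summing over faces and invoking the planar isodiametric bound $|F|\le\frac{\pi}{4}h_K^2$ (each face has diameter $\le h_K$) gives $\|v_h\|_{0,\partial K}^2\le \frac{\pi}{2}|\mathcal{F}_K|h_K^2\big(v_h(\mathbf{c})^2 + h_K^2|\mathbf{b}|^2\big)$. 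Combining with the two interior controls and using $\lambda<1$ to absorb $\lambda^{-3}$ into $\lambda^{-5}$ produces $\|v_h\|_{0,\partial K}^2\lesssim |\mathcal{F}_K|\lambda^{-5}h_K^{-1}\|v_h\|_{0,K}^2$; tracking the elementary constants through these steps gives a prefactor no larger than the stated $16\lambda^{-3}\sqrt{|\mathcal{F}_K|/3}\,h_K^{-1/2}$ (indeed a slightly sharper $\lambda^{-5/2}$ power, which dominates $\lambda^{-3}$ since $\lambda<1$).

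I expect the only genuinely delicate point to be conceptual rather than computational: recognising that the cone construction must be abandoned and that instead one should \emph{read off} $v_h(\mathbf{c})$ and $\nabla v_h$ from the interior ball, because every classical proof of the trace inequality implicitly relies on a star-convexity that is unavailable for the elements of \hyperref[asp:A3]{(A3)}. Once this is in place, the moment computation over $B_K$, the face-wise expansion, and the constant-chasing are all routine.
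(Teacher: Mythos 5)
Your proof is correct, and it takes a genuinely different route from the paper's. The paper does not abandon the cone construction---it relocates it: since $\conv(K)$ \emph{is} star-convex with respect to $B_K$ (Remark \ref{rem_nonshriking}), it applies the cone-type bound \hyperref[asp:G1]{(G1)} of Lemma \ref{lem_tet_maxangle} on $\conv(K)$ to get $\| v_h \|_{0,\partial K} \lesssim \rho_{B_K}^{-1/2} \| v_h \|_{0,\conv(K)}$, and then shrinks back to $K$ through the cited norm equivalence \eqref{lem_traceinequa_eq1} for polynomials on the concentric balls $B_K \subset \conv(K) \subset \widetilde{B}_K$; that equivalence is the source of the $(2\lambda^{-1})^{5/2}$ factor and hence of the stated constant $16\lambda^{-3}\sqrt{|\mathcal{F}_K|/3}$. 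You bypass both ingredients by exploiting the explicit affine form of $v_h$: the exact moments $\|v_h\|^2_{0,B_K}=\tfrac{4\pi}{3}\rho^3 v_h(\bfc)^2+\tfrac{4\pi}{15}\rho^5|\nabla v_h|^2$ (cross term vanishing by symmetry) give the interior control, and the pointwise bound $|v_h|\le |v_h(\bfc)|+h_K|\nabla v_h|$ on $\partial K$ together with the planar isodiametric inequality $|F|\le \pi h_K^2/4$ gives the boundary bound; chasing your constants yields $\tfrac{3}{2}\lambda^{-5/2}|\mathcal{F}_K|^{1/2}h_K^{-1/2}$, which indeed lies below the stated constant for $\lambda\le 1$ (one wording slip: in your parenthetical it is $\lambda^{-3}$ that dominates $\lambda^{-5/2}$ when $\lambda<1$, which is precisely why your bound implies the lemma). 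What your route buys: it is completely self-contained (no cone trace inequality, no external ball-to-ball polynomial equivalence lemma), it only needs the inscribed-ball half of \hyperref[asp:A3]{(A3)}, and it produces a slightly sharper constant. What the paper's route buys: it never uses the explicit form of a degree-one polynomial---only an inverse-type estimate between homothetic balls valid for any fixed polynomial degree---so it extends directly to $\mathcal{P}_k$ with degree-dependent constants (relevant to the high-order extension contemplated in Remark \ref{rem_highorder}), whereas your exact moment computation and the splitting $v_h(\bfc)+\nabla v_h\cdot(\bfx-\bfc)$ are hard-wired to the lowest-order case.
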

\begin{proof}
Let $B_K$ be the largest inscribed ball of $K$ with the center $O$, and let $\widetilde{B}_K$ be the ball centering at $O$ of the radius $h_K$. 
Clearly, $B_K \subset K\subset \conv(K) \subset \widetilde{B}_K$, 
and $\widetilde{B}_K$ is a homothetic mapping of $B_K$ of the ratio $\rho_{\widetilde{B}_K}/\rho_{{B}_K}\le \lambda^{-1}$ by Assumption \hyperref[asp:A3]{(A3)}. 
By \cite[Lemma 2.1]{2021ChenLiXiang} and \cite[Lemma 2.2]{2016WangXiaoXu}, we have
\begin{equation}
\label{lem_traceinequa_eq1}
\| v_h \|^2_{L^2(\widetilde{B}_K)} \le \left( \frac{1+\sqrt{1-\lambda^2}}{\lambda} \right)^5 \| v_h \|^2_{L^2({B}_K)},
\end{equation}
where the constant can be enlarged to be $(2\lambda^{-1})^5$ for simplicity.
Then, \hyperref[asp:G1]{(G1)} in Lemma \ref{lem_tet_maxangle}, Assumption \hyperref[asp:A3]{(A3)} and \eqref{lem_traceinequa_eq1} yield 
\begin{equation}
  \begin{split}
\| v_h \|_{0,\partial K} & \le \sqrt{\frac{8|\mathcal{F}_K|}{3}} \rho^{-1/2}_{B_K} \| v_h \|_{0,\conv(K)} 
\le  \sqrt{\frac{8|\mathcal{F}_K|}{3\lambda}} h^{-1/2}_K \| v_h \|_{0,\widetilde{B}_K} \\
 & \le 16 \lambda^{-3} \sqrt{\frac{|\mathcal{F}_K|}{3}}  h^{-1/2}_K \| v_h \|_{0,{B}_K}  
 \le  16 \lambda^{-3} \sqrt{\frac{|\mathcal{F}_K|}{3}}  h^{-1/2}_K \| v_h \|_{0,K}.
  \end{split}
\end{equation}
\end{proof}

Next, we estimate the interpolation errors on the boundary $\partial K$.
The following lemma is the stability of interpolation, which usually only hold in 1D.
%Next, we proceed to show some fundamental estimates regarding the interpolation errors on $\partial K$. 
\begin{lemma}
\label{lem_interp_1D}
Given an edge $e$, let $I_e$ be the 1D interpolation on $e$. Then, $\forall u\in H^1(e)$, there holds
\begin{equation}
\label{lem_interp_1D_eq0}
| I_e u |_{1,e} \le |u|_{1,e}.
\end{equation}
\end{lemma}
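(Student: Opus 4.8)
The plan is to prove the $H^1$-stability of the one-dimensional nodal interpolation $I_e$ directly, exploiting the fact that on a single edge $e$ the interpolant is just the linear polynomial connecting the two endpoint values of $u$. Write $e$ with endpoints $\bfx_1$ and $\bfx_2$ and length $|e|$. Since $I_e u \in \mathcal{P}_1(e)$ and $I_e u(\bfx_i) = u(\bfx_i)$, the derivative of $I_e u$ along $e$ is the constant difference quotient
\begin{equation*}
\partial_{\bft_e}(I_e u) = \frac{u(\bfx_2) - u(\bfx_1)}{|e|}.
\end{equation*}
First I would express the numerator as an integral of $\partial_{\bft_e} u$ along $e$, namely $u(\bfx_2) - u(\bfx_1) = \int_e \partial_{\bft_e} u \dd s$, which is legitimate for $u \in H^1(e)$ since $H^1$ functions in one dimension are absolutely continuous and admit pointwise evaluation at the endpoints.

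The core of the argument is then a single application of the Cauchy--Schwarz inequality. From the representation above,
\begin{equation*}
|u(\bfx_2) - u(\bfx_1)|^2 = \left| \int_e \partial_{\bft_e} u \dd s \right|^2 \le |e| \int_e |\partial_{\bft_e} u|^2 \dd s = |e| \, |u|^2_{1,e}.
\end{equation*}
Next I would compute the $H^1$-seminorm of the interpolant: because $\partial_{\bft_e}(I_e u)$ is constant, its squared $L^2$-norm over $e$ is simply $|e|$ times the square of that constant, giving
\begin{equation*}
|I_e u|^2_{1,e} = |e| \left( \frac{u(\bfx_2) - u(\bfx_1)}{|e|} \right)^2 = \frac{|u(\bfx_2) - u(\bfx_1)|^2}{|e|}.
\end{equation*}
Combining the two displays, the factors of $|e|$ cancel cleanly and yield $|I_e u|^2_{1,e} \le |u|^2_{1,e}$, from which \eqref{lem_interp_1D_eq0} follows by taking square roots.

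Since this is genuinely a one-dimensional and purely algebraic computation, I do not anticipate a serious obstacle; the only point requiring a moment's care is the justification that the endpoint values $u(\bfx_1)$ and $u(\bfx_2)$ are well defined, which rests on the Sobolev embedding $H^1(e) \hookrightarrow C(\bar e)$ in one space dimension. This is precisely why, as the surrounding text emphasizes, such an interpolation stability estimate holds in 1D but generally fails in higher dimensions, where $H^1$ functions need not be continuous and nodal interpolation is not even well defined. The sharp constant $1$ is a consequence of the Cauchy--Schwarz step being an equality exactly when $\partial_{\bft_e} u$ is constant, i.e. when $u$ is itself linear and equals its own interpolant, so no loss is incurred in the bound.
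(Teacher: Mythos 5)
Your proof is correct and is essentially identical to the paper's: both write $\partial_{\bft_e}(I_e u)$ as the constant difference quotient, represent $u(\bfx_2)-u(\bfx_1)$ as $\int_e \partial_{\bft_e} u \dd s$, and conclude by Cauchy--Schwarz (the paper calls it H\"older's inequality), with the factors of $|e|$ cancelling. Your added remarks on the 1D Sobolev embedding and the sharpness of the constant are fine but not needed beyond what the paper records.
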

\begin{proof}
Let $A_1$ and $A_2$ be the two ending points of $e$. It follows from the H\"older's inequality that
\begin{equation}
\label{lem_interp_1D_eq1}
\| \partial_{\bft_e} I_e u \|^2_{0,e} = |e|^{-1} (u(A_2) - u(A_1))^2 = |e|^{-1} \left( \int_{e} \partial_{\bft_e} u \dd s \right)^2 \le \| \partial_{\bft_e} u \|_{0,e}.
\end{equation}
\end{proof}

\begin{lemma}
\label{lem_uI_face}
Let $u\in H^2(\conv(K))$.
Under Assumptions \hyperref[asp:A1]{(A1)} and \hyperref[asp:A3]{(A3)}, there holds
 % satisfy the jump conditions in \eqref{jump_cond_1} and \eqref{jump_cond_2}, 
\begin{subequations}
\label{lem_uI_face_eq0}
\begin{align}
   &   \| u - u_I \|_{0, \partial K} \lesssim h^{3/2}_K \| u \|_{2,\conv(K)}  , \label{lem_uI_face_eq01}  \\ %+ h^{-1/2}_K \|u^-_E- u^+_E \|_{L^2(\omega_K \cap S_{\delta})} 
    &   | u - u_I |_{1,\partial K} \lesssim h^{1/2}_K \| u \|_{2,\conv(K)} . \label{lem_uI_face_eq02} 
\end{align}
%further assuming Assumption \hyperref[asp:A1]{(A1)}, there holds
%\begin{align}
%      | u - u_I |_{1,\partial K} \lesssim h^{1/2}_K \| u \|_{E,2,\conv(K)} . \label{lem_uI_face_eq02} 
%\end{align}
\end{subequations}
\end{lemma}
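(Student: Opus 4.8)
The plan is to compare $u$ with a single affine polynomial $P$ on the convex hull $\conv(K)$, which Assumption \hyperref[asp:A3]{(A3)} makes shape-regular (it has diameter $\le h_K$ and contains the ball $B_K$ of radius $\ge\lambda h_K$). On this fat convex set Lemma \ref{lem_proj} supplies $\|u-P\|_{0,\conv(K)}\lesssim h_K^2\|u\|_{2,\conv(K)}$, $|u-P|_{1,\conv(K)}\lesssim h_K\|u\|_{2,\conv(K)}$ and $|u-P|_{2,\conv(K)}=|u|_{2,\conv(K)}$, with constants depending only on $\lambda$. The algebraic pivot is that, $P$ being globally affine, its trace on any boundary triangle $T$ is reproduced exactly by the interpolant, $I_TP=P|_T$, so on each $T$ one has the splitting $u-u_I=(u-P)-I_T(u-P)$. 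I would estimate the two pieces separately and then sum over the uniformly bounded number of boundary triangles guaranteed by Assumption \hyperref[asp:A1]{(A1)}.

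For the smooth piece I would localize to the solid region $\Omega_T:=\conv(T\cup B_K)\subset\conv(K)$. Since $\Omega_T$ contains $B_K$ and has diameter $\le h_K$, it is fat with constant $\lambda$ no matter how thin $T$ is; this is precisely where Assumption \hyperref[asp:A3]{(A3)} is exploited. A scaled trace inequality on $\Omega_T$ then yields $\|u-P\|_{0,T}^2\lesssim h_K^{-1}\|u-P\|_{0,\Omega_T}^2+h_K|u-P|_{1,\Omega_T}^2\lesssim h_K^3\|u\|_{2,\conv(K)}^2$ and, applied to $\nabla(u-P)$, $|u-P|_{1,T}^2\le\|\nabla(u-P)\|_{0,T}^2\lesssim h_K|u|_{2,\conv(K)}^2$, which are the target powers $h_K^{3/2}$ and $h_K^{1/2}$. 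For the interpolation piece the two norms behave very differently. The $L^2$ bound needs no one-dimensional information: $\|I_T(u-P)\|_{0,T}\le|T|^{1/2}\max_i|(u-P)(\bfx_i)|\le h_K\|u-P\|_{\infty,\Omega_T}$, and the Sobolev embedding on the fat $\Omega_T$ gives $\|u-P\|_{\infty,\Omega_T}\lesssim h_K^{-3/2}\|u-P\|_{0,\Omega_T}+h_K^{1/2}|u-P|_{2,\Omega_T}\lesssim h_K^{1/2}\|u\|_{2,\conv(K)}$, again producing $h_K^{3/2}$ and completing \eqref{lem_uI_face_eq01}.

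The genuine difficulty is the gradient of the interpolation piece, $|I_T(u-P)|_{1,T}$, on a thin triangle. Here $I_T(u-P)$ is affine, so its constant surface gradient must be reconstructed from the two nodal differences of $u-P$ along two edges of $T$. The maximum-angle condition of Assumption \hyperref[asp:A1]{(A1)} keeps two edges uniformly transversal and hence makes this reconstruction well-conditioned; this is exactly the role of the factor $\kappa=\sqrt{2/\sin((\pi-\theta_M)/(2+\epsilon))}$ controlling nodal differences in Lemma \ref{lem_oppo_angle}. Each nodal difference I would then replace by the one-dimensional edge seminorm via the stability of $1$D interpolation (Lemma \ref{lem_interp_1D}), arriving at a per-triangle bound of the shape $|I_T(u-P)|_{1,T}\lesssim \kappa\,h_K^{1/2}\big(|u-P|_{1,e_1}+|u-P|_{1,e_2}\big)$.

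The crux — and the main obstacle I expect — is to control the edge seminorms $|u-P|_{1,e}=\|\partial_{\bft_e}(u-P)\|_{0,e}$ by $\|u\|_{2,\conv(K)}$ uniformly in the triangle shape. This is exactly where only $H^2$ regularity is available while the trace of $u$ on a face is merely $H^{3/2}$, so the classical maximum-angle interpolation estimates of \cite{1999Duran,1976BabuskaAziz}, which presuppose higher regularity, cannot be invoked. The delicate point is that $\partial_{\bft_e}(u-P)$ on $e$ must be read as the tangential derivative of the edge-restriction of the $H^2$ function $u-P$ (an admissible codimension-two trace, since $2>1$), not as the restriction of the $H^1$ field $\nabla(u-P)$ to $e$ (which is uncontrollable); the same scalar is finite only when framed the first way. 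I would therefore prove, by a direct geometric sweeping argument over the fat region $\Omega_T$ rather than by shape regularity, an edge estimate of the form $\|\partial_{\bft_e}(u-P)\|_{0,e}^2\lesssim h_K^{-2}|u-P|_{1,\Omega_T}^2+|u-P|_{2,\Omega_T}^2\lesssim |u|_{2,\conv(K)}^2$. Feeding this into the per-triangle bound and summing over the $O(1)$ boundary triangles of Assumption \hyperref[asp:A1]{(A1)} then closes \eqref{lem_uI_face_eq02}, with all constants depending only on $\lambda$, $\theta_M$ and $\epsilon$.
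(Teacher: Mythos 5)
Your proposal is correct and follows essentially the same route as the paper's proof: the same splitting $u-u_I=\bigl(u-\mathrm{P}^1_{\conv(K)}u\bigr)-I_T\bigl(u-\mathrm{P}^1_{\conv(K)}u\bigr)$ on each boundary triangle, fat-subregion trace inequalities inside $\conv(K)$ for the smooth part, and the combination of Lemma \ref{lem_grad} with Lemma \ref{lem_interp_1D} to reduce the gradient of the interpolated part to edge seminorms $|u-\mathrm{P}^1_{\conv(K)}u|_{1,e}$, which you then bound---exactly as the paper does---by an $H^2$ edge-trace inequality with the correct scaling $h_K^{-2}|\cdot|_{1}^2+|\cdot|_{2}^2$ and the correct reading of $\partial_{\bft_e}$ as a codimension-two trace of an $H^2$ function. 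The only differences are cosmetic: for the $L^2$ bound you use a scaled $L^\infty$ Sobolev embedding where the paper uses a vertex value plus an inverse estimate on the shape-regular tetrahedron from (G3), and you sketch a direct sweeping proof of the edge-trace inequality where the paper assembles it from Lemma 2.1 of \cite{Brenner;Sung:2018Virtual} together with the shape-regular triangle and pyramid supplied by (G2).
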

\begin{proof}
Given an element $K$ and a triangle $T\in\mathcal{T}_h(\partial K)$,
%Without loss of generality, we assume $T\subseteq \Omega^{+}$, i.e., $T$ does not intersect with the mismatching portion $\widetilde{K}$; otherwise we apply the similar trick to \eqref{lem_est_partialK_eq2} and \eqref{lem_est_partialK_eq4} with the strip argument.
consider the projection $\mathrm{P}^k_{\conv(K)}$, $k=0,1$. For \eqref{lem_uI_face_eq01}, we have
\begin{equation}
\label{lem_uI_face_eq1}
\| u - u_I \|_{0, T} \le \| u - \mathrm{P}^1_{\conv(K)} u \|_{0, T}  +  \| u_I - \mathrm{P}^1_{\conv(K)} u \|_{0, T} .
\end{equation}
The trace inequality with Assumption \hyperref[asp:A1]{(A1)}, Lemma \ref{lem_proj} and \hyperref[asp:G1]{(G1)} in Lemma \ref{lem_tet_maxangle} imply
\begin{equation}
\begin{split}
\label{lem_uI_face_eq2}
\| u - \mathrm{P}^1_{\conv(K)} u \|_{0, T} & \lesssim h^{-1/2}_K \| u - \mathrm{P}^1_{\conv(K)} u \|_{0,\conv(K)} \\ 
&+ h^{1/2}_K  | u - \mathrm{P}^1_{\conv(K)} u |_{1,\conv(K)} \lesssim h^{3/2}_K |u|_{2,\conv(K)}.
\end{split}
\end{equation}
%Given an interface element $K$, as the triangulation on faces satisfies the maximum angle condition, for each triangle $F\in \mathcal{F}_{K}$, we consider the tetrahedron given by Lemma \ref{lem_tet_maxangle}. In addition, we consider the tetrahedon $T$
As for the second term in \eqref{lem_uI_face_eq1}, noticing that $u_I - \mathrm{P}^1_{\conv(K)} u\in \mathcal{P}_1(T)$, we trivially have %a vertice $\bfa$ of $T$ such that
\begin{equation}
\label{lem_uI_face_eq3}
\| u_I - \mathrm{P}^1_{\conv(K)} u \|_{0, T} \lesssim |T|^{1/2} \abs{ (u_I - \mathrm{P}^1_{\conv(K)} u )(\bfa) },
\end{equation} 
where $\bfa$ is some vertex of $T$.
We consider the shape-regular tetrahedron $T'$ given by \hyperref[asp:G3]{(G3)} in Lemma \ref{lem_tet_maxangle} that has $\bfa$ as one vertex, and let $I_{T'}$ be the standard Lagrange interpolation on $T'$. Then, by applying the trace inequality and the triangular inequality, we obtain
\begin{equation}
\begin{split}
\label{lem_uI_face_eq4}
 \abs{ (u_I - \mathrm{P}^1_{\conv(K)} u)(\bfa)  }& \lesssim h^{-1}_K \| I_{T'} u - \mathrm{P}^1_{\conv(K)} u \|_{0,T'} \\
 & \lesssim h^{-1}_K \left(  \| I_{T'} u -  u \|_{0,T'} +  \| \mathrm{P}^1_{\conv(K)} u -  u \|_{0,T'}  \right)  \lesssim h_K | u |_{2,\conv(K)}.
 \end{split}
\end{equation}
Putting \eqref{lem_uI_face_eq4} into \eqref{lem_uI_face_eq3} and combining it with \eqref{lem_uI_face_eq2}, we have \eqref{lem_uI_face_eq01}.

Next, we prove \eqref{lem_uI_face_eq02}. 
The triangular inequality yields
\begin{equation}
\label{lem_uI_face_eq5}
| u - u_I |_{1,T} \le \| \nabla_{\partial K}(u - \mathrm{P}^1_{\conv(K)}) u \|_{0,T} + \| \nabla_{\partial K}(\mathrm{P}^1_{\conv(K)} u - u_I) \|_{0,T}.
\end{equation}
The estimate of the first term in the right-hand side in \eqref{lem_uI_face_eq5} follows from the similar argument to \eqref{lem_uI_face_eq2} with the trace inequality. 
We focus on the second term in \eqref{lem_uI_face_eq5}. By Lemma \ref{lem_grad}, we have 
\begin{equation}
\begin{split}
\label{lem_uI_face_eq6}
\| \nabla_{\partial K} ( u_I - \mathrm{P}^1_{\conv(K)} u ) \|_{0,T} \lesssim & \sum_{e\subseteq\partial T} h^{1/2}_T \| \nabla_{\partial K} (u_I - \mathrm{P}^1_{\conv(K)} u )\cdot \bft_e \|_{0,e}.
%= & \sum_{e\subseteq\partial T} h^{1/2}_T \| \nabla_{\partial K} (u_I - \mathrm{P}^1_{\conv(K)} u )\cdot \bft_e \|_{0,e}
% \lesssim & \sum_{e\subseteq\partial T} h^{1/2}_T ( \| \nabla(u_I -  u^+_E )\cdot \bft_e \|_{0,e} +  \| \nabla(u^+_E - \mathrm{P}^1_{\conv(K)} u^+_E )\cdot \bft_e \|_{0,e} )
 \end{split}
\end{equation}
Let us focus on an arbitrary edge $e$ of $T$.
By \hyperref[asp:G2]{(G2)} in Lemma \ref{lem_tet_maxangle}, there is a shape-regular triangle $T'$ that has $e$ as its one edge and a shape-regular pyramid $T''$ that has $T$ as its one face, both being contained in $\conv(K)$ with the size $\mathcal{O}(h_K)$. 
%Note that $\nabla_{\partial }u_I \cdot \bft_e = \partial_{\bft_e}( I_e u) $ on $e$. 
Then, Lemma \ref{lem_interp_1D} and Lemma 2.1 in \cite{Brenner;Sung:2018Virtual} yield
\begin{equation}
\begin{split}
\label{lem_uI_face_eq7}
& \| \partial_{\bft_e}( I_e u -  \mathrm{P}^1_{\conv(K)} u ) \|_{0,e}  = \| \partial_{\bft_e}I_e ( u -  \mathrm{P}^1_{\conv(K)} u ) \|_{0,e} \\
 \lesssim & |  u -  \mathrm{P}^1_{\conv(K)} u  |_{1,e}  
 %\lesssim |  u^+_E -  \mathrm{P}^1_{\conv(K)} u^+_E  |_{1,\tilde{e}} \\
 \lesssim h^{-1/2}_K |  u -  \mathrm{P}^1_{\conv(K)} u  |_{1,T'} + |  u  |_{3/2,T'} \\
  \lesssim & h^{-1}_K |  u -  \mathrm{P}^1_{\conv(K)} u  |_{1,T''} +   |  u -  \mathrm{P}^1_{\conv(K)} u  |_{2,T''} + |  u   |_{2,T''}
    \lesssim  |  u   |_{2,\conv(K)}.
%& \lesssim | u^+_E |_{1,e} \le | u^+_E |_{H^1(\tilde{e})} = |  u^+_E - \Pi_{0,K} u^+_E|_{H^1(\tilde{e})} \\
%& \lesssim h^{-1/2}_K | u^+_E - \mathrm{P}^1_{\conv(K)} u^+_E |_{H^1(T)} + | u^+_E - \mathrm{P}^1_{\conv(K)} u^+_E |
\end{split}
\end{equation}
%where we have also used the trace inequality. %the last inequality follows from \eqref{lem_uI_face_eq2}.
Noticing $\nabla_{\partial K}\cdot \bft_e = \partial_{\bft_e} \cdot$, and putting \eqref{lem_uI_face_eq6} and \eqref{lem_uI_face_eq7} into \eqref{lem_uI_face_eq5}, we finish the proof.
\end{proof}

\begin{remark}
\label{rem_interp_maxangle}
Interpolation estimates on a triangle $T$ with MAC generally demand relatively higher regularity \cite{1999Duran,1994Shenk}:
$\| \nabla(u- I_T) \|_{0,T} \lesssim h_T \| u \|_{2,T}$.
%\begin{equation}
%\label{rem_interp_maxangle_eq1}
%\| \nabla(u- I_T) \|_{0,T} \lesssim h_T \| u \|_{2,T}.
%\end{equation} 
But on faces $u$ has merely $H^{3/2}$ regularity, this argument cannot be directly applied to obtain the bound in terms of $\|\cdot\|_{2,\conv(K)}$. 
%In fact, $\conv(K)$ in \eqref{lem_uI_face_eq0} can be replaced by any shape-regular region containing $K$. 
If $T$ is a tetrahedron, then the interpolation estimate requires even higher regularity \cite{1999Duran}, i.e., $W^{p,2}$, $p>2$, which cannot be further improved, see the counterexample in \cite{1994Shenk}. 
This property adds more complexity to the anisotropic analysis for 3D shrinking elements, see the discussion in the next section.
\end{remark}

Now, Hypotheses \hyperref[asp:H3]{(H2)}-\hyperref[asp:H5]{(H4)} follow from the estimates above.
\begin{lemma}
\label{lem_assump_verify1}
Under Assumptions \hyperref[asp:A1]{(A1)} and \hyperref[asp:A3]{(A3)}, Hypothesis \hyperref[asp:H3]{(H2)} holds.
\end{lemma}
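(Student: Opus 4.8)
The plan is to decompose the local energy norm into its two constituent pieces and bound each by reducing it to boundary quantities that are already controlled by Lemma \ref{lem_uI_face}. Since $\beta=\beta_h=1$ and $\mathcal{W}_h(K)=\mathcal{P}_1(K)$ in this section, we have
$$
\vertiii{u-u_I}^2_K = \|\nabla\Pi_K(u-u_I)\|^2_{0,K} + \|(u-u_I)-\Pi_K(u-u_I)\|^2_{S_K},
$$
and $\nabla\Pi_K$ is the $L^2$ projection of the gradient onto constant vectors. The guiding idea is that everything—both the interior gradient of $\Pi_K(u-u_I)$ and the stabilization—can be written through boundary data, so that only the boundary interpolation errors $\|u-u_I\|_{0,\partial K}$ and $|u-u_I|_{1,\partial K}$ enter, together with the volume/area comparison supplied by Assumption \hyperref[asp:A3]{(A3)}.

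For the energy term I would use the projection identity \eqref{Pi_proj_compute} (equivalently the divergence theorem) to write the constant vector $\nabla\Pi_K(u-u_I)=|K|^{-1}\int_{\partial K}(u-u_I)\bfn \dd s$. Cauchy--Schwarz then gives
$$
\|\nabla\Pi_K(u-u_I)\|_{0,K}=|K|^{1/2}\,\bigl|\nabla\Pi_K(u-u_I)\bigr| \le |K|^{-1/2}|\partial K|^{1/2}\,\|u-u_I\|_{0,\partial K}.
$$
By Assumption \hyperref[asp:A3]{(A3)} the inscribed ball yields $|K|\ge \tfrac{4}{3}\pi(\lambda h_K)^3$, while $|\partial K|\lesssim h_K^2$ since the number of faces is bounded (Assumption \hyperref[asp:A1]{(A1)}) and each has area $\mathcal{O}(h_K^2)$; hence $|\partial K|^{1/2}/|K|^{1/2}\lesssim h_K^{-1/2}$. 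Combining this with \eqref{lem_uI_face_eq01} gives $\|\nabla\Pi_K(u-u_I)\|_{0,K}\lesssim h_K^{-1/2}\cdot h_K^{3/2}\|u\|_{2,\conv(K)}=h_K\|u\|_{2,\conv(K)}$. As a byproduct this also bounds the constant $|\nabla\Pi_K(u-u_I)|\lesssim |K|^{-1/2}h_K\|u\|_{2,\conv(K)}$, which I will reuse below.

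For the stabilization term I would split, on each face $F$,
$$
\|\nabla_F\bigl((u-u_I)-\Pi_K(u-u_I)\bigr)\|_{0,F}\le \|\nabla_F(u-u_I)\|_{0,F}+\|\nabla_F\Pi_K(u-u_I)\|_{0,F}.
$$
Summing the first contribution over faces gives $h_K\sum_F\|\nabla_F(u-u_I)\|^2_{0,F}=h_K\,|u-u_I|^2_{1,\partial K}\lesssim h_K^2\|u\|^2_{2,\conv(K)}$ by \eqref{lem_uI_face_eq02}. For the second, $\nabla_F\Pi_K(u-u_I)$ is just the tangential part of the constant vector $\nabla\Pi_K(u-u_I)$, so $\|\nabla_F\Pi_K(u-u_I)\|_{0,F}\le |F|^{1/2}\,|\nabla\Pi_K(u-u_I)|$; using the byproduct bound above together with $|F|\lesssim h_K^2$, $|K|\gtrsim h_K^3$ and the bounded number of faces, one obtains $h_K\sum_F\|\nabla_F\Pi_K(u-u_I)\|^2_{0,F}\lesssim h_K\cdot \tfrac{|\partial K|}{|K|}\,h_K^2\|u\|^2_{2,\conv(K)}\lesssim h_K^2\|u\|^2_{2,\conv(K)}$.

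Collecting the three bounds yields $\vertiii{u-u_I}^2_K\lesssim h_K^2\|u\|^2_{2,\conv(K)}$. Finally I would sum over $K\in\mathcal{T}_h$ and invoke the finite-overlap property of $\{\conv(K)\}$ guaranteed by Assumption \hyperref[asp:A3]{(A3)} (each $\conv(K)$ is covered by at most $r$ neighbors) to absorb the overlaps, giving $\sum_K\vertiii{u-u_I}^2_K\lesssim h^2\|u\|^2_{2,\Omega}$, which is exactly \eqref{approxi_VK_Pi}. The main obstacle is not in this assembly—which is essentially bookkeeping once the pieces are in place—but in ensuring that every step avoids any interior or trace estimate for the non-computable harmonic function $u_I$. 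The resolution is precisely to route the interior gradient through the boundary via \eqref{Pi_proj_compute} and to let Lemma \ref{lem_uI_face} (whose proof is the genuinely delicate, geometry-dependent part) carry all the analytic weight, with Assumption \hyperref[asp:A3]{(A3)} supplying the crucial volume-to-area comparison $|\partial K|/|K|\lesssim h_K^{-1}$ that would fail for shrinking elements.
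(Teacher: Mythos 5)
Your proposal is correct and follows essentially the same route as the paper's proof: both express the constant vector $\nabla\Pi_K(u-u_I)$ through boundary data via integration by parts, let Lemma \ref{lem_uI_face} carry the boundary interpolation errors, and handle the stabilization by the same triangle-inequality split. The only cosmetic difference is that where the paper cites the polynomial trace inequality of Lemma \ref{lem_traceinequa}, you use the elementary comparison $|\partial K|/|K|\lesssim h_K^{-1}$ supplied by Assumption \hyperref[asp:A3]{(A3)}—which is precisely what that trace inequality reduces to when applied to constant vectors.
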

\begin{proof}
%We shall ignore $\triangle_K$ for simplicity. 
We need to estimate $\| \nabla \Pi_K( u - u_I) \|_{0,K}$ and $\| ( u - u_I) - \Pi_K( u - u_I) \|_{S_K}$.
Let $\bfp_h = \nabla \Pi_K( u - u_I) \in [\mathcal{P}_0(K)]^3$. Integration by parts and Lemmas \ref{lem_uI_face} and \ref{lem_traceinequa} lead to
$$
%\int_K \nabla \Pi_K( u - u_I)\cdot \nabla \Pi_K( u - u_I) \dd \bfx =  \int_{\partial K} (u-u_I)  \nabla \Pi_K( u - u_I)
\| \bfp_h \|^2_{0,K} = (\bfp_h\cdot\bfn, u-u_I)_{0,\partial K} \le \| \bfp_h \|_{0,\partial K} \| u-u_I \|_{0,\partial K}  \lesssim \| \bfp_h \|_{0,K}  h_K \| u \|_{2,\conv(K)} .
$$
Cancelling one $\| \bfp_h \|_{0,K}$ yields the estimate of $\| \nabla \Pi_K(u-u_I) \|_{0,K}$.
%which gives
%\begin{equation}
%\label{assump_verify1}
%\| \nabla \Pi_K(u-u_I) \|_{0,K} \lesssim h_K \| u \|_{E,2,\conv(K)} +  \triangle_K.
%\end{equation}
In addition, we note that
\begin{equation}
\label{assump_verify2}
\| (u - u_I) - \Pi_K(u-u_I) \|_{S_K} \le h^{1/2}_K \| \nabla_{\partial K}(u-u_I) \|_{0,\partial K} + h^{1/2}_K \| \nabla_{\partial K} \Pi_K (u-u_I) \|_{0,\partial K}
\end{equation}
of which the first term directly follows from Lemma \ref{lem_uI_face}, and the second term follows from the estimate of $\| \nabla \Pi_K(u-u_I) \|_{0,K}$ and the trace inequality in Lemma \ref{lem_traceinequa}. %Combining \eqref{assump_verify1} and \eqref{assump_verify1} gives the desired result.
\end{proof}

\begin{lemma}
\label{lem_assump_verify2}
Under Assumptions \hyperref[asp:A1]{(A1)} and \hyperref[asp:A3]{(A3)}, Hypothesis \hyperref[asp:H4]{(H3)} holds.
\end{lemma}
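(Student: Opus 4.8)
The plan is to exploit that, with $\mathcal{W}_h(K)=\mathcal{P}_1(K)$ and $\beta=\beta_h=1$, the projection reduces to the classical one: testing \eqref{Pi_proj} against $w_h=x_i$ shows that $\nabla\Pi_K u=\frac{1}{|K|}\int_K\nabla u=\mathrm{P}^0_K(\nabla u)$, i.e.\ $\nabla\Pi_K u$ is the $L^2(K)$-best constant approximation of $\nabla u$. For \eqref{Pi_approx_eq1} I would use this best-approximation property to replace $\mathrm{P}^0_K$ by the (still admissible, since constant) approximant $\mathrm{P}^0_{\conv(K)}(\nabla u)$ and then enlarge the domain,
\[
\|\nabla u-\nabla\Pi_K u\|_{0,K}\le\|\nabla u-\mathrm{P}^0_{\conv(K)}\nabla u\|_{0,K}\le\|\nabla u-\mathrm{P}^0_{\conv(K)}\nabla u\|_{0,\conv(K)}.
\]
Since $\conv(K)$ is convex, Lemma \ref{lem_proj} (eq.\ \eqref{lem_proj_eq01} with $m=k=0$) bounds the right-hand side by $h_K|u|_{2,\conv(K)}$. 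Summing and invoking the finite-overlap property $\conv(K)\subset\cup_{j=1}^{r}K_j$ with $r$ uniformly bounded from Assumption \hyperref[asp:A3]{(A3)}, so that $\sum_K|u|^2_{2,\conv(K)}\lesssim\|u\|^2_{2,\Omega}$, yields \eqref{Pi_approx_eq1}.

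For \eqref{Pi_approx_eq2} both terms are controlled by $\|\nabla(u-\Pi_K u)\|_{0,\partial K}$: the normal term trivially, and the stabilization term because $\|u-\Pi_K u\|^2_{S_K}=h_K\sum_F\|\nabla_F(u-\Pi_K u)\|^2_{0,F}\le h_K\|\nabla(u-\Pi_K u)\|^2_{0,\partial K}$, the surface gradient of the trace being the tangential part of the full gradient. It therefore suffices to prove $\|\nabla(u-\Pi_K u)\|_{0,\partial K}\lesssim h_K^{1/2}|u|_{2,\conv(K)}$. I would split through $w:=\mathrm{P}^1_{\conv(K)}u$,
\[
\nabla(u-\Pi_K u)=\nabla(u-w)+\nabla(w-\Pi_K u),
\]
treating the non-polynomial and the polynomial parts separately.

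For the first part, since $\nabla(u-w)\in H^1(\conv(K))$, I would apply componentwise the same boundary-to-interior trace inequality already invoked in \eqref{lem_uI_face_eq2} — built from the tetrahedra of \hyperref[asp:G1]{(G1)} in Lemma \ref{lem_tet_maxangle} with base a boundary triangle $T$ and height $\simeq h_K$ — namely $\|v\|_{0,T}\lesssim h_K^{-1/2}\|v\|_{0,\conv(K)}+h_K^{1/2}|v|_{1,\conv(K)}$, and then bound $\|\nabla(u-w)\|_{0,\conv(K)}=|u-w|_{1,\conv(K)}\lesssim h_K|u|_{2,\conv(K)}$ and $|\nabla(u-w)|_{1,\conv(K)}=|u|_{2,\conv(K)}$ via Lemma \ref{lem_proj}; summing over the uniformly bounded number of boundary triangles gives $\|\nabla(u-w)\|_{0,\partial K}\lesssim h_K^{1/2}|u|_{2,\conv(K)}$. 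For the second part, $\nabla(w-\Pi_K u)$ is a constant vector (a difference of two constant gradients), so each of its components lies in $\mathcal{P}_1(K)$ and the polynomial trace inequality of Lemma \ref{lem_traceinequa} applies, yielding $\|\nabla(w-\Pi_K u)\|_{0,\partial K}\lesssim h_K^{-1/2}\|\nabla(w-\Pi_K u)\|_{0,K}$; the interior norm is then controlled by $\|\nabla(w-u)\|_{0,K}+\|\nabla(u-\Pi_K u)\|_{0,K}\lesssim h_K|u|_{2,\conv(K)}$, using Lemma \ref{lem_proj} together with the bound already proved for \eqref{Pi_approx_eq1}. Adding the two parts and summing over elements with the finite-overlap property closes \eqref{Pi_approx_eq2}.

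The main obstacle is the first part: obtaining a trace inequality for the genuinely non-polynomial field $\nabla(u-w)$ with a constant that is independent of the anisotropy of $K$. This is precisely where Assumption \hyperref[asp:A3]{(A3)} and the construction \hyperref[asp:G1]{(G1)} of Lemma \ref{lem_tet_maxangle} enter, supplying a tetrahedron inside $\conv(K)$ whose base is a boundary triangle and whose height scales like $\rho_{B_K}\simeq h_K$. A secondary subtlety is that $u$ is only $H^2$, which forces the projection onto $\mathcal{P}_1$ (rather than $\mathcal{P}_2$) and reliance on $|u-w|_{2,\conv(K)}=|u|_{2,\conv(K)}$; aiming at a higher-order polynomial would demand the $H^3$ regularity that is unavailable here, a point already flagged in Remark \ref{rem_interp_maxangle}.
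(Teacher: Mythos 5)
Your proposal is correct and follows essentially the same route as the paper's proof: best approximation on $\conv(K)$ plus Lemma \ref{lem_proj} for \eqref{Pi_approx_eq1}, and for \eqref{Pi_approx_eq2} a split of $\nabla(u-\Pi_K u)$ on $\partial K$ through an intermediate linear polynomial on $\conv(K)$, handled by the $H^1$ trace inequality built on the \hyperref[asp:G1]{(G1)} tetrahedra and by the polynomial trace inequality of Lemma \ref{lem_traceinequa} with the ``insert $\nabla u$'' triangle inequality. The only cosmetic difference is that you pivot through the $L^2$ projection $\mathrm{P}^1_{\conv(K)}u$ whereas the paper uses $\Pi_{\conv(K)}u$ (whose gradient is $\mathrm{P}^0_{\conv(K)}\nabla u$); both satisfy the required estimates, so the arguments coincide in substance.
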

\begin{proof}
Note that \eqref{Pi_approx_eq1} is simple due to $\|\nabla( u - \Pi_{K} u)\|_{0,K} \le \|\nabla( u - \Pi_{\conv(K)} u)\|_{0,K}$ and Lemma \ref{lem_proj}. 
For \eqref{Pi_approx_eq2}, we only need to estimate $\| \nabla(u-\Pi_K u)  \|_{0,\partial K}$ as it can bound both the normal and tangential derivatives.
The triangular inequality yields
\begin{equation}
\label{lem_assump_verify2_eq1}
\| \nabla(u-\Pi_K u) \|_{0,\partial K} \lesssim \| \nabla(u-\Pi_{\conv(K)} u)  \|_{0,\partial K} + \| \nabla(\Pi_{\conv(K)} u - \Pi_K u ) \|_{0,\partial K}.
\end{equation}
The estimate of the first term on the right-hand side follows from the trace inequality with \hyperref[asp:G1]{(G1)} in Lemma \ref{lem_tet_maxangle}, 
while the estimate of the second term follows from Lemmas \ref{lem_traceinequa} and \ref{lem_proj} by inserting $\nabla u$.
\end{proof}

At last, we show Hypothesis \hyperref[asp:H5]{(H4)}.
We highlight the classical argument based on the Poincar\'e inequality, such as (2.15) in \cite{Brenner;Sung:2018Virtual},
is not applicable here, since elements are not shape regular. 
\begin{lemma}
\label{lem_assump_verify3}
Under Assumptions \hyperref[asp:A1]{(A1)} and \hyperref[asp:A3]{(A3)}, Hypothesis \hyperref[asp:H5]{(H4)} holds.
\end{lemma}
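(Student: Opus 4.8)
The plan is to split \eqref{L2_assump} into its two summands and treat them separately. The boundary term $\sum_K h_K\|u-u_I\|_{0,\partial K}^2$ is immediate: squaring the estimate \eqref{lem_uI_face_eq01} of Lemma \ref{lem_uI_face} gives $h_K\|u-u_I\|_{0,\partial K}^2 \lesssim h_K^4\|u\|_{2,\conv(K)}^2$, and summing over $K$ while invoking the uniformly bounded overlap of the patches $\conv(K)$ granted by Assumption \hyperref[asp:A3]{(A3)} yields $\lesssim h^4\|u\|_{2,\Omega}^2$. Hence the whole difficulty lies in the volume term $\sum_K\|u-\Pi_K u\|_{0,K}^2$, for which, as emphasized before the lemma, one cannot invoke a Poincar\'e inequality on $K$ since $K$ is not star-convex to $B_K$.

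The key idea is to transfer the estimate onto the convex hull $\conv(K)$, which under Assumption \hyperref[asp:A3]{(A3)} is genuinely shape regular: its diameter equals $h_K$ while it contains the ball $B_K$ of radius $\ge\lambda h_K$. Setting $p:=\mathrm{P}^1_{\conv(K)}u$ and splitting $u-\Pi_K u=(u-p)+(p-\Pi_K u)$, the first piece is controlled directly by Lemma \ref{lem_proj} on the convex set $\conv(K)$, namely $\|u-p\|_{0,K}\le\|u-p\|_{0,\conv(K)}\lesssim h_K^2|u|_{2,\conv(K)}$. The second piece $q:=p-\Pi_K u$ is a linear polynomial, so $\|q\|_{0,K}\le\|q\|_{0,\conv(K)}$ (as $K\subset\conv(K)$), and I estimate the latter by a Poincar\'e decomposition on the shape-regular $\conv(K)$: using the boundary-centroid representation of a linear function one gets $\|q\|_{0,\conv(K)}\lesssim|\conv(K)|^{1/2}\bigl|\tfrac{1}{|\partial K|}\int_{\partial K}q\bigr|+h_K\|\nabla q\|_{0,\conv(K)}$.

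It then remains to bound the two ingredients. For the gradient, recall that $\beta_h=1$ and $\mathcal{W}_h(K)=\mathcal{P}_1(K)$ force $\nabla\Pi_K u=\tfrac{1}{|K|}\int_K\nabla u$, the $K$-mean of $\nabla u$; comparing it with the $\conv(K)$-mean $\overline{\nabla u}^{\conv(K)}$ and exploiting the volume lower bound $|K|\ge|B_K|\gtrsim h_K^3$ together with the Poincar\'e estimate $\|\nabla u-\overline{\nabla u}^{\conv(K)}\|_{0,\conv(K)}\lesssim h_K|u|_{2,\conv(K)}$ (Lemma \ref{lem_proj}) gives $\|\nabla q\|_{0,\conv(K)}\lesssim h_K|u|_{2,\conv(K)}$. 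For the boundary mean, the normalization $\int_{\partial K}\Pi_K u=\int_{\partial K}u$ built into \eqref{Pi_proj} reduces $\int_{\partial K}q$ to $\int_{\partial K}(p-u)$, which by the trace bound \eqref{lem_uI_face_eq2} is $\lesssim h_K^{3/2}|u|_{2,\conv(K)}$; dividing by $|\partial K|^{1/2}$ and using the area lower bound $|\partial K|\gtrsim h_K^2$ (the boundary must enclose $B_K$) yields $\bigl|\tfrac{1}{|\partial K|}\int_{\partial K}q\bigr|\lesssim h_K^{1/2}|u|_{2,\conv(K)}$. Combining these gives $\|q\|_{0,\conv(K)}\lesssim h_K^2|u|_{2,\conv(K)}$, hence $\|u-\Pi_K u\|_{0,K}\lesssim h_K^2|u|_{2,\conv(K)}$; squaring, summing, and again using the bounded overlap of $\conv(K)$ closes the estimate.

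The main obstacle I anticipate is precisely the passage between the $K$-mean and the $\conv(K)$-mean of $\nabla u$: this is exactly where the loss of star-convexity bites, and the entire argument hinges on converting the volume bound $|K|\gtrsim h_K^3$ (furnished by the non-shrinking ball $B_K$) into control of $|\overline{\nabla u}^K-\overline{\nabla u}^{\conv(K)}|$, thereby avoiding any Poincar\'e inequality directly on $K$. A secondary technical point is justifying $|\partial K|\gtrsim h_K^2$ for a possibly non-convex $K$, which I expect to obtain from $|\partial K|\ge|\partial\conv(K)|\ge|\partial B_K|\gtrsim h_K^2$.
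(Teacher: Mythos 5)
Your proof is correct and follows essentially the same route as the paper's: both split $u-\Pi_K u$ via $\mathrm{P}^1_{\conv(K)}u$, represent the linear remainder through its constant gradient plus its boundary average (exploiting the normalization $\int_{\partial K}\Pi_K u\,\dd s=\int_{\partial K}u\,\dd s$ and the boundary centroid $\bfx_0$), and bound these pieces with Lemma \ref{lem_proj} on the convex hull together with the trace estimate \eqref{lem_uI_face_eq2}. The only differences are bookkeeping: the paper controls the boundary-average term with the isoperimetric inequality $|K|\lesssim|\partial K|^{3/2}$ plus $|\partial K|\lesssim h_K^2$ and the gradient term by the $L^2$-contraction of $\mathrm{P}^0_K$ on $K$ itself, whereas you work on $\conv(K)$ and invoke the lower bounds $|K|\gtrsim h_K^3$ and $|\partial K|\gtrsim h_K^2$ furnished by the inscribed ball of Assumption \hyperref[asp:A3]{(A3)} --- equivalent arguments leading to the same $h_K^2\|u\|_{2,\conv(K)}$ bound.
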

\begin{proof}
The second term in \eqref{L2_assump} follows from Lemma \ref{lem_uI_face}. 
We focus on the first term. 
Given any $z\in H^1(K)$, we may write $\Pi_K z = (\mathrm{P}_K^0\nabla z) \cdot(\bfx - \bfx_0) +\mathrm{P}^0_{\partial K} z $ 
where $\bfx_0$ is chosen such that $(\bfx - \bfx_0,1)_{\partial K} = \mathbf{ 0}$. 
We have $\| \bfx - \bfx_0 \| \lesssim h_K$, $\forall\bfx\in K$. Then,  letting $v = \mathrm{P}^1_{\conv(K)} u  - u$, we write
\begin{equation}
\begin{split}
\label{lem_assump_verify3_1}
\| u - \Pi_K u \|_{0,K} & \le \| u - \mathrm{P}^1_{\conv(K)} u \|_{0,K} + \| \mathrm{P}^1_{\conv(K)} u  - \Pi_K u \|_{0,K} \\
& \le \| u - \mathrm{P}^1_{\conv(K)} u \|_{0,K} + \|  (\mathrm{P}_K^0\nabla v) \cdot(\bfx - \bfx_0) \|_{0,K} + \| \mathrm{P}^0_{\partial K} v  \|_{0,K}.
\end{split}
\end{equation}
 The estimates of the first two terms on the right-hand side above directly follow from Lemma \ref{lem_proj}. 
 %For the second term, we also have
% \begin{equation}
%\label{lem_assump_verify3_3}
% \|  (\mathrm{P}_K^0\nabla v) \cdot(\bfx - \bfx_0) \|_{0,K} \lesssim h_K \| \nabla v \|。
%\end{equation}
For the last term, the isoperimetric inequality $|K|\lesssim |\partial K|^{3/2}$, 
the trace inequality with \hyperref[asp:G1]{(G1)} in Lemma \ref{lem_tet_maxangle} and Lemma \ref{lem_proj} imply
\begin{equation}
\begin{split}
\label{lem_assump_verify3_2}
\| \mathrm{P}^0_{\partial K} v  \|_{0,K}  & = | K|^{1/2}/|\partial K| \abs{ (v ,1)_{\partial K} }
\le | K|^{1/2}/|\partial K|^{1/2} \| v  \|_{0,\partial K} \\
%& \lesssim |\partial K|^{1/4} \| \|  \\
& \lesssim |\partial K|^{1/4} ( h^{-1/2}_K \| v  \|_{0, \conv(K)} + h^{1/2}_K | v  |_{1, \conv(K)}  ) \lesssim h^2_K \| u \|_{2,\conv(K)}.
\end{split}
\end{equation}
Putting \eqref{lem_assump_verify3_2} into \eqref{lem_assump_verify3_1} finishes the proof.
%As for the second term above, letting $v = \mathrm{P}^1_K u  - u$ we have 
%\begin{equation}
%\begin{split}
%\label{lem_assump_verify3_2}
% \| \mathrm{P}^1_K u  - \Pi_K u \|_{0,K} & =  \| \Pi_Kv \|_{0,K} \le \|  (\mathrm{P}_K^0\nabla v) \cdot(\bfx - \bfx_0) \|_{0,K} + \| \mathrm{P}^0_{\partial K} v  \|_{0,K}
% \end{split}
%\end{equation}
\end{proof}

%\RG{The discrete Poincare inequality is not used in this section.}
As a conclusion, under Assumptions \hyperref[asp:A1]{(A1)}-\hyperref[asp:A3]{(A3)}, Theorems \ref{lem_err_eqn} and \ref{thm_u} give the desired estimate.

%%%%%%%%%%%%%%%%%%%%%%%%%%%%%%%%%%%%%%%%%%%%%%%%%%%%%%%%%%%%%%%%%%%%%%%

\section{Application II: a special class of shrinking elements cut from cuboids}
\label{sec:fitted _2}

In this section, we discuss the anisotropic elements in Case (2) that are cut from a background Cartesian mesh,
given by the following assumption:
\begin{itemize}
    \item[(\textbf{A4})]  \label{asp:A4} 
    Each element $K$ is cut from cuboids by a plane. 
    %In addition, there exists a (virtual) tetrahedral mesh $\mathcal{T}_h(K)$ of $K$ satisfying the maximum angle condition and the restriction of $\mathcal{T}_h(K)$ onto $\partial K$ matches $\mathcal{B}_h(\partial K)$.
    %%%%%%%%%%%%%%
%    \item[(\textbf{A4})]  \label{asp:A4} (The prism condition)  For each element $K$ and any of its faces $F$, there exists a convex ``prism" $ {P}_F$ with $\omega_K\supset {P}_F\supset K$ orthogonal to $F$ such that $|{P}_F|/|K| \le \mu_1$ with $\mu_1$ being uniformlly bounded. But the top face of the ``prism" is not required to be parallel to $F$. 
%   \item[(\textbf{A5})]  \label{asp:A5} (The convexity condition) For each element $K$, assume it is convex and its patch $\omega_K$ is star convex with respect to a ball of the radius $\rho_K\ge \mu_2 h_K$ with $\mu_2$ being uniformly bounded. In addition, the number of elements in $\omega_K$ is uniformly by $\mu_3$.
\end{itemize}
Clearly, these elements do not satisfy Assumption \hyperref[asp:A3]{(A3)} in the sense that
the inscribed balls can be arbitrarily small, as shown in Figure \ref{fig:cub},
and they may even shrink to a flat plane or a segment. 
%Note that, even though the triangulation is available, one cannot directly apply the standard interpolation result from the literature \cite{1999Duran,1994Shenk}, as it requires $W^{2,p}$ regularity, see Remark \ref{rem_interp_maxangle}. 
Note that one cannot apply the trace inequality in Lemma \ref{lem_traceinequa}, say on the face $A_1D_1D_3A_5$ of the left plot in Figure \ref{fig:cub}, towards the element.
This very problem will raise significant challenges in analysis, 
which requires a completely different argument from the prevision section, also distinguished from those in the literature.

%We formulate the following assumption:
%In this case, we make the following assumptions.

In this case, by rotation, there are three types of elements highlighted by the red solids in Figure \ref{fig:cub}. By \cite[Proposition 3.2]{2017ChenWeiWen}, all these elements have a boundary triangulation satisfying Assumptions \hyperref[asp:A1]{(A1)} with the maximum angle $144^{\circ}$. 
In addition, they all satisfy Assumption \hyperref[asp:A2]{(A2)}, (the second and third cases even satisfy the stronger Assumption \hyperref[asp:A2plus]{(A2')}).
%According to \cite{2000Pflaum}, generating a triangulation for such elements satisfying the maximum angle condition is possible, but the resulting triangulation may not satisfy the delaunay property on element faces; namely the triangulation is non-conforming on element surfaces.
%Thus, only requiring the boundary triangulation will greatly simplify the computation \cite{2017ChenWeiWen}. 

%To begin with, with Assumption \hyperref[asp:A4]{(A4)}, we can define a new lifting operator: for all $b_h\in \mathcal{B}_h(\partial K)$ 
%\begin{equation}
%\label{eq_LK2}
%\mathscr{L}_K(b_h) |_T \in \mathcal{P}_1(T), ~~~ \forall T \in \mathcal{T}_h(K), ~~~ \mathscr{L}_K(b_h) \in H^1(K), ~~~ \text{and} ~~~ \mathscr{L}_K(b_h) |_{\partial K} = b_h,
%\end{equation}
%i.e., $\mathscr{L}_K(b_h)$ is a linear finite-element-function extension of $b_h$, and then the virtual space is defined in \eqref{lifting}.

%The key is a geometric estimate given by Lemma \ref{lem_regular_edges} in the Appendix. 
Denote $R$ by a cuboid with the size $h_R$. 
For an element $K$ cut from $R$, we have $h_K \le  h_R$.
We follow the setup of the previous section by setting $\beta=\beta_h=1$ and $\mathcal{W}_h(K)=\mathcal{P}_1(K)$. 
Similarly, we still only need to examine Hypotheses \hyperref[asp:H3]{(H2)}-\hyperref[asp:H5]{(H4)}.
To avoid redundancy, we merely concentrate on the highlighted element in the left plot in Figure \ref{fig:cub} 
which is a triangular prism and may shrink to the segment $A_1A_5$. 
From the perspective of analysis, this is the most challenging case.  
In the analysis below, we always take $A_1$ as the origin with $A_1D_1$, $A_1D_2$ and $A_1A_5$ being the $x_1$, $x_2$ and $x_3$ axes.
See Figure \ref{fig:crosssec1} for illustration.
%In the following discussion, we shall denote the underlying cube by $R$ for such elements $K$. 

\begin{figure}[h]
  \centering
  \includegraphics[width=1.5in]{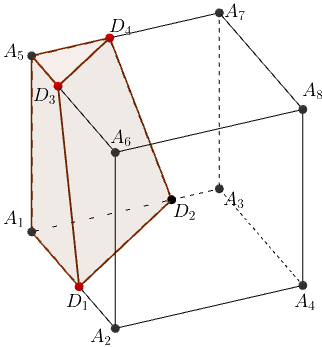}
  \includegraphics[width=1.5in]{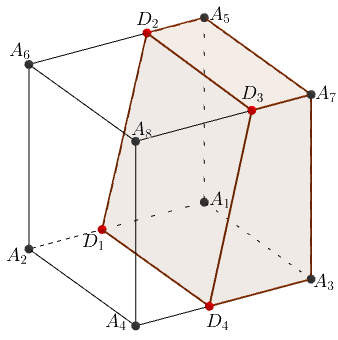}
  \includegraphics[width=1.5in]{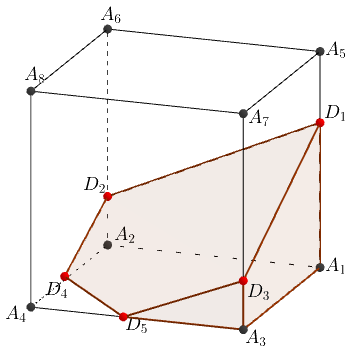}
  \caption{Cases 1-3 from left to right: elements cut from cuboids, highlighted as the red solids, may not contain non-shrinking inscribed balls. 
  The left one may shrink to the segment $A_1A_5$. 
  The middle one and the right one may shrink to the plane $A_1A_3A_7A_5$ and $A_1A_2A_3A_4$. 
  They all satisfy Assumption \hyperref[asp:A1]{(A1)}. Case 1 satisfies Assumption \hyperref[asp:A2]{(A2)}, while Cases 1 and 3 satisfy Assumption \hyperref[asp:A2plus]{(A2')}.
  %They all satisfy Assumptions \hyperref[asp:A1]{(A1)} and \hyperref[asp:A2]{(A2)}.
  %In addition, the other subelement of the cube satiesfies (\textbf{a1}) in Assumption \hyperref[asp:A1]{(A1)}.
  }
  \label{fig:cub}
\end{figure}

%\begin{remark}
%\label{rem_assumo_shrink}
%\end{remark}

We first present the following lemma which will be frequently used.
\begin{lemma}
\label{lem_edge_est}
Let $u\in H^2(R)$ with $K\subseteq R$. Given each edge $e$ of $K$ with the unit directional vector $\bft_e$, 
%let $F$ be any of its neighbor faces
there holds that
\begin{equation}
\label{lem_edge_est_eq0}
\abs{ \int_K \nabla(u-u_I)\cdot\bft_e  \dd \bfx } \lesssim h^{1/2}_K |F|^{-1/2} |K| \| u \|_{2,R},
\end{equation}
where $F$ is any face containing $e$.
\end{lemma}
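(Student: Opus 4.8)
The plan is to convert the volume integral into boundary data via the divergence theorem, exploit the prism structure to reduce to a two–dimensional quadrature error, and then carry the thin geometry by hand. Since $\bft_e$ is a constant unit vector, $\nabla(u-u_I)\cdot\bft_e=\ddiv\bigl((u-u_I)\bft_e\bigr)$, so
$\int_K\nabla(u-u_I)\cdot\bft_e\dd\bfx=\sum_{F'\in\mathcal{F}_K}(\bft_e\cdot\bfn_{F'})\int_{F'}(u-u_I)\dd s$.
On the two faces containing $e$ the direction $\bft_e$ is tangential, hence $\bft_e\cdot\bfn_{F'}=0$ and they drop out; as $|\mathcal{F}_K|$ is bounded by Assumption \hyperref[asp:A1]{(A1)} and $|\bft_e\cdot\bfn_{F'}|\le 1$, it remains to bound $\int_{F'}(u-u_I)\dd s$ over the faces transverse to $e$. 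For the prism of Figure \ref{fig:crosssec1}, writing $K=T_0\times[0,h]$ with base triangle $T_0$ in the $x_1x_2$–plane and extrusion along $\bft_e$ (the $x_3$–axis, i.e.\ the edge to which $K$ may shrink), the transverse faces are the congruent top and bottom triangles, and the quantity becomes $\int_{T_0}\bigl[(u-u_I)(y,h)-(u-u_I)(y,0)\bigr]\dd y$.

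Next I would use that the boundary triangulations of the two parallel triangular faces can be taken as vertical translates of a single triangulation of $T_0$. Because the piecewise-linear boundary interpolant commutes with the vertical difference, $(u-u_I)(y,h)-(u-u_I)(y,0)=\phi(y)-I\phi(y)$, where $\phi(y):=u(y,h)-u(y,0)=\int_0^h\partial_{x_3}u(y,s)\dd s$ and $I$ is the piecewise-linear interpolant on the base triangulation. Thus the target reduces to a sum over base triangles $\tau$ of the vertex-quadrature errors $E_\tau[\phi]:=\int_\tau(\phi-I_\tau\phi)\dd y$, a functional that annihilates $\mathcal{P}_1(\tau)$.

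The heart of the argument is to bound $E_\tau[\phi]$ using only the volume regularity $u\in H^2(R)$. Subtracting the mean $\bar\phi$ (allowed since $E_\tau$ kills constants) splits $E_\tau[\phi]$ into $\int_\tau(\phi-\bar\phi)$, controlled by $h_K|\phi|_{1,\tau}$ through the shape-independent projection estimate \eqref{lem_proj_eq01}, plus the nodal part $\tfrac{|\tau|}{3}\sum_i(\phi(P_i)-\bar\phi)$. The nodal part is the delicate piece: on an arbitrarily thin $\tau$ the point values $\phi(P_i)$ are not controlled by $|\phi|_{1,\tau}$, and — as stressed in Remark \ref{rem_interp_maxangle} — no tangential Hessian of $\phi$ is available, the trace of $u$ being only $H^{3/2}$ on the face. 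I would therefore pass back to three dimensions: each $\phi(P_i)=u(P_i,h)-u(P_i,0)$ is a difference of genuine point values of the $C^0(\overline R)$–function $u$ along a full vertical edge of $K$, and I would represent both these differences and $\bar\phi$ as averages over slender sub-prisms of the background cuboid $R$ aligned with $\bft_e$, so that the point-value/mean discrepancy is controlled by $\int|\nabla\partial_{x_3}u|$ over a controlled tube, producing the volume seminorm $\|u\|_{2,R}$. Throughout I would invoke the maximum-angle bound of the base triangulation (Assumption \hyperref[asp:A1]{(A1)}), used exactly as in Lemma \ref{lem_oppo_angle}, to keep every constant shape-independent despite the thinness of $\tau$, and a Cauchy–Schwarz in $s\in[0,h]$ to extract the factor $h_K^{1/2}$ from the $x_3$–integration defining $\phi$.

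Finally I would sum over the base triangles with $\sum_\tau|\tau|=|T_0|$ and the prism identity $|K|=|T_0|\,h$, noting that the slab used to recover the point values is aligned with $e$ and has cross-section comparable to a face $F\ni e$, which is the source of the negative power $|F|^{-1/2}$; collecting the remaining factors yields $h_K^{1/2}|F|^{-1/2}|K|\,\|u\|_{2,R}$. The same scheme applies to the other edge directions, and to the remaining element types of Figure \ref{fig:cub}, by slicing transversally to $\bft_e$ and pairing the corresponding opposite faces. The main obstacle is precisely the third step: estimating the vertex-quadrature error on an arbitrarily thin, maximum-angle-bounded triangle when the integrand carries only $H^2$ volume regularity — so that neither tangential Hessians nor face-level nodal values are usable — which forces the return to averaging inside $R$ and the careful anisotropic bookkeeping that produces both the gain $h_K^{1/2}$ and the exact negative power $|F|^{-1/2}$.
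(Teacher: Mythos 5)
Your Step 1 rests on a geometric premise that fails for the elements this lemma actually covers. Under Assumption \hyperref[asp:A4]{(A4)} the element is cut from a cuboid by a plane that is in general \emph{tilted} relative to $\bft_e$; writing $K=T_0\times[0,h]$ assumes a right prism. The true element has five faces: the two coordinate faces containing $e=A_1A_5$, the bottom and top triangles $A_1D_1D_2$ and $A_5D_3D_4$, and the cut face $D_1D_2D_4D_3$, whose normal satisfies $\bft_e\cdot\bfn\neq 0$ unless the cutting plane happens to be parallel to $e$. Hence the divergence theorem leaves you not only with the two triangles but also with the term $(\bft_e\cdot\bfn)\int_{D_1D_2D_4D_3}(u-u_I)\dd s$, which is never estimated. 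This term cannot be handled by crude bounds: writing $a=|A_1D_1|$ and $b=|A_1D_2|$, when $K$ shrinks toward the segment $A_1A_5$ the cut face has area of order $h_K\sqrt{a^2+b^2}$ while $|K|\simeq ab\,h_K$, so the cut face is far larger than $|K|/h_K$ and genuine cancellation inside $\int (u-u_I)$ is required — a problem of the same anisotropic nature as the lemma itself. Moreover, because the plane is tilted, the top and bottom triangles are \emph{not} congruent (the paper's proof explicitly allows $h_{D_1A_1}\ge h_{D_3A_5}$), so your commutation identity $(u-u_I)(y,h)-(u-u_I)(y,0)=\phi(y)-I\phi(y)$, which needs the two boundary triangulations to be vertical translates of one another, also breaks down; and for the other two edge directions (all three orthogonal directions are needed in Lemma \ref{lem_assump_verify4}) the surviving faces are a coordinate face and the tilted cut face, which cannot be paired by translation at all.

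Even in the right-prism special case, the step you yourself identify as the heart of the argument — bounding the nodal part $\tfrac{|\tau|}{3}\sum_i(\phi(P_i)-\bar\phi)$ with shape-independent constants and extracting precisely the factor $h_K^{1/2}|F|^{-1/2}$ — is only described, not carried out, and this is exactly where the anisotropic difficulty is concentrated. For comparison, the paper's proof avoids both obstacles by never forming face integrals of $u-u_I$ or nodal values: it slices $K$ into 2D cross-sections $\mathbb{T}(x_2)$ parallel to a face containing $e$, applies the planar Stokes identity with a linear weight $s_h$ chosen so that $\rot(s_h)=\bft_e$, and exploits that $|s_h|$ is bounded by the shrinking width, giving $\|s_h\|_{0,\partial\mathbb{T}(x_2)}\lesssim |\mathbb{T}(x_2)|\,h_K^{-1/2}$ (see \eqref{lem_assump_verify4_eq4}); this reduces everything to tangential-derivative bounds $\|\nabla(u-u_I)\cdot\bft\|_{0,\partial\mathbb{T}(x_2)}\lesssim \|u\|_{2,R}$, which are obtained from the maximum angle condition via Lemma \ref{lem_edge_max}, the 1D interpolation stability of Lemma \ref{lem_interp_1D}, and trace inequalities through shape-regular sub-bodies of $R$. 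Note also that this slicing construction handles the tilted geometry automatically, since the cross-sections are simply trapezoids regardless of the tilt. To salvage your route you would need to supply both the missing cut-face estimate and a rigorous, anisotropically uniform nodal-value estimate; as it stands, neither is in place.
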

\begin{proof}
First, to facilitate a clear presentation, 
we assume that $A_1$ is the origin and take $A_1D_1$, $A_1D_2$ and $A_1A_5$ to be the $x_1$, $x_2$ and $x_3$ axis, 
see Figure \ref{fig:crosssec1} for illustration of the geometry and notation.
Here, without loss of generality, we always assume $h_{D_1A_1}\ge h_{D_3A_5}$.
To avoid redundancy, we restrict ourselves to one representative case of  $e=A_1A_5$
in the sense that the whole element may shrink to this edge 
and any face containing this edge may not have a supported height of length $\mathcal{O}(h_K)$.
In this case, we can take $F$ to be either one of the two faces $A_1D_1D_3A_5$ and $A_1D_2D_4A_5$ by symmetry
and let $F$ be the face $A_1D_1D_3A_5$ without loss of generality.
The rest of the discussion is technical and lengthy, and thus we shall decompose it into several steps. 

\textit{Step 1. (Rewrite the volume integral as a boundary integral)} 
We rewrite the volume integral in the left-hand side of \eqref{lem_edge_est_eq0} into the integral of some cross-sections parallel to $F=A_1D_1D_3A_5$.
In particular, for each $x_2$, we let $\mathbb{T}(x_2)$ be the cross section at $x_2$ parallel to the plane of $A_1D_1D_3A_5$. 
%For simplicity, we only consider the the case of $\bft_1$, and the discussion for $\bft_2$ is similar. 
Note that the cross-sections may be triangular, but it does not affect the analysis below.
Let $\tilde{x}_2$ be the maximal height of $K$ from the $x_1x_3$ plane. 
We can then rewrite the integral as
\begin{equation}
\label{lem_assump_verify4_eq2}
\int_K  \nabla (  u - u_I ) \cdot \bft_e \dd \bfx 
= \int_{0}^{\tilde{x}_2} \left( \int_{\mathbb{T}(x_2)}  \nabla_{\mathbb{T}} (  u - u_I ) \cdot \bft_e \dd x_1 \dd x_3 \right) \dd x_2,
\end{equation}
where $\nabla_{\mathbb{T}}$ is the surface gradient within the plane ${\mathbb{T}}(x_2)$. 
Given a $\mathbb{T}(x_2)$, without loss of generality, assume it is a quadrilateral denoted by $B_1B_2B_3B_4$,
where the edge $B_3B_4$ is parallel to $\bft_e$, i.e., $A_1A_5$. 
As $\bft_e$ is always parallel to the $x_1x_3$ plane, it can be written as $\bft_e = [0,1]^T$ within this plane by dropping ``$0$'' in the second coordinate.
Consider a function $s_h \in \mathcal{P}_1(\mathbb{T}(x_2))$ being sought such that $\rot(s_h) = \bft_e$, where $\rot = [\partial_{x_3}, -\partial_{x_1} ]$ is the 2D rotation operator. In fact, we have $s_h  = - (x_1-\bar{x}_1)$,
%Specifically, we can write 
%\begin{equation}
%\label{lem_assump_verify4_eq2_1}
%s_h(x_2,x_3) =  (x_3 -\bar{x}_3) t_{1,2} - (x_2 -\bar{x}_2) t_{1,3}, 
%\end{equation}
where $(\bar{x}_1,\bar{x}_3)$ is the center of $B_1B_2B_3B_4$. 
With integration by parts, we obtain
\begin{equation}
\begin{split}
\label{lem_assump_verify4_eq5}
\abs{ \int_{\mathbb{T}(x_2)}  \nabla_{\mathbb{T}} ( u - u_I ) \cdot \bft_e \dd x_1 \dd x_3 } & 
= \abs{ \int_{\partial\mathbb{T}(x_2)}  \left( \nabla_{\mathbb{T}} (  u - u_I ) \cdot \bft \right) s_h \dd s } \\
 & \le \underbrace{ \|\nabla ( u - u_I)\cdot\bft \|_{0,\partial \mathbb{T}(x_2)} }_{(I)}  \underbrace{\| s_h \|_{0,\partial \mathbb{T}(x_2)}}_{(II)}.
 \end{split}
\end{equation}

\textit{Step 2. (Estimation of the term $(I)$)} There holds
\begin{equation}
\label{lem_assump_verify4_eq6}
\| \nabla ( u - u_I) \cdot \bft \|_{0,\partial \mathbb{T}(x_2)} \le \| (\nabla u - \mathrm{P}^0_R \nabla u )\cdot \bft \|_{0,\partial \mathbb{T}(x_2)}  + \|  (  \mathrm{P}^0_R \nabla u - \nabla u_I)\cdot \bft \|_{0,\partial \mathbb{T}(x_2)} .
\end{equation}
For the first term above, let $e'$ be one edge of $ \mathbb{T}(x_2) = B_1B_2B_3B_4$. 
By Lemma \ref{lem_tet_maxangle}, we can always find a polygon $T\subseteq R$, such that $e'$ is one edge of $T$, 
and $T$ is shape regular in the sense of being star convex to a circle of the radius $\mathcal{O}(h_K)$. 
In addition, we can also find a shape regular polyhedron inside $R$ that has $T$ as its face. 
For example, if $e=B_1B_2$, we let $T = \triangle B_1B_2A_8$.
% let $f$ be one side of the face of $R$ cut by $e$ that has the regular shape, as shown by Figure \ref{fig:crosssec1}.
%Trivially, there exists a shape regular tetrahedron $T\subseteq R$ with size $O(h)$ such that $e$ is one of its edges. 
%In addition, let $T'$ be one face of $T$ that has $e$ as one of its edges. 
%In addition, let $F'$ be one face of $R$. 
Then, using the similar argument to \eqref{lem_uI_face_eq7} with Lemma 2.1 in \cite{Brenner;Sung:2018Virtual} and the trace inequality, we have
\begin{equation}
\begin{split}
\label{lem_assump_verify4_eq7}
\| (\nabla u - \mathrm{P}^0_R \nabla u )\cdot \bft \|_{0,e} & \lesssim h^{-1/2}_K \| \nabla u - \mathrm{P}^0_R \nabla u \|_{0,T} + | \nabla u - \mathrm{P}^0_R \nabla u |_{1/2,T}  \lesssim  \| u \|_{2,R}.
\end{split}
\end{equation}
For the second term on the right-hand side of \eqref{lem_assump_verify4_eq6}, 
based on the face triangulation assumption by \hyperref[asp:A1]{(A1)}, 
$\partial \mathbb{T}(x_2)$ is covered by several triangles,
and thus can be decomposed into a collection of edges.
Without loss of generality, we consider one edge $e' = \triangle D_1D_2D_3 \cap \partial \mathbb{T}(x_2)$.
%still consider one edge $e$ of $ \mathbb{T}(x_1) = B_1B_2B_3B_4$, say $e=B_1B_2$ for example. Based on the face triangulation assumption, we can find faces covering $e$ which are $\triangle D_1D_2D_3$ and $\triangle D_2D_3D_4$. Now, 
%we consider an arbitrary triangular face $F'\in\mathcal{T}_h(\partial K)$, say $F'=\triangle A_1D_1A_5$ for example, and let $e= F'\cap \partial \mathbb{T}(x_3)$. 
Due to the maximum angle condition, there exist two edges $e_1$ and $e_2$ of $\triangle D_1D_2D_3$ whose angle is bounded below and above.
By Lemma \ref{lem_edge_max}, we have
\begin{equation}
\begin{split}
\label{lem_assump_verify4_eq8}
\|  (  \mathrm{P}^0_R \nabla u - \nabla u_I)\cdot \bft_e \|_{0,e} & \lesssim \sum_{i=1,2} \|  (  \mathrm{P}^0_R \nabla u - \nabla u_I)\cdot \bft_{e_i} \|_{0,e_i} \\
&\lesssim  \sum_{i=1,2} \|  (  \mathrm{P}^0_R \nabla u - \nabla u)\cdot \bft_{e_i} \|_{0,e_i}  +  \sum_{i=1,2} \|  (  \nabla u - \nabla u_I)\cdot \bft_{e_i} \|_{0,e_i} 
 %\lesssim \| u \|_{2,R},
\end{split}
\end{equation}
which actually fall into the same scenario as \eqref{lem_assump_verify4_eq7} and  \eqref{lem_uI_face_eq7},
and their estimation are very similar.
%of which the estimation is similar to \eqref{lem_uI_face_eq7} and also \eqref{lem_assump_verify4_eq7}. 
Putting \eqref{lem_assump_verify4_eq7} and \eqref{lem_assump_verify4_eq8} into \eqref{lem_assump_verify4_eq6} 
and summing it over all these $e'$, we arrive at
\begin{equation}
\label{lem_assump_verify4_eq8_1}
\| \nabla ( u - u_I) \cdot \bft \|_{0,\partial \mathbb{T}(x_2)} \lesssim \| u \|_{2,R}.
\end{equation}

\textit{Step 3. (Estimation of the term $(II)$)} 
Noticing that $\mathbb{T}(x_2)= B_1B_2B_3B_4$ is convex, we have
\begin{equation}
\label{lem_assump_verify4_eq3}
| s_h (x_1,x_3)| = | x_1 -\bar{x}_1 |  \le l_{B_3B_4}, ~~~~ \forall (x_2,x_3)\in \partial \mathbb{T}(x_1),
\end{equation}
where $l_{B_3B_4}$ is the height of the edge $B_3B_4$ towards $\mathbb{T}(x_2)$, which is in fact $h_{B_1B_4}$,
see the right plot in Figure 5.4 for illustration.
Using \eqref{lem_assump_verify4_eq3}, we have
\begin{equation}
\label{lem_assump_verify4_eq4}
\| s_h \|_{0,\partial \mathbb{T}(x_2) } \lesssim | \partial \mathbb{T}(x_2) |^{1/2} l_{B_3B_4} \lesssim h^{1/2}_K l_{B_3B_4} \lesssim | \mathbb{T}(x_2)| h^{-1/2}_K.
\end{equation}
We point out that the key for constructing such $s_h$ is to get the bound in terms of the possibly shrinking term, 
i.e., $l_{B_1B_2}$, in the second inequality of \eqref{lem_assump_verify4_eq4}.
%\begin{equation}
%\label{lem_assump_verify4_eq3}
%p_h =  -(x_{\eta_2} -\bar{x}_{\eta_2}) )
%\end{equation}
Now, putting \eqref{lem_assump_verify4_eq8_1} and \eqref{lem_assump_verify4_eq4} into \eqref{lem_assump_verify4_eq5} and \eqref{lem_assump_verify4_eq2}, we have
\begin{equation}
\label{lem_assump_verify4_eq9}
\int_K  \nabla (  u - u_I ) \cdot \bft_e \dd \bfx \lesssim \int_0^{\tilde{x}_2} h^{-1/2}_K \| u \|_{2,R} | \mathbb{T}(x_2)| \dd x_2 \lesssim h^{-1/2}_K |K| \| u \|_{2,R},
\end{equation}
which yields \eqref{lem_edge_est_eq0} as $|F|\lesssim h^2_K$.

%%%%%%%%%%%%%%%%%%%%%%%%%%%%%%%%%%
%As for the case of $e = A_1D_1$, 
\end{proof}

\begin{lemma}
\label{lem_cylin_trace}
%Given a ``prism" $P_F$ as described in Assumption \hyperref[asp:A4]{A4} with the base $F$, then there holds
Let $u\in H^1(R)$ with $K\subseteq R$. Under Assumptions \hyperref[asp:A1]{(A1)} and \hyperref[asp:A4]{(A4)}, $\forall F\in \mathcal{F}_K$,
\begin{equation}
\label{lem_cylin_trace_eq0}
\| u - \mathrm{P}_K^0 u \|_{0,F} \lesssim h^{1/2}_K \| \nabla u \|_{0,R}.
\end{equation}
\end{lemma}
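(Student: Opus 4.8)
The plan is to compare $\mathrm{P}^0_K u$ with the mean $\bar u_R := \mathrm{P}^0_R u$ of $u$ over the whole cuboid $R$, and to exploit the fact that $R$—unlike the possibly shrinking $K$—is a shape-regular convex box on which Poincar\'e and trace-type estimates hold with constants depending only on $h_R\simeq h_K$. Writing $v = u - \bar u_R$ and noting that $\mathrm{P}^0_K u - \bar u_R = \mathrm{P}^0_K v$ is a constant, I first reduce to
\[
\| u - \mathrm{P}^0_K u \|_{0,F} \le \| v \|_{0,F} + |F|^{1/2}\, |\mathrm{P}^0_K v|,
\]
so that it suffices to bound each term by $h_K^{1/2}\|\nabla u\|_{0,R}$.

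The core tool is a trace estimate on planar sections of the box $R$: for \emph{any} planar region $G\subseteq R$ there holds $\| v \|^2_{0,G} \lesssim h_R^{-1}\| v \|^2_{0,R} + h_R \| \nabla v \|^2_{0,R}$. To prove it I pick the coordinate axis $x_i$ whose direction $\bfe_i$ has the largest component against the unit normal $\bfn$ of $G$ (so $|\bfn\cdot\bfe_i|\ge 1/\sqrt3$, and $G$ is a graph over the $x_i$-plane with surface-measure factor $\le\sqrt3$), then slice $R$ along $x_i$. Because $R$ is a box, every slicing segment is a full edge of length $\simeq h_R$; the fundamental theorem of calculus along these full segments, followed by averaging in the slicing variable and integrating over the transverse rectangle, yields the bound with \emph{no} small-interval factor—this is exactly the step where the box structure, rather than the geometry of $K$, is essential. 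Combining this with the Poincar\'e inequality $\|v\|_{0,R}\lesssim h_R\|\nabla u\|_{0,R}$ on the convex set $R$ gives $\|v\|_{0,G}\lesssim h_R^{1/2}\|\nabla u\|_{0,R}\simeq h_K^{1/2}\|\nabla u\|_{0,R}$ for every planar $G\subseteq R$; in particular the first term $\|v\|_{0,F}$ is controlled.

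For the constant term I foliate $K$ by the cross-sections $\{\mathbb{T}(t)\}_{t\in[0,w]}$ parallel to the face $F$ (the same kind of sections used in Lemma \ref{lem_edge_est}), where $w\le h_K$ is the extent of $K$ transverse to $F$. Applying Cauchy--Schwarz on each slice together with the planar trace estimate $\|v\|_{0,\mathbb{T}(t)}\lesssim h_K^{1/2}\|\nabla u\|_{0,R}$ (legitimate since $\mathbb{T}(t)\subseteq K\subseteq R$), and then Cauchy--Schwarz in $t$, I obtain
\[
|\mathrm{P}^0_K v| = |K|^{-1}\Big|\int_K v\Big| \lesssim |K|^{-1} h_K^{1/2}\|\nabla u\|_{0,R}\int_0^w |\mathbb{T}(t)|^{1/2}\dd t \le |K|^{-1/2} w^{1/2} h_K^{1/2}\|\nabla u\|_{0,R}.
\]
Hence $|F|^{1/2}|\mathrm{P}^0_K v|\lesssim (|F|\,w/|K|)^{1/2}\,h_K^{1/2}\|\nabla u\|_{0,R}$, and the proof closes once I establish the purely geometric fact $|F|\,w\lesssim |K|$.

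The main obstacle is this last geometric estimate $|F|\,w\lesssim|K|$. Since $K$ is cut from a cuboid it is convex, so $t\mapsto|\mathbb{T}(t)|^{1/2}$ is concave (Brunn--Minkowski) with $\mathbb{T}(0)=F$; concavity forces $|\mathbb{T}(t)|^{1/2}$ to remain above the chord joining its endpoints, whence $\int_0^w|\mathbb{T}(t)|\dd t\gtrsim |F|\,w$, i.e.\ $|K|\gtrsim|F|\,w$. As a down-to-earth alternative I would check directly, for the three cut configurations in Figure \ref{fig:cub}, that the parallel sections shrink at most linearly, which makes this step elementary and avoids invoking Brunn--Minkowski; the only remaining care is to keep all sections $\mathbb{T}(t)$ inside $R$ so that the planar trace estimate applies to them, which is immediate.
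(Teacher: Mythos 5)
Your proof is correct, and it takes a genuinely different route from the paper's. The paper argues face by face: the two triangular faces are handled by the ordinary trace inequality, and for the three possibly degenerate faces it inserts the \emph{face} average $\mathrm{P}^0_F u$, bounds $\| u-\mathrm{P}^0_F u\|_{0,F}$ via a trace inequality on a shape-regular pyramid $P_F\subseteq R$ with base $F$, and bounds the constant difference $\|\mathrm{P}^0_F u-\mathrm{P}^0_K u\|_{0,F}$ by mapping $K$ affinely to a reference prism and invoking the anisotropic Poincar\'e inequality of Lemma \ref{lem_poin_anis}, closing with the volume bound $|F|\,l_F\lesssim|K|$. You instead insert the \emph{box} average $\mathrm{P}^0_R u$, prove a slicing trace estimate valid for \emph{every} planar section of $R$ (the box structure gives full slicing segments of length $\simeq h_R$, so no degenerate factor enters), and control the constant term by foliating $K$ parallel to $F$, Cauchy--Schwarz in the transverse variable, and the geometric bound $|F|\,w\lesssim|K|$. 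Your route buys uniformity (no case distinction among faces and no special role for the prism configuration), dispenses entirely with the affine map, the reference prism and Lemma \ref{lem_poin_anis}, and in fact proves the lemma for an arbitrary \emph{convex} $K$ contained in the cuboid; moreover you do not even need Brunn--Minkowski for the last step, since convexity gives $K\supseteq\conv\left(F\cup\{p\}\right)$ with $p$ realizing the width $w$, hence $|K|\ge |F|\,w/3$ (incidentally, this cone bound is the correct form of the paper's claim ``$|F|\,l_F\le|K|$'', which for a prism only holds up to a factor $2$). What the paper's route buys is that it never uses convexity of the whole element, only the existence of the pyramid $P_F$ and the concrete prism structure, so it stays closer to the explicit cut configurations of Figure \ref{fig:cub}. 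Note also that both your argument and the paper's implicitly use $h_R\simeq h_K$, which is legitimate under Assumption \hyperref[asp:A4]{(A4)} since each of the admitted elements contains a full edge of the cuboid.
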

\begin{proof}
When $F= \triangle A_1D_1D_2$ or $\triangle A_5D_3D_4$, the trace inequality immediately yields the desired result.
%We consider the prism $P_F$ from Assumption \hyperref[asp:A4]{(A4)} and let $\overline{F}\supset F$ be its base. 
So we only need to consider the other three faces whose height towards $K$ may shrink.  
Without loss of generality, we let $F$ be the face $D_1D_2D_3D_4$.
By the triangular inequality, we have
\begin{equation}
\label{lem_cylin_trace_eq1}
\| u - \mathrm{P}_K^0 u \|_{0,F}  \le \| u - \mathrm{P}_{F}^0 u \|_{0,F} + \| \mathrm{P}_{F}^0 u - \mathrm{P}_K^0 u \|_{0,F}.
\end{equation}
%For the first term above, by Assumption \hyperref[asp:A5]{(A5)} and (\textbf{G1}) in Lemma \ref{lem_tet_maxangle}, there is tetrahedron $T\subset\omega_K$ with the $\mathcal{O}(h_{\omega_K})$ height spporting $\overline{F}$. 
Let $P_F$ be the pyramid contained in $R$ that has $F$ as its base and $A_8$ as the apex, and it is easy to see that the height of $F$ is $\mathcal{O}(h_K)$.
Then, by the trace inequality, we obtain
\begin{equation}
\begin{split}
\label{lem_cylin_trace_eq2}
\| u - \mathrm{P}_{F}^0 u \|_{0,F} & \lesssim \| u - \mathrm{P}_{R}^0 u \|_{0,F}  \lesssim h^{-1/2}_K \| u - \mathrm{P}_{R}^0 u \|_{0,P_F}  + h^{1/2}_K | u - \mathrm{P}_{R}^0 u |_{1,P_F} \\
& \lesssim h^{-1/2}_K \| u - \mathrm{P}_{R}^0 u \|_{0,R}  + h^{1/2}_K | u  |_{0,R} \lesssim h^{1/2}_K |u|_{1,R}.
\end{split}
\end{equation}
For the second term in \eqref{lem_cylin_trace_eq1}, by the projection property we have
\begin{equation}
\label{lem_cylin_trace_eq3}
\| \mathrm{P}_{F}^0 u - \mathrm{P}_K^0 u \|_{0,F} = |F|^{1/2}/|K| \abs{ \int_K \mathrm{P}_{F}^0 u -u \dd \bfx } \le  |F|^{1/2}/|K| ^{1/2} \| u - \mathrm{P}_{F}^0 u \|_{0,K}.
\end{equation}
Let $l_F$ be the height of $F$ towards $K$.
Note that it is possible $|F|/|K| \rightarrow \infty$ as $K$ shrinks to $F$ i.e., $l_F \rightarrow 0$.
But we will see that the estimate of $\|  u - \mathrm{P}_F^0 u \|_{0,K}$ can compensate the issue.
Without loss of generality, we let $F$ be on the $x_1x_2$ plane with $D_1D_3$ being the $x_1$ axis, see Figure \ref{fig:prism} for illustration, 
and assume the dihedral angle associated with $D_1D_2$ is not greater than $\pi/2$. 
In this case, by the elementary geometry, we know that every face angle and dihedral angle associated with the vertex $D_1$ is uniformly bounded below and above. 
Let $D_1D_3$, $D_1D_2$ and $D_1A_1$ %have the length $l_1$, $l_2$, $l_3$ and 
have the directional vectors $\bft_1$, $\bft_2$, $\bft_3$. Then, we can construct an affine mapping 
\begin{equation}
\label{lem_cylin_trace_eq3_1}
 \bfx = \mathfrak{F}( \hat{\bfx}) := \left[  \bft_1, ~  \bft_2, ~  \bft_3  \right] \hat{\bfx} := A \hat{\bfx}.
\end{equation}
Note that $\| A^{-1} \|\lesssim |\text{det}(A)|^{-1} = |(\bft_1\times \bft_2)\cdot\bft_3|^{-1}\lesssim 1$ as every angle is bounded below and above, and $\|A\|\lesssim 1$. 
So the mapping \eqref{lem_cylin_trace_eq3_1} maps to $K$ from a triangular prism $\hat{K}= \mathfrak{F}^{-1}(K)$ with the two triangular faces orthogonal to $\hat{F}=   \mathfrak{F}^{-1}(F)$, as shown in Figure \ref{fig:prism}. 
Note that the prism $\hat{K}$ satisfies the condition of Lemma \ref{lem_poin_anis}. 
Let $v= u - \mathrm{P}_F^0 u$ and $\hat{v}(\hat{\bfx}) := v(\mathfrak{F}( \hat{\bfx}) )$, and then we have 
\begin{equation}
\label{lem_cylin_trace_eq4}
\| v \|_{0,K} \lesssim \| \hat{v} \|_{0,\hat{K}} \lesssim l^{1/2}_{\hat{F}} \| \hat{v} \|_{0,\hat{F}} + l_{\hat{F}} \| \nabla \hat{v} \|_{0,\hat{K}}
\lesssim l^{1/2}_F \| u - \mathrm{P}_F^0 u \|_{0,F} + l_F \| \nabla u \|_{0,K}
\end{equation}
where we have used $l_{\hat{F}}\simeq l_F$. 
Substituting \eqref{lem_cylin_trace_eq4} into \eqref{lem_cylin_trace_eq3} yields
\begin{equation}
\begin{split}
\label{lem_cylin_trace_eq5}
\| \mathrm{P}_{F}^0 u - \mathrm{P}_K^0 u \|_{0,F} & \lesssim  (|F| l_F/|K|)^{1/2} \left(  \| u - \mathrm{P}_F^0 u \|_{0,F} + l^{1/2}_F \| \nabla u \|_{0,K}  \right) \lesssim h_K \| \nabla u \|_{0,R},
\end{split}
\end{equation}
where we have used $|F| l_F \le |K|$ and \eqref{lem_cylin_trace_eq2}.
%Putting \eqref{lem_cylin_trace_eq4} into \eqref{lem_cylin_trace_eq3}, and applying Assumption \hyperref[asp:A4]{(A4)}, we obtain
%\begin{equation}
%\label{lem_cylin_trace_eq6}
%\| \mathrm{P}_{\overline{F}}^0 u - \mathrm{P}_K^0 u \|_{0,\overline{F}} \lesssim |\overline{F}|^{1/2}l^{1/2}_F h^{1/2}/|K|^{1/2}  \| \nabla u \|_{E,1,\omega_K} \lesssim h^{1/2} \| \nabla u \|_{E,0,\omega_K},
%\end{equation}
%where we have used $ |\overline{F}| l_F = |P_F|\lesssim  |K|$. Substituting \eqref{lem_cylin_trace_eq2} and \eqref{lem_cylin_trace_eq5} into \eqref{lem_cylin_trace_eq1} yields the desired result.
\end{proof}

\begin{figure}[h]
  \centering
  \begin{minipage}{.28\textwidth}
  \centering
  \includegraphics[width=1.4in]{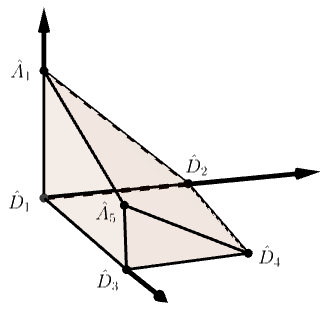}
  \caption{The reference triangular prism $\hat{K}$ mapped to $K$ by $\mathfrak{F}$. The two triangular faces $\hat{D}_1\hat{D}_2\hat{A}_1$ and $\hat{D}_3\hat{D}_4\hat{A}_5$ are perpendicular to the face $\hat{D}_1\hat{D}_3\hat{D}_4\hat{D}_2$}
  \label{fig:prism}
  \end{minipage}
  ~
  \begin{minipage}{0.66\textwidth}
  \centering
   \includegraphics[width=1.6in]{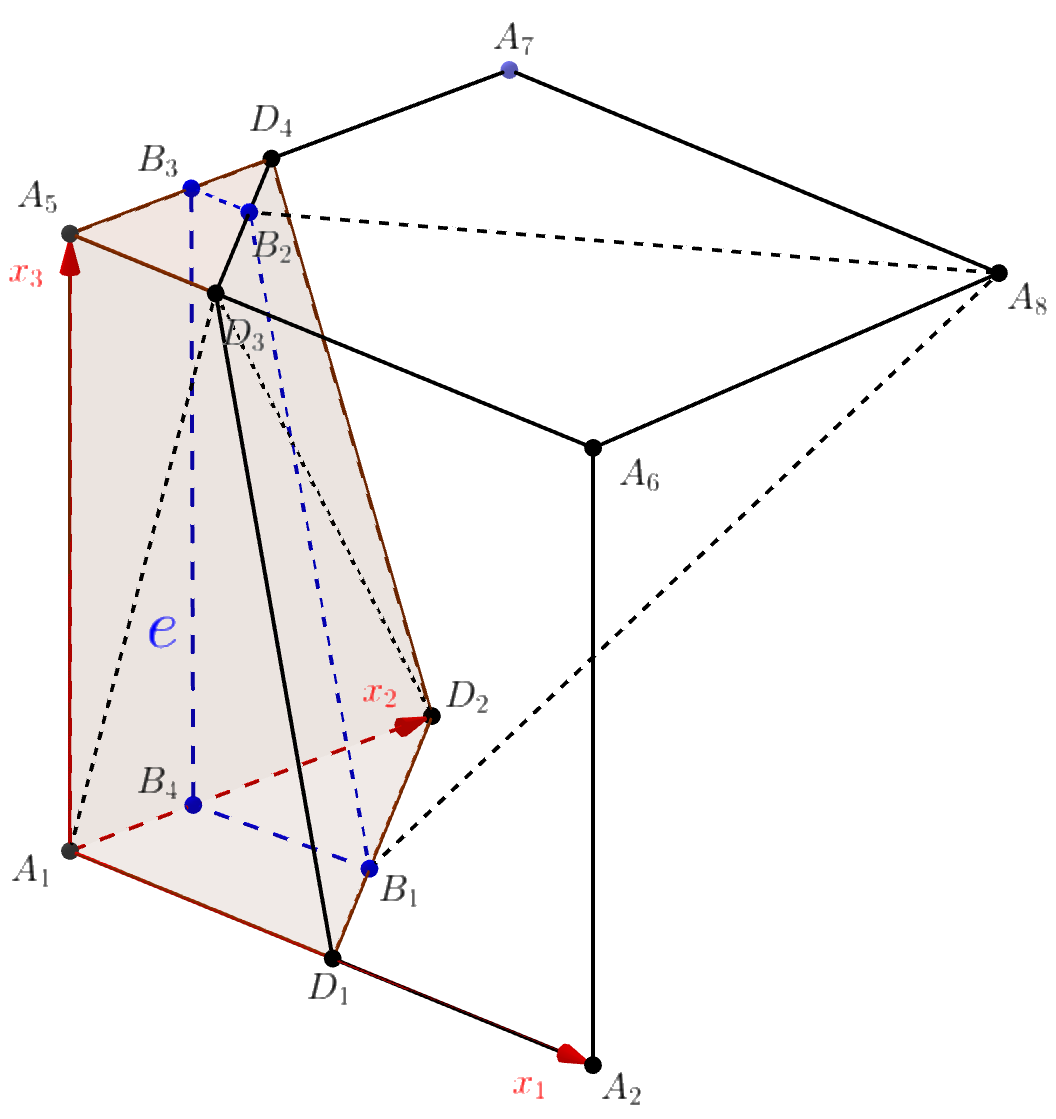}
   \includegraphics[width=2in]{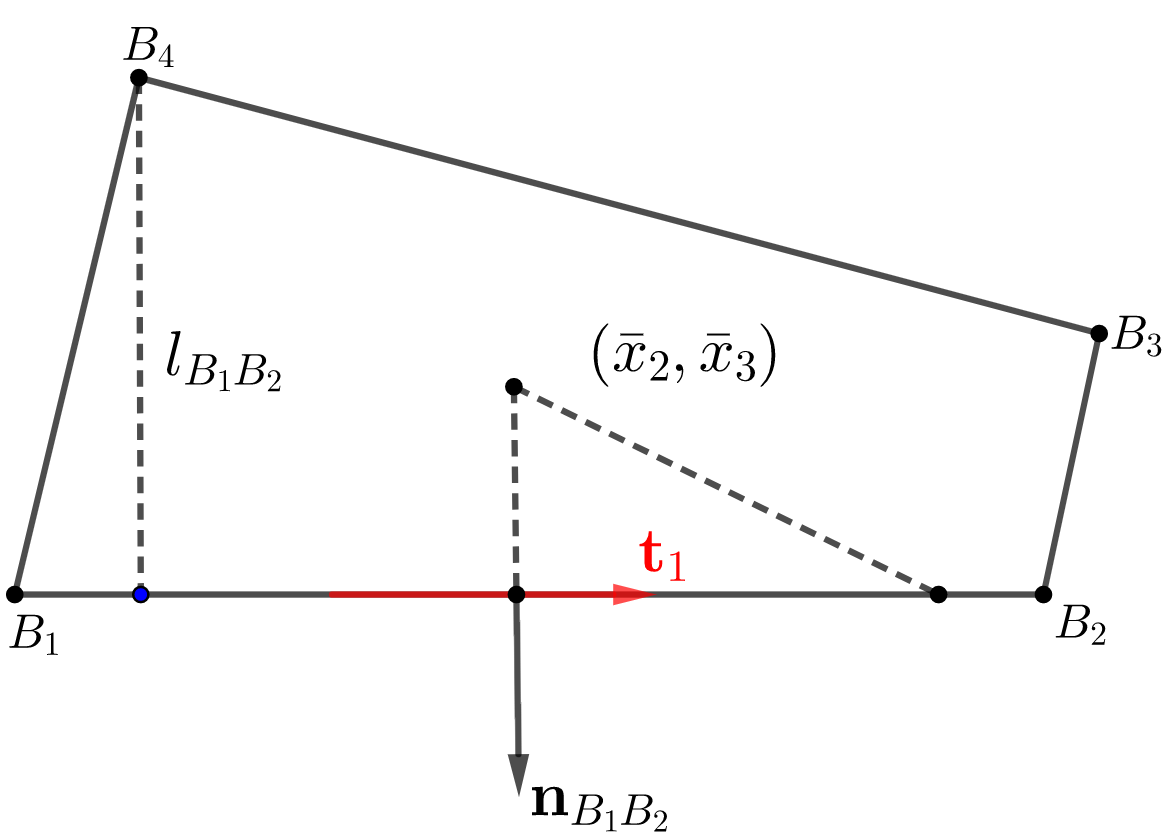}
  \caption{Left: illustration of the proof of Lemma \ref{lem_edge_est} for $e=A_1D_1$. 
  Right: the stage 3 of the proof of Lemma \ref{lem_edge_est}; the estimate of $s_h$ for $\bft_1$, and $|s_h(x_2,x_3)|$ is the length of projection of the vector from $(\bar{x}_1,\bar{x}_2)$ to $(x_1,x_2)$ onto $\bfn_{B_1B_2}$, which cannot be greater than $l_{B_1B_2}$. 
%  Left: the proof for $\bft_1$ being the directional vector of $A_1A_5$. 
%  Right: the proof for $\bft_2$ being the directional vector of $A_1D_1$, 
%  where the two cases are considered separately: $h_{A_1D_1}\le h_{A_1D_2}$ and $h_{A_1D_1} \ge h_{A_1D_2}$.
  }
  \label{fig:crosssec1}
  \end{minipage}
%  \begin{minipage}{0.33\textwidth}
%  \centering
%   \includegraphics[width=2in]{}
%  \caption{}
%  \label{fig:crosssec2}
%  \end{minipage}
\end{figure}

With the preparation above, we are ready to verify Hypotheses \hyperref[asp:H3]{(H2)}-\hyperref[asp:H5]{(H4)}. %But, we shall postpone Hypothesis \hyperref[asp:H3]{(H3)} to the end of this section, as it is the most difficult one. 

\begin{lemma}
\label{lem_assump_verify4}
Under Assumptions \hyperref[asp:A1]{(A1)} and \hyperref[asp:A4]{(A4)}, Hypothesis \hyperref[asp:H3]{(H2)} holds.
\end{lemma}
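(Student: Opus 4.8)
The plan is to bound the local energy error for the representative triangular prism $K$ that may collapse onto the edge $A_1A_5$, and then to sum using the bounded overlap of the background cuboids $\{R\}$. Since $\beta_h=1$ and $\mathcal{W}_h(K)=\mathcal{P}_1(K)$, the vector $\bfp:=\nabla\Pi_K(u-u_I)=\mathrm{P}^0_K\nabla(u-u_I)$ is a single constant, and
\[
\vertiii{u-u_I}^2_K = \|\bfp\|^2_{0,K} + \big\|(u-u_I)-\Pi_K(u-u_I)\big\|^2_{S_K}.
\]
I would treat the two terms separately, aiming at $h^2_K\|u\|^2_{2,R}$ for each.

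\textbf{First term.} The route of Lemma \ref{lem_assump_verify1} — integrate by parts over $\partial K$ and invoke the trace inequality of Lemma \ref{lem_traceinequa} — is closed to us, because the prism contains no inscribed ball of radius $\mathcal{O}(h_K)$. Instead I would recover $\bfp$ from its components along the three edges $A_1D_1$, $A_1D_2$, $A_1A_5$ meeting at $A_1$, which by the coordinate choice point along the orthonormal axes $\bfe_1,\bfe_2,\bfe_3$; thus $|\bfp|^2=\sum_i(\bfp\cdot\bfe_i)^2$ with no condition-number loss. For each such edge $e$ one has $\bfp\cdot\bft_e=|K|^{-1}\int_K\nabla(u-u_I)\cdot\bft_e\dd\bfx$, and the strong form \eqref{lem_assump_verify4_eq9} obtained inside the proof of Lemma \ref{lem_edge_est} gives $|\bfp\cdot\bft_e|\lesssim h^{-1/2}_K\|u\|_{2,R}$, hence $|\bfp|\lesssim h^{-1/2}_K\|u\|_{2,R}$. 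Since $K$ lies in a ball of diameter $h_K$ we have $|K|\lesssim h^3_K$, so
\[
\|\bfp\|_{0,K}=|K|^{1/2}|\bfp|\lesssim h^{3/2}_K\,h^{-1/2}_K\|u\|_{2,R}=h_K\|u\|_{2,R}.
\]
The essential point — and the main obstacle of the whole lemma — is that the smallness of $|K|^{1/2}$ exactly absorbs the $h^{-1/2}_K$ blow-up of each edgewise bound, playing the role that the now-unavailable trace inequality plays in Section \ref{sec:fitted_1}.

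\textbf{Second term.} Put $v:=(u-u_I)-\Pi_K(u-u_I)$. On a face $F$ its surface gradient is $\nabla_F v=\nabla_F(u-u_I)-\bfp_F$, where $\bfp_F$ denotes the constant tangential part of $\bfp$. The $\bfp_F$ contribution is already controlled, since $h_K\|\bfp_F\|^2_{0,F}\le h_K|F|\,|\bfp|^2\lesssim|F|\,\|u\|^2_{2,R}\lesssim h^2_K\|u\|^2_{2,R}$ by $|F|\lesssim h^2_K$. For $\|\nabla_F(u-u_I)\|_{0,F}$ I would reproduce, step for step, the tangential estimate \eqref{lem_uI_face_eq02} of Lemma \ref{lem_uI_face}: decompose over the triangles of the boundary triangulation, use the maximum-angle bound and Lemma \ref{lem_grad} to reduce to edgewise quantities $\|\partial_{\bft_e}(u-u_I)\|_{0,e}$, and estimate each as in \eqref{lem_uI_face_eq7} via the $1$D interpolation stability of Lemma \ref{lem_interp_1D}, Lemma 2.1 of \cite{Brenner;Sung:2018Virtual} and the trace inequality — the only change being that the shape-regular auxiliary triangles and polyhedra are now taken inside $R$ through Lemma \ref{lem_tet_maxangle}, rather than inside $\conv(K)$. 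This gives $\|\nabla_F(u-u_I)\|_{0,F}\lesssim h^{1/2}_K\|u\|_{2,R}$, whence $h_K\|\nabla_F(u-u_I)\|^2_{0,F}\lesssim h^2_K\|u\|^2_{2,R}$.

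Summing the two face contributions over the boundedly many faces (Assumption \hyperref[asp:A1]{(A1)}) yields $\|v\|^2_{S_K}\lesssim h^2_K\|u\|^2_{2,R}$, and together with the first term $\vertiii{u-u_I}^2_K\lesssim h^2\|u\|^2_{2,R}$. Summing over $K$ and using that each cuboid $R$ produces only boundedly many cut elements (so $\{R\}$ has finite overlap) gives $\sum_K\|u\|^2_{2,R}\lesssim\|u\|^2_{2,\Omega}$ and hence \eqref{approxi_VK_Pi}. The middle and right prisms of Figure \ref{fig:cub} are handled in the same way and are in fact easier, as only a single direction degenerates; I expect no new difficulty there.
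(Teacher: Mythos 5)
Your proposal is correct and follows essentially the same route as the paper's proof: the constant vector $\nabla\Pi_K(u-u_I)$ is controlled componentwise along three orthogonal edges via Lemma \ref{lem_edge_est} (equivalently, its intermediate form \eqref{lem_assump_verify4_eq9}), and the stabilization term is split into $\nabla_F(u-u_I)$, handled by the Lemma \ref{lem_uI_face} argument with the auxiliary shape-regular regions taken inside $R$, plus the projected part. The only (cosmetic) difference is that for the projected part on the shrinking side faces you reuse the pointwise bound $|\bfp|\lesssim h_K^{-1/2}\|u\|_{2,R}$ directly, whereas the paper re-derives a facewise bound through Lemma \ref{lem_grad} and a second application of Lemma \ref{lem_edge_est}; both yield $h_K\|\nabla_F\Pi_K(u-u_I)\|^2_{0,F}\lesssim h^2_K\|u\|^2_{2,R}$.
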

\begin{proof}
We first estimate $\|\nabla\Pi_K(u-u_I)\|_{0,K}$. 
%Now, we focus on \eqref{lem_uI_face2_eq02}. 
%Let $\bfv_h = \nabla \Pi_K ( u - u_I ) \in [\mathcal{P}_0(K)]^3$. 
Note that there exist three orthogonal edges $e_i$, $i=1,2,3$, of $K$ with the directional vectors $\bft_i$. Using the projection property and Lemma \ref{lem_edge_est}, we have
\begin{equation*}
\begin{split}
\label{lem_uI_face2_eq2}
& \| \nabla \Pi_K ( u - u_I )  \|_{0,K}  = |K|^{1/2} \| \nabla \Pi_K ( u - u_I )  \|  % = |T|^{1/2} \| M^{-1} M \bfv_h \| \le |T|^{1/2} \| M^{-1} \| \| M\bfv_h \|\\
\lesssim |K|^{1/2} \sum_{i=1,2,3} \| \nabla \Pi_K ( u - u_I ) \cdot\bft_i \| \\
 = & \sum_{i=1,2,3} |K|^{-1/2} \int_K \nabla \Pi_K ( u - u_I ) \cdot\bft_i  \dd \bfx 
 =  \sum_{i=1,2,3} |K|^{-1/2} \int_K \nabla  ( u - u_I ) \cdot\bft_i  \dd \bfx \\
 \le & h^{1/2}_K (|K|/|F|)^{1/2} \| u \|_{2,R} \le h_K \| u \|_{2,R} .
\end{split}
\end{equation*}
%Then, the estimation of $\| \bfv_h\cdot\bft_i \|_{0,e_i}$ is similar to \eqref{lem_uI_face_eq7} by using $R$ being shape-regular.

For the stabilization term, using the argument similar to Lemma \ref{lem_uI_face}, we have
$ | u - u_I |^2_{1,\partial K} \lesssim h_R \| u \|^2_{2,R}$.
%\begin{equation}
%\label{lem_uI_face2_eq2_1}
%\| u - u_I \|_{0, \partial K} + h_R | u - u_I |_{1,\partial K} \lesssim h^{3/2}_K \| u \|_{2,R}.
%\end{equation}
Now, let us concentrate on the estimate of $\| \nabla_{\partial K} \Pi_K( u - u_I ) \|_{0,\partial K}$. 
Given each triangular element $F\in\mathcal{T}_h(\partial K)$, if $F$ is the bottom or top triangular face, i.e., $\triangle A_1D_1AD_2$ and $\triangle A_5D_3AD_4$,
the trace inequality immediately yields the desired result. 
The difficult part is the estimate for the three sided quadrilateral faces where the trace inequality is not applicable due to the possibly shrinking height. 
Based on symmetry, we can assume that $F$ is one triangular element of the quadrilateral face, which, without loss of generality, can be taken as $\triangle A_1D_1D_3$. 
By Lemma \ref{lem_grad}, thanks to MAC, there exist two edges $e_i$, $i=1,2$, with their tangential directional vectors $\bft_i$ such that
\begin{equation*}
\begin{split}
\label{lem_assump_verify4_eq1}
\| \nabla_{F} \Pi_K( u - u_I ) \|_{0,F} & \lesssim \sum_{i=1,2} |F|^{1/2} \| \nabla \Pi_K( u - u_I ) \cdot \bft_i \|  = |F|^{1/2}/|K| \abs{ \sum_{i=1,2} \int_K  \nabla (  u - u_I ) \cdot \bft_i \dd \bfx }.
\end{split}
\end{equation*}
For $F=\triangle A_1D_1D_3$, we can take $\bft_1$ and $\bft_2$ as the directional vectors of $D_1D_3$ and $D_1A_1$. 
Then, Lemma \ref{lem_edge_est} finishes the proof.
\end{proof}

\begin{lemma}
\label{lem_assump_verify5}
Under Assumptions \hyperref[asp:A1]{(A1)} and \hyperref[asp:A4]{(A4)}, Hypothesis \hyperref[asp:H4]{(H3)} holds.
\end{lemma}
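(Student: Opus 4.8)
Throughout this section $\beta=\beta_h=1$ and $\mathcal{W}_h(K)=\mathcal{P}_1(K)$, so the defining relation \eqref{Pi_proj} makes $\nabla\Pi_K u=\mathrm{P}^0_K\nabla u$, the $L^2(K)$-projection of $\nabla u$ onto constant vectors; equivalently $\partial_i\Pi_K u=\mathrm{P}^0_K\partial_i u$ for each component $i$. The plan is to verify \eqref{Pi_approx_eq1} and \eqref{Pi_approx_eq2} separately: the first by a direct best-approximation argument over $\conv(K)$, and the second by invoking the shrinking-robust trace estimate of Lemma \ref{lem_cylin_trace} applied componentwise to $\nabla u$.

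For \eqref{Pi_approx_eq1}, since $\mathrm{P}^0_K\nabla u$ is the best constant approximation of $\nabla u$ in $L^2(K)$, I would bound
\[
\|\nabla(u-\Pi_K u)\|_{0,K}=\|\nabla u-\mathrm{P}^0_K\nabla u\|_{0,K}\le \|\nabla u-\mathrm{P}^0_{\conv(K)}\nabla u\|_{0,\conv(K)}\lesssim h_K|u|_{2,\conv(K)},
\]
where the last step uses the convex case of Lemma \ref{lem_proj} on $\conv(K)$, whose diameter is $h_K$. As $\conv(K)\subseteq R$ and only boundedly many $K$ are cut from each cuboid $R$, summing and using $h_K\le h$ gives $\sum_K\|\nabla(u-\Pi_K u)\|_{0,K}^2\lesssim h^2\|u\|_{2,\Omega}^2$.

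For \eqref{Pi_approx_eq2}, the first observation is that, decomposing the gradient on each face into its tangential and normal parts and recalling $S_K$ from \eqref{stab_f},
\[
\|u-\Pi_K u\|_{S_K}^2+h_K\|\nabla(u-\Pi_K u)\cdot\bfn\|_{0,\partial K}^2=h_K\|\nabla(u-\Pi_K u)\|_{0,\partial K}^2,
\]
so it suffices to control $\|\nabla u-\mathrm{P}^0_K\nabla u\|_{0,\partial K}$. Here the ordinary trace inequality towards $K$ is unavailable, since the sided faces may have vanishing supporting height. This is precisely the difficulty that Lemma \ref{lem_cylin_trace} was built to overcome. Each component $\partial_i u$ lies in $H^1(R)$ with $\|\nabla\partial_i u\|_{0,R}\le|u|_{2,R}$, so Lemma \ref{lem_cylin_trace} gives, for every face $F\in\mathcal{F}_K$,
\[
\|\partial_i u-\mathrm{P}^0_K\partial_i u\|_{0,F}\lesssim h_K^{1/2}|u|_{2,R}.
\]
Summing over the three components and the boundedly many faces (Assumption \hyperref[asp:A1]{(A1)}) yields $\|\nabla(u-\Pi_K u)\|_{0,\partial K}\lesssim h_K^{1/2}|u|_{2,R}$, hence $h_K\|\nabla(u-\Pi_K u)\|_{0,\partial K}^2\lesssim h_K^2|u|_{2,R}^2\le h^2|u|_{2,R}^2$; summing over $K$ with finite overlap of the cuboids closes \eqref{Pi_approx_eq2}.

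The only genuine obstacle is this boundary estimate \eqref{Pi_approx_eq2}: the naive split $\mathrm{P}^0_K\nabla u=\mathrm{P}^0_R\nabla u+(\mathrm{P}^0_K-\mathrm{P}^0_R)\nabla u$ followed by a trace inequality fails, because the geometric factor $|\partial K|/|K|$ blows up as $K$ shrinks to a face or an edge. The argument nevertheless goes through because Lemma \ref{lem_cylin_trace} was engineered to absorb exactly this degeneration (through the compensating height factor $l_F$ in its proof), so applying it directly to the gradient components circumvents any appeal to star-convexity or to the anisotropic polynomial trace inequality of Lemma \ref{lem_traceinequa}, which is not available under Assumption \hyperref[asp:A4]{(A4)}.
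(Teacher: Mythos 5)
Your proposal is correct and follows essentially the same route as the paper: \eqref{Pi_approx_eq1} via the projection estimate of Lemma \ref{lem_proj} (applied on the convex set, noting $\nabla\Pi_K u=\mathrm{P}^0_K\nabla u$), and \eqref{Pi_approx_eq2} by applying Lemma \ref{lem_cylin_trace} componentwise to $\nabla u\in[H^1(R)]^3$, which is exactly the shrinking-robust trace estimate the paper invokes. Your write-up merely makes explicit the details the paper leaves implicit (the tangential/normal splitting of the boundary gradient and the summation over elements), and your closing observation about why a naive $\mathrm{P}^0_R$-splitting plus trace inequality would fail is consistent with the paper's own motivation for Lemma \ref{lem_cylin_trace}.
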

\begin{proof}
\eqref{Pi_approx_eq1} is trivial by Lemma \ref{lem_proj}, while \eqref{Pi_approx_eq2} immediately follows from Lemma \ref{lem_cylin_trace}.
\end{proof}

\begin{lemma}
\label{lem_assump_verify6}
Under Assumptions \hyperref[asp:A1]{(A1)}, \hyperref[asp:A2]{(A2)} and \hyperref[asp:A4]{(A4)}, Hypothesis \hyperref[asp:H5]{(H4)} holds.
\end{lemma}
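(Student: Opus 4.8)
The plan is to reduce \hyperref[asp:H5]{(H4)} to the two local bounds
$\|u-\Pi_K u\|_{0,K}\lesssim h_K^2\|u\|_{2,R}$ and $\|u-u_I\|_{0,\partial K}\lesssim h_K^{3/2}\|u\|_{2,R}$, after which summation over $K$ together with the bounded overlap of the background cuboids $R\supseteq K$ (which tile $\Omega$) gives the global $\lesssim h^4\|u\|^2_{2,\Omega}$; here I use that $\beta=1$ so $u\in H^2(\Omega)$, and that $h_K\simeq h_R$ for the three cuboid‑cut configurations. Throughout I abbreviate $p:=\mathrm{P}^1_R u$ and $\phi:=u-p$, so that Lemma \ref{lem_proj} yields $\|\phi\|_{0,R}\lesssim h_K^2|u|_{2,R}$ and $|\phi|_{1,R}\lesssim h_K|u|_{2,R}$; since $\Pi_K p=p$ and $p|_{\partial K}\in\mathcal{B}_h(\partial K)$ is interpolated exactly, both estimates can be recast in terms of the second–order remainder $\phi$.

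For the volume term I write $u-\Pi_K u=\phi-\Pi_K\phi$ and expand $\Pi_K\phi=(\mathrm{P}_K^0\nabla\phi)\cdot(\bfx-\bfx_0)+\mathrm{P}^0_{\partial K}\phi$ with $\bfx_0$ the boundary centroid. The linear part is controlled by $\|(\mathrm{P}_K^0\nabla\phi)\cdot(\bfx-\bfx_0)\|_{0,K}\lesssim h_K\|\nabla\phi\|_{0,K}\lesssim h_K^2|u|_{2,R}$, and $\|\phi\|_{0,K}\le\|\phi\|_{0,R}$ is already of the right order. The only delicate piece is $\|\mathrm{P}^0_{\partial K}\phi\|_{0,K}=|K|^{1/2}|\partial K|^{-1}\,\abs{\int_{\partial K}\phi}$: I split $\int_{\partial K}\phi$ into $\int_{\partial K}(\phi-\mathrm{P}_K^0\phi)$ and $|\partial K|\,\mathrm{P}_K^0\phi$, bound the oscillation by Lemma \ref{lem_cylin_trace} via $\|\phi-\mathrm{P}_K^0\phi\|_{0,\partial K}\lesssim h_K^{1/2}\|\nabla\phi\|_{0,R}\lesssim h_K^{3/2}|u|_{2,R}$, and absorb the surviving factor $(|K|/|\partial K|)^{1/2}\le(h_K/3)^{1/2}$ using the elementary divergence identity $3|K|\le h_K|\partial K|$. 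This step needs only \hyperref[asp:A1]{(A1)} and \hyperref[asp:A4]{(A4)}.

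The boundary term is where \hyperref[asp:A2]{(A2)} enters. Writing $\phi_I:=u_I-p\in\mathcal{B}_h(\partial K)$ and $\chi:=\phi-\mathrm{P}_K^0\phi$ with interpolant $\chi_I$, one has $u-u_I=\chi-\chi_I$. The continuous part is again handled by Lemma \ref{lem_cylin_trace}, $\|\chi\|_{0,\partial K}\lesssim h_K^{3/2}|u|_{2,R}$. For the discrete part I subtract the boundary average and invoke the discrete Poincaré inequality of Lemma \ref{lem_disct_poinc}, giving $\|\chi_I-\mathrm{P}^0_{\partial K}\chi_I\|_{0,\partial K}\lesssim h_K N_{\mathcal{T}}^{1/2}\kappa\,|\phi_I|_{1,\partial K}\lesssim h_K^{3/2}|u|_{2,R}$, where $|\phi_I|_{1,\partial K}\lesssim h_K^{1/2}|u|_{2,R}$ follows from the interpolation–stability estimates already established for \hyperref[asp:H3]{(H2)} in Lemma \ref{lem_assump_verify4} (using Lemmas \ref{lem_grad}, \ref{lem_interp_1D} and the edge–to–volume trace \eqref{lem_uI_face_eq7} applied to $\phi$). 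There remains the constant $|\partial K|^{1/2}\,|\mathrm{P}^0_{\partial K}\chi_I|$, which reduces to the integrated interpolation error $\int_{\partial K}(u-u_I)=\sum_T\int_T(\phi-I_T\phi)$; I would estimate each vertex–quadrature error $\int_T(\phi-I_T\phi)$ through a shape‑regular tetrahedron of diameter $\mathcal{O}(h_K)$ seated at a vertex of $T$ inside $R$ (available from Lemma \ref{lem_tet_maxangle}), obtaining a contribution of order $|\partial K|^{1/2}h_K^{3/2}|u|_{2,R}$.

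The main obstacle is precisely the boundary interpolation for the prism that may shrink to the segment $A_1A_5$: the faces whose height towards $K$ degenerate carry no shape‑regular simplex of diameter $\mathcal{O}(h_K)$, so the face‑local argument of Lemma \ref{lem_uI_face} is unavailable and the oscillation of $u_I$ could a priori accumulate across the thin triangles. Assumption \hyperref[asp:A2]{(A2)} removes this by supplying, for every vertex, a path to the maximal triangle along edges whose opposite angles stay comparable to a minimal angle, so that Lemma \ref{lem_disct_poinc} converts control of $|\phi_I|_{1,\partial K}$ into the $L^2$‑oscillation bound with a shape‑independent constant. The separate matching of the continuous average $\mathrm{P}_K^0\phi$ and the discrete average $\mathrm{P}^0_{\partial K}\chi_I$ is of lower order but must be executed carefully so that no factor $|\partial K|/|K|$ survives; assembling the two local bounds and summing over $K$ then completes the proof.
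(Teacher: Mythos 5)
Your proposal is correct, and on the volume term it essentially coincides with the paper's own argument: the paper likewise reuses the expansion \eqref{lem_assump_verify3_1} of $\Pi_K$ into $(\mathrm{P}^0_K\nabla\cdot)\cdot(\bfx-\bfx_0)+\mathrm{P}^0_{\partial K}(\cdot)$ and closes the average term with Lemma \ref{lem_cylin_trace} together with $|K|\lesssim h_K|\partial K|$. The only discrepancy there is that you insert $\mathrm{P}^1_R u$ where the paper inserts $\mathrm{P}^1_K u$; your choice is in fact the more robust one, since Lemma \ref{lem_cylin_trace} requires control of $\|\nabla\phi\|_{0,R}$ over the whole cuboid, which Lemma \ref{lem_proj} gives for $\phi=u-\mathrm{P}^1_R u$, whereas $\nabla\mathrm{P}^1_K u$ need not approximate $\nabla u$ on $R\setminus K$ when $K$ degenerates (and the extra constant $\mathrm{P}^0_K\phi$ you pick up is harmless). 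Where you genuinely depart from the paper is the boundary term: the paper disposes of $\|u-u_I\|_{0,\partial K}$ by rerunning the proof of Lemma \ref{lem_uI_face} with the cuboid $R$ in place of $\conv(K)$, while you route it through the discrete Poincar\'e inequality of Lemma \ref{lem_disct_poinc} (hence your invocation of (A2)), plus a separate treatment of the boundary average. Your route does close, but the stated motivation for it is not accurate: the face-local argument of Lemma \ref{lem_uI_face} does \emph{not} need shape-regular simplices inside $K$ --- the pyramids in \eqref{lem_uI_face_eq2} and the tetrahedra in \eqref{lem_uI_face_eq4} only need to sit inside the enclosing convex body, and for cuboid-cut elements they exist inside $R$ because every face of $K$ lies on $\partial R$ or on the cutting plane; this is exactly how the paper already obtained $|u-u_I|_{1,\partial K}\lesssim h_R^{1/2}\|u\|_{2,R}$ in the proof of Lemma \ref{lem_assump_verify4}. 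Moreover, your detour is ultimately redundant: your last step bounds $\int_T(\phi-I_T\phi)$ by precisely the vertex-value estimate through a shape-regular tetrahedron in $R$ (plus a pyramid trace for $\|\phi\|_{0,T}$), and once those ingredients are in hand you already have $\|u-u_I\|_{0,T}=\|\phi-I_T\phi\|_{0,T}\lesssim h_K^{3/2}\|u\|_{2,R}$ on every triangle, which yields the whole boundary bound directly and makes the Lemma \ref{lem_disct_poinc} step unnecessary. In short, the paper's route buys brevity and keeps the $L^2$ estimate independent of (A2) (which this mesh family needs only for Hypothesis (H1)); your route shows the same bound can be assembled from the stability-type gradient control, but at the cost of extra machinery and an extra hypothesis.
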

\begin{proof}
The argument is similar to Lemma \ref{lem_assump_verify3}, 
where the difference is to insert $\mathrm{P}^1_K u$ in \eqref{lem_assump_verify3_1} with $v = \mathrm{P}^1_{K} u  - u$
and apply Lemma \ref{lem_cylin_trace} to $\|v \|_{0,\partial K}$ in \eqref{lem_assump_verify3_2}.
\end{proof}

In summary, Assumptions \hyperref[asp:A1]{(A1)} and \hyperref[asp:A4]{(A4)} imply 
Hypotheses \hyperref[asp:H3]{(H2)}-\hyperref[asp:H5]{(H4)},
Hypothesis \hyperref[asp:H2]{(H1)} holds for this type of meshes,
and Hypothesis \hyperref[asp:H6]{(H5)} is trivial.
These results together yield the desired optimal estimates.
%\RG{We refer readers to some numerical results in \cite{2017ChenWeiWen}.}
%\begin{figure}[h]
%  \centering
%   \includegraphics[width=5in]{}
%  \caption{Left: the estimate of $s_h$ for $\bft_1$, and $|s_h(x_2,x_3)|$ is the length of projection of the vector from $(\bar{x}_1,\bar{x}_2)$ to $(x_1,x_2)$ onto $\bfn_{B_1B_2}$, 
%  which cannot be greater than $l_{B_1B_2}$. 
%  Middle: the estimate of $\tilde{s}_h$ for $\bft_2$ if $h_{C_1C_3}\le h_{C_1C_2}$, and $|\tilde{s}_h(x_1,x_2)|$ is the length of projection of the vector from $(\bar{x}_1,\bar{x}_2)$ to $(x_1,x_2)$ onto $C_1C_3$, 
%  which cannot be greater than $h_{C_1C_3}$. 
%  Right: the estimate of $\tilde{s}_h$ for $\bft_2$ if $h_{C_1C_3}\ge h_{C_1C_2}$, and $|\tilde{s}_h(x_1,x_2)|$ is the length of projection of the vector from $(\bar{x}_1,\bar{x}_2)$ to $(x_1,x_2)$ onto $C_1C_2$, 
%  which cannot be greater than $h_{C_1C_2}$.}
%  \label{fig:quad}
%\end{figure}

%\begin{remark}[Some remarks on the proof of Lemma \ref{lem_assump_verify4}]
%\label{rem_assump_verify4}
%\end{remark}

\begin{remark}
\label{rem_tetangle}
The proof of Lemma \ref{lem_assump_verify4} heavily relies on that edges of faces cannot be nearly collinear, and respectively, edges of elements cannot be nearly coplanar. 
In fact, this is the essential meaning of the MAC, see Lemma \ref{lem_edge_max} for illustration. 
For the elements in this section, we can find three orthogonal edges. 
For the elements in Section \ref{sec:fitted_1}, Assumption \hyperref[asp:A3]{(A3)} also implicitly implies the existence of such three edges. 
%In summary, we shall call an element ``degenerate" if its edges are all are nearly coplanar, which is the non-favorable case in this work.
\end{remark}

% !TEX root =  VEM_MaxAngle.tex

%%%%%%%%%%%%%%%%%%%%%%%%%%%
%%%%%%%%%%%%%%%%%%%%%%%%%%%
%%%%%%%%%%%%%%%%%%%%%%%%%%%

 \section{Application III: virtual spaces with discontinuous coefficients}
 \label{sec:IVEM}
 
In this section, we consider the third case regarding interface problems on unfitted meshes;
namely one element may contain multiple materials corresponding to different PDE coefficients. 
Henceforth, we assume $\Omega$ is partitioned into two subdomains $\Omega^{\pm}$ by a surface $\Gamma$, called interface. 
We further assume $\beta$ in the model problem \eqref{model} is a piecewise constant function: $\beta|_{\Omega^{\pm}} = \beta^{\pm}$,
% \begin{equation}
%\label{beta_discont}
%\beta =
%\begin{cases}
%      & \beta^- ~~~~~  \text{in} ~ \Omega^- , \\
%      & \beta^+ ~~~~~ \text{in} ~ \Omega^+ ,
%\end{cases}
%\end{equation}
where the assumption of two subdomains (two materials) is only made for simplicity. Define $v^{\pm}:=v|_{\Omega^{\pm}}$ for any appropriate function $v$. 
%Note that the equation $-\nabla\cdot(\beta \nabla u) \in L^2(\Omega)$ implicitly implies the following conditions
% \begin{equation}
%\label{jump_cond}
%u \in H^1(\Omega) ~~~~~~  \text{and} ~~~~~~ \beta \nabla u \in \bfH(\text{div};\Omega),
%\end{equation}
Notice that the regularity $u \in H^1(\Omega)$ and $\beta \nabla u \in \bfH(\text{div};\Omega)$ is not trivial now, 
but are derived from the jump conditions:
\begin{equation}
\label{jump_cond}
\jump{u}_{\Gamma}:=u^+|_{\Gamma} - u^-|_{\Gamma} = 0 ~~~~~ \text{and} ~~~~~  \jump{\nabla u \cdot \bfn}_{\Gamma}:= \nabla u^+|_{\Gamma} \cdot \bfn - \nabla u^-|_{\Gamma} \cdot \bfn = 0. 
\end{equation}
%We can define the lifting operator below:
% \begin{equation}
%\label{lift3}
%-\nabla\cdot(\beta_h \nabla \mathscr{L}_K v_h) = 0~~~ \text{in} ~ K,~~~ v_h|_{\partial K} = b_h, ~~~ \text{on} ~ \partial K ~~~ \forall b_h \in \mathcal{B}_h(\partial K),
%\end{equation}
%and the virtual space is defined in \eqref{lifting}. 
Generally, the jump conditions make the solutions to interface problems only have a piecewise higher regularity. 
Given $D\subseteq \Omega$ intersecting $\Gamma$, define $H^k(D^-\cup D^+) := \{ u\in L^2(D): ~ u|_{D^{\pm}} \in H^k(D) \}$, with an integer $k \ge 0$. For smooth $\Gamma$ and $\partial \Omega$, by \cite{1998ChenZou} the solution $u$ belongs to the space
\begin{equation}
  \label{beta_space_2}
H^2_0(\beta;\Omega) = \{ u\in H^1_0(\Omega) \cap H^2(\Omega^-\cup\Omega^+): \beta\nabla u\in \bfH(\text{div};\Omega) \}
\end{equation}
which is slightly modified from \eqref{beta_space_1} due to the discontinuity of $\beta$.
Define the Sobolev extensions $u^{\pm}_E\in H^2_0(\Omega)$ of $u^{\pm}$ from $\Omega^{\pm}$ to $\Omega$. The following boundedness holds \cite{2001GilbargTrudinger}
\begin{equation}
\label{sobolev_ext}
\| u^{\pm}_E \|_{H^2(\Omega)} \le C_{\Omega}  \| u^{\pm} \|_{H^2(\Omega^{\pm})},
\end{equation}
for a constant $C_{\Omega} $ only depending on the geometry of $\Gamma$. 
We further define the norms $\|u\|_{E,k,D} = \|u^+_E\|_{k,D}+\|u^-_E\|_{k,D}$ and $|u|_{E,k,D} = |u^+_E|_{k,D}+|u^-_E|_{k,D}$ to simplify the presentation.

% Due to the discontinuity, we also need the following jump conditions
% \begin{subequations}
%\label{jump_cond}
%\begin{align}
%    & \jump{u}:=u^+|_{\Gamma} - u^-|_{\Gamma} = 0, \label{jump_cond01}   \\
%    & \jump{\nabla u \cdot \bfn}:= \nabla u^+|_{\Gamma} \cdot \bfn - \nabla u^-|_{\Gamma} \cdot \bfn = 0. \label{jump_cond02}
%\end{align}
%\end{subequations}

In this case, the local PDEs in \eqref{lifting} to define the virtual spaces involve discontinuous coefficients.
In fact, \eqref{lifting} contains local interface problems whose solutions belonging to $H^1(K)$ \cite{1998ChenZou,2002HuangZou} satisfying the jump conditions in \eqref{jump_cond} on $\Gamma^K:=\Gamma \cap K$. 
The method analyzed in this section is referred to as the immersed virtual element method (IVEM) developed in \cite{2021CaoChenGuoIVEM,2022CaoChenGuo}
-- a novel immersed scheme for solving interface problems on unfitted meshes, 
different from the classical immersed finite element methods (IFEs) \cite{2020GuoLinZou,2022JiWangChenLi,2015LinLinZhang}.
Thanks to the unfitted meshes, we shall assume the background mesh cut by the interface is a simple tetrahedral mesh, see Figure \ref{fig:tetinter} for an illustration, which is is added into \hyperref[asp:A5]{(A5)} below. 
In fact, this is also a widely-used setup in practice, as meshes are not needed to fit the interface \cite{2020AdjeridBabukaGuoLin,2020GuoLin,2020GuoZhang}.

\begin{figure}[h]
\centering
\begin{minipage}{.4\textwidth}
  \centering
  \includegraphics[width=1in]{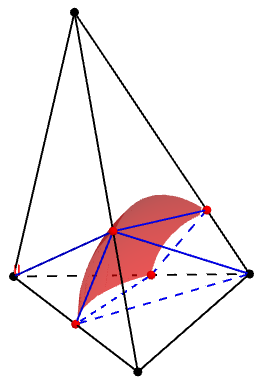}
  \caption{Illustration of an interface element. Being a polyhedron, it has 8 vertices of which 4 are the vertices of $K$ and 4 are the intersecting points with $\Gamma^K_h$. The face triangulation satisfies the }
  \label{fig:tetinter}
\end{minipage}
%\hspace{1cm}
\begin{minipage}{0.5\textwidth}
  \centering
   \includegraphics[width=2in]{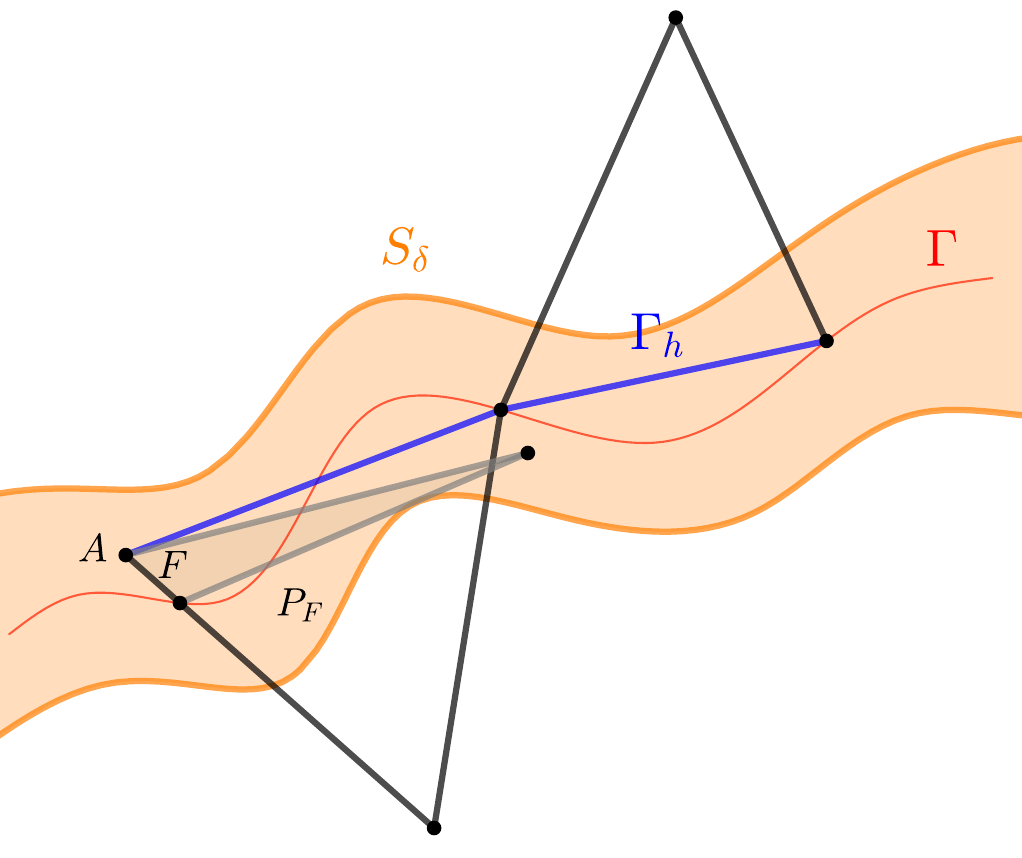}
  \caption{2D Illustration of the approximated interface $\Gamma_h$ and Assumption \hyperref[asp:A5]{(A5)}. }
  \label{fig:assump3}
  \end{minipage}
\end{figure}

Now, let us first review some fundamental ingredients of the IVEM. 
Denote the collection of interface elements: $\mathcal{T}^i_h=\{ K\in \mathcal{T}_h: K \cap \Gamma \neq \emptyset \}$. 
As the linear method is used, we let $\Gamma_h$ be a linear approximation to $\Gamma$, where $\Gamma_h$ can be constructed as a linear interpolant of the level-set function of $\Gamma$ on the mesh $\mathcal{T}_h$,
see Figure \ref{fig:assump3} for a 2D illustration.
Let $\beta_h$ be defined with $\Gamma_h$.
For each $K\in \mathcal{T}^i_h$ intersecting the interface, we let $\Gamma^K= \Gamma \cap K$ and  $\Gamma^K_h= \Gamma_h \cap K$. 
Then, $K$ can be regarded as a polyhedron whose vertices include the vertices of both $K$ and $\Gamma^K_h$, and short edges and small faces may appear. For $\mathcal{T}_h$ being shape-regular, 
it is easy to see that the face triangulation in Assumption \hyperref[asp:A1]{(A1)} holds, 
and thus we still use $\mathcal{B}_h(\partial K)$ as the trace space. 
We refer readers to Figure \ref{fig:tetinter} for illustration of polyhedron and triangulation.

Apparently, polynomial spaces cannot capture the jump behavior across $\Gamma^K_h$ 
and thus Hypothesis \hyperref[asp:H4]{(H3)} certainly does not hold. 
Instead, we employ the following linear IFE spaces as the projection space. 
A local linear IFE function is a piecewise polynomial space given by
%function $v_h$ with $v_h = v_h|_{K^{\pm}_h} \in \mathcal{P}_1(K^{\pm}_h)$ satisfying the approximate jump conditions to \eqref{jump_cond} on $\Gamma^K_h$:
%\begin{subequations}
%\label{ife_fun}
%\begin{align}
%& v^-_h|_{\Gamma^K_h} = v^+_h|_{\Gamma^K_h},  \label{ife_fun_1} \\
%& \beta^-\nabla v^{-}_h\cdot \bar{\mathbf{ n}}_K = \beta^+\nabla v^{+}_h\cdot \bar{\mathbf{ n}}_K.\label{ife_fun_2}
%\end{align}
%\end{subequations}
%Then the local linear IFE space on an element $K$ is defined as
\begin{equation}
\label{ife_fun_space}
\mathcal{W}_h(K) := \{ v_h|_{K^{\pm}_h}\in \mathcal{P}_1(K^{\pm}_h): v_h\in H^1(K), ~~~ \beta_h \nabla v_h \in \bfH(\text{div};K) \},
\end{equation}
of which the conditions are equivalent to $\jump{v_h}_{\Gamma^K_h}=0$ and $\jump{\beta_h\nabla v_h\cdot\bar{\bfn}_K}_{\Gamma^K_h} = 0$. 
We then proceed to derive explicit representation of the IFE functions.
The continuity condition shows that $\nabla v_h$ must be continuous tangentially on $\Gamma^K_h$. 
Namely, for $\bar{\bft}^1_K$ and $\bar{\bft}^2_K$ being two orthogonal unit tangential vectors of $\Gamma^K_h$, there holds $\nabla v^-_h \cdot \bar{\bft}^i_K = \nabla v^+_h \cdot \bar{\bft}^i_K$, $i=1,2$.
%\begin{equation}
%\label{tang_continu}
%\nabla v^-_h \cdot \bar{\bft}^i_K = \nabla v^+_h \cdot \bar{\bft}^i_K, ~~~ i=1,2.
%\end{equation}
With the flux jump condition, we have the following identities:
\begin{equation}
\label{gradv}
\nabla v^-_h = M^- \nabla v^+_h ~~~ \text{and} ~~~ \nabla v^+_h = M^+ \nabla v^-_h, ~~~~ M^- = [\bar{\bft}^1_K,\bar{\bft}^2_K,\beta^-\bar{\bfn}_K]^{-T}[\bar{\bft}^1_K,\bar{\bft}^2_K,\beta^+\bar{\bfn}_K]^T,
\end{equation}
where $M^+=(M^-)^{-1}$.
%with $M^-=(N^-)^{-1}N^+$ and $M^+=(N^+)^{-1}N^-$ where $N^\pm$ are matrices with rows $\bar{\bft}^1_K$, $\bar{\bft}^2_K$ and $\beta^\pm\bar{\bfn}_K$. 
Define the piecewise constant vector space:
\begin{equation}
\label{Cspace}
\bfP^{\beta}_h(K) = \{  \bfp_h |_{K^{\pm}_h}\in[\mathcal{P}_1(K^{\pm}_h)]^3,~ \bfp_h|_{K^-_h} = M^-\bfp_h|_{K^+_h} \}.
\end{equation}
Therefore, given any point $\bfx_K\in \Gamma^K_h$, the IFE space in \eqref{ife_fun_space} is equivalent to
\begin{equation}
\label{lem_ife_space_eq0}
\mathcal{W}_h(K) = \{ \bfp_h \cdot(\bfx - \bfx_K) + c~:~ \bfp_h \in \bfP^{\beta}_h(K), ~ c\in \mathcal{P}_0(K) \}.
\end{equation}
One can easily verify that the space in \eqref{lem_ife_space_eq0} is invariant with respect to the choice of $\bfx_K$
and $\mathcal{P}_0(K) \subseteq \mathcal{W}_h(K)$.
It is trivial that $\text{div}(\beta_h \nabla v_h)=0$, $\forall v_h \in \mathcal{W}_h(K)$. 
Then, the projection in \eqref{Pi_proj} is computable, which is $\beta_h$-weighted and thus different from the standard projection in the previous two cases.

%Without loss of generality, let $\bfv_i$, $i=1,2,3$, 
%be the basis functions in $\bfC^{\beta}_h$ from which the gradient is projected:
%$$
%\bfv^-_1 = \bar{\bft}^1_K, ~~~ \bfv^-_2 = \bar{\bft}^2_K,  ~~~ \bfv^-_3 = \bar{\bfn}_K, ~~~ \text{and} ~~~ \bfv^+_i = M^- \bfv^-_i.
%$$
%Then, for any $u\in H^1(K)$, we can write
%$
%\nabla \Pi^\beta_K u = c_1 \bfv_1 + c_2 \bfv_2 + c_3 \bfv_3,
%$
%where $c_i$, $i=1,2,3$, are the coefficients to be determined. Then by elementary calculus, the matrix in \eqref{eq:ife-projection} can be explicitly expressed as
%\begin{equation}
%\label{eq:ife-projection_mat}
%A = \beta^- |K^-_h| I +\frac{ (\beta^-)^2}{\beta^+} |K^+_h| I = \beta^- \left(  |K^-_h| +\frac{ \beta^-}{\beta^+} |K^-_h|  \right) I
%\end{equation}
%where $I$ is a $3\times 3$ identity matrix. 

As for the Hypotheses, \hyperref[asp:H2]{(H1)} is guaranteed by Assumptions \hyperref[asp:A1]{(A1)} and \hyperref[asp:A2]{(A2)} again.
We then only need to verify Hypotheses \hyperref[asp:H3]{(H2)}-\hyperref[asp:H6]{(H5)}, 
where we should replace the right-hand sides $\| u \|_{0,\Omega}$ in \eqref{approxi_VK_Pi}-\eqref{betah_approx} by $\| u \|_{0,\Omega^-\cup\Omega^+}$, due to the piecewise regularity. 
Accordingly, the regularity assumptions in Theorems \ref{thm_energy_est} and \ref{thm_u} becomes the space in \eqref{beta_space_2}, 
and the meta-framework developed in Section \ref{sec:unifyframe} is also applicable.
Furthermore, the analysis is standard on non-interface elements, as $\beta_h$ reduces to a constant,
 and thus we focus on interface elements.

%We shall see that the desired results can nicely follow from inserting the existing results in the literature to the proposed unified framework.

%Let us first characterize the IFE functions defined above. The continuity condition in \eqref{ife_fun_1} shows that $\nabla v_h$ must be continuous tangentially on $\Gamma_K$. Namely, if we let $\bar{\bft}^1_K$ and $\bar{\bft}^2_K$ be two orthogonal tangential directions of $\Gamma_K$, then there holds
%\begin{equation}
%\label{tang_continu}
%\nabla v^-_h \cdot \bar{\bft}^i_K = \nabla v^+_h \cdot \bar{\bft}^i_K, ~~~ i=1,2.
%\end{equation}
%Combining \eqref{ife_fun_2} and \eqref{tang_continu}, we have the following identities for $\nabla v_h$
%\begin{equation}
%\label{gradv}
%\nabla v^-_h = M^- \nabla v^+_h ~~~ \text{and} ~~~ \nabla v^+_h = M^+ \nabla v^-_h
%\end{equation}
%with $M^-=(N^-)^{-1}N^+$ and $M^+=(N^+)^{-1}N^-$ where $N^s$ is a matrix with rows $\bar{\bft}^1_K$, $\bar{\bft}^2_K$ and $\beta^s\bar{\bfn}_K$, $s=\pm$. Define the piecewise constant vector space:
%\begin{equation}
%\label{Cspace}
%\bfC^{\beta}_h = \{  \bfp|_{K^{\pm}_h}\in[\mathbb{P}_1(K^{\pm}_h)]^3,~ \bfp|_{K^-_h} = M^-\bfp|_{K^+_h} \}.
%\end{equation}
%Therefore, we can have the following characterization of the linear IFE spaces.
%\begin{equation}
%\label{lem_ife_space_eq0}
%\mathcal{V}^{IF}_h(K) = \{ \bfp\cdot(\bfx - \bfx_K) + c~:~ \bfp\in \bfC^{\beta}_h, ~ c\in\mathbb{P} \}.
%\end{equation}

Let us first address Hypothesis \hyperref[asp:H6]{(H5)} by considering the geometric error caused by $\Gamma$ and $\Gamma_h$. 
Let $\Gamma_h$ cut $\Omega$ into two polyhedral subdomains $\Omega^{\pm}_h$
that are differing from $\Omega^{\pm}$ in a small region $\widetilde{\Omega}_h := (\Omega^+\cap\Omega^-_h)\cup(\Omega^-\cap\Omega^+_h)$ called the \textit{mismatched region}. 
%A theoretical tool to handle the mismatching region in FEMs is the $\delta$-strip argument developed in \cite{2010LiMelenkWohlmuthZou} recalled below. 
Define a $\delta$-strip:
$
S_{\delta} = \{ \bfx: \text{dist}(\bfx,\Gamma)\le \delta \}
$ \cite{2010LiMelenkWohlmuthZou}.
Denote $ \widetilde{K} = K\cap \widetilde{\Omega}_h$ and $\widetilde{F} = F\cap \widetilde{\Omega}_h$ for each element $K$ and face $F$.
Make the following assumption:
\begin{itemize}
  \item[(\textbf{A5})] \label{asp:A5} (The $\delta$-strip condition) $\mathcal{T}_h$ is a shape-regular tetrahedral mesh. 
  On $\mathcal{T}_h$, $\Gamma_h$ is an optimal linear approximation to $\Gamma$ in the sense that
  \begin{equation}
\label{delta_strip_arg}
\widetilde{\Omega}_h \subseteq S_{\delta}, ~~~ \text{for some}~ \delta\lesssim h^2.
\end{equation} 
In addition, assume $S_{\delta}$ satisfies that, for each face $F$ of an element $K$, there is a pyramid $P_F\subseteq S_{\delta}\cap\omega_K$ with $\widetilde{F}$ as its base such that the associated supporting height is $\mathcal{O}(h_K)$. 
\end{itemize} 
\eqref{delta_strip_arg} basically means the optimal geometric accuracy of a linear approximation to a surface, which indeed holds for smooth surfaces \cite{2016WangXiaoXu,2020GuoLin}.
A 2D illustration of Assumption \hyperref[asp:A5]{(A5)} is shown in Figure \ref{fig:assump3}. 
%But here we mention that the vertice $A$ can exactly locate on the interface in the 2D case such that the assumption is not needed. The example in Figure \ref{fig:assump3} is just artficially constructed for pure illustration purpose.
Next, we recall the following lemma.
\begin{lemma}{\cite[Lemma 2.1]{2010LiMelenkWohlmuthZou}}
\label{lem_delta}
Let $u\in H^1(\Omega^-\cup\Omega^+)$, then there holds
\begin{equation}
\label{lem_delta_eq1}
\| u \|_{L^2(S_{\delta})} \lesssim \sqrt{\delta} \| u \|_{H^1(\Omega^-\cup\Omega^+)}.
\end{equation}
%Furthermore, suppose $u|_{\Gamma}=0$, then there holds
%\begin{equation}
%\label{lem_delta_eq2}
%\| u \|_{L^2(S_{\delta})} \lesssim \delta \| u \|_{H^1(\Omega^-\cup\Omega^+)}.
%\end{equation}
\end{lemma}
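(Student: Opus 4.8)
The plan is to localize near the interface and reduce the three-dimensional strip estimate to a one-dimensional trace-type inequality along the normal of $\Gamma$, exploiting the smoothness of the interface. First I would split $S_{\delta} = (S_{\delta}\cap\Omega^+)\cup(S_{\delta}\cap\Omega^-)$; since $u$ is $H^1$ on each subdomain separately, the two one-sided strips can be treated identically, and I present the argument for $\Omega^+$. To avoid any difficulty at thin parts of $\Omega^+$, I would first replace $u|_{\Omega^+}$ by a Sobolev extension $\tilde{u}$ to a neighborhood of $\overline{\Omega^+}$ with $\|\tilde{u}\|_{H^1}\lesssim \|u\|_{H^1(\Omega^+)}$, so that normal fibers of a fixed length stay in the domain of $\tilde{u}$.

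Since $\Gamma$ is smooth, for $\delta$ below the reach of $\Gamma$ the map $(\bfy,t)\mapsto \bfy + t\bfn(\bfy)$, with $\bfy\in\Gamma$, $\bfn$ the unit normal into $\Omega^+$, and $t\in[0,L]$ for a fixed $L=\mathcal{O}(1)$, is a diffeomorphism onto a tubular neighborhood with Jacobian $J(\bfy,t)\simeq 1$ uniformly, and $S_{\delta}\cap\Omega^+$ is contained in the slab $t\in[0,\delta]$. This change of variables gives
\begin{equation*}
\| u \|^2_{L^2(S_{\delta}\cap\Omega^+)} \lesssim \int_{\Gamma}\int_0^{\delta} |\tilde{u}(\bfy+t\bfn(\bfy))|^2 \dd t \dd \sigma(\bfy).
\end{equation*}
For each fixed $\bfy$ I set $g(t):=\tilde{u}(\bfy+t\bfn(\bfy))$ on $[0,L]$, so that $g'(t)=\nabla\tilde{u}\cdot\bfn$ and $|g'|\le|\nabla\tilde{u}|$. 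By the fundamental theorem of calculus, for $t\in[0,\delta]$ and $s\in[0,L]$ one has $g(t)=g(s)+\int_s^t g'$; squaring and averaging in $s$ yields the pointwise bound $|g(t)|^2 \lesssim \int_0^L|g|^2 + \int_0^L|g'|^2$, uniformly in $t$.

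Integrating this over $t\in[0,\delta]$ extracts the crucial factor $\delta$,
\begin{equation*}
\int_0^{\delta}|g(t)|^2 \dd t \lesssim \delta\left( \int_0^L|g|^2 \dd t + \int_0^L|g'|^2 \dd t \right),
\end{equation*}
and integrating over $\Gamma$ while undoing the change of variables (again using $J\simeq 1$) bounds the right-hand side by $\delta\|\tilde{u}\|^2_{H^1}\lesssim \delta\|u\|^2_{H^1(\Omega^+)}$. Adding the analogous estimate for $\Omega^-$ gives $\|u\|^2_{L^2(S_{\delta})}\lesssim \delta\|u\|^2_{H^1(\Omega^-\cup\Omega^+)}$, which is \eqref{lem_delta_eq1} after taking square roots.

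I expect the main obstacle to be the geometric bookkeeping rather than the analysis: one must verify that $\delta$ is small enough (below the reach of $\Gamma$) for the normal coordinates to be well defined and their Jacobian uniformly comparable to $1$, and that the fibers of fixed length $L$ remain inside the extension domain so that the full $H^1(\Omega^+)$ norm appears on the right. A routine density argument ($C^{\infty}$ functions are dense in $H^1$) is also needed to justify the fundamental-theorem-of-calculus step for a general $H^1$ function.
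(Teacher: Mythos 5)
Lemma \ref{lem_delta} is not proved in the paper at all: it is imported verbatim as Lemma 2.1 of the cited reference \cite{2010LiMelenkWohlmuthZou}, so there is no in-paper argument to compare against. Your proof is a correct, self-contained reconstruction of the standard argument used in that reference --- one-sided Sobolev extension, normal (tubular) coordinates with uniformly bounded Jacobian near the smooth interface, and a one-dimensional fundamental-theorem-of-calculus average along normal fibers that extracts the factor $\delta$; the only bookkeeping you still owe (and correctly flag) is that the fiber length, not $\delta$, must sit below the reach of $\Gamma$, with the case of $\delta$ bounded away from $0$ being trivial since then $\sqrt{\delta}\gtrsim 1$.
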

Now, we can control the error occurring in the mismatched region, and show Hypothesis \hyperref[asp:H6]{(H5)}.
\begin{lemma}
\label{lem_h6_verify}
Let $u\in H^2_0(\beta;\Omega)$. Under Assumption \hyperref[asp:A5]{(A5)}, Hypothesis \hyperref[asp:H6]{(H5)} holds.
\end{lemma}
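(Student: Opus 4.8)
The plan is to exploit the fact that $\beta-\beta_h$ is supported only on the mismatched region $\widetilde{\Omega}_h=(\Omega^+\cap\Omega^-_h)\cup(\Omega^-\cap\Omega^+_h)$: off $\widetilde{\Omega}_h$ the two partitions agree so $\beta=\beta_h$, while on $\widetilde{\Omega}_h$ we simply have $|\beta-\beta_h|=|\beta^+-\beta^-|\lesssim 1$. The bound $\|\beta_h\|_{\infty,\Omega}\le\|\beta\|_{\infty,\Omega}$ is immediate, since $\beta_h$ takes the same two values $\beta^\pm$. Thus both terms in \eqref{betah_approx} reduce to controlling $\nabla u$ over $\widetilde{\Omega}_h$ and over the mismatched face pieces $\widetilde{F}=F\cap\widetilde{\Omega}_h$. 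By Assumption \hyperref[asp:A5]{(A5)} these live inside the $\delta$-strip $S_\delta$ with $\delta\lesssim h^2$, so the whole proof is driven by the strip estimate of Lemma \ref{lem_delta} applied to the (piecewise $H^1$) field $\nabla u$.

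For the volume term I would first bound
\[
\sum_{K\in\mathcal{T}_h}\||\beta-\beta_h|^{1/2}\nabla u\|^2_{0,K}\lesssim\|\nabla u\|^2_{0,\widetilde{\Omega}_h}\le\|\nabla u\|^2_{0,S_\delta},
\]
using $\widetilde{\Omega}_h\subseteq S_\delta$. Since $u^\pm\in H^2(\Omega^\pm)$, the field $\nabla u$ is piecewise $H^1$, so Lemma \ref{lem_delta} gives $\|\nabla u\|^2_{0,S_\delta}\lesssim\delta\,\|u\|^2_{2,\Omega^-\cup\Omega^+}\lesssim h^2\|u\|^2_{2,\Omega^-\cup\Omega^+}$ by $\delta\lesssim h^2$, which is the required bound (recall that in this section the right-hand side $\|u\|_{2,\Omega}$ is replaced by the piecewise norm $\|u\|_{2,\Omega^-\cup\Omega^+}$).

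For the boundary term the difficulty is that $\widetilde{F}$ and the surrounding element $K$ may be arbitrarily degenerate, so a naive trace inequality has no uniform constant. Here I would use the supporting pyramid $P_F\subseteq S_\delta\cap\omega_K$ furnished by Assumption \hyperref[asp:A5]{(A5)}, which has $\widetilde{F}$ as its base and supporting height $\mathcal{O}(h_K)$; this is exactly the configuration that makes the scaled trace inequality used in Lemma \ref{lem_cylin_trace} (cf. \eqref{lem_cylin_trace_eq2}) hold with a shape-independent constant. Since $\nabla u$ may jump across $\Gamma\cap P_F$, I would split $\widetilde{F}$ according to the side of $\Gamma$ and apply the trace inequality to the Sobolev extensions $u^\pm_E\in H^2(\Omega)$, which are genuinely $H^1$ on all of $P_F$, obtaining
\[
\|\nabla u\|^2_{0,\widetilde{F}}\le\sum_{\pm}\|\nabla u^\pm_E\|^2_{0,\widetilde{F}}\lesssim h_K^{-1}\,|u|^2_{E,1,P_F}+h_K\,|u|^2_{E,2,P_F}.
\]
Multiplying by $h_K$, summing over $K$ and $F$, and using the bounded overlap of the patches $\omega_K\supseteq P_F$ on the shape-regular mesh yields $\sum_K h_K\||\beta-\beta_h|\nabla u\cdot\bfn\|^2_{0,\partial K}\lesssim |u|^2_{E,1,S_\delta}+h^2|u|^2_{E,2,\Omega}$. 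Applying Lemma \ref{lem_delta} to $\nabla u^\pm_E$ bounds the first summand by $\delta\,\|u\|^2_{E,2,\Omega}\lesssim h^2\|u\|^2_{E,2,\Omega}$, and the second is $\le h^2\|u\|^2_{E,2,\Omega}$ directly; the Sobolev extension estimate \eqref{sobolev_ext} then converts $\|u\|_{E,2,\Omega}$ back to $\|u\|_{2,\Omega^-\cup\Omega^+}$, completing the bound.

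The main obstacle is the trace step on $\widetilde{F}$: both the loss of shape regularity of $K$ and the jump of $\nabla u$ across $\Gamma$ inside $S_\delta$ prevent a direct application of a standard trace inequality. Resolving it requires simultaneously invoking the pyramid $P_F$ of height $\mathcal{O}(h_K)$ from Assumption \hyperref[asp:A5]{(A5)} to supply a uniform trace constant, and the extensions $u^\pm_E$ to supply genuine $H^1$ regularity over $P_F$; everything else is a routine combination of $\delta\lesssim h^2$ with the strip inequality of Lemma \ref{lem_delta}.
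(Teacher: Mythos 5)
Your proposal is correct and follows essentially the same route as the paper's proof: the volume term is reduced to $\|\nabla u\|_{0,S_\delta}$ and handled by the strip estimate of Lemma \ref{lem_delta}, while the face term is treated via the pyramid $P_F$ of Assumption \hyperref[asp:A5]{(A5)} together with a scaled trace inequality applied to the Sobolev extensions $u^{\pm}_E$, finishing with Lemma \ref{lem_delta} and \eqref{sobolev_ext}. The only difference is that you spell out details the paper leaves implicit (the $\|\beta_h\|_{\infty}$ bound, the bounded overlap of patches, and the exact scaling in the trace step), which is fine.
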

\begin{proof}
It follows from the definition of $\beta_h$ that $\| \beta \nabla u - \beta_h \nabla u \|_{0,K}\lesssim \|\nabla u \|_{0,K\cap S_{\delta}}$,
which yields the estimate in $\|\cdot\|_{0,K}$.
Next, we note that the estimate on faces only appears on those intersecting with the interface. 
Given an interface face $F$, we consider the pyramid $P_F$ from Assumption \hyperref[asp:A5]{(A5)}, by the trace inequality, there holds
\begin{equation}
\begin{split}
\label{lem_h6_verify_eq1}
h^{1/2}_K \| \beta \nabla u\cdot\bfn -  \beta_h \nabla u \cdot\bfn \|_{0,F} & \lesssim \sum_{s=\pm} \| \nabla u^{s}_E\cdot \bfn \|_{0,\widetilde{F}}  \lesssim \sum_{s=\pm}    |u^{s}_E |_{H^1(P_F)} + h_K | u^{s}_E |_{H^2( P_F)} \\
& \lesssim  |u|_{E,1,S_\delta \cap \omega_K}  + h_K |u|_{E,2,S_\delta \cap \omega_K}.
\end{split}
\end{equation}
Summing \eqref{lem_h6_verify_eq1} over all the interface elements and using Lemma \ref{lem_delta} and \eqref{sobolev_ext} finishes the proof.
\end{proof}

We then proceed to verify Hypotheses \hyperref[asp:H3]{(H2)}-\hyperref[asp:H5]{(H4)}.
Note that this is non-trivial as $v_h$ on each interface element is piecewise defined,
and thus all the nice properties for polynomials cannot be applied directly.
In addition, even if the mesh itself is very shape regular, 
each subelement of an interface element could be highly anisotropic.
To handle this issue, let us first recall the following results for the IFE spaces.
\begin{lemma}[Lemma 4.1, \cite{2022CaoChenGuo}]
\label{lem_trace_inequa}
The following trace inequality holds for each $K$:
\begin{equation}
\label{lem_trace_inequa_eq02}
\| \nabla v_h \|_{0, \partial K} \lesssim h^{-1/2}_K \| \nabla v_h \|_{0, K}, ~~~~ \forall v_h\in \mathcal{W}_h(K),
\end{equation}
where the hidden constant is independent of the shape of subelements.
\end{lemma}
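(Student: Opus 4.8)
The plan is to exploit the fully explicit description of the IFE functions in \eqref{lem_ife_space_eq0}, which reduces the statement to a trace inequality for a vector field that is piecewise constant with a uniformly controlled jump across $\Gamma^K_h$. Writing $v_h=\bfp_h\cdot(\bfx-\bfx_K)+c$ with $\bfp_h\in\bfP^{\beta}_h(K)$, the gradient $\nabla v_h$ equals $\bfp_h^+:=\bfp_h|_{K^+_h}$ on $K^+_h$ and $\bfp_h^-:=M^-\bfp_h^+$ on $K^-_h$. The decisive point is that $\bfp_h^+$ and $\bfp_h^-$ have comparable magnitude, uniformly in the geometry, so that $|\nabla v_h|$ is, up to a fixed constant, a single constant on all of $K$.

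First I would record the uniform boundedness of the transfer matrices. Since $M^-$ and $M^+=(M^-)^{-1}$ in \eqref{gradv} are assembled only from the orthonormal frame $\bar{\bft}^1_K,\bar{\bft}^2_K,\bar{\bfn}_K$ of $\Gamma^K_h$ and the scalars $\beta^{\pm}$, a direct computation shows that each acts as the identity on the tangential plane and rescales the normal direction by $\beta^{\mp}/\beta^{\pm}$; hence $\|M^{\pm}\|\le\max\{1,\beta^{\mp}/\beta^{\pm}\}$, a bound depending only on the contrast and not on the position or orientation of $\Gamma^K_h$. Consequently $|\bfp_h^-|\simeq|\bfp_h^+|$.

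Next I would evaluate the two sides separately. On the boundary, each face (or face-piece produced by the interface subdivision recorded in Assumption \hyperref[asp:A1]{(A1)}) lies in a single subregion, and $\Gamma^K_h$ itself is interior and does not belong to $\partial K$; therefore $\|\nabla v_h\|^2_{0,\partial K}=\sum_{F\subset\partial K}\int_F|\nabla v_h|^2\,\dd s\simeq|\partial K|\,|\bfp_h^+|^2$ by the previous step. On the element, $\|\nabla v_h\|^2_{0,K}=|K^+_h|\,|\bfp_h^+|^2+|K^-_h|\,|\bfp_h^-|^2\simeq(|K^+_h|+|K^-_h|)\,|\bfp_h^+|^2=|K|\,|\bfp_h^+|^2$. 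The inequality then collapses to the purely geometric bound $h_K|\partial K|\lesssim|K|$, which holds because under Assumption \hyperref[asp:A5]{(A5)} the uncut element $K$ is a shape-regular tetrahedron, so $|\partial K|\simeq h_K^2$ and $|K|\simeq h_K^3$.

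The crux --- and the only place where independence of the subelement shape is genuinely used --- is the merging step $|K^+_h|\,|\bfp_h^+|^2+|K^-_h|\,|\bfp_h^-|^2\simeq|K|\,|\bfp_h^+|^2$: because the two gradient constants are equivalent, the subvolumes may be recombined into the full, non-degenerate volume $|K|$, so no lower bound on any individual $|K^{\pm}_h|$ is ever needed. Thus arbitrarily thin subelements are harmless, and I expect the only delicate point to be the verification of the uniform bound on $M^{\pm}$ (together with the bookkeeping for faces cut by $\Gamma^K_h$); the remaining estimates are elementary and rest solely on the shape regularity of the uncut tetrahedron, which is blind to the interface location.
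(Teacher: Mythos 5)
Your proof is correct and follows essentially the same route as the argument behind the cited result (the paper itself only cites [Lemma 4.1, 2022CaoChenGuo] rather than reproving it): exploit the explicit IFE structure so that $\nabla v_h$ is piecewise constant with $|\bfp_h^-|\simeq|\bfp_h^+|$ via the uniformly bounded transfer matrices $M^{\pm}$, then reduce the trace inequality to $h_K|\partial K|\lesssim |K|$ for the uncut shape-regular tetrahedron. Your observation that the comparability of the two gradient constants is exactly what makes the estimate blind to the subelement shapes is the correct key point, and the norm computation $\|M^{\pm}\|=\max\{1,\beta^{\mp}/\beta^{\pm}\}$ in the orthonormal frame $(\bar{\bft}^1_K,\bar{\bft}^2_K,\bar{\bfn}_K)$ is accurate.
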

As for the interpolation errors, we introduce a specially-designed quasi-interpolation:
\begin{equation}
\label{quaInterp_1}
J_K u =
\begin{cases}
      & J_K^-u = \mathrm{P}^1_{\omega_K} u^-_E, ~~~~~~~~~~~~~~~~~~~~~~~~~~~~~~~~~~~~~~~~~~~~~~~~~~~~~ \text{in} ~ \omega^+_K, \\
      & J_K^+u =\mathrm{P}^1_{\omega_K} u^-_E + (\tilde{\beta} - 1) \nabla \mathrm{P}^1_{\omega_K} u^-_E\cdot\bar{\bfn}_K ( \bfx - \bfx_K )\cdot\bar{\bfn}_K, ~~~ \text{in} ~ \omega^-_K,
\end{cases}
\end{equation}
where $\tilde{\beta} = \beta^-/\beta^+$,
and $\omega_K:= \{ K'\in \mathcal{T}_h: \partial K'\cap K \neq \emptyset \}$ is the patch associated with $K$.
One can easily show $J_K u = \bfp_h\cdot(\bfx-\bfx_K) +c$ with 
$$
\bfp^-_h  = \nabla  \mathrm{P}^1_{\omega_K} u^-_E,
~~~~
\bfp^+_h = M^+ \bfp_h^-, % +(\tilde{\beta}-1)(\bfp^-_h\cdot\bar{\bfn}_K) \bar{\bfn}_K
~~~~
c =  \mathrm{P}^1_{\omega_K} u^-_E(\bfx_K),
$$
and thus $J_K u$ is an IFE function by \eqref{lem_ife_space_eq0}.
In the following discussion, $J^{\pm}_Ku$ are regarded as polynomials of which each is defined on the entire patch $\omega_K$ instead of just the sub-patches.

\begin{theorem}[Theorem 4.1, \cite{2020GuoLin}]
\label{thm_interp}
Let $u\in H^2_0(\beta;\Omega)$. Then, for every $K\in\mathcal{T}^{i}_h$, there holds
\begin{equation}
\label{thm_interp_eq01}
h^j_K |u^{\pm}_E - J^{\pm}_Ku|_{H^j(\omega_K)} \lesssim h^2_K \|u \|_{E,2,\omega_K}  ,  ~~~~ j=0,1.
\end{equation}
\end{theorem}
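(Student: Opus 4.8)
The plan is to exploit the structural design of the quasi-interpolation \eqref{quaInterp_1}: its two linear pieces $J^-_Ku$ and $J^+_Ku$ are linked by precisely the discrete gradient-jump operator $M^+$ of \eqref{gradv}, so that $J^+_Ku$ is the IFE perturbation of $\mathrm{P}^1_{\omega_K}u^-_E$ whose gradient satisfies the flux condition across $\Gamma^K_h$. Since both extensions $u^+_E,u^-_E$ lie in $H^2(\omega_K)$ and, by Assumption \hyperref[asp:A5]{(A5)}, $\omega_K$ is a union of a uniformly bounded number of shape-regular tetrahedra, the estimate reduces to a patch-wise polynomial approximation statement in which the interface geometry enters only through $\bar{\bfn}_K$, $\bfx_K$ and $M^+$. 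I would first dispose of the minus side: as $J^-_Ku=\mathrm{P}^1_{\omega_K}u^-_E$, standard polynomial approximation on the shape-regular patch (Lemma \ref{lem_proj}) gives $h^j_K|u^-_E-J^-_Ku|_{H^j(\omega_K)}\lesssim h^2_K|u^-_E|_{2,\omega_K}\le h^2_K\|u\|_{E,2,\omega_K}$ for $j=0,1$.

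For the plus side I would split $u^+_E-J^+_Ku=(u^+_E-\mathrm{P}^1_{\omega_K}u^+_E)+q$, where $q:=\mathrm{P}^1_{\omega_K}u^+_E-J^+_Ku\in\mathcal{P}_1(\omega_K)$. The first summand is again controlled by Lemma \ref{lem_proj}, so it remains to bound the single linear polynomial $q$, for which it suffices to estimate its constant gradient $\nabla q=\bfg^+-M^+\bfg^-$ (with $\bfg^\pm:=\nabla\mathrm{P}^1_{\omega_K}u^\pm_E$) and its value $q(\bfx_K)$. For the gradient I would start from the exact jump relation $\nabla u^+=M^+_{\mathrm{exact}}\nabla u^-$, which holds a.e.\ on the true interface $\Gamma^K$, where $M^+_{\mathrm{exact}}$ is the operator of \eqref{gradv} built from the true (varying) normal $\bfn$ instead of $\bar{\bfn}_K$. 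Averaging this identity over $\Gamma^K$ and inserting it, I would decompose $\bfg^+-M^+\bfg^-$ into three pieces: the differences between $\bfg^\pm$ and the $\Gamma^K$-average of $\nabla u^\pm$, both handled by a trace inequality together with the projection bound $\|\bfg^\pm-\nabla u^\pm_E\|_{0,\omega_K}\lesssim h_K|u^\pm_E|_{2,\omega_K}$, and the normal-mismatch term $M^+-M^+_{\mathrm{exact}}$, controlled by $|\bar{\bfn}_K-\bfn|\lesssim h_K$ (the $\mathcal{O}(h)$ normal deviation of a linear interpolant of a smooth surface). These combine to $|\nabla q|\lesssim h^{-1/2}_K\|u\|_{E,2,\omega_K}$, whence $h_K|q|_{H^1(\omega_K)}=h_K|\nabla q|\,|\omega_K|^{1/2}\lesssim h^2_K\|u\|_{E,2,\omega_K}$.

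For the constant $q(\bfx_K)$ I would use that $\bfx_K\in\Gamma^K_h$ lies within $\delta\lesssim h^2_K$ of $\Gamma$ by the $\delta$-strip condition \eqref{delta_strip_arg}, that $u^+_E=u^-_E$ on $\Gamma$ by the continuity jump condition in \eqref{jump_cond}, and the 3D embedding $H^2(\omega_K)\hookrightarrow C^0$ together with a scaled Sobolev/Bramble--Hilbert bound $\|v-\mathrm{P}^1_{\omega_K}v\|_{L^\infty(\omega_K)}\lesssim h^{1/2}_K|v|_{2,\omega_K}$. Writing $q(\bfx_K)=\mathrm{P}^1_{\omega_K}u^+_E(\bfx_K)-\mathrm{P}^1_{\omega_K}u^-_E(\bfx_K)$ (the added term in \eqref{quaInterp_1} vanishing at $\bfx_K\in\Gamma^K_h$) and inserting the common value of $u^\pm_E$ at the nearest point of $\Gamma$ yields $|q(\bfx_K)|\lesssim h^{1/2}_K\|u\|_{E,2,\omega_K}$, hence $\|q\|_{0,\omega_K}\lesssim h^2_K\|u\|_{E,2,\omega_K}$. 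Combining these with the two projection terms, and using \eqref{sobolev_ext} to pass from $|u^\pm_E|_{2,\omega_K}$ to $\|u\|_{E,2,\omega_K}$, gives \eqref{thm_interp_eq01}.

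The main obstacle is the gradient step: one must show that the algebraically constructed jump $M^+\bfg^-$, built from the piecewise-constant average gradient $\bfg^-$ (whose magnitude is only $\mathcal{O}(1)$) and from the discrete planar normal $\bar{\bfn}_K$, reproduces the exact pointwise flux relation up to $\mathcal{O}(h_K)$; the delicate point is that an $\mathcal{O}(h)$ normal deviation multiplying an $\mathcal{O}(1)$ gradient must be shown to fit the target budget $h^{-1/2}_K\|u\|_{E,2,\omega_K}$, with constants independent both of how $\Gamma$ cuts $K$ and of the anisotropy of the subelements $K^\pm_h$, for which the shape-robust trace inequality of Lemma \ref{lem_trace_inequa} is the appropriate tool. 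The geometric bookkeeping that coordinates $\delta\lesssim h^2_K$, the normal deviation, and the averaging over a possibly small $\Gamma^K$ is where the real work lies; the polynomial-approximation components are routine consequences of Lemma \ref{lem_proj}.
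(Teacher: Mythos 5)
First, a point about the comparison itself: the paper does not prove this statement. It is imported verbatim as Theorem 4.1 of \cite{2020GuoLin}, so there is no internal proof to measure your argument against, and your attempt has to stand on its own as a reconstruction. Structurally you are on the right track: the minus side is indeed immediate from Lemma \ref{lem_proj}; on the plus side, the reduction to a single linear polynomial $q$ with $\nabla q = \bfg^+ - M^+\bfg^-$, and the scaling targets $|\nabla q|\lesssim h_K^{-1/2}\|u\|_{E,2,\omega_K}$ and $|q(\bfx_K)|\lesssim h_K^{1/2}\|u\|_{E,2,\omega_K}$, are exactly what is needed. Two caveats on the constant term: plain $H^2\hookrightarrow C^0$ gives no rate when you move the evaluation point from $\bfx_K$ to the nearest point of $\Gamma$; you need the Morrey embedding $H^2(\omega_K)\hookrightarrow C^{0,1/2}$ applied to $u^{\pm}_E-\mathrm{P}^1_{\omega_K}u^{\pm}_E$ (after scaling this absorbs the displacement $\delta^{1/2}\lesssim h_K$), together with the crude bound $|\nabla \mathrm{P}^1_{\omega_K}u^{\pm}_E|\lesssim h_K^{-3/2}\|\nabla u^{\pm}_E\|_{0,\omega_K}$ for the linear parts. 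Also, you correctly use the $\beta$-weighted flux condition to form the exact relation $\nabla u^+ = M^+_{\mathrm{exact}}\nabla u^-$; the unweighted condition displayed in \eqref{jump_cond} is inconsistent with the rest of the paper and must be read as $\jump{\beta\nabla u\cdot\bfn}_{\Gamma}=0$.

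The genuine gap is the one you flag but leave open, and as written the proof fails there: you average the exact jump relation over $\Gamma^K=\Gamma\cap K$, so every resulting bound carries a factor $|\Gamma^K|^{-1/2}$, which is unbounded precisely in the small-cut regime that the theorem (and the whole IVEM program) must cover. Lemma \ref{lem_trace_inequa} cannot repair this, because it is a trace inequality on $\partial K$ for IFE functions, while the loss here is in the size of the averaging set, not in any trace step. The repair is to average over $\Gamma\cap\omega_K$ instead of $\Gamma^K$: every other object in your decomposition ($\bfg^{\pm}$, the Bramble--Hilbert bounds, the normal deviation $|\bar{\bfn}_K-\bfn|\lesssim h_K$) already lives on the patch, and for a smooth interface and a shape-regular conforming mesh (Assumption \hyperref[asp:A5]{(A5)}) one has $|\Gamma\cap\omega_K|\gtrsim h_K^2$ whenever $\Gamma$ meets $K$, because $\omega_K$ contains an $\mathcal{O}(h_K)$-neighborhood of $K$ through which $\Gamma$ must sweep. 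With $|\Gamma\cap\omega_K|^{-1/2}\lesssim h_K^{-1}$, and with the trace inequality for $H^1$ functions on a smooth surface cutting shape-regular elements giving $\|\bfg^{\pm}-\nabla u^{\pm}_E\|_{0,\Gamma\cap\omega_K}\lesssim h_K^{1/2}|u^{\pm}_E|_{2,\omega_K}$, all three pieces of your decomposition (projection averages, normal mismatch times $\langle\nabla u^-_E\rangle$, and the $M^+$-weighted average error) land inside the $h_K^{-1/2}\|u\|_{E,2,\omega_K}$ budget, and the argument then closes as you describe.
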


With the results above, we are able to estimate the projection errors. 
Similar to $J_K$, due to the piecewise manner,
the projection $\Pi_Ku$ is still a piecewise polynomial.
Again, each of $\Pi^{\pm}_Ku$ is regarded as a polynomial defined on the whole patch. 
The key here is to estimate $u^{\pm}_E - \Pi^{\pm}_K u$ on the whole element.
\begin{lemma}
\label{lem_beta_proj}
Let $u\in H^2_0(\beta;\Omega)$. Then, for every $K\in\mathcal{T}^{i}_h$, there holds
\begin{equation}
\begin{split}
\label{lem_beta_proj_eq0}
\|\nabla(u^{\pm}_E - \Pi^{\pm}_K u) \|_{0, K} & \lesssim  h_K \|u \|_{E,2,\omega_K} + \| u \|_{E,1,\omega_K \cap S_{\delta}}  .
\end{split}
\end{equation}
%\begin{subequations}
%\label{lem_beta_proj_eq0}
%\begin{align}
%    & \|\nabla(u^-_E - \Pi^{-}_K u) \|_{L^2(K_h)} \lesssim \frac{\sqrt{\beta^+}}{\sqrt{\beta^-}} \big( \sum_{s=\pm} |\beta^+ u^+_E|_{H^1(\omega_K)} + |\beta^+ u^+_E|_{H^2(\omega_K)} + \sigma_K \big) \label{lem_beta_proj_eq01}  \\
%    &  h^j|\beta^+(u^+_E - J^+_Ku)|_{H^j(\omega_K)} \lesssim h^2_K\sum_{i=1,2}( |\beta^+ u^+_E|_{H^i(\omega_K)} + |\beta^- u^-_E|_{H^i(\omega_K)} ) \label{lem_beta_proj_eq02}
%\end{align}
%\end{subequations}
%where $\sigma_K =\sqrt{\beta^{+}} \left( \| \nabla u^-_E \|_{L^2(K_{\delta})} + \| \nabla u^+_E \|_{L^2(K_{\delta})} \right)$. 
\end{lemma}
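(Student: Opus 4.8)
The plan is to estimate the projection error by comparing $\Pi_K u$ with the quasi-interpolation $J_K u$ from \eqref{quaInterp_1}, exploiting the $\beta_h$-weighted Galerkin orthogonality of the projection \eqref{Pi_proj}. First I would insert $J_K u$ through the triangle inequality,
\[
\|\nabla(u^{\pm}_E - \Pi^{\pm}_K u)\|_{0,K} \le \|\nabla(u^{\pm}_E - J^{\pm}_K u)\|_{0,K} + \|\nabla(J^{\pm}_K u - \Pi^{\pm}_K u)\|_{0,K}.
\]
The first term is controlled immediately by Theorem \ref{thm_interp} (restricting its $\omega_K$-bound to $K$), contributing $h_K\|u\|_{E,2,\omega_K}$. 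It remains to bound the discrete difference $\chi_h := J_K u - \Pi_K u$, which lies in $\mathcal{W}_h(K)$ since both $J_K u$ and $\Pi_K u$ are IFE functions.

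Second, I would test the orthogonality relation against $\chi_h$ itself. Since $(\beta_h\nabla(u-\Pi_K u),\nabla w_h)_K = 0$ for all $w_h\in\mathcal{W}_h(K)$ by \eqref{Pi_proj}, choosing $w_h=\chi_h$ gives
\[
(\beta_h\nabla\chi_h,\nabla\chi_h)_K = (\beta_h\nabla(J_K u - u),\nabla\chi_h)_K.
\]
The key to coercivity is that, although one subelement volume $|K^{\pm}_h|$ may be arbitrarily small for an anisotropic interface element, the two constant gradient pieces are comparable: the relation $\nabla\chi_h|_{K^-_h}=M^-\nabla\chi_h|_{K^+_h}$ from \eqref{gradv} together with $\|M^{\pm}\|\lesssim 1$ and $\beta^{\pm}$ bounded above and below yields $(\beta_h\nabla\chi_h,\nabla\chi_h)_K \gtrsim \|\nabla\chi^-_h\|^2_{0,K}+\|\nabla\chi^+_h\|^2_{0,K}$. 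Hence after a Cauchy--Schwarz step on the right and cancelling one factor of $\|\nabla\chi_h\|$, it suffices to bound $\nabla(J_K u - u)$ against $\beta_h$ on each subregion $K^{\pm}_h$.

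The third and most delicate step is accounting for the geometric mismatch between $\Gamma$ and $\Gamma_h$, which is where the term $\|u\|_{E,1,\omega_K\cap S_\delta}$ originates. On the matched part $K^s_h\cap\Omega^s$ the integrand is $\nabla(J^s_K u - u^s_E)$, bounded by $h_K\|u\|_{E,2,\omega_K}$ via Theorem \ref{thm_interp}. On the mismatched part $K^s_h\cap\Omega^{-s}\subseteq\widetilde K\subseteq S_\delta$ (using Assumption \hyperref[asp:A5]{(A5)}), however, the true solution $u$ equals the \emph{opposite} extension $u^{-s}_E$, so I would split $\nabla(J^s_K u - u^{-s}_E)=\nabla(J^s_K u - u^s_E)+\nabla(u^s_E-u^{-s}_E)$; the first summand is again the interpolation error on the strip, while the second is estimated directly in $L^2$ over the strip, $\|\nabla(u^+_E-u^-_E)\|_{0,\widetilde K}\lesssim \|u\|_{E,1,\omega_K\cap S_\delta}$, without any gain of $h_K$.

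Collecting the matched and mismatched contributions gives $\|\nabla\chi^{\pm}_h\|_{0,K}\lesssim h_K\|u\|_{E,2,\omega_K}+\|u\|_{E,1,\omega_K\cap S_\delta}$, and combining with the interpolation bound from the first step yields \eqref{lem_beta_proj_eq0}. I expect the main obstacle to be the bookkeeping of the mismatched region: one must track that on $\widetilde K$ the relevant value of $u$ is the wrong-side extension, that $\widetilde K$ lies in the $\delta$-strip so the cross-extension difference is $L^2$-small there, and that the extended pieces $J^{\pm}_K u$ and $\Pi^{\pm}_K u$ are legitimately treated as single polynomials over the whole element when splitting each integral across $\Gamma_h$.
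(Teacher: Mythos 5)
Your proposal is correct and takes essentially the same route as the paper's proof: both rest on the Galerkin orthogonality (best-approximation property) of the $\beta_h$-weighted projection compared against the quasi-interpolant $J_K u$, on Theorem \ref{thm_interp}, on the gradient relation \eqref{gradv} with $\|M^{\pm}\|\lesssim 1$ to control both polynomial pieces over the whole element $K$ regardless of which subelement shrinks, and on the $\delta$-strip treatment of the mismatched region from Assumption \hyperref[asp:A5]{(A5)}. The only difference is organizational: the paper treats $K^-_h$ and $K^+_h$ by successive triangle inequalities (using $\nabla v^-_h = M^-\nabla v^+_h$ for $v_h=\Pi_K u - J_K u$ on the foreign subelement), whereas you package the same $M^{\pm}$ argument into a single coercivity bound $(\beta_h\nabla\chi_h,\nabla\chi_h)_K \gtrsim \|\nabla\chi^-_h\|^2_{0,K}+\|\nabla\chi^+_h\|^2_{0,K}$ for $\chi_h=J_K u-\Pi_K u$, a cosmetic rather than substantive distinction.
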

\begin{proof}
 For simplicity, we only show \eqref{lem_beta_proj_eq0} for the ``$-$" component. 
 By the projection property, %we have
\begin{equation}
\begin{split}
\label{lem_beta_proj_eq1}
\| \sqrt{\beta_h} \nabla(u^-_E - \Pi^{-}_K u) \|_{L^2(K^-_h)}  \le & \| \sqrt{\beta_h} \nabla (u -  \Pi_K u) \|_{0, K} +  \| \sqrt{\beta_h} \nabla u \|_{E,0, K \cap S_{\delta}} \\
 \le & \| \sqrt{\beta_h} \nabla (u -  J_K u) \|_{0, K} +  \| \sqrt{\beta_h} \nabla u \|_{E,0, K \cap S_{\delta}}  \\
  \le & \sum_{s=\pm}  \|\sqrt{\beta^s_h}\nabla (u^s_E -  J^s_K u) \|_{0, K} +   2 \| \sqrt{\beta_h} \nabla u \|_{E,0, K \cap S_{\delta}}
\end{split}
\end{equation}
which yields \eqref{lem_beta_proj_eq0} on $K^-_h$ by Theorem \ref{thm_interp}. As for $K^+_h$, we note that
\begin{equation}
\begin{split}
\label{lem_beta_proj_eq2}
\|\nabla(u^-_E - \Pi^{-}_K u) \|_{0, K^+_h} &\le \| \nabla(u^-_E - J^-_K u) \|_{0, K^+_h} + \|\nabla(\Pi^{-}_K u - J^-_K u) \|_{0, K^+_h}.
\end{split}
\end{equation}
The first term in the right-hand side above directly follows from \eqref{thm_interp_eq01}. 
For the second term, as $v_h:=\Pi_K u - J_K u$ is an IFE function, by \eqref{gradv} and $\| M^+ \| \lesssim 1$, we obtain
\begin{equation}
\begin{split}
\label{lem_beta_proj_eq3}
 \|  \nabla v^-_h \|_{0, K^+_h}  \lesssim \|  \nabla v^+_h \|_{0, K^+_h} 
 \le  \| \nabla(\Pi^{+}_K u - u^+_E) \|_{0, K^+_h} +  \| \nabla( u^+_E - J^+_K u) \|_{0, K^+_h}
\end{split}
\end{equation}
where the estimation of the first term in the right-hand side above is similar to \eqref{lem_beta_proj_eq1} and the estimate of the second term follows from Theorem \ref{thm_interp}. 
Combining these estimates, we obtain \eqref{lem_beta_proj_eq0}.
\end{proof}

With these preparations, we are ready to examine Hypothesis \hyperref[asp:H3]{(H2)}-\hyperref[asp:H5]{(H4)}.

\begin{lemma}
\label{projection term estimate}
Let $u\in H^2_0(\beta;\Omega)$. Under Assumptions \hyperref[asp:A1]{(A1)}, \hyperref[asp:A2]{(A2)} and \hyperref[asp:A5]{(A5)}, 
Hypothesis \hyperref[asp:H3]{(H2)} holds. 
%\begin{equation}
%\begin{split}
%\label{projection term estimate eq0}
% \|  \nabla \Pi_K (u-u_I)\|_{0, K}\lesssim &  \frac{\sqrt{\beta^+}}{\beta^-} h_K ( |\beta u |_{E,1,\omega_K} + |\beta u |_{E,2,\omega_K} ) \\
%+ & \frac{(\beta^+)^{3/2}}{\beta^-}  h^{-1}_K \|u^-_E- u^+_E \|_{L^2(\omega_K \cap S_{\delta})} +  \frac{(\beta^+)^{3/2}}{(\beta^-)^2}  | \beta u |_{E,1,\omega_K \cap S_{\delta}} .
%\end{split}
%\end{equation}
\end{lemma}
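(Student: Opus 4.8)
The plan is to work element by element, writing
\begin{equation*}
\vertiii{u-u_I}^2_K = \|\beta_h^{1/2}\nabla\Pi_K(u-u_I)\|^2_{0,K} + \|(u-u_I)-\Pi_K(u-u_I)\|^2_{S_K},
\end{equation*}
and to reduce both pieces to boundary interpolation errors on $\partial K$ before summing over $\mathcal{T}_h$; only the interface elements require new arguments. For the gradient (consistency) piece I would exploit that $\Pi_K(u-u_I)\in\mathcal{W}_h(K)$ is an IFE function, so $\beta_h\nabla\Pi_K(u-u_I)$ is divergence-free on each subelement and has no normal-flux jump across $\Gamma^K_h$, while $u-u_I\in H^1(K)$ is single-valued there. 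Hence the projection property \eqref{Pi_proj} followed by integration by parts leaves only a boundary term,
\begin{equation*}
\|\beta_h^{1/2}\nabla\Pi_K(u-u_I)\|^2_{0,K} = (\beta_h\nabla\Pi_K(u-u_I)\cdot\bfn,\ u-u_I)_{\partial K}.
\end{equation*}
Applying Cauchy--Schwarz, the boundedness of $\beta_h$, and the shape-independent IFE trace inequality of Lemma \ref{lem_trace_inequa}, I would cancel one factor of $\|\beta_h^{1/2}\nabla\Pi_K(u-u_I)\|_{0,K}$ to obtain $\|\beta_h^{1/2}\nabla\Pi_K(u-u_I)\|_{0,K}\lesssim h^{-1/2}_K\|u-u_I\|_{0,\partial K}$.

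For the stabilization piece, recall $S_K(v,v)=h_K|v|^2_{1,\partial K}$. The triangle inequality splits it into $h_K|u-u_I|^2_{1,\partial K}$ and $h_K|\Pi_K(u-u_I)|^2_{1,\partial K}$, the latter being controlled by Lemma \ref{lem_trace_inequa} together with the gradient bound just derived. Thus the entire lemma reduces to the interface analogues of Lemma \ref{lem_uI_face}, namely
\begin{equation*}
\|u-u_I\|_{0,\partial K}\lesssim h^{3/2}_K\|u\|_{E,2,\omega_K}\quad\text{and}\quad |u-u_I|_{1,\partial K}\lesssim h^{1/2}_K\|u\|_{E,2,\omega_K}.
\end{equation*}

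\textbf{The main obstacle is establishing these two boundary estimates}, since $u$ is only piecewise $H^2$ and the boundary triangulation need not align with the true interface $\Gamma$. On each boundary triangle $T\subseteq\overline{\Omega^s_h}$ I would split $u-u_I=(u-u^s_E)+(u^s_E-I_Tu^s_E)+I_T(u^s_E-u)$, where $I_T$ is the nodal interpolation and $u^s_E$ the Sobolev extension of \eqref{sobolev_ext}. The first term is supported in the mismatch region $T\cap\widetilde{\Omega}_h\subseteq T\cap S_\delta$ and is controlled by Lemma \ref{lem_delta} with $\delta\lesssim h^2$ from Assumption \hyperref[asp:A5]{(A5)}; the second is the interpolation error of the smooth $H^2$ function $u^s_E$ and is handled verbatim by the trace-based three-dimensional argument of Lemma \ref{lem_uI_face}, with the shape-regular patch $\omega_K$ playing the role of $\conv(K)$. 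The delicate third term is nonzero only at the (at most four) interface-intersection vertices $\bfx_i$ that fall in the mismatch region; there the nodal value equals $(u^s_E-u^{-s}_E)(\bfx_i)$, and since the jump condition $\jump{u}_\Gamma=0$ forces $u^+_E-u^-_E$ to vanish on $\Gamma$ while $\bfx_i$ lies within $\delta$ of $\Gamma$, I would bound it pointwise via the embedding $H^2\hookrightarrow C^{0,1/2}$ (valid in three dimensions) combined with $\delta\lesssim h^2$. The tangential semi-norm $|u-u_I|_{1,\partial K}$ is treated by the same splitting, now using the maximum angle condition (Assumption \hyperref[asp:A1]{(A1)}, via Lemma \ref{lem_grad}) to convert the constant gradient of the nodal-interpolation error on $T$ into differences along two well-separated edges.

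Finally I would sum over the boundary triangles of each $K$ and then over all interface elements, using the finite overlap of the patches $\omega_K$ and that the $\mathcal{O}(h^{-2})$ interface elements each contribute an $\mathcal{O}(h^4)$ energy, to conclude $\sum_{K}\vertiii{u-u_I}^2_K\lesssim h^2\|u\|^2_{E,2,\Omega}$, which is Hypothesis \hyperref[asp:H3]{(H2)} in the piecewise-regularity form appropriate to this section. I expect the nodal-interpolation term near $\Gamma$ to be the only genuinely new difficulty, as every other piece reuses the IFE trace inequality, the interpolation bounds of Theorem \ref{thm_interp}, and the smooth-function technique already developed for Case (1).
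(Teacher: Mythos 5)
Your skeleton is exactly the paper's: the projection identity with integration by parts, cancellation through the shape-independent IFE trace inequality of Lemma \ref{lem_trace_inequa}, the \eqref{assump_verify2}-type splitting of the stabilization term, reduction to boundary interpolation estimates, and a final summation over interface elements via Lemma \ref{lem_delta} and \eqref{sobolev_ext}. The divergence is that the paper compresses the boundary estimates into a citation of Lemma \ref{lem_uI_face}, while you attempt to prove them in the piecewise-$H^2$ setting; that is precisely where your proposal has a genuine gap.

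The gap is in the mismatch-node term $I_T(u^s_E-u)$. Write $w=u^+_E-u^-_E$ and let $\bfx_i$ be a node with $d:=\text{dist}(\bfx_i,\Gamma)\lesssim h^2$. First, if the seminorm in your bound $|w(\bfx_i)|\lesssim d^{1/2}\,|w|_{C^{0,1/2}}$ is the global one coming from $H^2(\Omega)\hookrightarrow C^{0,1/2}(\Omega)$, then every interface element inherits the \emph{global} norm $\|u\|_{E,2,\Omega}$, and your count ``each of the $\mathcal{O}(h^{-2})$ interface elements contributes $\mathcal{O}(h^4)$ energy'' collapses: the sum is $\mathcal{O}(h)\|u\|^2_{E,2,\Omega}$, not $\mathcal{O}(h^2)$. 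Second, even after localizing to $\omega_K$ (where the scaled embedding reads $|w|_{C^{0,1/2}(\omega_K)}\lesssim h^{-2}\|w\|_{0,\omega_K}+h^{-1}|w|_{1,\omega_K}+|w|_{2,\omega_K}$, and the zeroth-order term is absorbed by a Poincar\'e inequality using $w=0$ on $\Gamma\cap\omega_K$), the H\"older modulus is intrinsically too weak for the stabilization part: it converts the quadratic closeness $d\lesssim h^2$ into only $d^{1/2}\lesssim h$. Take the generic kink $u^-\equiv 0$, $u^+=\text{dist}(\cdot,\Gamma)$, so $w$ is Lipschitz and $|w|_{C^{0,1/2}(\omega_K)}\simeq h^{1/2}$. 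Then edge differences of the nodal values are bounded by H\"older only as $|\Delta_i w|\lesssim |e_i|^{1/2}h^{1/2}$, so Lemma \ref{lem_grad} gives $\|\nabla I_T(u^s_E-u)\|_{0,T}\lesssim h$, a stabilization contribution $\simeq h^3$ per element and $\simeq h$ in total, one full power of $h$ short of the required $\mathcal{O}(h^2)$. The true values are much smaller ($w(\bfx_i)=d\lesssim h^2$, and $w$ varies along $\Gamma_h$ only at rate $\mathcal{O}(h)$ because $\Gamma_h$ is $\mathcal{O}(h^2)$-close and $\mathcal{O}(h)$-tangent to $\Gamma$), but no pointwise H\"older estimate can see this; one must use $d\lesssim h^2$ \emph{linearly}, e.g.\ by comparing $w$ with its best linear fit $P_1$ on $\omega_K$ (Bramble--Hilbert plus Sobolev embedding give $\|w-P_1\|_{\infty,\omega_K}\lesssim h^{1/2}|w|_{2,\omega_K}$, and $P_1$, being small on $\Gamma\cap\omega_K$ with $|\nabla P_1|\lesssim h^{-3/2}\|\nabla w\|_{0,\omega_K}$, satisfies $|P_1(\bfx_i)|\lesssim h^{1/2}|w|_{2,\omega_K}+d\,|\nabla P_1|$), together with the tangential-variation argument along $\Gamma_h$ for the short edges. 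Relatedly, your per-element targets $\|u-u_I\|_{0,\partial K}\lesssim h^{3/2}\|u\|_{E,2,\omega_K}$ and $|u-u_I|_{1,\partial K}\lesssim h^{1/2}\|u\|_{E,2,\omega_K}$ are not what such arguments deliver: the honest per-element bounds retain strip-localized terms such as $\|\nabla w\|_{0,\omega_K\cap S_{\delta}}$ with no $h$-gain (also note Lemma \ref{lem_delta} is a volume estimate, so surface terms on $T\cap\widetilde{\Omega}_h$ require the pyramid trace of Assumption \hyperref[asp:A5]{(A5)} first), and the missing power of $h$ is recovered only at the summation stage through Lemma \ref{lem_delta} --- which is exactly the bookkeeping in the paper's Lemmas \ref{lem_beta_proj}--\ref{lem_boundinterp}. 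As written, the proposal therefore does not establish Hypothesis \hyperref[asp:H3]{(H2)} at the optimal rate.
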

\begin{proof}
By the definition of projection and integration by parts, we immediately have
\begin{equation}
\label{projection term estimate eq1}
\begin{aligned}
 \| \sqrt{\beta_h} \nabla \Pi_K (u-u_I)\|_{0, K}^2 =& (\beta_h \nabla \Pi_K (u-u_I)\cdot \mathbf{ n},u-u_I)_{\partial K}\\
 \leq& \| \nabla \Pi_K (u-u_I)\cdot \mathbf{ n}\|_{0, \partial K} \| \beta_h (u-u_I) \|_{0, \partial K}.
\end{aligned}
\end{equation}
As $ \Pi_K (u-u_I)$ is an IFE function, the trace inequality in Lemma \ref{lem_trace_inequa} and Lemma \ref{lem_uI_face} lead to the estimate of $\|  \nabla \Pi_K (u-u_I)\|_{0, K}$ by \eqref{projection term estimate eq1}. 
The estimate of $\| (u-u_I) - \Pi_K(u-u_I) \|_{S_K}$ is similar to \eqref{assump_verify2}.
Summing the estimates over all the interface elements and using Lemma \ref{lem_delta} and \eqref{sobolev_ext} finishes the proof.
\end{proof}

\begin{lemma}
\label{lem_boundflux_err}
Let $u\in H^2_0(\beta;\Omega)$. Under Assumptions \hyperref[asp:A1]{(A1)}, \hyperref[asp:A2]{(A2)} and \hyperref[asp:A5]{(A5)}, 
Hypothesis \hyperref[asp:H4]{(H3)} holds. 
%\begin{equation}
%\begin{split}
%\label{lem_boundflux_err_eq0}
%   h^{1/2}_K \| \beta_h \nabla (u-\Pi^{\beta}_K u)\cdot \mathbf{ n}\|_{0, \partial K} 
%  \lesssim &  \frac{\beta^+}{\beta^-}  h_K  (|\beta u |_{E,1,\omega_K} + | \beta u|_{E,2,\omega_K,}) \\
%  + & \frac{(\beta^+)^{3/2}}{(\beta^-)^{3/2}} ( | u |_{E,1,\omega_K\cap S_{\delta}} + h_K | u |_{E,2,\omega_K} ).
%  \end{split}
%\end{equation}
\end{lemma}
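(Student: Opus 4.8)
The plan is to verify the two estimates \eqref{Pi_approx_eq1} and \eqref{Pi_approx_eq2} that constitute Hypothesis \hyperref[asp:H4]{(H3)}, with the right-hand sides now read in the piecewise norm $\|u\|_{2,\Omega^-\cup\Omega^+}$. On non-interface elements $\beta_h$ is constant and $\mathcal{W}_h(K)=\mathcal{P}_1(K)$, so the bounds reduce to those already established in Section \ref{sec:fitted_1}; hence I restrict attention to $K\in\mathcal{T}^i_h$. For \eqref{Pi_approx_eq2} I first observe that both the tangential part measured by $\|\cdot\|_{S_K}$ (recall \eqref{stab_f}) and the normal flux are dominated by the full boundary gradient, since $\|\nabla_F(\cdot)\|_{0,F}\le\|\nabla(\cdot)\|_{0,F}$ and $\|\nabla(\cdot)\cdot\bfn\|_{0,F}\le\|\nabla(\cdot)\|_{0,F}$; thus it suffices to bound $\sum_{K}h_K\|\nabla(u-\Pi_K u)\|^2_{0,\partial K}\lesssim h^2\|u\|^2_{E,2,\Omega}$.

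First I would derive \eqref{Pi_approx_eq1} from Lemma \ref{lem_beta_proj}. Splitting $K$ by $\Gamma_h$ and writing $u$ as $u^{+}_E$ or $u^{-}_E$ according to the true phase, the contribution over the correctly matched part of each $K^{\pm}_h$ is exactly $\|\nabla(u^{\pm}_E-\Pi^{\pm}_K u)\|_{0,K}$, controlled by \eqref{lem_beta_proj_eq0}. The remaining contribution lives on the mismatched region $\widetilde K\subseteq S_{\delta}$, where I bound $\|\nabla(u^{\mp}_E-\Pi^{\pm}_K u)\|_{0,\widetilde K}$ by $\|\nabla u^{\mp}_E\|_{0,S_{\delta}\cap K}$ plus $|\nabla\Pi^{\pm}_K u|\,|S_{\delta}\cap K|^{1/2}$; the constant gradient $\nabla\Pi^{\pm}_K u$ is controlled through \eqref{lem_beta_proj_eq0} once more, and $|S_{\delta}\cap K|/|K|\lesssim\delta/h_K$ since $|K|\simeq h_K^3$. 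Summing over interface elements with bounded patch overlap and invoking Lemma \ref{lem_delta} applied to $\nabla u$ together with $\delta\lesssim h^2$ from \hyperref[asp:A5]{(A5)} turns every strip term into $\delta\|u\|^2_{2,\Omega^-\cup\Omega^+}\lesssim h^2\|u\|^2_{2,\Omega^-\cup\Omega^+}$, which closes \eqref{Pi_approx_eq1}.

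For the boundary estimate I would use the decomposition $u-\Pi_K u=(u-J_K u)+(J_K u-\Pi_K u)$ with the quasi-interpolant $J_K u$ from \eqref{quaInterp_1}. Since $J_K u-\Pi_K u\in\mathcal{W}_h(K)$, the IFE trace inequality of Lemma \ref{lem_trace_inequa} gives $h_K\|\nabla(J_K u-\Pi_K u)\|^2_{0,\partial K}\lesssim\|\nabla(J_K u-\Pi_K u)\|^2_{0,K}$, and a triangle inequality reduces this to the already bounded $\|\nabla(u-J_K u)\|_{0,K}$ (Theorem \ref{thm_interp}, with the same mismatch bookkeeping as above) and $\|\nabla(u-\Pi_K u)\|_{0,K}$ (just proved). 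For the interpolation piece $u-J_K u$, on each face $F$ I split $F$ by $\Gamma_h$; on the matched part $u=u^{\pm}_E$ and $J_K u=J^{\pm}_K u$ are genuine polynomials on the shape-regular patch $\omega_K$, so the standard trace inequality combined with Theorem \ref{thm_interp} gives $h_K\|\nabla(u^{\pm}_E-J^{\pm}_K u)\|^2_{0,F}\lesssim h_K^2\|u\|^2_{E,2,\omega_K}$. On the mismatched part $\widetilde F\subseteq S_{\delta}$ I convert the surface integral into a volume integral over the pyramid $P_F\subseteq S_{\delta}\cap\omega_K$ supplied by \hyperref[asp:A5]{(A5)}, exactly as in Lemma \ref{lem_h6_verify}, again producing strip terms absorbed by Lemma \ref{lem_delta} and $\delta\lesssim h^2$. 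Summation over interface elements with \eqref{sobolev_ext} then completes \eqref{Pi_approx_eq2}.

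The main obstacle is the careful treatment of the mismatch region: the pieces $\Pi^{\pm}_K u$ and $J^{\pm}_K u$ are polynomials whose gradients need not be small, and on $\widetilde K$ (respectively $\widetilde F$) the ``wrong'' extension $u^{\mp}_E$ must be compared against them. The resolution is that these bad terms are always multiplied by the measure of the $\delta$-strip (or, through the pyramid of \hyperref[asp:A5]{(A5)}, by a trace over $\widetilde F$), so that $\delta\lesssim h^2$ together with the strip inequality $\|\nabla u\|_{0,S_\delta}\lesssim\sqrt{\delta}\,\|u\|_{2,\Omega^-\cup\Omega^+}$ renders them at worst of order $h^2$. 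The only real subtlety is keeping the powers of $h_K$ balanced—using $|K|\simeq h_K^3$ and $|\widetilde F|\lesssim\delta h_K$ for the shape-regular tetrahedral mesh—so that no inverse power of $h$ survives the summation.
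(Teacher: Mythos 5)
Your proposal reaches the right conclusion, but it differs from the paper in one half and has a loose step in the other. For \eqref{Pi_approx_eq2} the paper never introduces the quasi-interpolant: it applies a phase-wise triangle inequality on each face,
\begin{equation*}
\|\nabla(u-\Pi_K u)\|_{0,F}\;\le\;\sum_{s=\pm}\|\nabla(u^{s}_E-\Pi^{s}_K u)\|_{0,F}\;+\;\|\nabla(u^{+}_E-u^{-}_E)\|_{0,\widetilde F},
\end{equation*}
and then bounds the first terms by the \emph{classical} trace inequality applied on the entire shape-regular element $K$. This is legitimate precisely because Lemma \ref{lem_beta_proj} controls $\|\nabla(u^{s}_E-\Pi^{s}_K u)\|_{0,K}$ on the \emph{whole} element (not just on $K^{s}_h$), and because $\Pi^{s}_K u$ is affine so the second-order term in the trace inequality is just $|u^{s}_E|_{2,K}$; the mismatch term is handled by the pyramid of \hyperref[asp:A5]{(A5)} exactly as in \eqref{lem_h6_verify_eq1}. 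Your route instead inserts $J_K u$ and uses the IFE trace inequality of Lemma \ref{lem_trace_inequa} for $J_Ku-\Pi_Ku\in\mathcal{W}_h(K)$, then treats $u-J_Ku$ face by face. This is correct (the classic ``insert the interpolant'' device, and it would survive even if the projection error were only controllable in the volume), but it costs an extra layer: Theorem \ref{thm_interp}, the discrete trace inequality, and the mismatch bookkeeping appear twice, where the paper needs a single continuous trace inequality on $K$.

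The loose step is in your treatment of \eqref{Pi_approx_eq1}. On the mismatched set you bound $\|\nabla(u^{\mp}_E-\Pi^{\pm}_K u)\|_{0,\widetilde K}$ by $\|\nabla u^{\mp}_E\|_{0,S_\delta\cap K}+|\nabla\Pi^{\pm}_K u|\,|S_\delta\cap K|^{1/2}$ and then assert that ``every strip term'' becomes $\delta\|u\|^2_{2,\Omega^-\cup\Omega^+}$. The second term is \emph{not} a width-$\delta$ strip term: writing $|\nabla\Pi^{\pm}_K u|\,|S_\delta\cap K|^{1/2}\le\|\nabla\Pi^{\pm}_K u\|_{0,K}\,(\delta/h_K)^{1/2}$ and controlling $\|\nabla\Pi^{\pm}_K u\|_{0,K}$ via \eqref{lem_beta_proj_eq0} leaves a contribution $(\delta/h_K)^{1/2}\|\nabla u^{\pm}_E\|_{0,K}$, where the norm is over all of $K$, not over $K\cap S_\delta$. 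Estimating naively with $\delta/h_K\lesssim h_K$ gives $\sum_K h_K\|\nabla u^{\pm}_E\|^2_{0,K}\lesssim h\|u\|^2_{E,1,\Omega}$, which is only $O(h)$ for the squared error, i.e.\ suboptimal by half an order. It can be repaired: since interface elements lie in an $O(h)$-neighborhood of $\Gamma$, Lemma \ref{lem_delta} applied with strip width $\sim h$ gives $\sum_{K\in\mathcal{T}^i_h}\|\nabla u^{\pm}_E\|^2_{0,K}\lesssim h\|u\|^2_{E,2,\Omega}$, whence $\sum_K(\delta/h_K)\|\nabla u^{\pm}_E\|^2_{0,K}\lesssim \delta\|u\|^2_{E,2,\Omega}\lesssim h^2\|u\|^2_{E,2,\Omega}$. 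Simpler still---and this is what the paper's ``immediately follows'' tacitly uses---decompose on the mismatched set as $u^{\mp}_E-\Pi^{\pm}_K u=(u^{\mp}_E-u^{\pm}_E)+(u^{\pm}_E-\Pi^{\pm}_K u)$: the first difference is a genuine strip term, and the second is bounded on all of $K\supseteq\widetilde K$ directly by \eqref{lem_beta_proj_eq0}, so no polynomial gradient ever needs to be isolated and no ratio $\delta/h_K$ appears.
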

\begin{proof}
\eqref{Pi_approx_eq1} immediately follows from Lemma \ref{lem_beta_proj} and Lemma \ref{lem_delta}. 
As for \eqref{Pi_approx_eq2}, by the triangular inequality, given each face $F\in\mathcal{F}_K$, we have
\begin{equation*}
\begin{split}
\label{lem_boundflux_err_eq1}
\|  \nabla (u-\Pi_K u) \|_{0,F}  \le  \sum_{s=\pm}  \|  \nabla (u^{s}_E-\Pi^{s}_K u) \|_{0,F}   +  \|   \nabla (  u^+_E -  u^-_E )\cdot \mathbf{ n}\|_{0,\widetilde{F}}.
\end{split}
\end{equation*}
The estimate of the first term follows from Lemma \ref{lem_beta_proj} with the classical trace inequality applied on the entire element, 
while the estimate of the second term is similar to \eqref{lem_h6_verify_eq1}. %we consider the pyramid $P_F$ from Assumption \hyperref[asp:A5]{(A5)}, and employ the trace inequality to obtain
Summing the estimates over all the interface elements and using Lemma \ref{lem_delta} and \eqref{sobolev_ext} finishes the proof.
\end{proof}

 \begin{lemma}
\label{lem_boundinterp}
Let $u\in H^2_0(\beta;\Omega)$. Under Assumptions \hyperref[asp:A1]{(A1)}, \hyperref[asp:A2]{(A2)} and \hyperref[asp:A5]{(A5)}, 
Hypothesis \hyperref[asp:H5]{(H4)} holds. 
\end{lemma}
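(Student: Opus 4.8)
The plan is to verify the two terms of \eqref{L2_assump} separately, exploiting that Assumption \hyperref[asp:A5]{(A5)} makes every interface element $K$ a shape-regular tetrahedron: it is convex, so $\conv(K)=K$, and it contains a ball of radius $\mathcal{O}(h_K)$. Consequently the polynomial and trace tools of Section \ref{sec:fitted_1} are available on $K$ taken as a whole, and the only genuinely new difficulty is that $u$ is merely piecewise $H^2$. Throughout, I would therefore replace $u$ by its Sobolev extensions $u^{\pm}_E$ and absorb the resulting discrepancies, which live in the mismatched region $\widetilde{K}\subseteq S_{\delta}$ with $\delta\lesssim h^2$, via Lemma \ref{lem_delta} and the extension bound \eqref{sobolev_ext}.

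For the first term $\|u-\Pi_K u\|_{0,K}$, I would first observe that the normalization imposed in \eqref{Pi_proj} yields $(u-\Pi_K u,1)_{\partial K}=0$. Since $K$ is a shape-regular tetrahedron, the Poincar\'e inequality with vanishing boundary mean holds on $K$ with a uniform constant, so that
\begin{equation*}
\| u-\Pi_K u \|_{0,K} \lesssim h_K \| \nabla(u-\Pi_K u) \|_{0,K}.
\end{equation*}
The right-hand side is precisely the quantity controlled in Lemma \ref{lem_beta_proj} once the $\pm$ components are combined and the resulting mismatch term absorbed, giving $\| \nabla(u-\Pi_K u) \|_{0,K}\lesssim h_K\|u\|_{E,2,\omega_K}+\|u\|_{E,1,\omega_K\cap S_{\delta}}$. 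Hence $\|u-\Pi_K u\|_{0,K}\lesssim h_K^2\|u\|_{E,2,\omega_K}+h_K\|u\|_{E,1,\omega_K\cap S_{\delta}}$; squaring, summing over $K$, and applying Lemma \ref{lem_delta} with \eqref{sobolev_ext} turns the strip contribution into $h^2\,\delta\,\|u\|_{E,2,\Omega}^2\lesssim h^4\|u\|_{E,2,\Omega}^2$, as required.

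For the second term $h_K\|u-u_I\|_{0,\partial K}^2$, I would argue triangle by triangle over $\mathcal{T}_h(\partial K)$. By construction each boundary triangle $T$ lies on one side of $\Gamma^K_h$, say $T\subset K^{+}_h$, and on it I would compare $u_I$ with the nodal interpolant $(u^{+}_E)_I$ of the matching extension: Lemma \ref{lem_uI_face} applied to $u^{+}_E\in H^2(\conv(K))$ controls $\|u^{+}_E-(u^{+}_E)_I\|_{0,T}$ by $h_K^{3/2}\|u^{+}_E\|_{2,\conv(K)}$, which sums to the desired order. It then remains to estimate two interface corrections: the function mismatch $\|u-u^{+}_E\|_{0,T}$, supported in $T\cap\widetilde{\Omega}_h$ and handled through the pyramid $P_F$ of Assumption \hyperref[asp:A5]{(A5)} together with the fact that $u^{+}_E-u^{-}_E$ vanishes on $\Gamma$ by \eqref{jump_cond}, exactly as in \eqref{lem_h6_verify_eq1}; and the nodal mismatch $\|(u^{+}_E)_I-u_I\|_{0,T}$, whose nodal values equal $u^{+}_E(\bfz)-u(\bfz)$ and are nonzero only at interface-intersection vertices $\bfz\in\Gamma^K_h$ lying within $\delta$ of $\Gamma$. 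Summing all contributions over the interface elements and invoking Lemma \ref{lem_delta} and \eqref{sobolev_ext} would finish the proof.

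The main obstacle is the nodal-mismatch term. Unlike the volume and trace estimates, it requires pointwise control of $u^{+}_E-u^{-}_E$ at the interface-intersection vertices; since this difference vanishes on $\Gamma$ and the vertices lie within $\mathcal{O}(h^2)$ of $\Gamma$, I expect to combine a Sobolev embedding (or a trace/normal-Poincar\'e argument localized to $P_F$) with the geometric accuracy $\delta\lesssim h^2$ to show that the associated piecewise-linear correction is of order $h_K^{3/2}$ in $L^2(\partial K)$ after summation. Converting these pointwise nodal bounds into a cleanly \emph{summable} estimate—rather than an element-wise bound carrying the full global norm $\|u\|_{E,2,\Omega}$—is the delicate point, and is where the strip structure of Assumption \hyperref[asp:A5]{(A5)} must be used most carefully.
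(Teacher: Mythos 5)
Your proposal is correct and follows essentially the same route as the paper's (very terse) proof: the volume term is handled by the boundary-mean-zero Poincar\'e inequality on the shape-regular element $K$ followed by Lemma \ref{lem_beta_proj}, the boundary term by Lemma \ref{lem_uI_face}, and the sum over interface elements by Lemma \ref{lem_delta} and \eqref{sobolev_ext}; your accounting of the strip contribution, $h^2\delta\|u\|^2_{E,2,\Omega}\lesssim h^4\|u\|^2_{E,2,\Omega}$, is exactly right.

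The one substantive difference concerns the boundary term. The paper simply writes that it ``follows from Lemma \ref{lem_uI_face}'', even though that lemma assumes $u\in H^2(\conv(K))$; so, as you do, it must implicitly be applied to the extensions $u^{\pm}_E$, and neither the function mismatch on $T\cap\widetilde{\Omega}_h$ nor your ``nodal mismatch'' term is mentioned there. Your worry about the nodal term is legitimate, but it is harmless, for two reasons. First, under the common IFE construction in which the nodes of the boundary triangulation on $\Gamma^K_h$ are the exact intersections of $\Gamma$ with the edges of $K$, one has $u^+_E(\bfz)=u^-_E(\bfz)=u(\bfz)$ at those nodes (and the vertices of $K$ never switch sides, since $\Gamma_h$ interpolates the level set at the vertices), so the term vanishes identically. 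Second, even when the nodes only lie within $\mathcal{O}(h^2)$ of $\Gamma$, your sketch does close and yields a summable bound: localize the embedding $H^2\hookrightarrow C^{0,1/2}$ to a ball $B$ of radius $\mathcal{O}(h_K)$ around each node; since $w:=u^+_E-u^-_E$ vanishes on $\Gamma$, a local Poincar\'e inequality absorbs the $L^2$ term of the scaled embedding, giving $[w]_{C^{0,1/2}(B)}\lesssim h_K^{-1}|w|_{1,B}+|w|_{2,B}$ and hence $|w(\bfz)|\lesssim \mathrm{dist}(\bfz,\Gamma)^{1/2}\left(h_K^{-1}|w|_{1,B}+|w|_{2,B}\right)\lesssim |w|_{1,B}+h_K|w|_{2,B}$; multiplying the squared nodal value by $h_K|T|\lesssim h_K^3$, summing over the finitely overlapping balls, which all lie in an $\mathcal{O}(h)$-strip, and applying Lemma \ref{lem_delta} to $\nabla w$ gives a total of order $h^3|w|^2_{1,S_{\mathcal{O}(h)}}+h^5|w|^2_{2,\Omega}\lesssim h^4\|u\|^2_{E,2,\Omega}$, as required. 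So your plan is complete in substance; if anything, it is the paper's own proof that glosses over this point.
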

\begin{proof}
As $K$ is shape regular and $u,\Pi_K u \in H^1(K)$, by the standard Poincar\'e inequality, there holds $\| u - \Pi_K u \|_{0,K}\lesssim h_K \| \nabla( u - \Pi_K u ) \|_{0,K}$ whose estimate then follows from Lemma \ref{lem_beta_proj}. The estimate of $\| u - u_I \|_{0,\partial K}$ follows from Lemma \ref{lem_uI_face}. Summing the estimates over all the interface elements and using Lemma \ref{lem_delta} and \eqref{sobolev_ext} finishes the proof.
\end{proof}

%%%%%%%%%%%%%%%%%%%%%%%%%%%%%%%%%%%%%%%%%%%%%%%%%%%%%%%%%%%%%%%%%%%%%%

\section{Numerical results}
\label{sec:num}

%In summary, the numerical solutions admit optimal errors in terms of both the energy and $L^2$ norms by Theorems \ref{thm_energy_est} and \ref{thm_u}, and we refer readers to numerical examples in \cite{2022CaoChenGuo}.
In this section, we present numerical results to validate the convergence rates proved above.
In particular, the robustness in terms of the interface location is a well-known issue for methods based on unfitted meshes and fitted anisotropic meshes; namely both the errors and the solver are ideally robust to how the interface cuts the mesh.
For this purpose, we consider such an example: a 3D domain $\Omega = [-1,1]^3$ is partitioned to $N^3$ cuboids, with $N=2^n$, $n=3,4,5,6,7$,
which is used as the background mesh. 
Consider a ``squircle" interface:
\begin{equation}
\label{num_interf}
\Gamma: ~ x_1^4 + x_2^4 + x_3^4 - r_0^4 = 0 ~~~ \text{with} ~~~ r_0 = 0.75-\epsilon,
\end{equation}
which is close to a square but has rounded corner.
Let $\Gamma$ cut the background mesh to generate fitted and unfitted meshes on which both VEMs and IVEMs can be used.
In \eqref{num_interf}, the parameter $\epsilon$ is used to control the interface location relative to the mesh.
For example, if we let $\epsilon$ be very small, say $10^{-6}$,
then there are lots of interface elements within which the interface is very close one of their faces, say Figure \ref{fig:interfmesh} for illustration.
The exact solution is given by
\begin{equation}
\label{exact_solu}
u(x) = 
\begin{cases}
    ( x_1^4 + x_2^4 + x_3^4)^{\alpha}/\beta^-  & \text{in}~ \Omega^-, \\
    ( x_1^4 + x_2^4 + x_3^4)^{\alpha}/\beta^+ + (1/\beta^- - 1/\beta^+)r^{4\alpha}_0  & \text{in}~ \Omega^+, 
\end{cases}
\end{equation}
where $\alpha =1/2$, and all the source terms and boundary conditions are computed accordingly.
\begin{figure}[h]
  \centering
    \includegraphics[width=1.35in]{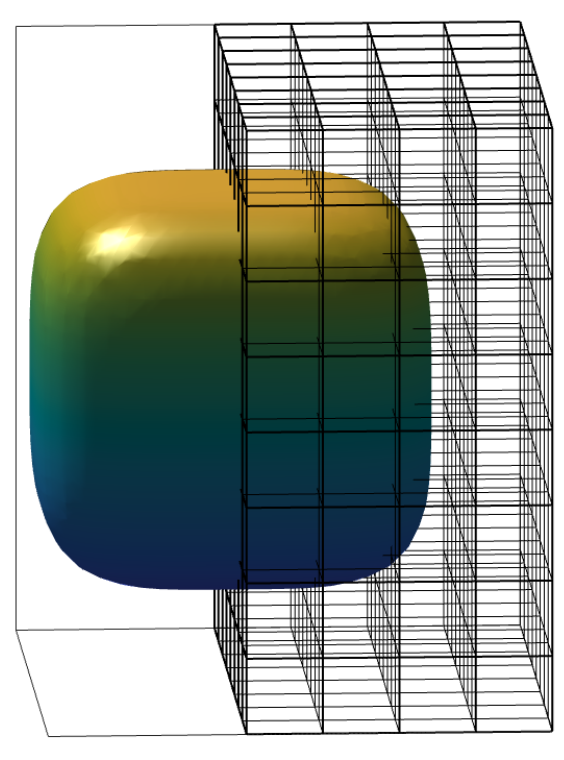}
  ~~
  \includegraphics[width=1.4in]{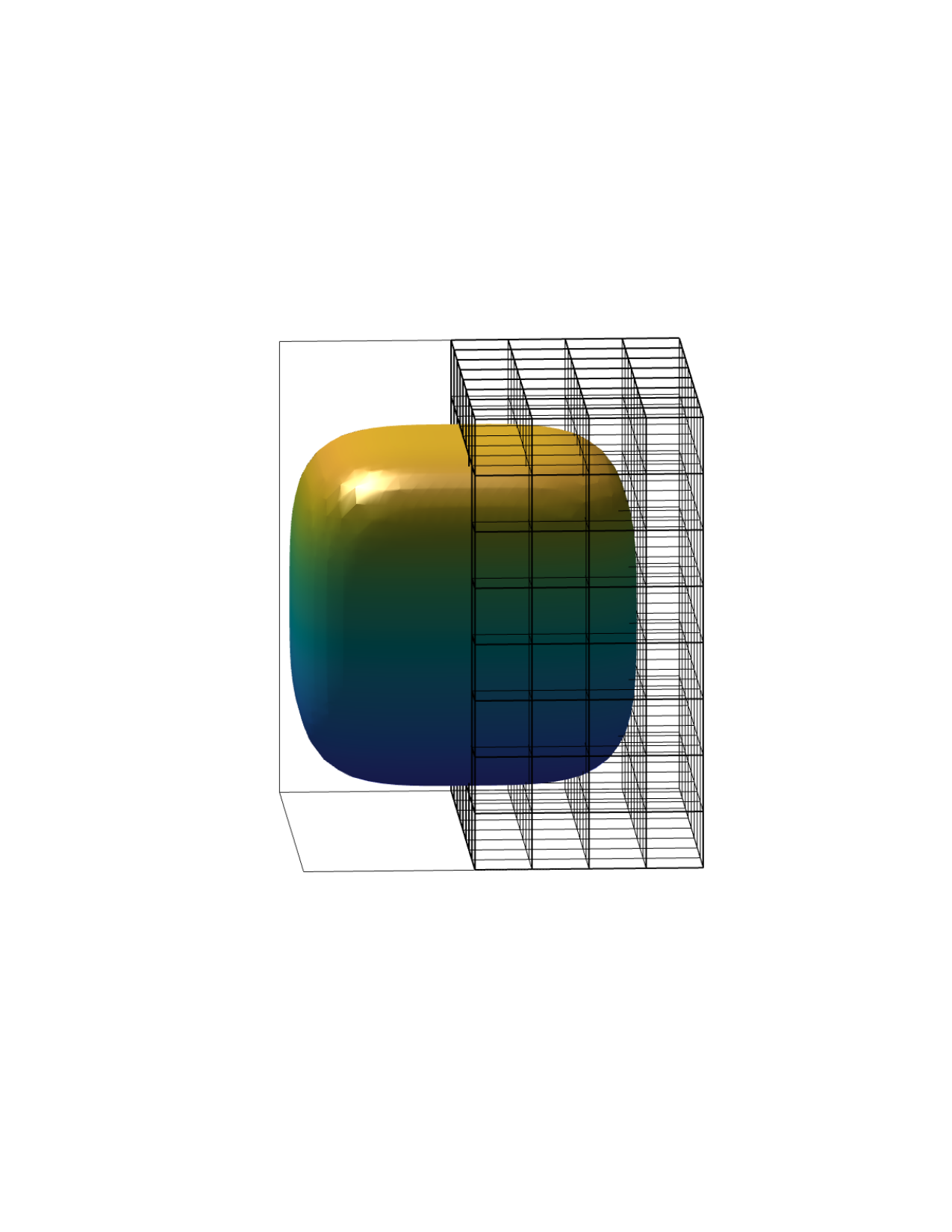}
  ~~
  \includegraphics[width=1.2in]{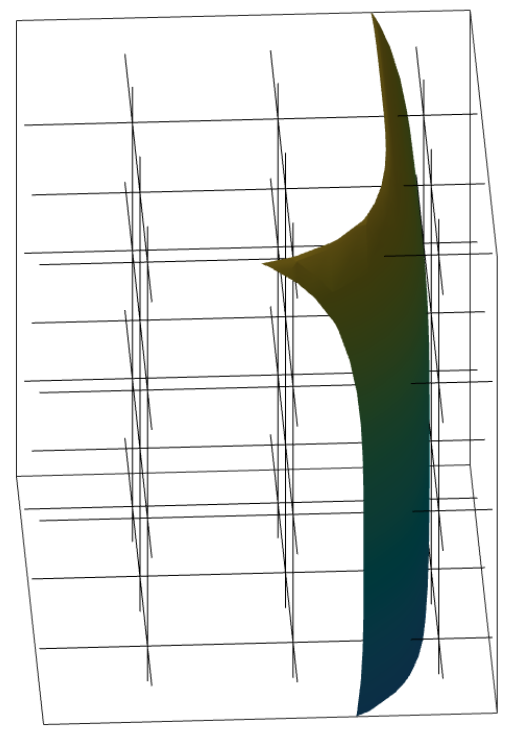}
  \caption{
  The left plot is for $\epsilon = 0.1$, making the interface away from the faces.
  The middle plot i=s $\epsilon = 10^{-6}$, making the interface extremely close the faces with the $x_1$, $x_2$ or $x_3$ coordinates being $\pm 0.75$.
  The third plot is the zoom-in visualization of such interface elements. In particular, it is highlighted that such elements always exist for even very fine meshes.}
  \label{fig:interfmesh}
\end{figure}

%We present the numerical results in Figure \ref{fig:convgrate} and Table \ref{table_mgIte} for the convergence of errors and iteration numbers of MG.
%It is very clear that both these two types of methods have optimal convergence.
%But the errors of VEMs are usually smaller than IVEMs,
%and we believe that this is attributed to fitted meshes provide more geometrical details around the interface. 
%However, from Table \ref{table_mgIte}, we point out one remarkable feature of the IVEMs--their MG solvers are highly robust to the small-cut interface elements,
%while the iteration number of the VEM is much larger. 

We present the numerical results for error convergence and MG iteration numbers, respectively in Figure \ref{fig:convgrate} and Table \ref{table_mgIte}. 
It is clear that both methods exhibit optimal convergence. 
However, the errors for VEMs are generally smaller than those for IVEMs, 
likely because fitted meshes provide greater geometric detail around the interface. 
On the other hand, Table \ref{table_mgIte} highlights a remarkable feature of IVEMs: 
their MG solvers are highly robust to small-cut interface elements, whereas the iteration numbers for VEMs are significantly higher.

\begin{figure}[h]
  \centering
    \includegraphics[width=1.4in]{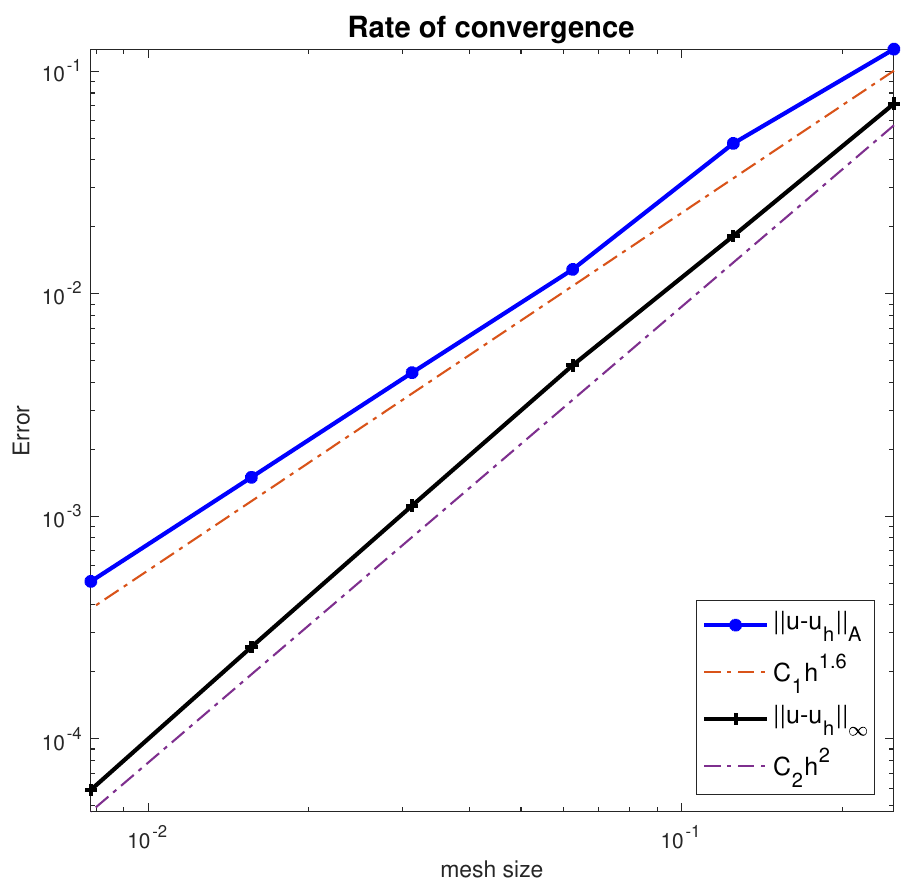}
  ~~
  \includegraphics[width=1.4in]{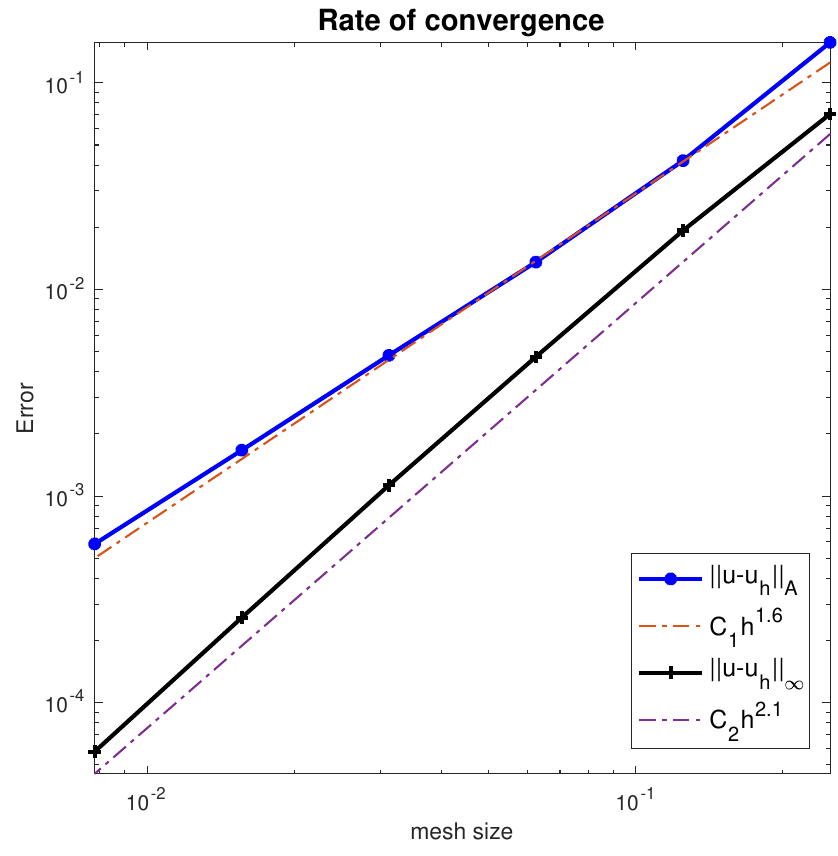}
%  ~~
    \includegraphics[width=1.35in]{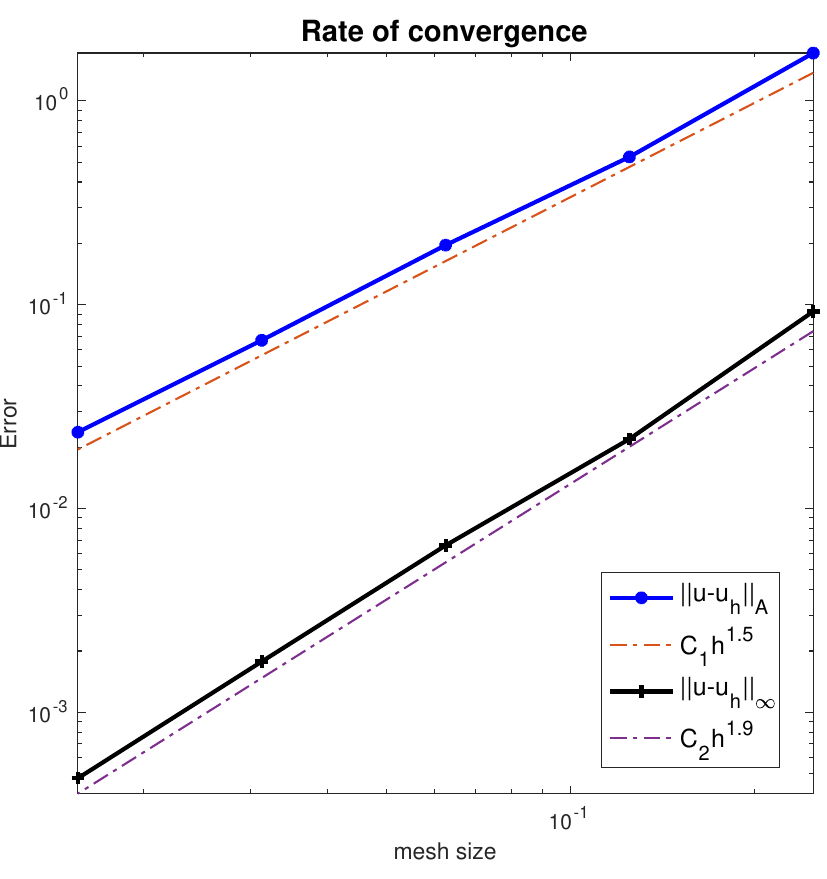}
  ~~
  \includegraphics[width=1.4in]{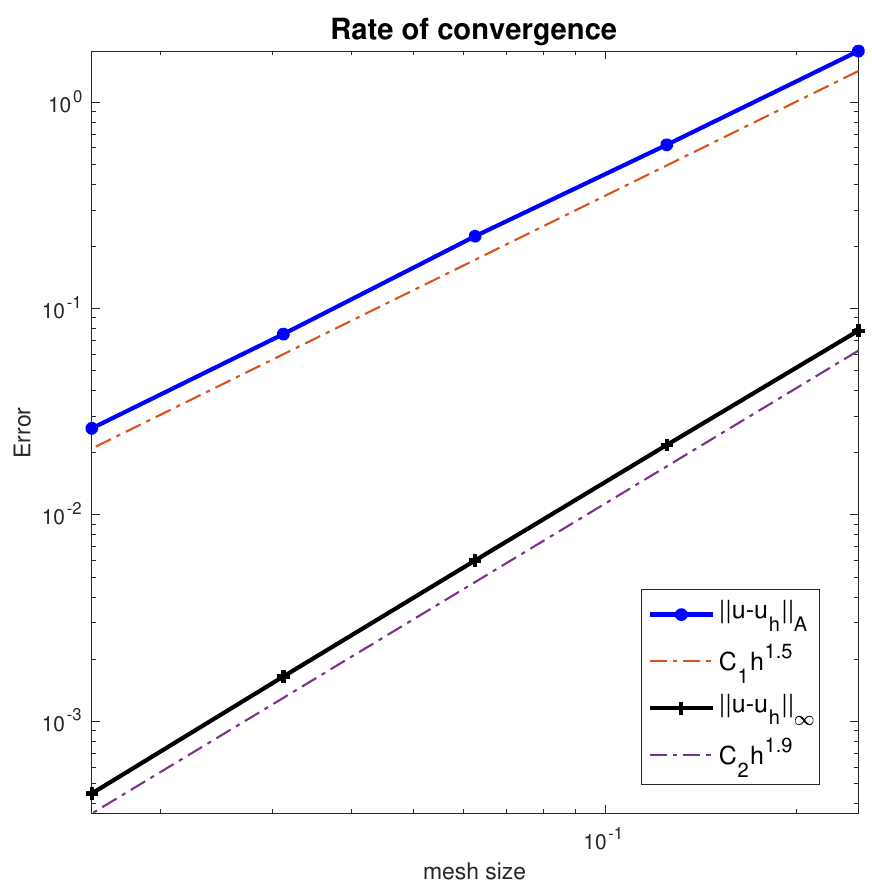}
  \caption{
 The convergence rates for the VEM (the left two, $\epsilon=10^{-1}$ and $\epsilon=10^{-6}$) and the IVEM (the right two, $\epsilon=10^{-1}$ and $\epsilon=10^{-6}$).
 }
  \label{fig:convgrate}
\end{figure}

\begin{table}[h!]
  \centering
  \hspace*{-1cm}
  \begin{tabular}{c | c | c | c | c | c | c | c | c | c | c} 
   \hline
    $N$     &  \multicolumn{2}{c|}{8}   &    \multicolumn{2}{c|}{16}     &    \multicolumn{2}{c|}{32}    &   \multicolumn{2}{c|}{64}    &   \multicolumn{2}{c}{128} \\
   \hline
   $\epsilon$ & $10^{-1}$  & $10^{-6}$  & $10^{-1}$  & $10^{-6}$ & $10^{-1}$  & $10^{-6}$ & $10^{-1}$  & $10^{-6}$ & $10^{-1}$  & $10^{-6}$ \\
   \hline
   VEM & 6 &  7 & 8 & 20 & 9 & 48 & 11 & 200 & 12 & 194 \\ 
   \hline
    IVEM & 7 &  7 & 9 & 10 & 10 & 11 & 12 & 14 &  13 & 17 \\ 
   \hline
  \end{tabular}
  \caption{MG iterations for VEM and IVEM with the two types of interface location.}
  \label{table_mgIte}
  \end{table}

% !TEX root =  VEM_MaxAngle.tex

\begin{appendices}

\section{Relation between different geometry assumptions}
\label{appen_lem_tet_maxangle}

\begin{lemma}
\label{lem_tet_maxangle}
Let a polyhedron $D$ be star convex with respect to a ball with the radius $\rho_D$, then the following results hold
\begin{itemize}
\item[(\textbf{G1})]\label{asp:G1} for each $F\in \mathcal{F}_D$, there is a tetrahedron $T\subseteq D$ that has $F$ as of its faces and the supporting height is greater than $\rho_D$.
\item[(\textbf{G2})]\label{asp:G2} for each $e\in \mathcal{E}_D$, there is a trapezoid $T$ that has $e$ as one of its edges and has the largest inscribed ball of the radius larger than $\rho_D/2$. In addition there is a pyramid $T'$ that has $T$ as its base and the height is $\rho_D$
%there is a triangle $T$ and a tetrahedron $T'$ such that $e\subseteq \partial T$, $T \subseteq \partial T'$, and the supporting height of $e$ towards $T$ and of $T$ towards $T'$ are both greater than $\rho_D$.
\item[(\textbf{G3})]\label{asp:G3} for each $\bfx \in \mathcal{N}_h(\partial K)$, there is a shape regular tetrahedron $T\subseteq D$ with the size greater than $\rho_D$ that has $\bfx$ as one of its vertices.
\end{itemize}
\end{lemma}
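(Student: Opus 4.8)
The plan is to derive all three assertions from a single principle: since $D$ is star convex with respect to the ball $B=B(O,\rho_D)\subseteq D$, every segment joining a point of $B$ to a point of $\bar D$ lies in $D$, and consequently $\operatorname{dist}(O,P_F)\ge\rho_D$ for the supporting plane $P_F$ of any face $F$. I would establish this \emph{height estimate} first, since it is the only genuinely nonconvex part of the argument (for a convex body it is trivial, but here it fails from $B\subseteq D$ alone and really needs star convexity). Fix $F\in\mathcal{F}_D$ with plane $P_F$, and let $H^+$ be the closed half space into which the inward normal of $F$ points, so that $D$ agrees with a neighborhood of $H^+$ near $\operatorname{relint}F$, i.e.\ $D\cap N\subseteq H^+$ for a small ball $N$ around some $p\in\operatorname{relint}F$. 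If some $q\in B$ lay strictly in the complementary half space $H^-$, then by star convexity $[q,p]\subseteq\bar D$, yet the interior points of $[q,p]$ near $p$ lie in $H^-$ and arbitrarily close to $F$, contradicting $D\cap N\subseteq H^+$. Hence $B\subseteq H^+$, which gives $\operatorname{dist}(O,P_F)\ge\rho_D$.

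Granting the height estimate, \textbf{(G1)} is immediate. For a (triangular) face $F$ the cone $T=\conv(\{O\}\cup F)=\bigcup_{x\in F}[O,x]$ lies in $D$ (using convexity of $F$ and the fact that each $[O,x]\subseteq\conv(\{x\}\cup B)\subseteq D$), and its supporting height from the apex $O$ equals $\operatorname{dist}(O,P_F)\ge\rho_D$; when $F$ is a triangle this $T$ is the desired tetrahedron. The volume identity $|T|=\tfrac13|F|\operatorname{dist}(O,P_F)\ge\tfrac13|F|\rho_D$ then produces exactly the $\rho_D^{-1/2}$ scaling and the explicit constant entering the trace inequality of Lemma~\ref{lem_traceinequa}.

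For \textbf{(G3)} I would work inside the convex ``ice cream cone'' $C=\conv(\{\bfx\}\cup B)\subseteq D$ over a vertex $\bfx$. Since $B\subseteq D$ forces $|\bfx O|\ge\rho_D$, while $|\bfx O|\le\operatorname{diam}D$, the half opening angle $\alpha$ of $C$ satisfies $\sin\alpha=\rho_D/|\bfx O|\ge\rho_D/\operatorname{diam}D$, so $C$ is uniformly fat. A cross section of $C$ at axial distance $\sim\rho_D$ from $\bfx$ therefore contains a disk of radius comparable to $\rho_D$; inscribing an equilateral triangle there and coning it back to $\bfx$ yields a shape regular tetrahedron with $\bfx$ as a vertex and diameter $\ge|\bfx O|\ge\rho_D$, the shape regularity constant depending only on $\operatorname{diam}D/\rho_D$, which is $\mathcal{O}(\lambda^{-1})$ under Assumption~\hyperref[asp:A3]{(A3)}.

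Statement \textbf{(G2)} I expect to be the main obstacle, because the edge $e\in\mathcal{E}_D$ may be arbitrarily short and the two faces meeting along it may subtend a small dihedral angle, so the trapezoid must be allowed to ``fan out'' from $e$ with width governed by $\rho_D$ rather than by $|e|$. My plan is to cone $e$ toward the ball: in the plane $P_e$ through $e$ and $O$, the planar region $\conv\!\big(e\cup(B\cap P_e)\big)\subseteq D\cap P_e$ is a two dimensional ice cream cone whose transverse extent toward $O$ is $\ge\rho_D$ (the great disk $B\cap P_e$ has radius $\rho_D$, and the endpoints of $e$ lie at distance $\ge\rho_D$ from $O$). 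Inside it I would carve out a trapezoid having $e$ as one side, with its opposite parallel side a chord of $B\cap P_e$, arranging the legs so that it is tangential with inradius exceeding $\rho_D/2$; then choosing an apex in $B$ at distance $\gtrsim\rho_D$ off the plane $P_e$ and coning the trapezoid to it (again star convexity keeps the cone in $D$) furnishes the pyramid $T'$ of height $\rho_D$. The entire difficulty is in verifying the inradius bound \emph{uniformly} in $|e|$ and in the dihedral angle, and this is precisely where the radius $\rho_D$ of star convexity, not merely the inclusion $B\subseteq D$, is indispensable.
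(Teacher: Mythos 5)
Your proposal is correct and follows essentially the same route as the paper's own proof: all three statements are obtained by coning the face, edge, or vertex toward the star center $O$ --- a pyramid over $F$ with apex $O$ for (G1), a trapezoid spanned by $e$ and a parallel segment in the great disk of $B$ together with a pole of $B$ as the pyramid apex for (G2), and a coned, ball-based tetrahedron for (G3). If anything, you are more careful than the paper, which merely asserts the height bound $\operatorname{dist}(O,P_F)\ge\rho_D$ that you prove, leaves the (G2) inradius claim unverified just as you flag, and does not acknowledge that the shape-regularity constant in (G3) necessarily depends on $\operatorname{diam}(D)/\rho_D$, as you correctly observe.
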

\begin{proof}
Let $O$ be the center of the ball denoted by $B_D$. (\textbf{G1}) can be simply verified by forming a pyramid that has the base $F$ and the apex $O$ as the distance from $O$ to the plane of F is certainly larger than $\rho_D$. For (\textbf{G2}), $T$ can be chosen as the trapezoid formed by $e$ and the segment passing through $O$ parallel to $e$. Consider another point $P$ on $B_D$ such that $PO$ is penperdicular to $T$, then the tetrahedron formed by $T$ and $P$ fulfills the requirement. (\textbf{G3}) follows from a similar argument.
\end{proof}

\begin{figure}[h]
  \centering
  \begin{minipage}{.47\textwidth}
  \centering
  \includegraphics[width=2.3in]{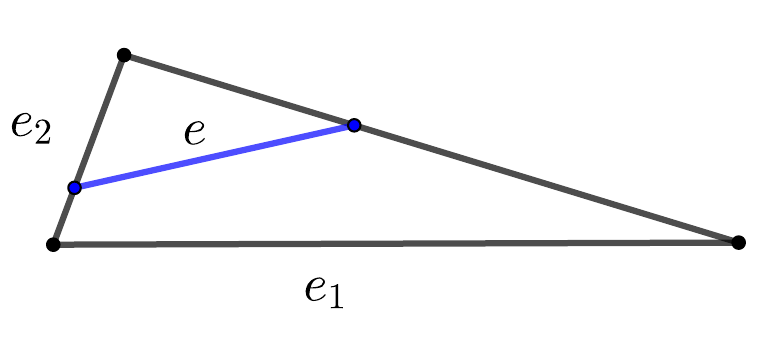}
  \caption{Illustration of the proof of Lemma \ref{lem_edge_max}.}
  \label{fig:ebound}
  \end{minipage}
  ~~~~
    \begin{minipage}{0.47\textwidth}
  \centering
   \includegraphics[width=2.3in]{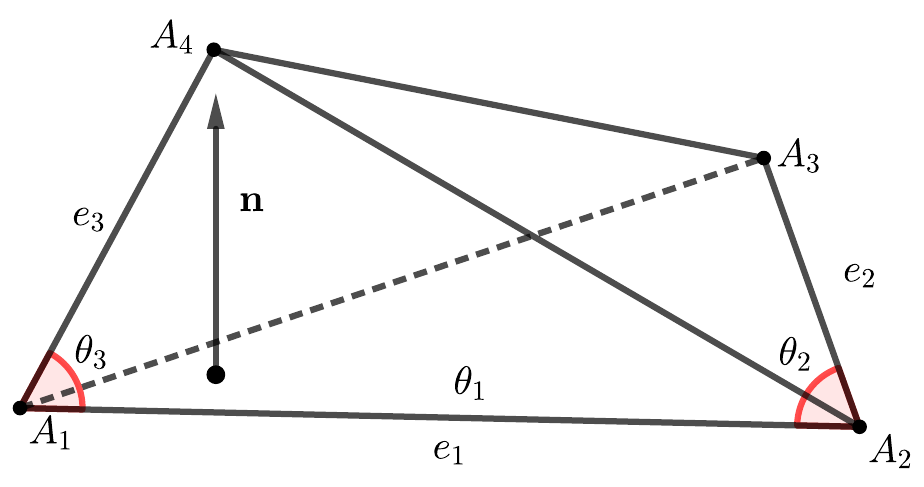}
  \caption{Illustration of the proof of Lemma \ref{lem_regular_edges}.}
  \label{fig:tetra_maxangle}
  \end{minipage}
\end{figure}

\begin{figure}
\centering
   \includegraphics[width=3in]{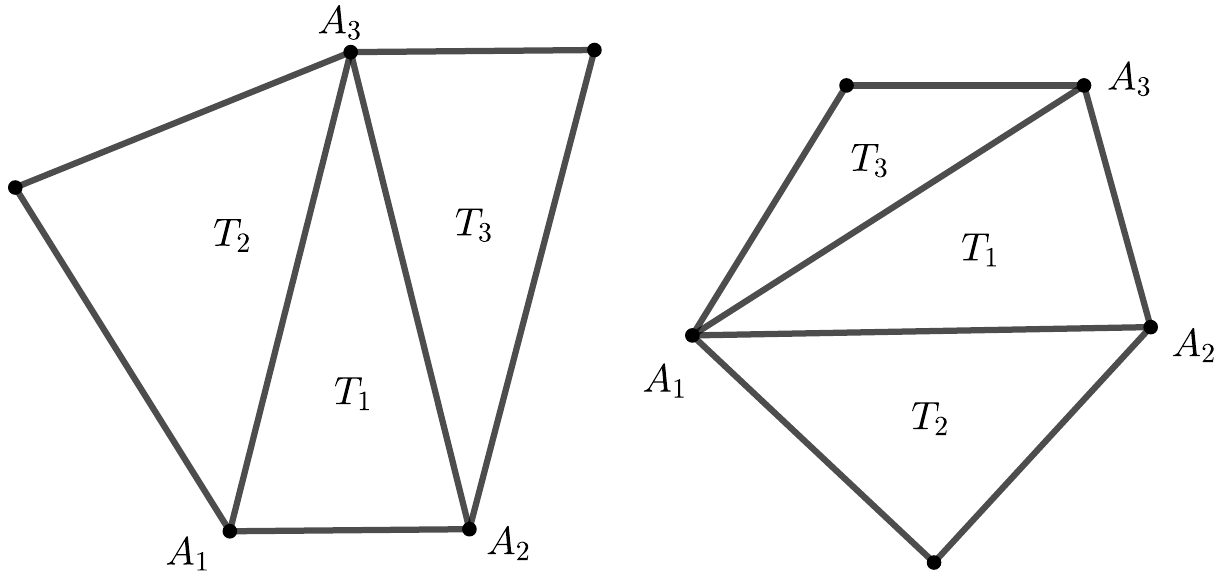}
  \caption{Configuration of anisotropic elements. $T_1$ has the minimum angle at $A_3$ (left) or at $A_1$ (right). Note that these triangles may not be coplanar.}
  \label{fig:anisotrop_triangles}
\end{figure}

\section{An alternative assumption to \hyperref[asp:A2]{(A2)}}
\label{append:lem_A2plus}

\begin{itemize}
  \item[(\textbf{A2}')] \label{asp:A2plus} Every triangle $T$ shares at least one edge with another triangle (including $T$ itself) which satisfies the minimum angle condition and whose size bounded below by $\mathcal{O}(h_T)$.
\end{itemize} 

\begin{lemma}
\label{lem_A2plus}
Assumptions \hyperref[asp:A1]{(A1)} and \hyperref[asp:A2plus]{(A2')} together imply \hyperref[asp:A2]{(A2)} with $\epsilon=\theta_M/\arcsin (\rho \sin(\theta_m) \sin(\theta_M))$, 
where $\theta_M$ is the maximum angle in Assumption \hyperref[asp:A1]{(A1)}, 
$\theta_m$ is the minimum angle in Assumption \hyperref[asp:A2]{(A2)}, 
and $\rho \le 1$ is such that the size of the triangle in Assumption \hyperref[asp:A2]{(A2)} greater than $\rho h_T$.
\end{lemma}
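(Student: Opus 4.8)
The plan is to recast Assumption \hyperref[asp:A2]{(A2)} as a connectivity statement for a graph $G$ on the vertex set $\mathcal{N}_h(\partial K)$ whose edges are the \emph{traversable} ones, i.e. those edges $e$ admitting an adjacent triangle $T$ with opposite angle $\theta_e \le (1+\epsilon)\theta_m(T)$ (the hypothesis of Lemma \ref{lem_oppo_angle}); Assumption \hyperref[asp:A2]{(A2)} is exactly the assertion that in $G$ every vertex is joined to a vertex of $T_M$ by a path. The first step is to exploit the stated value of $\epsilon$. Since every triangle satisfies the maximum angle condition with bound $\theta_M$ by \hyperref[asp:A1]{(A1)}, writing $\phi = \arcsin(\rho\sin\theta_m\sin\theta_M)$ so that $\epsilon = \theta_M/\phi$, any edge $e$ of a triangle $T$ with $\theta_m(T)\ge\phi$ is automatically traversable, because $\theta_e\le\theta_M = \epsilon\phi\le\epsilon\theta_m(T)\le(1+\epsilon)\theta_m(T)$. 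Since $\phi\le\theta_m$, every triangle meeting the minimum angle condition of \hyperref[asp:A2plus]{(A2')} has all three edges in $G$; I shall call such triangles \emph{good}. Likewise, the edge of any triangle opposite its own smallest angle is always in $G$, with $\epsilon=0$.

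The geometric crux turns \hyperref[asp:A2plus]{(A2')} into a lower bound on one angle of each triangle. Given $T$ with diameter $h_T$, let $T^\ast$ be the good neighbour from \hyperref[asp:A2plus]{(A2')} and $e^\ast$ the shared edge. The minimum angle condition and the size bound for $T^\ast$ force its shortest edge, hence $e^\ast$, to have length at least $\rho h_T\sin\theta_m$, since by the law of sines the shortest edge of $T^\ast$ is at least $\operatorname{diam}(T^\ast)\sin\theta_m\ge\rho h_T\sin\theta_m$. Applying the law of sines inside $T$ to the angle $\angle_{e^\ast}$ opposite $e^\ast$, together with $\operatorname{diam}(T)=h_T$ and the maximum angle bound, gives $\sin\angle_{e^\ast}\ge\rho\sin\theta_m\sin\theta_M$, so that $\angle_{e^\ast}\ge\phi$. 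I would then read off, for an arbitrary vertex $\bfz$ and any triangle $T$ containing $\bfz$, what this buys: if $e^\ast$ contains $\bfz$ then $\bfz$ is already a vertex of the good triangle $T^\ast$; otherwise $e^\ast$ is opposite $\bfz$, the angle at $\bfz$ is $\ge\phi$, and a short case analysis on the location of the minimum angle of $T$ shows that one of the two spokes of $T$ emanating from $\bfz$ is traversable and lands on an endpoint of $e^\ast$, i.e. on a vertex of $T^\ast$. In either case $\bfz$ is joined in $G$ to a vertex of a good triangle.

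With this in hand the remaining task is global: to connect the good triangles to one another and ultimately to $T_M$, and to bound the path length. Since every good triangle contributes all of its edges to $G$ and every vertex hangs off a good triangle, I would argue that the $G$-components are in bijection with connected clusters of good triangles; because $\mathcal{T}_h(\partial K)$ triangulates the connected surface $\partial K$ and every triangle is edge-adjacent to a good one by \hyperref[asp:A2plus]{(A2')}, these clusters must in fact form a single component containing $T_M$. Once connectivity is established, any simple path from $\bfz$ to a vertex of $T_M$ visits each vertex at most once, so its length $L$ is at most $|\mathcal{N}_h(\partial K)|$, which by Euler's formula for a triangulated sphere is $\lesssim |N_\mathcal{T}|$; this is compatible with the bound $L\le 3|N_\mathcal{T}|/2$ used in Lemma \ref{lem_disct_poinc}.

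The main obstacle I anticipate is precisely this global connectivity/reachability of $T_M$: while ``every vertex touches a good triangle'' is clean, showing that the good triangles themselves form one $G$-connected cluster (rather than several isolated ones) requires using the edge-adjacency to good neighbours to bridge clusters across the surface, and care is needed because the bridging edge $e^\ast$ need not itself be traversable from both sides. A secondary, purely bookkeeping, difficulty is the constant chase in the geometric step: the law of sines produces $\sin\theta_M(T)$ rather than $\sin\theta_M$, and one must invoke $\theta_M(T)\in[\pi/3,\theta_M]$ (with $\theta_M$ near $\pi$ in the anisotropic regime) to replace it by $\sin\theta_M$ and recover exactly the stated $\phi$.
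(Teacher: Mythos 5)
Your local geometry is exactly the paper's: the same law-of-sines computation (the shortest edge of the good neighbour $T^\ast$ has length at least $\rho h_T\sin\theta_m$, hence the angle of $T$ opposite the shared edge $e^\ast$ is at least $\phi=\arcsin(\rho\sin\theta_m\sin\theta_M)$, modulo the same $\sin\theta_M(T)$-versus-$\sin\theta_M$ slack that the paper also glosses over), and the same case analysis on where the minimum angle of $T$ sits. The genuine gap is the global step, and it is precisely where you flagged it: ``every triangle is edge-adjacent to a good triangle'' together with connectedness of $\partial K$ does \emph{not} imply that the good triangles form a single $G$-connected cluster. Nothing in your clustering argument exhibits a traversable path between two clusters of good triangles separated by a band of non-good triangles, each of which leans on only one of the clusters; so ``these clusters must in fact form a single component containing $T_M$'' is a non sequitur as written, and your per-vertex conclusion (``$\bfz$ is joined to \emph{some} good triangle'') is too weak to repair it.

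The paper avoids this entirely by proving a stronger \emph{per-edge} statement instead of your per-vertex one: for every edge $A_1A_3$ of $\mathcal{T}_h(\partial K)$, its two endpoints admit a path satisfying the angle condition of \hyperref[asp:A2]{(A2)} (either the edge $A_1$-$A_3$ itself, or the detour $A_1$-$A_2$-$A_3$ through the third vertex, according to which neighbour is isotropic and where the minimum angle of $T_1$ lies). Global reachability of $T_M$ is then immediate: the $1$-skeleton of the triangulation of the connected surface $\partial K$ is connected, so any vertex reaches a vertex of $T_M$ through a chain of edges, and each edge in the chain is replaced by its good path. Note that your own tools already prove this per-edge claim: fix an edge with endpoints $\bfz,\bfw$, third vertex $\bfv$, and run your case analysis against $T=\bfz\bfw\bfv$ and its good neighbour $T^\ast$. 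If $e^\ast=\bfz\bfw$, the edge is traversable via $T^\ast$; if $e^\ast=\bfz\bfv$, then either the minimum angle of $T$ is at $\bfw$ (so $\theta_m(T)\ge\phi$ and all edges of $T$ are traversable), or it is at $\bfv$ (so $\bfz\bfw$, opposite $\bfv$, is traversable via $T$), or it is at $\bfz$ (so $\bfw\bfv$ is traversable via $T$ and $e^\ast$ via $T^\ast$, giving the path $\bfw$-$\bfv$-$\bfz$); the case $e^\ast=\bfw\bfv$ is symmetric. So the repair is a reorganization of what you have rather than a new idea, but as submitted the global connectivity step does not stand.
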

\begin{proof}
We show a stronger version of  Assumption \hyperref[asp:A2]{(A2)} that any two vertices connected by an edge must admit a path satisfying the property in Assumption \hyperref[asp:A2]{(A2)}. 
Call an element \textit{isotropic} if it has the minimum angle $\theta_m$. Consider two vertices $A_1$ and $A_3$ of a triangle $T_1$, as shown in Figure \ref{fig:anisotrop_triangles}.
If $T_1$ is isotropic, then we just choose the path as $A_1$-$A_3$ with $\epsilon = \theta_M/\theta_m$. We focus on $T_1$ being anisotropic.

Case 1. Suppose $\angle A_1A_3A_2$ is the minimum angle of $T_1$, shown by the left plot in Figure \ref{fig:anisotrop_triangles}. 
If $\angle A_1A_3A_2 \rightarrow 0$, by the assumption, one of the two triangles $T_2$ and $T_3$ must be isotropic. Then, one of the paths $A_1$-$A_3$ and $A_1-A_2-A_3$ must have the desired property.  If $\angle A_1A_3A_2$ is also bounded below, and neither of $T_2$ and $T_3$ has the minimum angle $\theta_m$, we can estimate $\angle A_1A_3A_2$ by considering $T_4$. As $T_4$ has the minimum angle $\theta_m$ and has the size greater than $\rho h_{T_1}$, by sine law we know its edges have the minimum length $\sin(\theta_m) \rho h_{T_1} $. So $A_1A_2$ is bounded below by this quantity. As $\angle A_1A_3A_2$ is the minimum angle, $A_1A_2$ is also the edge with the minimum length. Therefore, using the sine law again, we have $\sin(\angle A_1A_3A_2) \ge \rho \sin(\theta_m) \sin(\theta_M)$, which implies that $T_1$ is isotropic with the minimum angle $ \arcsin (\rho \sin(\theta_m) \sin(\theta_M))$. So the path $A_1$-$A_3$ has the desired property.

Case 2. If $\angle A_1A_3A_2$ is not the minimum angle of $T_1$, without loss of generality, we suppose $\angle A_3A_1A_2$ is the minimum angle, shown by the right plot in Figure \ref{fig:anisotrop_triangles}. By the assumption, one of $T_2$ and $T_3$ must isotropic. Similarly, one of the paths $A_1-A_3$ and $A_1-A_2-A_3$ has the desired property.
\end{proof}

%%%%%%%%%%%%%%%%%%%%%%%%%%%

\section{Estimates regarding maximum angle conditions}

\begin{lemma}
\label{lem_grad}
Given a triangle $T$ with the maximum angle $\theta_M$, then there holds
\begin{equation}
\label{lem_grad_eq0}
\| \nabla v_h \|_{0, T} \le  h^{1/2}_T/\sqrt{2\sin(\theta_M)} \sum_{i=1,2,3} \| \nabla v_h \cdot \bft_i\|_{L^2(e_i)}, ~~~~ \forall v_h\in \mathcal{P}_1(T).
\end{equation}
\end{lemma}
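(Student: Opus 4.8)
The plan is to reduce the statement to a short algebraic inequality and then exploit the maximum-angle vertex. Since $v_h\in\mathcal{P}_1(T)$, its gradient $\bfg:=\nabla v_h$ is a \emph{constant} vector on $T$, and each tangential derivative $d_i:=\bfg\cdot\bft_i$ is constant along the edge $e_i$. Consequently $\|\nabla v_h\|_{0,T}^2=|T|\,|\bfg|^2$ and $\|\nabla v_h\cdot\bft_i\|_{L^2(e_i)}^2=|e_i|\,d_i^2$, so \eqref{lem_grad_eq0} is equivalent to the elementary inequality $|T|^{1/2}|\bfg|\le \tfrac{h_T^{1/2}}{\sqrt{2\sin\theta_M}}\sum_{i}|e_i|^{1/2}|d_i|$ relating the constant vector $\bfg$, the edge lengths, and the area.

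The core step is to reconstruct $\bfg$ from two directional derivatives using the vertex carrying the maximum angle. Let $\theta_M$ be located at that vertex, and let $e,f$ be the two edges meeting there, with unit tangents $\bft_e,\bft_f$ emanating from it and enclosing the angle $\theta_M$. Writing $\bfg$ in the non-orthogonal basis $\{\bft_e,\bft_f\}$ and inverting the $2\times2$ Gram matrix, whose determinant is $\sin^2\theta_M$, gives $|\bfg|^2=\sin^{-2}\theta_M\,(d_e^2-2\cos\theta_M\,d_e d_f+d_f^2)$. Bounding $-2\cos\theta_M\,d_e d_f\le 2|d_e||d_f|$ yields $|\bfg|\le (|d_e|+|d_f|)/\sin\theta_M$. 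The decisive choice is to use precisely this vertex, so that the \emph{same} $\theta_M$ appears in the area identity $|T|=\tfrac12|e|\,|f|\sin\theta_M$ (here $|e|,|f|$ are the lengths of $e,f$); the orientation/sign of the $\bft_i$ is irrelevant since only $|d_i|$ enters.

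Finally I would combine the two estimates: multiplying $|T|^{1/2}=2^{-1/2}(|e|\,|f|)^{1/2}\sin^{1/2}\theta_M$ against $|\bfg|\le(|d_e|+|d_f|)/\sin\theta_M$ makes the factor $\sin^{1/2}\theta_M$ cancel against $\sin\theta_M$, leaving exactly $\sin^{-1/2}\theta_M$ and hence the prefactor $1/\sqrt{2\sin\theta_M}$. Then, bounding $|e|,|f|\le h_T$ termwise inserts the missing $h_T^{1/2}$ and identifies the two surviving contributions as a subsum of $\sum_i|e_i|^{1/2}|d_i|$, the remaining (third) edge term only helping. The main obstacle is the gradient reconstruction: it is only stable when the two chosen edges are genuinely transverse, and this is guaranteed exactly by anchoring them at the maximum-angle vertex, which keeps $\sin\theta_M$ bounded below and, crucially, forces the gradient bound and the area formula to share the same trigonometric factor so that the stated constant emerges sharply.
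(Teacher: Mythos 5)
Your proof is correct, and it takes a genuinely different route from the paper's. The paper's proof invokes the cotangent formula of \cite{2007Chen}, namely the exact energy identity $\| \nabla v_h \|_{0,T}^2 = R_T \sum_{i=1}^3 \cos(\theta_i)\, \| \nabla v_h \cdot \bft_i\|_{L^2(e_i)}^2$, and then concludes with $\cos(\theta_i)\le 1$ together with the law-of-sines bound $R_T \le h_T/(2\sin(\theta_M))$. You instead reconstruct the constant gradient $\bfg=\nabla v_h$ from only the two tangential derivatives at the maximum-angle vertex: inverting the Gram matrix of $\{\bft_e,\bft_f\}$, whose determinant is $\sin^2(\theta_M)$, gives $|\bfg|\le (|d_e|+|d_f|)/\sin(\theta_M)$, and pairing this with the area identity $|T|=\tfrac12 |e|\,|f|\sin(\theta_M)$ anchored at the \emph{same} vertex makes the trigonometric factors combine to exactly $1/\sqrt{2\sin(\theta_M)}$, after which $|e|,|f|\le h_T$ finishes the argument. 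Your version is self-contained (no external identity needed) and actually proves a slightly stronger statement: the two edges adjacent to the maximum angle already suffice, the third edge term in \eqref{lem_grad_eq0} being superfluous. What the paper's route buys in exchange is brevity and an exact decomposition as the starting point (an equality rather than a Gram-matrix inequality), which makes transparent exactly where sharpness is lost, namely in discarding the weights $\cos(\theta_i)$. Both arguments produce the identical constant.
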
 
\begin{proof}
Let $R_T$ be the circumradius of $T$ and $\bft_i$ is a unit tangential vector of the edge $e_i$, $i=1,2,3$.
The cotangent formula \cite{2007Chen} and the law of sines gives
\begin{equation}
\| \nabla v_h \|_{0, T}^2 = R_T \sum_{i=1}^3  \cos(\theta_i) \| \nabla v_h \cdot \bft_i\|_{L^2(e_i)}^2, 
\end{equation}
which leads to the desired result by $R_T  \le h_T/(2\sin(\theta_M))$.
%Then the law of sines and $|\nabla \phi_h \cdot \bs t_i|^2 = (\phi_{i-1} - \phi_{i+1})^2/|e_i|^2$ finish the proof. 
\end{proof}

\begin{lemma}
\label{lem_regular_edges}
Assume a tetrahedron $T$ has the maximum angle condition $\theta_M$. Then, $T$ has three edges (may not share one vertex) such that 
\begin{equation}
\label{lem_regular_edges_eq01}
|\emph{det}(M)|\ge c_m:=\min\{\sqrt{3}/2,\sin(\theta_M) \}\min\{ \cos(\theta_M/2),\sin(\theta_M) \}^2, 
\end{equation}
where $M$ is the matrix formed by the unit direct vectors of these edges. In addition, there holds 
\begin{equation}
\label{lem_regular_edges_eq02}
\frac{c_m}{6}|e_1||e_2||e_3| \le |T| \le \frac{1}{6}|e_1||e_2||e_3|.
\end{equation}
\end{lemma}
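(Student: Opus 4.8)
The plan is to reduce both assertions to a single lower bound on the determinant of three edge directions, and then to produce the required edges by combining the maximum-angle bounds on the triangular faces with those on the dihedral angles.

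First I would fix a vertex $\bfv$ of $T$ and let $\bfe_1,\bfe_2,\bfe_3$ be the three edges emanating from it, with unit directions $\bft_1,\bft_2,\bft_3$ and $M=[\bft_1,\bft_2,\bft_3]$. Since these edges share $\bfv$, the standard volume formula gives $|T|=\tfrac16|e_1||e_2||e_3|\,|\det M|$. The upper bound in \eqref{lem_regular_edges_eq02} is then immediate from Hadamard's inequality $|\det M|\le\prod_i\|\bft_i\|=1$, while the lower bound is exactly \eqref{lem_regular_edges_eq01}. Thus the whole lemma reduces to exhibiting a vertex at which $|\det M|\ge c_m$. Because $\det M$ depends only on the directions $\bft_i$, the edges realizing \eqref{lem_regular_edges_eq01} need not be organized around a common vertex; the common vertex is used only to recover \eqref{lem_regular_edges_eq02}.

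Next I would decompose the determinant geometrically. Writing $\bfn$ for the unit normal of the plane $F$ spanned by $\bft_2,\bft_3$, one has $\det M=\sin\alpha\,(\bft_1\cdot\bfn)$, where $\alpha$ is the angle between $\bft_2$ and $\bft_3$; moreover the out-of-plane factor splits as $\bft_1\cdot\bfn=\sin\gamma\,\sin\theta_d$, with $\gamma$ a face angle (the angle at $\bfv$ in the face containing $\bfe_1$ and one of $\bfe_2,\bfe_3$) and $\theta_d$ the dihedral angle along that shared edge. Hence $|\det M|=\sin\alpha\,\sin\gamma\,\sin\theta_d$, and it suffices to bound these three sines below. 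For the first factor I would choose $F$ and $\bfv$ so that $\alpha$ is the largest angle of the triangular face $F$: since the three angles of $F$ sum to $\pi$ and each is at most $\theta_M$, the largest lies in $[\pi/3,\theta_M]$, giving $\sin\alpha\ge\min\{\sqrt{3}/2,\sin\theta_M\}$, which is the first factor of $c_m$. For the remaining two factors I would use the identity $\cos(\theta_M/2)=\sin\big((\pi-\theta_M)/2\big)$, so that $\min\{\cos(\theta_M/2),\sin\theta_M\}$ is the sine of any angle lying in $[(\pi-\theta_M)/2,\theta_M]$. The elementary mechanism producing such an angle is that the two angles at the endpoints of any edge of a triangular face sum to at least $\pi-\theta_M$ (the opposite angle being at most $\theta_M$), so the larger is at least $(\pi-\theta_M)/2$; the dihedral factor is produced the same way, together with the fact that the dihedral angles of a tetrahedron exceed $2\pi$ in total, so at least one is $\ge\pi/3$ and, by the maximum-angle bound, lies in a range where its sine is controlled.

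The hard part — and the content of the case analysis sketched in Figure \ref{fig:tetra_maxangle} — is to guarantee that all three lower bounds can be met \emph{at one and the same vertex} (equivalently, for one consistent triple of edges). The face-angle argument and the dihedral argument each single out a favourable vertex or edge, but a priori these need not coincide, and one must exclude the degenerate configuration in which every vertex produces a small out-of-plane factor. I would close this gap by checking the finitely many vertex/edge choices directly and by observing that if the out-of-plane factor were small at every vertex, then all four vertices would be nearly coplanar, forcing some dihedral angle arbitrarily close to $\pi$ and contradicting the maximum-angle condition; this is precisely where the upper bound on the dihedral angles (not merely on the face angles) is indispensable. Collecting the three factors then yields $|\det M|\ge\min\{\sqrt{3}/2,\sin\theta_M\}\min\{\cos(\theta_M/2),\sin\theta_M\}^2=c_m$, which proves \eqref{lem_regular_edges_eq01} and, through the volume identity, \eqref{lem_regular_edges_eq02}.
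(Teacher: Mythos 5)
Your opening reduction---that it suffices to exhibit one vertex whose three emanating unit edge directions satisfy $\abs{\det M}\ge c_m$---is the fatal gap: that statement is false under the maximum angle condition, and the lemma is phrased with edges that ``may not share one vertex'' precisely for this reason. Concretely, take the sliver $T$ with vertices $A_1=(0,0,0)$, $A_2=(1,0,0)$, $A_3=(1,\epsilon,0)$, $A_4=(0,0,\delta)$, with $\epsilon,\delta\to 0$. Every face angle and every dihedral angle of this tetrahedron is at most $\pi/2$ up to $O(\epsilon+\delta)$, so the MAC holds with a fixed $\theta_M$ (hence a fixed $c_m>0$); but the triple products of unit edge directions at the four vertices are, respectively, of order $\epsilon$, $\delta$, $\delta$ and $\epsilon$, all tending to zero. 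So no vertex works, and no re-assignment of your three sine factors can help, since at a fixed vertex their product \emph{equals} the vanishing determinant. Your fallback argument for the ``hard part'' fails on the same example: all four vertices are nearly collinear (in particular nearly coplanar), yet no dihedral angle is anywhere near $\pi$---under the MAC, degeneration does not force a large dihedral angle, and permitting such degenerate elements is the entire point of the maximum angle condition. Note also that the lower bound in \eqref{lem_regular_edges_eq02} then fails for any vertex triple of this $T$: $|T|=\frac16\epsilon\delta$, while at $A_1$ one has $\frac{c_m}{6}|e_1||e_2||e_3|\approx\frac{c_m}{6}\delta$, which is much larger once $\epsilon\ll c_m$.

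The paper's proof selects the edges without reference to any vertex, and assigns the constants oppositely to yours. First $e_1$ is taken to be the edge with the \emph{largest dihedral angle} $\theta_1$; since the six dihedral angles of a tetrahedron sum to more than $2\pi$ (Lemma 6 of \cite{1992Michal}), $\theta_1\in[\pi/3,\theta_M]$, which produces the factor $\min\{\sqrt3/2,\sin\theta_M\}$. Then, in each of the two faces adjacent to $e_1$, one takes the edge forming the largest angle with $e_1$; these angles $\theta_2,\theta_3$ lie in $[(\pi-\theta_M)/2,\theta_M]$ (the two angles at the endpoints of $e_1$ within a face sum to at least $\pi-\theta_M$), producing the factor $\min\{\cos(\theta_M/2),\sin\theta_M\}^2$. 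The same triple-product identity you wrote then gives $\abs{\det(M)}=\sin\theta_1\sin\theta_2\sin\theta_3\ge c_m$, even though $e_2$ and $e_3$ may end at different endpoints of $e_1$---as in the sliver above, where the good triple is $A_1A_2$, $A_2A_3$, $A_1A_4$, whose directions are the three coordinate axes. Finally, \eqref{lem_regular_edges_eq02} is not obtained from the common-vertex volume formula plus Hadamard, but from the identity $|T|=\frac16|e_1||e_2|\sin(\theta_2)\,l$ with apex height $l=|e_3|\sin(\theta_3)\sin(\theta_1)$, which is valid for exactly this non-vertex-sharing configuration; the upper bound follows since each sine is at most $1$. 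If you replace your vertex-based selection by this edge-based one, the determinant decomposition and the individual angle bounds you derived can be reused essentially verbatim.
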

\begin{proof}
In $T=A_1A_2A_3A_4$, we first choose the edge $e_1$ such that the associated dihedral angle is the largest one, and without loss of generality, we assume $e_1=A_1A_2$, as shown in Figure \ref{fig:tetra_maxangle}. Let this dihedral angle be $\theta_1$, and let the directional vector of $e_1$ be $\bft_1$. By Lemma 6 in \cite{1992Michal}, we have $\theta_1\in[\pi/3,\theta_M]$. This edge has two neighbor elements $\triangle A_1A_2A_3$ and $\triangle A_1A_2A_4$. Then, we pick the edges $e_2$ and $e_3$ from these two faces such that they have the largest angle from $e_1$ in their faces, denoted by $\theta_2$ and $\theta_3$ respectively. Clearly, we have $\theta_2, \theta_3\in[(\pi-\theta_M)/2,\theta_M]$. Note that $e_2$ and $e_3$ may or may not share the same vertex, but the argument for both the two cases are the same. See Figure \ref{fig:tetra_maxangle} for illustration that they do not share a vertex. Let $\bft_2$ and $\bft_3$, respectively, be the directional vectors of $e_2$ and $e_3$. Let $M$ be the matrix formed by these three vectors. Let $\bfn$ be the norm vector to $\bft_1$ and $\bft_2$. 
Then, the direct calculation shows $\abs{ \bft_3\cdot\bfn} = \cos(\theta_1-\pi/2)\sin(\theta_3)$, and thus
\begin{equation}
\label{lem_regular_edges_eq1}
\abs{\text{det}(M)} = \abs{ (\bft_1 \times \bft_2)\cdot \bft_3 } = \sin(\theta_2) \abs{\bfn \cdot \bft_3} = \sin(\theta_1)\sin(\theta_2)\sin(\theta_3).
\end{equation}
%where in the last identity, we apply a simple orthogonal decomposition to $\bft_3$ with respect to two directions: $\bft_1$ and the one orthogonal to $\bft_1$, and thus $\bfn\cdot\bft_3=\cos(\theta_1-\pi/2)\sin(\theta_3)$. 
Therefore, we obtain the estimates of $\abs{\text{det}(M)}$ by the upper and lower bounds of $\theta_i$, $i=1,2,3$. 

As for \eqref{lem_regular_edges_eq02}, without loss of generality, we consider the tetrahedron shown in Figure \ref{fig:tetra_maxangle}. Let $l$ be the distance from $A_4$ to the plane $\triangle A_1A_2A_3$. It is easy to see $l =|e_3|\sin(\theta_3)\sin(\theta_1)$. Then, we have
\begin{equation}
\label{lem_regular_edges_eq2}
|T| = |e_1||e_2|\sin(\theta_2)l/6 = |e_1||e_2||e_3| \sin(\theta_1)\sin(\theta_2)\sin(\theta_3)/6
\end{equation}
which yields \eqref{lem_regular_edges_eq02}.
%Therefore, we obtain
%\begin{equation}
%\label{lem_regular_edges_eq2}
%\abs{\text{det}(M)}  = \sin(\theta_1)\sin(\theta_2)\sin(\theta_3) \ge \min\{\sqrt{3}/2,\sin(\theta_M) \}\min\{ \cos(\theta_M/2),\sin(\theta_M) \}^2.
%\end{equation}
\end{proof}

\begin{lemma}
\label{lem_edge_max}
Given a triangle $T$ with maximum angle $\theta_M$, let $e_1$ and $e_2$ be the two edges of $T$ adjacent to the maximum angle, then each for each segment $e \subseteq T$, there holds
\begin{equation}
\label{lem_edge_max_eq0}
\| \bfv_h\cdot\bft_e \|_{0,e} \lesssim  (\sin(\theta))^{-1/2}  \sum_{i=1,2} \| \bfv_h\cdot\bft_{e_i} \|_{0,e_i}, ~~~ \forall \bfv_h \in \left[ \mathcal{P}_0(T) \right]^2.
\end{equation}
\end{lemma}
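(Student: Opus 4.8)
The plan is to exploit that $\bfv_h\in[\mathcal{P}_0(T)]^2$ is a \emph{constant} vector, so the only relevant data are its two scalar products $p:=\bfv_h\cdot\bft_{e_1}$ and $q:=\bfv_h\cdot\bft_{e_2}$ along the two edges meeting at the maximal angle. Since $\bfv_h\cdot\bft_e$ is constant along each segment, I would first record the trivial identities
\[
\|\bfv_h\cdot\bft_e\|_{0,e}^2=|e|\,(\bfv_h\cdot\bft_e)^2,\qquad
\|\bfv_h\cdot\bft_{e_i}\|_{0,e_i}^2=|e_i|\,(\bfv_h\cdot\bft_{e_i})^2,
\]
which reduce the claim to a pointwise estimate for $(\bfv_h\cdot\bft_e)^2$ together with a geometric bound on $|e|$.

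Next, because $\bft_{e_1},\bft_{e_2}$ make the angle $\theta_M\in(0,\pi)$ they form a basis of $\mathbb{R}^2$, and a short computation in coordinates aligned with $\bft_{e_1}$ yields the resolution
\[
\bfv_h\cdot\bft_e=\frac{\sin(\theta_M-\phi)}{\sin\theta_M}\,p+\frac{\sin\phi}{\sin\theta_M}\,q,
\]
where $\phi$ is the angle between $\bft_e$ and $\bft_{e_1}$ (the endpoints $\phi=0,\theta_M$ recover $p$ and $q$, confirming the identity). Squaring and using $(a+b)^2\le 2a^2+2b^2$ isolates exactly the two factors $\sin^2(\theta_M-\phi)$ and $\sin^2\phi$ that must be absorbed.

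The crux is controlling $|e|$ by the \emph{widths} of $T$ rather than by its edge lengths. Projecting $T$ onto the direction $\bfn_i$ orthogonal to $e_i$, the extent of $T$ in that direction is precisely the height $h_{e_i}$ onto $e_i$, and elementary trigonometry gives $h_{e_1}=|e_2|\sin\theta_M$ and $h_{e_2}=|e_1|\sin\theta_M$. Since the endpoints of $e\subseteq T$ lie in $T$, the projection of $e$ onto $\bfn_i$ cannot exceed this width, so $|e|\,|\sin\phi|\le h_{e_1}=|e_2|\sin\theta_M$ and $|e|\,|\sin(\theta_M-\phi)|\le h_{e_2}=|e_1|\sin\theta_M$; multiplying each by the corresponding $|\sin(\cdot)|\le 1$ gives $|e|\sin^2\phi\le|e_2|\sin\theta_M$ and $|e|\sin^2(\theta_M-\phi)\le|e_1|\sin\theta_M$.

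Combining the three ingredients,
\[
\|\bfv_h\cdot\bft_e\|_{0,e}^2\le\frac{2}{\sin^2\theta_M}\Big(|e|\sin^2(\theta_M-\phi)\,p^2+|e|\sin^2\phi\,q^2\Big)\le\frac{2}{\sin\theta_M}\big(|e_1|p^2+|e_2|q^2\big)=\frac{2}{\sin\theta_M}\sum_{i=1,2}\|\bfv_h\cdot\bft_{e_i}\|_{0,e_i}^2,
\]
and taking square roots (with $\sqrt{a^2+b^2}\le a+b$) yields the stated bound with constant factor $(\sin\theta_M)^{-1/2}$; here $\theta$ in the statement is understood as the maximal angle $\theta_M$. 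The main obstacle is exactly the geometric step: one must resist bounding $|e|$ by the possibly tiny edge lengths and instead recognize that the governing quantity is the triangle's transverse width toward each $e_i$. It is through $h_{e_i}=|e_j|\sin\theta_M$ that the maximum-angle quantity $\sin\theta_M$ enters, which also explains why the constant degenerates as $\theta_M\to\pi$ and why the maximum angle condition is what makes the estimate uniform.
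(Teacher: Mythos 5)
Your proof is correct, but it takes a different route from the paper's. The paper proceeds in two steps: it first proves the estimate on a right triangle, where $\bft_e=\cos(\theta)\bft_1+\sin(\theta)\bft_2$ in the orthonormal basis of the two legs and the projection bounds $|e|\cos(\theta)\le|e_1|$, $|e|\sin(\theta)\le|e_2|$ give the result with constant $1$; it then transfers to a general triangle via the affine map $\bfx=A\hat\bfx$ with $A=[\bft_1,\bft_2]$, using $\|A\|\lesssim 1$ and $\|A^{-1}\|\lesssim(\sin\theta_M)^{-1}$, which is where the factor $(\sin\theta_M)^{-1/2}$ appears. You instead work directly on the general triangle: you resolve $\bft_e$ in the skew basis $\{\bft_{e_1},\bft_{e_2}\}$ with the exact coefficients $\sin(\theta_M-\phi)/\sin\theta_M$ and $\sin\phi/\sin\theta_M$, and you bound $|e|\,|\sin\phi|$ and $|e|\,|\sin(\theta_M-\phi)|$ by the transverse widths $h_{e_1}=|e_2|\sin\theta_M$ and $h_{e_2}=|e_1|\sin\theta_M$ of $T$ (the correct strip/projection argument, since $e\subseteq T$). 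The two proofs share the same underlying geometric mechanism --- projections of $e$ are controlled by the widths of $T$, not by edge lengths --- but your one-step argument avoids the reference-element pullback and the attendant norm tracking, and it produces the explicit constant $\sqrt{2}\,(\sin\theta_M)^{-1/2}$, making transparent both the role of the maximum angle and the degeneration as $\theta_M\to\pi$; the paper's route is the more standard modular FEM technique but hides the constants in $\lesssim$. Your reading of the undefined $\theta$ in the statement as $\theta_M$ is also consistent with the paper's own proof, where the factor arises from $\|A^{-1}\|\lesssim(\sin\theta_M)^{-1}$.
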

\begin{proof}
We first consider a right-angle triangle. Clearly, $\theta_M = \pi/2$. Suppose $e_1$ is on the $x_1$ axis. Let the angle sandwiched by $e$ and $e_1$ be $\theta$. Then, we have $\bft_e = \cos(\theta) \bft_1 +  \sin(\theta) \bft_2$, and thus
\begin{equation}
\begin{split}
\label{lem_edge_max_eq1}
\| \bfv_h\cdot\bft_e \|_{0,e} & = |e|^{1/2} |\bfv_h\cdot\bft_e| \le |e|^{1/2} |\cos(\theta) | |\bfv_h\cdot\bft_1| + |e|^{1/2} |\sin(\theta) | |\bfv_h\cdot\bft_2| \\
& \le  \| \bfv_h\cdot\bft_{e_1} \|_{0,e_1} + \| \bfv_h\cdot\bft_{e_2} \|_{0,e_2},
\end{split}
\end{equation}
where we have used $|e|\cos(\theta) \le |e_1|$ and $|e|\sin(\theta) \le |e_2|$. See Figure \ref{fig:ebound} for illustration. Now, for a general triangle, we consider the affine mapping $\bfx = \mathfrak{F}(\hat{\bfx}) := A\hat{\bfx} = [\bft_1,\bft_2] \hat{\bfx}$. Clearly, there holds $\|A\|_2\lesssim 1$ and $\| A^{-1} \|_2\lesssim (\sin(\theta))^{-1}$.
%\begin{equation}
%\label{lem_edge_max_eq2}
%\bfx = \mathfrak{F}(\hat{\bfx}) := A\hat{\bfx} = [\bft_1,\bft_2] \hat{\bfx}.
%\end{equation}
Then, we obtain from \eqref{lem_edge_max_eq1} that
\begin{equation*}
\begin{split}
\label{lem_edge_max_eq2}
\| \bfv_h \cdot \bft_e \|_{0,e} & = |e|^{1/2} | \bfv_h \cdot \bft_e | \lesssim  |\hat{e}|^{1/2} | (A\bfv_h)\cdot \bft_{\hat{e}} | =  \| (A\bfv_h)\cdot \bft_{\hat{e}} \|_{0,\hat{e}}  \\
& \lesssim   \| (A\bfv_h)\cdot\bft_{\hat{e}_1} \|_{0,\hat{e}_1} + \| (A\bfv_h)\cdot\bft_{\hat{e}_2} \|_{0,\hat{e}_2}  
 \lesssim \| A^{-1} \|^{1/2} \left( |e_1|^{1/2} |\bfv_h\cdot \bft_{e_1}| + |e_2|^{1/2} |\bfv_h\cdot \bft_{e_2}| \right) \\
& \lesssim (\sin(\theta))^{-1/2} ( \| \bfv_h\cdot\bft_{e_1} \|_{0,e_1} + \| \bfv_h\cdot\bft_{e_2} \|_{0,e_2}),
\end{split}
\end{equation*}
which finishes the proof.
\end{proof}

%%%%%%%%%%%%%%%%%%%%%%%%%%%%%%%%%%%%%%%%%
\section{A Poincar\'e-type inequality on anisotropic elements}
\begin{lemma}
\label{lem_poin_anis}
Let $P$ be a convex polyhedron with $F$ being one of its faces, and let $l_F$ be the supporting height of $F$. Assume the projection of $P$ onto the plane containing $F$ is exactly $F$. Then, for $u\in H^1(P),$ there holds
\begin{equation}
\label{lem_poin_anis_eq0}
\| u  \|_{0,P} \lesssim l^{1/2}_F \| u \|^2_{0,F} + l_F \| \nabla u \|_{0,P}.
\end{equation}
\end{lemma}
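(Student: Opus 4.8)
The plan is to exploit the geometric hypothesis that $P$ projects orthogonally onto exactly $F$, which foliates $P$ into vertical fibers each anchored on $F$, and then to run a one-dimensional fundamental-theorem-of-calculus argument along those fibers. First I would choose coordinates so that the plane containing $F$ is $\{x_3 = 0\}$ and $P \subseteq \{x_3 \ge 0\}$; this is legitimate because $F$ is a face of the convex body $P$, so $\{x_3 = 0\}$ is a supporting plane and $P$ lies on one side of it. The assumption that the orthogonal projection of $P$ onto this plane equals $F$ guarantees that for each $(x_1,x_2)\in F$ the fiber $\{(x_1,x_2,t)\}\cap P$ is, by convexity of $P$, an interval $[0,\phi(x_1,x_2)]$ that starts on $F$ (since $(x_1,x_2,0)\in F\subseteq P$) and whose length satisfies $\phi(x_1,x_2)\le l_F$ by the definition of the supporting height.

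Next I would write, for a smooth $u$ and each fixed $(x_1,x_2)\in F$ and each $t\in[0,\phi(x_1,x_2)]$, the identity $u(x_1,x_2,t) = u(x_1,x_2,0) + \int_0^t \partial_{x_3} u(x_1,x_2,s)\dd s$. Squaring, using $(a+b)^2 \le 2a^2 + 2b^2$, and bounding the integral term by Cauchy--Schwarz together with $t\le l_F$ gives $|u(x_1,x_2,t)|^2 \le 2|u(x_1,x_2,0)|^2 + 2 l_F \int_0^{\phi(x_1,x_2)} |\partial_{x_3}u|^2 \dd s$. Integrating this first in $t$ over $[0,\phi(x_1,x_2)]$ and again invoking $\phi\le l_F$ yields a fiberwise bound whose right-hand side sees only the trace value of $u$ on $F$ and the vertical derivative over the fiber.

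Then I would integrate over $(x_1,x_2)\in F$ and apply Fubini. The first contribution assembles to $2 l_F \|u\|_{0,F}^2$ (the trace of $u$ on $F$), and the second to $2 l_F^2 \|\partial_{x_3} u\|_{0,P}^2 \le 2 l_F^2 \|\nabla u\|_{0,P}^2$, producing $\|u\|_{0,P}^2 \lesssim l_F\|u\|_{0,F}^2 + l_F^2 \|\nabla u\|_{0,P}^2$; taking square roots gives the stated bound $\|u\|_{0,P}\lesssim l_F^{1/2}\|u\|_{0,F} + l_F\|\nabla u\|_{0,P}$, which is exactly the form invoked in \eqref{lem_cylin_trace_eq4}. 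For general $u\in H^1(P)$ I would justify the pointwise identity and the trace evaluation either by density of $C^\infty(\overline{P})$ in $H^1(P)$ (valid since a convex polyhedron is Lipschitz) and passing to the limit through the continuity of the trace operator, or equivalently by the ACL characterization of Sobolev functions along the $x_3$-lines. Crucially, every constant that appears is absolute, independent of $P$ and of $l_F$, which is precisely the shape-independence required.

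The main obstacle, conceptually, is establishing that the fibers really are intervals anchored at $x_3=0$ with length at most $l_F$: this is the single point where both geometric hypotheses (convexity of $P$ and $\pi(P)=F$) are genuinely used, and it is the only place where the possibly extremely thin, anisotropic geometry enters. Without $\pi(P)=F$ one could not use the trace on $F$ as the base value at $t=0$ for every fiber, and without convexity the fibers need not be single intervals. Everything downstream is the standard one-dimensional Poincar\'e/trace computation and is completely insensitive to how small $l_F$ becomes, which is exactly the feature needed to control the shrinking elements in Lemma \ref{lem_cylin_trace}.
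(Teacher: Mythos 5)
Your proposal is correct and follows essentially the same argument as the paper: choose coordinates with $F$ in the plane $\{x_3=0\}$, write $u(\bfx) = u(\bfx_F) + \int_0^{\xi_3}\partial_{x_3}u\dd x_3$ along the vertical fibers, then square, apply Cauchy--Schwarz with the bound $\xi_3 \le l_F$, and integrate via Fubini to get $\|u\|_{0,P}^2 \le 2l_F\|u\|_{0,F}^2 + 2l_F^2\|\nabla u\|_{0,P}^2$. Your additional care in verifying that the fibers are intervals anchored on $F$ (from convexity and $\pi(P)=F$) and the density argument for general $u\in H^1(P)$ are details the paper leaves implicit, but they do not change the approach.
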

\begin{proof}
Without loss of generality, we assume that $F$ is on the $x_1x_2$ plane. For each $\bfx = (\xi_1,\xi_2,\xi_3)\in P$, let $\bfx_F= (\xi_1,\xi_2,0)$ be the projection of $\bfx$ onto ${F}$, and let $l(\bfx_F)$ be the height at $\bfx_F$.  We can write $w(\bfx) - w(\bfx_F) = \int_{0}^{\xi_3} \partial_{x_3}w \dd x_3$ and derive
\begin{equation}
\begin{split}
\label{lem_poin_anis_eq1}
\| u \|^2_{0,P} &= \int_{{F}}  \int_0^{l(\bfx_F)}  \left( u(\bfx_F) + \int_{0}^{\xi_3} \partial_{x_3}u \dd x_3 \right)^2 \dd x_3 \dd \bfx_F \\
& \le 2 \int_{F} \int_0^{l(\bfx_F)}  | u(\bfx_F) |^2  \dd x_3 \dd \bfx_F +  \int_{F} \int_0^{l(\bfx_F)} \left( \int_{0}^{\xi_3} \partial_{x_3}u \dd x_3 \right)^2 \dd \xi_3  \dd \bfx_F   \\
& \le 2 l_F \| u \|^2_{0,F} + 2l^2_F \| \nabla u \|^2_{0,P}
\end{split}
\end{equation}
where in the last inequality we have also used H\"older's inequality.
\end{proof}

\end{appendices}

  %%%%%%%%%%%%%%%%%%%%%%%%%%%%%%%%%%%%%%%%%%%%%%%%%%%%%%%%%%%%%%%%%%%%%%%%%%%%%%%%%%%%%%%%%%%%%%%%%%%%%%%%%%%%%%%%%%%%%%
 
 %%%%%%%%%%%%%%%%%%%%%%%%%%%%%%%%%%%%%%%%%%%%%%%%%%%%%%%%%%%%%%%%%%%%%%%%%%%%%%%%%%%%%%%%%%%%%%%%%%%%%%%%%%%%%%%%%%%%%%

 %%%%%%%%%%%%%%%%%%%%%%%%%%%%%%%%%%%%%%%%%%%%%%%%%%%%%%%
%%%%%%%%%%%%%%%%%%%%%%%%%%%%%%%%%%%%%%%%%%%%%%%%%%%%%%%

\bibliographystyle{abbrv}
%\clearpage
% \bibliography{/Users/guoruchi/Documents/math/Mathematics/research/reference/RuchiBib.bib,vem.bib,IFEMVEMREF.bib}
%\bibliography{../vem.bib,../IFEMVEMREF.bib,../RuchiBib.bib}
\bibliography{vem.bib,RuchiBib.bib}

\begin{thebibliography}{10}

\bibitem{1999AcostaRicardo}
{\sc G.~Acosta and R.~G. Dur{\'a}n}, {\em {The Maximum Angle Condition for
  Mixed and Nonconforming Elements: Application to the Stokes Equations}}, SIAM
  J. Numer. Anal., 37 (1999), pp.~18--36,
  \url{https://doi.org/10.1137/S0036142997331293},
  \url{https://doi.org/10.1137/S0036142997331293},
  \url{https://arxiv.org/abs/https://doi.org/10.1137/S0036142997331293}.

\bibitem{2020AdjeridBabukaGuoLin}
{\sc S.~Adjerid, I.~Babu{\v s}ka, R.~Guo, and T.~Lin}, {\em {An Enriched
  Immersed Finite Element Method for Interface Problems With Nonhomogeneous
  Jump Conditions}}, Comput. Methods Appl. Mech. Engrg.,  (2020).

\bibitem{2013AntoniettiGianiHouston}
{\sc P.~F. Antonietti, S.~Giani, and P.~Houston}, {\em \$hp\$-version composite
  discontinuous galerkin methods for elliptic problems on complicated domains},
  SIAM Journal on Scientific Computing, 35 (2013), pp.~A1417--A1439,
  \url{https://doi.org/10.1137/120877246},
  \url{https://doi.org/10.1137/120877246},
  \url{https://arxiv.org/abs/https://doi.org/10.1137/120877246}.

\bibitem{1976BabuskaAziz}
{\sc I.~Babu{\v s}ka and A.~K. Aziz}, {\em {On the Angle Condition in the
  Finite Element Method}}, SIAM J. Numer. Anal., 13 (1976), pp.~214--226,
  \url{https://doi.org/10.1137/0713021}, \url{https://doi.org/10.1137/0713021},
  \url{https://arxiv.org/abs/https://doi.org/10.1137/0713021}.

\bibitem{2012BassiBottiColombo}
{\sc F.~Bassi, L.~Botti, A.~Colombo, D.~{Di Pietro}, and P.~Tesini}, {\em On
  the flexibility of agglomeration based physical space discontinuous galerkin
  discretizations}, J. Comput. Phys., 231 (2012), pp.~45--65,
  \url{https://doi.org/https://doi.org/10.1016/j.jcp.2011.08.018},
  \url{https://www.sciencedirect.com/science/article/pii/S0021999111005055}.

\bibitem{BEIRAODAVEIGA20171110}
{\sc L.~{Beir{\~a}o da Veiga}, F.~Dassi, and A.~Russo}, {\em High-order virtual
  element method on polyhedral meshes}, Computers \& Mathematics with
  Applications, 74 (2017), pp.~1110--1122,
  \url{https://doi.org/https://doi.org/10.1016/j.camwa.2017.03.021},
  \url{https://www.sciencedirect.com/science/article/pii/S0898122117301839}.
\newblock SI: SDS2016 -- Methods for PDEs.

\bibitem{2017VeigaCarloAlessandro}
{\sc L.~Beir\~{a}o~da Veiga, C.~Lovadina, and A.~Russo}, {\em Stability
  analysis for the virtual element method}, Math. Models Methods Appl. Sci., 27
  (2017), pp.~2557--2594, \url{https://doi.org/10.1142/S021820251750052X},
  \url{https://doi.org/10.1142/S021820251750052X},
  \url{https://arxiv.org/abs/https://doi.org/10.1142/S021820251750052X}.

\bibitem{2013BeiraodeVeigaBrezziCangiani}
{\sc L.~Beir{\~a}o~da Veiga, F.~Brezzi, A.~Cangiani, G.~Manzini, L.~D. Marini,
  and A.~Russo}, {\em {Basic principles of virtual element methods}},
  Mathematical Models and Methods in Applied Sciences, 23 (2013), pp.~199--214,
  \url{https://doi.org/10.1142/S0218202512500492},
  \url{https://doi.org/10.1142/S0218202512500492},
  \url{https://arxiv.org/abs/https://doi.org/10.1142/S0218202512500492}.

\bibitem{2014BenedettoPieraccini}
{\sc M.~F. Benedetto, S.~Berrone, S.~Pieraccini, and S.~Scial{\`o}}, {\em The
  virtual element method for discrete fracture network simulations}, Computer
  Methods in Applied Mechanics and Engineering, 280 (2014), pp.~135--156,
  \url{https://doi.org/https://doi.org/10.1016/j.cma.2014.07.016},
  \url{https://www.sciencedirect.com/science/article/pii/S0045782514002485}.

\bibitem{2019BenvenutiChiozziManziniSukumar}
{\sc E.~Benvenuti, A.~Chiozzi, G.~Manzini, and N.~Sukumar}, {\em Extended
  virtual element method for the laplace problem with singularities and
  discontinuities}, Computer Methods in Applied Mechanics and Engineering, 356
  (2019), pp.~571--597,
  \url{https://doi.org/https://doi.org/10.1016/j.cma.2019.07.028},
  \url{https://www.sciencedirect.com/science/article/pii/S0045782519304244}.

\bibitem{2022BenvenutiChiozziManziniSukumar}
{\sc E.~Benvenuti, A.~Chiozzi, G.~Manzini, and N.~Sukumar}, {\em Extended
  virtual element method for two-dimensional linear elastic fracture}, Computer
  Methods in Applied Mechanics and Engineering, 390 (2022), p.~114352,
  \url{https://doi.org/https://doi.org/10.1016/j.cma.2021.114352},
  \url{https://www.sciencedirect.com/science/article/pii/S0045782521006289}.

\bibitem{Brenner;Sung:2018Virtual}
{\sc S.~C. Brenner and L.-Y. Sung}, {\em {Virtual element methods on meshes
  with small edges or faces}}, Mathematical Models and Methods in Applied
  Sciences, 28 (2018), pp.~1291--1336.

\bibitem{2005BrezziLipnikovShashkov}
{\sc F.~Brezzi, K.~Lipnikov, and M.~Shashkov}, {\em Convergence of the mimetic
  finite difference method for diffusion problems on polyhedral meshes}, SIAM
  Journal on Numerical Analysis, 43 (2005), pp.~1872--1896,
  \url{https://doi.org/10.1137/040613950},
  \url{https://doi.org/10.1137/040613950},
  \url{https://arxiv.org/abs/https://doi.org/10.1137/040613950}.

\bibitem{2005BREZZILIPNIKOVSIMONCINI}
{\sc F.~BREZZI, K.~LIPNIKOV, and V.~SIMONCINI}, {\em A family of mimetic finite
  difference methods on polygonal and polyhedral meshes}, Mathematical Models
  and Methods in Applied Sciences, 15 (2005), pp.~1533--1551,
  \url{https://doi.org/10.1142/S0218202505000832},
  \url{https://doi.org/10.1142/S0218202505000832},
  \url{https://arxiv.org/abs/https://doi.org/10.1142/S0218202505000832}.

\bibitem{2015BurmanClaus}
{\sc E.~Burman, S.~Claus, P.~Hansbo, M.~G. Larson, and A.~Massing}, {\em
  {CutFEM: Discretizing geometry and partial differential equations}},
  Internat. J. Numer. Methods Engrg., 104 (2015), pp.~472--501.

\bibitem{2022CangianiDongGeorgoulis}
{\sc A.~Cangiani, Z.~Dong, and E.~H. Georgoulis}, {\em hp-version discontinuous
  galerkin methods on essentially arbitrarily-shaped elements}, Math. Comput.,
  91 (2022), pp.~1--35.

\bibitem{2017CangianiDongGeorgoulisHouston}
{\sc A.~Cangiani, Z.~Dong, E.~H. Georgoulis, and P.~Houston}, {\em $hp$-Version
  Discontinuous Galerkin Methods on Polygonal and Polyhedral Meshes}, Springer
  Cham, 2017.

\bibitem{2014CangianiGeorgoulisHouston}
{\sc A.~Cangiani, E.~H. Georgoulis, and P.~Houston}, {\em $hp$-version
  discontinuous galerkin methods on polygonal and polyhedral meshes},
  Mathematical Models and Methods in Applied Sciences, 24 (2014),
  pp.~2009--2041, \url{https://doi.org/10.1142/S0218202514500146},
  \url{https://doi.org/10.1142/S0218202514500146},
  \url{https://arxiv.org/abs/https://doi.org/10.1142/S0218202514500146}.

\bibitem{2017AndreaEmmanuilSutton}
{\sc A.~Cangiani, E.~H. Georgoulis, T.~Pryer, and O.~J. Sutton}, {\em A
  posteriori error estimates for the virtual element method}, Numerische
  Mathematik, 137 (2017), pp.~857--893,
  \url{https://doi.org/10.1007/s00211-017-0891-9},
  \url{https://doi.org/10.1007/s00211-017-0891-9}.

\bibitem{2018CaoChen}
{\sc S.~Cao and L.~Chen}, {\em {Anisotropic Error Estimates of the Linear
  Virtual Element Method on Polygonal Meshes}}, SIAM J. Numer. Anal., 56
  (2018), pp.~2913--2939, \url{https://doi.org/10.1137/17M1154369},
  \url{https://doi.org/10.1137/17M1154369},
  \url{https://arxiv.org/abs/https://doi.org/10.1137/17M1154369}.

\bibitem{Cao;Chen:2018AnisotropicNC}
{\sc S.~Cao and L.~Chen}, {\em Anisotropic error estimates of the linear
  nonconforming virtual element methods}, SIAM J. Numer. Anal., 57 (2019),
  pp.~1058--1081.

\bibitem{2021CaoChenGuo}
{\sc S.~Cao, L.~Chen, and R.~Guo}, {\em {A Virtual Finite Element Method for
  Two Dimensional Maxwell Interface Problems with a Background Unfitted Mesh}},
  Math. Models Methods Appl. Sci.,  (2021).

\bibitem{2021CaoChenGuoIVEM}
{\sc S.~Cao, L.~Chen, and R.~Guo}, {\em Immersed virtual element methods for
  elliptic interface problems}, J. Sci. Comput.,  (2021).

\bibitem{2022CaoChenGuo}
{\sc S.~Cao, L.~Chen, and R.~Guo}, {\em Immersed virtual element methods for
  {M}axwell interface problems in three dimensions}, arXiv preprint
  arXiv:2202.09987,  (2022).

\bibitem{2023CaoChenGuo}
{\sc S.~Cao, L.~Chen, and R.~Guo}, {\em Immersed virtual element methods for
  maxwell interface problems in three dimensions}, Mathematical Models and
  Methods in Applied Sciences,  (2023).

\bibitem{2007Chen}
{\sc L.~Chen}, {\em Introduction to finite element methods}, 2007,
  \url{https://www.math.uci.edu/~chenlong/226/Ch2FEM.pdf}.

\bibitem{2023ChenGuoZou}
{\sc L.~Chen, R.~Guo, and J.~Zou}, {\em A family of immersed finite element
  spaces and applications to three-dimensional \(\bf{H}(\operatorname{curl})\)
  interface problems}, SIAM Journal on Scientific Computing, 45 (2023),
  pp.~A3121--A3149, \url{https://doi.org/10.1137/22M1505360},
  \url{https://doi.org/10.1137/22M1505360},
  \url{https://arxiv.org/abs/https://doi.org/10.1137/22M1505360}.

\bibitem{2017ChenWeiWen}
{\sc L.~Chen, H.~Wei, and M.~Wen}, {\em {An interface-fitted mesh generator and
  virtual element methods for elliptic interface problems}}, J. Comput. Phys.,
  334 (2017), pp.~327--348.

\bibitem{2009ChenXiaoZhang}
{\sc Z.~Chen, Y.~Xiao, and L.~Zhang}, {\em {The adaptive immersed interface
  finite element method for elliptic and Maxwell interface problems}}, J.
  Comput. Phys., 228 (2009), pp.~5000--5019,
  \url{https://doi.org/https://doi.org/10.1016/j.jcp.2009.03.044},
  \url{http://www.sciencedirect.com/science/article/pii/S0021999109001612}.

\bibitem{1998ChenZou}
{\sc Z.~Chen and J.~Zou}, {\em {Finite element methods and their convergence
  for elliptic and parabolic interface problems}}, Numer. Math., 79 (1998),
  pp.~175--202, \url{https://doi.org/10.1007/s002110050336},
  \url{http://dx.doi.org/10.1007/s002110050336}.

\bibitem{Ciarlet1975LecturesOT}
{\sc P.~G. Ciarlet, S.~Kesavan, A.~Ranjan, and M.~Vanninathan}, {\em Lectures
  on the finite element method}, 1975.

\bibitem{2010VeigaManzini}
{\sc L.~B.~a. da~Veiga, K.~Lipnikov, and G.~Manzini}, {\em Error analysis for a
  mimetic discretization of the steady stokes problem on polyhedral meshes},
  SIAM Journal on Numerical Analysis, 48 (2010), pp.~1419--1443,
  \url{https://doi.org/10.1137/090757411},
  \url{https://doi.org/10.1137/090757411},
  \url{https://arxiv.org/abs/https://doi.org/10.1137/090757411}.

\bibitem{2001DolbowMoesBelytschko}
{\sc J.~Dolbow, N.~Mo{\"e}s, and T.~Belytschko}, {\em {An extended finite
  element method for modeling crack growth with frictional contact}}, Comput.
  Methods Appl. Mech. Engrg., 190 (2001), pp.~6825--6846,
  \url{https://doi.org/10.1016/S0045-7825(01)00260-2},
  \url{http://dx.doi.org/10.1016/S0045-7825(01)00260-2}.

\bibitem{2024DroniouManziniYemm}
{\sc J.~Droniou, G.~Manzini, and L.~Yemm}, {\em The extended virtual element
  method for elliptic problems with weakly singular solutions}, Computer
  Methods in Applied Mechanics and Engineering, 429 (2024), p.~117129,
  \url{https://doi.org/https://doi.org/10.1016/j.cma.2024.117129},
  \url{https://www.sciencedirect.com/science/article/pii/S0045782524003852}.

\bibitem{1999Duran}
{\sc R.~G. Dur{\'a}n}, {\em Error estimates for 3-d narrow finite elements},
  Mathematics of Computation, 68 (1999), pp.~187--199,
  \url{http://www.jstor.org/stable/2585105} (accessed 2022-10-28).

\bibitem{2001GilbargTrudinger}
{\sc D.~Gilbarg and N.~S. Trudinger}, {\em {Elliptic partial differential
  equations of second order}}, vol.~224, Springer, New York, 2~ed., 2001.

\bibitem{2020Guo}
{\sc R.~Guo}, {\em {Solving Parabolic Moving Interface Problems with Dynamical
  Immersed Spaces on Unfitted Meshes: Fully Discrete Analysis}}, SIAM J. Numer.
  Anal., 2 (2021), pp.~797--828.

\bibitem{2020GuoLin}
{\sc R.~Guo and T.~Lin}, {\em {An immersed finite element method for elliptic
  interface problems in three dimensions}}, J. Comput. Phys., 414 (2020).

\bibitem{2018GuoLinLinElasto}
{\sc R.~Guo, T.~Lin, and Y.~Lin}, {\em {Recovering Elastic Inclusions by Shape
  Optimization Methods with Immersed Finite Elements}}, J. Comput. Phys., 404
  (2020).

\bibitem{2020GuoLinZou}
{\sc R.~Guo, Y.~Lin, and J.~Zou}, {\em {Solving two dimensional
  $H(\mathbf{curl})$-elliptic interface systems with optimal convergence on
  unfitted meshes}}, European J. Appl. Math.,  (2022).

\bibitem{2020GuoZhang}
{\sc R.~Guo and X.~Zhang}, {\em Solving three-dimensional interface problems
  with immersed finite elements: A-priori error analysis}, J. Comput. Phys.,
  (2020).

\bibitem{2002HuangZou}
{\sc J.~Huang and J.~Zou}, {\em {Some New A Priori Estimates for Second-Order
  Elliptic and Parabolic Interface Problems}}, J. Differential Equations, 184
  (2002), pp.~570--586,
  \url{https://doi.org/https://doi.org/10.1006/jdeq.2001.4154}.

\bibitem{2018HusseinBlaFadi}
{\sc A.~Hussein, B.~Hudobivnik, F.~Aldakheel, P.~Wriggers, P.-A. Guidault, and
  O.~Allix}, {\em A virtual element method for crack propagation}, PAMM, 18
  (2018), p.~e201800104,
  \url{https://doi.org/https://doi.org/10.1002/pamm.201800104},
  \url{https://onlinelibrary.wiley.com/doi/abs/10.1002/pamm.201800104},
  \url{https://arxiv.org/abs/https://onlinelibrary.wiley.com/doi/pdf/10.1002/pamm.201800104}.

\bibitem{2022JiWangChenLi}
{\sc H.~Ji, F.~Wang, J.~Chen, and Z.~Li}, {\em A new parameter free partially
  penalized immersed finite element and the optimal convergence analysis},
  Numer. Math.,  (2022).

\bibitem{2005KafafyLinLinWang}
{\sc R.~Kafafy, T.~Lin, Y.~Lin, and J.~Wang}, {\em {Three-dimensional immersed
  finite element methods for electric field simulation in composite
  materials}}, Internat. J. Numer. Methods Engrg., 64 (2005), pp.~940--972,
  \url{https://doi.org/10.1002/nme.1401},
  \url{http://dx.doi.org/10.1002/nme.1401}.

\bibitem{2020KobayashiTsuchiya}
{\sc K.~Kobayashi and T.~Tsuchiya}, {\em {Error analysis of Lagrange
  interpolation on tetrahedrons}}, Journal of Approximation Theory, 249 (2020),
  p.~105302, \url{https://doi.org/https://doi.org/10.1016/j.jat.2019.105302},
  \url{https://www.sciencedirect.com/science/article/pii/S0021904519300991}.

\bibitem{1992Michal}
{\sc M.~K{\v r}{\`\i}{\v z}ek}, {\em {On the Maximum Angle Condition for Linear
  Tetrahedral Elements}}, SIAM Journal on Numerical Analysis, 29 (1992),
  pp.~513--520, \url{https://doi.org/10.1137/0729031},
  \url{https://doi.org/10.1137/0729031},
  \url{https://arxiv.org/abs/https://doi.org/10.1137/0729031}.

\bibitem{2010LiMelenkWohlmuthZou}
{\sc J.~Li, J.~M. Melenk, B.~Wohlmuth, and J.~Zou}, {\em {Optimal a priori
  estimates for higher order finite elements for elliptic interface problems}},
  Appl. Numer. Math., 60 (2010), pp.~19--37.

\bibitem{2015LinLinZhang}
{\sc T.~Lin, Y.~Lin, and X.~Zhang}, {\em {Partially penalized immersed finite
  element methods for elliptic interface problems}}, SIAM J. Numer. Anal., 53
  (2015), pp.~1121--1144, \url{https://doi.org/10.1137/130912700},
  \url{http://dx.doi.org/10.1137/130912700}.

\bibitem{2022MaZhangZheng}
{\sc C.~Ma, Q.~Zhang, and W.~Zheng}, {\em A fourth-order unfitted
  characteristic finite element method for solving the advection-diffusion
  equation on time-varying domains}, SIAM Journal on Numerical Analysis, 60
  (2022), pp.~2203--2224, \url{https://doi.org/10.1137/22M1483475},
  \url{https://doi.org/10.1137/22M1483475},
  \url{https://arxiv.org/abs/https://doi.org/10.1137/22M1483475}.

\bibitem{1999MoesDolbowBelytschko}
{\sc N.~Mo\"{e}s, J.~Dolbow, and T.~Belytschko}, {\em {A finite element method
  for crack growth without remeshing}}, Internat. J. Numer. Methods Engrg., 46
  (1999), pp.~131--150.

\bibitem{2012MuWwangYe}
{\sc L.~Mu, J.~Wang, and X.~Ye}, {\em Weak galerkin finite element methods on
  polytopal meshes}, Int. J. Numer. Anal. Model.,  (2012).

\bibitem{2018ThanhZhuangXuan}
{\sc V.~M. Nguyen-Thanh, X.~Zhuang, H.~Nguyen-Xuan, T.~Rabczuk, and
  P.~Wriggers}, {\em A virtual element method for 2d linear elastic fracture
  analysis}, Comput. Methods Appl. Mech. Engrg., 340 (2018), pp.~366--395,
  \url{https://doi.org/https://doi.org/10.1016/j.cma.2018.05.021},
  \url{https://www.sciencedirect.com/science/article/pii/S0045782518302664}.

\bibitem{2020PietroDroniou}
{\sc D.~A.~D. Pietro and J.~Droniou}, {\em The Hybrid High-Order Method for
  Polytopal Meshes}, Springer, 2020.

\bibitem{2019RivarolaBenedettoLabanda}
{\sc F.~Rivarola, M.~Benedetto, N.~Labanda, and G.~Etse}, {\em A multiscale
  approach with the virtual element method: Towards a ve2 setting}, Finite
  Elements in Analysis and Design, 158 (2019), pp.~1--16,
  \url{https://doi.org/https://doi.org/10.1016/j.finel.2019.01.011},
  \url{https://www.sciencedirect.com/science/article/pii/S0168874X18305924}.

\bibitem{1994Shenk}
{\sc N.~A. Shenk}, {\em Uniform error estimates for certain narrow lagrange
  finite elements}, Mathematics of Computation, 63 (1994), pp.~105--119,
  \url{http://www.jstor.org/stable/2153564} (accessed 2022-10-28).

\bibitem{2020SreekumarTriantafyllouBcot}
{\sc A.~Sreekumar, S.~P. Triantafyllou, F.-X. B{\'e}cot, and F.~Chevillotte},
  {\em A multiscale virtual element method for the analysis of heterogeneous
  media}, International Journal for Numerical Methods in Engineering, 121
  (2020), pp.~1791--1821,
  \url{https://doi.org/https://doi.org/10.1002/nme.6287},
  \url{https://onlinelibrary.wiley.com/doi/abs/10.1002/nme.6287},
  \url{https://arxiv.org/abs/https://onlinelibrary.wiley.com/doi/pdf/10.1002/nme.6287}.

\bibitem{1999Rudiger}
{\sc {Verf\"urth, R\"udiger}}, {\em A note on polynomial approximation in
  sobolev spaces}, ESAIM: M2AN, 33 (1999), pp.~715--719,
  \url{https://doi.org/10.1051/m2an:1999159},
  \url{https://doi.org/10.1051/m2an:1999159}.

\bibitem{2016WangXiaoXu}
{\sc F.~Wang, Y.~Xiao, and J.~Xu}, {\em {High-Order Extended Finite Element
  Methods for Solving Interface Problems}}, Comput. Methods Appl. Mech. Engrg.,
  364 (2020).

\bibitem{2014WangYe}
{\sc J.~Wang and X.~Ye}, {\em A weak galerkin mixed finite element method for
  second order elliptic problems}, Math. Comp.,  (2014), pp.~2101--2126.

\bibitem{2021XieWangFeng}
{\sc C.~Xie, G.~Wang, and X.~Feng}, {\em Variational multiscale virtual element
  method for the convection-dominated diffusion problem}, Applied Mathematics
  Letters, 117 (2021), p.~107077,
  \url{https://doi.org/https://doi.org/10.1016/j.aml.2021.107077},
  \url{https://www.sciencedirect.com/science/article/pii/S0893965921000355}.

\bibitem{2024ZhengChenWang}
{\sc X.~Zheng, J.~Chen, and F.~Wang}, {\em An extended virtual element method
  for elliptic interface problems}, Computers \& Mathematics with Applications,
  156 (2024), pp.~87--102,
  \url{https://doi.org/https://doi.org/10.1016/j.camwa.2023.12.019},
  \url{https://www.sciencedirect.com/science/article/pii/S0898122123005680}.

\bibitem{2018ZhuangGuo}
{\sc Q.~Zhuang and R.~Guo}, {\em {High Degree Discontinuous Petrov-Galerkin
  Immersed Finite Element Methods using Fictitious Elements for Elliptic
  Interface Problems}}, J. Comput. Appl. Math., 362 (2019), pp.~560--573.

\end{thebibliography}

\end{document}